\theoremstyle{plain}
\newtheorem{Th}{Theorem}[section]
\newtheorem{Lemma}[Th]{Lemma}
\newtheorem{Prop}[Th]{Proposition}
\theoremstyle{definition}
\newtheorem{Def}[Th]{Definition}
\newtheorem{Rem}[Th]{Remark}
\newtheorem{?}[Th]{Problem}
\renewcommand{\leq}{\leqslant}
\renewcommand{\setminus}{\smallsetminus}
\author[M. Wlodek]{\bfseries Maciej Wlodek} %
\newcommand{\A}{\mathcal{A}}
\newcommand{\C}{\mathcal{C}}
\newcommand{\M}{\mathcal{M}}
\def\expandafter\normalsize\expandafter{%
	\normalsize%
	\setlength\abovedisplayskip{0pt}%
	\setlength\belowdisplayskip{8pt}%
	\setlength\abovedisplayshortskip{-8pt}%
	\setlength\belowdisplayshortskip{2pt}%
}
\begin{document}

	\title{Bordered Legendrian Rational Symplectic Field Theory}

	\begin{abstract} 
		Given a Legendrian knot $\Lambda \subset \mathbb{R}^3$ and a vertical line $M$ dividing the front projection of $\Lambda$ into two halves, we construct a differential graded algebra associated to each half-knot. We then show that one may obtain the commutative algebra from Legendrian Rational Symplectic Field Theory as a pushout of the two bordered algebras. This construction extends the bordered Chekanov-Eliashberg differential graded algebra by incorporating disks with multiple positive punctures into the differential.
	\end{abstract}
	
	\maketitle

	\section{Introduction}
	
	Let $(\mathbb{R}^3, \xi_{std} = \ker(dz-ydx))$ be the standard contact $\mathbb{R}^3$. A Legendrian knot is a knot $\Lambda \subset \mathbb{R}^3$ which is everywhere tangent to $\xi_{std}$. 
	
	Two invariants of Legendrian knots have been known classically - the rotation number and the Thurston-Bennequin number. More sophisticated invariants of Legendrian knots come from the framework of Symplectic Field Theory (SFT), developed by Eliashberg, Givental, and Hofer \cite{egh}. The first SFT-type invariant of Legendrian knots is Legendrian contact homology, which counts rigid genus zero holomorphic curves having precisely one positive puncture. Chekanov \cite{Chekanov} provided a combinatorial formulation of the differential graded algebra (DGA) underlying Legendrian contact homology (called the Chekanov-Eliashberg DGA). Applications of Legendrian contact homology and the Chekanov-Eliashberg DGA include, among many others, allowing one to distinguish certain Legendrian knots that are indistinguishable with just classical invariants \cite{Chekanov}, and being used to relate the symplectic homologies or linearized contact homologies of manifolds related by a Legendrian surgery \cite{bee}. %
	
	In \cite{ng}, Ng constructed Legendrian Rational Symplectic Field Theory (LSFT), which is an extension of the Chekanov-Eliashberg DGA incorporating (genus zero) holomorphic curves with arbitrarily many positive punctures. The phenomenon of boundary bubbling forces one to include correction terms coming from string topology (\cite{chas}, \cite{cieliebak}) in the differential. The full invariant has the structure of a \emph{curved} DGA, though there is also an associated commutative complex. In further work of Ng, LSFT was used to upgrade Legendrian contact homology to an $L_{\infty}$ algebra \cite{nglinfty}.

	Suppose $\Lambda$ has a simple front projection, and divide $\Lambda$ into two pieces by cutting it with a vertical line in the front projection. Sivek \cite{sivek} constructed a DGA associated to each half-knot, and showed that the Chekanov-Eliashberg DGA may be obtained as a pushout of the bordered algebras, in an analogous manner to the Seifert-van Kampen theorem for the fundamental group of topological spaces. Applications of this construction include results about augmentations of the Chekanov-Eliashberg DGA for a connected sum of Legendrians, formulae for relating the Chekanov-Eliashberg DGAs of knots related by certain tangle replacements, and results about augmentations of Legendrian Whitehead doubles \cite{sivek}.
	
	In the present work, we consider the bordered case of Legendrian Rational Symplectic Field Theory. We construct differential graded algebras $\A^L_{SFT}$ associated to a left Legendrian half-knot (tangle), $\A^R_{SFT}$ associated to a right Legendrian half-knot, as well as $\A^M_{SFT}$ associated to a dividing line. We then relate our algebras to the commutative algebra from LSFT by proving the following Seifert-van Kampen type theorem (see \Cref{thm:main}):

	\begin{Th}\label{thm:t1}
		Cut a simple Legendrian front $\Lambda$ in two pieces $\Lambda^L$ and $\Lambda^R$ by a dividing line $M$. The differential graded algebras $\A^M_{SFT}(M)$, $\A^L_{SFT}(\Lambda^L)$, and $\A^R_{SFT}(\Lambda^R)$ form a pushout square 
		
		\[\begin{tikzcd}
			\A^M_{SFT}(M) \arrow[r] \arrow[d] & \A^L_{SFT}(\Lambda^L) \arrow[d] \\
			\A^R_{SFT}(\Lambda^R) \arrow[r]           & \A^{comm}_{SFT}(\Lambda).
		\end{tikzcd} \]
	\end{Th}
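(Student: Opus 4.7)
The plan is to follow the template of Sivek's proof of the pushout theorem for the Chekanov--Eliashberg DGA, enhanced to account for the multi-punctured disks and string topology corrections that appear in LSFT. The first step is to identify generators: $\A^L_{SFT}(\Lambda^L)$ should be generated by Reeb chords lying in the left half together with ``boundary'' generators sitting on $M$ (one set for each strand crossing $M$), and similarly for $\A^R_{SFT}(\Lambda^R)$; the algebra $\A^M_{SFT}(M)$ is generated by only the boundary generators on $M$; and $\A^{comm}_{SFT}(\Lambda)$ is generated by all Reeb chords of $\Lambda$. I would then define natural DGA maps $\A^M_{SFT}(M) \to \A^{L,R}_{SFT}(\Lambda^{L,R})$ by inclusion of boundary generators, and maps $\A^{L,R}_{SFT}(\Lambda^{L,R}) \to \A^{comm}_{SFT}(\Lambda)$ sending Reeb chords to themselves and boundary generators to canonical elements built from Reeb chords that cross $M$ nearby.

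Next, I would verify that the square commutes on generators and that the induced map $\Phi \colon \A^L_{SFT}(\Lambda^L) \sqcup_{\A^M_{SFT}(M)} \A^R_{SFT}(\Lambda^R) \to \A^{comm}_{SFT}(\Lambda)$ is a DGA isomorphism. The isomorphism on the underlying (ungraded, differential-free) algebras reduces to a combinatorial identification: cutting a front diagram along $M$ puts the Reeb chords of $\Lambda$ in bijection with the disjoint union of Reeb chords of $\Lambda^L$ and $\Lambda^R$, and the boundary generators of $\A^M_{SFT}(M)$ are precisely the data one must identify in the pushout so that the crossings near $M$ fit together consistently.

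The main work lies in showing that $\Phi$ commutes with the differentials. For each generator $c$ of $\A^{comm}_{SFT}(\Lambda)$, the terms in $\partial c$ correspond to either rigid holomorphic disks with possibly multiple positive punctures or to string topology corrections along $\Lambda$. I would analyze how each such contribution interacts with the dividing line $M$: a rigid disk $u$ can be cut along $M$ into a collection of ``half-disks'' in $\Lambda^L$ or $\Lambda^R$, with punctures appearing on $M$ playing the role of boundary generators. Each half-disk should be counted by the differential of one of the bordered DGAs, and the boundary generators appearing along $M$ are precisely the ones identified in the pushout, so that gluing reproduces the original $u$. Conversely, matching tuples of half-disks glue to a global rigid disk, giving a bijection between the two sides. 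This is essentially a geometric gluing statement which should follow once the bordered moduli spaces are set up to admit standard gluing of holomorphic curves at punctures on $M$.

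The main obstacle I anticipate is the bookkeeping of string topology corrections. Unlike in Sivek's setting, LSFT's differential includes correction terms from boundary bubbling of moduli spaces, and these must also decompose compatibly with cutting along $M$. Concretely, this requires introducing string-topology-type terms into the bordered differentials that count ``half-string'' operations on intervals of $\Lambda^L$ or $\Lambda^R$ with endpoints on $M$, and then verifying that the pushout identification along $\A^M_{SFT}(M)$ reassembles these into the full string topology operations on $\Lambda$. Once this algebraic/topological compatibility is established, the proof reduces to matching signs and coefficients on both sides, which I expect to follow from the conventions of \cite{ng} together with the cutting analysis of \cite{sivek}.
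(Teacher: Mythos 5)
Your overall architecture is the right one and matches the paper's: bordered algebras with extra generators living on $M$, maps into $\A^{comm}_{SFT}$ defined by counting regions that cross the dividing line, and a verification that cut-open disks and string-topology terms reassemble. However, one specific commitment you make would fail, and it is not a cosmetic point. You propose that \emph{both} maps $\A^M_{SFT}\to\A^L_{SFT}$ and $\A^M_{SFT}\to\A^R_{SFT}$ are inclusions of boundary generators. This cannot work: the differential of a boundary generator inside $\A^L_{SFT}$ necessarily sees the geometry of $\Lambda^L$ (half-disks with boundary on $M$ and corners at left crossings), whereas $d^M$ must be defined from the dividing-line data alone. In the paper the middle algebra carries \emph{two} flavors of half-disk generators $\alpha^L_{ij},\alpha^R_{ij}$ indexed by \emph{pairs} of points of $\Pi_{xz}(\Lambda)\cap M$ (not one generator per strand), together with strand generators $\beta^L_{ij},\beta^R_{ij}$ recording the pairings; the map $\ell$ includes the $R$-flavored generators verbatim but \emph{expands} the $L$-flavored ones as sums over actual left admissible half-disks (and expands $\beta^L_{ij}$ as $\sum p_kq_k$ over Reeb chords along the corresponding left strand), with $r$ doing the mirror. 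This asymmetric expansion is what makes the square commute and is already present in Sivek's construction, where $\ell$ is likewise not an inclusion.

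Two further ingredients your sketch leaves unaddressed would need real work. First, the string-topology correction is not just a bookkeeping overlay: to make broken closed strings well defined on a half-knot one must close $\Lambda^L$ up along $M$ using the pairing $\beta$, which is exactly why the $\beta_{ij}$ generators (and the paths $\gamma_i$ built from them) must be adjoined, and why the differential acquires $\beta$-insertions in addition to $pq$- and $\alpha$-insertions; without these the analogue of $\delta_{str}^2=0$ and the quantum master equation $h\to h=\delta_{str}(h)$ fail on the bordered pieces. Second, the whole statement only holds after passing to the commutative quotient: the paper notes that in the noncommutative setting the maps $L,R,\ell,r$ fail to be morphisms, so your plan to "match signs and coefficients" at the end must be replaced by an argument that works in $\A^{comm}_{SFT}$ over $\mathbb{Z}_2$. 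A related geometric fact you would need (and which uses simplicity of the front) is that right admissible disks map entirely into $\mathbb{R}^2_R$ while left admissible disks may loop back across $M$; this asymmetry is what makes $L(\alpha_{ij})$ and $R(\alpha_{ij})$ well defined and makes the commutativity check $L\circ\ell=R\circ r$ go through.
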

	
	Here $\A^{comm}_{SFT}$ is the commutative LSFT algebra from \cite{ng}. 
	
	Thus our construction extends both (the commutative version of) the bordered Chekanov-Eliashberg algebra and (the commutative version of) Legendrian Symplectic Field Theory. The relationship may be informally summarized with the following diagram:
	
	\[\begin{tikzcd}[row sep=2cm, column sep = 3cm]
		\text{CE DGA} \arrow[r, "\substack{\text{include disks with} \\ \text{multiple positive punctures}}"] \arrow[d, "\text{bordered version}"] & \text{LSFT} \arrow[d, "\text{bordered version}"] \\
		\text{bordered CE DGA} \arrow[r, "\substack{\text{include disks with} \\ \text{multiple positive punctures}}"]  \arrow[u,"\text{pushout}",shift left=3]          & \text{bordered LSFT} \arrow[u,"\text{pushout}", shift left=3] \\
	\end{tikzcd} \]

	The remainder of the present work is organized as follows. In \Cref{sec:cedga}, we review the construction of the Chekanov-Eliashberg DGA. The construction of the bordered version of the Chekanov-Eliashberg DGA is reviewed in \Cref{sec:sivek}. The construction of LSFT is reviewed in \Cref{sec:lsft}. \Cref{sec:blsft} contains the results of the present work, namely the construction of the bordered LSFT algebras and the proof of \Cref{thm:t1}. Finally, some example calculations are provided in \Cref{sec:examples}.

	\subsection*{Acknowledgments}
	
	I would like to thank my advisor, John Pardon, for his guidance. I would also like to thank Zoltan Szabo, Peter Ozsvath, and Lenhard Ng for helpful comments and discussions. I would also like to thank Steven Sivek for his permission to use and modify his code.

	\section{Chekanov-Eliashberg DGA}\label{sec:cedga}
	
	In this section, we review the construction of the Chekanov-Eliashberg differential graded algebra \cite{Chekanov}.

	Let $\Lambda$ be a Legendrian knot. The Chekanov-Eliashberg DGA is a differential graded algebra $\A(\Lambda)$ associated to $\Lambda$, whose stable-tame isomorphism class is an invariant of $\Lambda$ under Legendrian isotopy. It is generated over $\mathbb{Z}_2$ by the set of Reeb chords of $\Lambda$, and its  differential counts rigid holomorphic curves with one positive end and arbitrarily many negative ends.

	\subsection{SFT formulation of the Chekanov-Eliashberg DGA}
	
	\-\
	
	Though we will be mostly concerned with the combinatorial formulation of the Chekanov-Eliashberg DGA, we first briefly recall a special case of the SFT formulation of the Chekanov-Eliashberg DGA \cite{egh}, and relate it to the combinatorial one.
	
	Let $(M, \alpha)$ be a contact manifold and $\Lambda \subset M$ a Legendrian knot. Let $N = (M\times \mathbb{R}_t, d(e^t \alpha))$ be the symplectization of $M$ and let $J$ be a compatible almost complex structure on $N$. Let $L = \Lambda \times \mathbb{R} \subset N$ be the symplectization of $\Lambda$. 
	
	Assume for simplicity that the Reeb vector field $R_{\alpha}$ has no periodic orbits. Let $\C$ denote the set of Reeb chords of $\Lambda$.

	\begin{Def}[Moduli spaces]
		Given a tuple $(c_1,\dots, c_k) \in \C^k$ of Reeb chords, define the moduli space $\M(c_1,\dots, c_k)$ to be the space of $J$-holomorphic maps 
		\[ u: \left(D^2\setminus \{z_1,\dots, z_k\}, \partial D^2 \setminus \{z_1, \dots, z_k\}\right) \to (N, L) \]	
		where $z_i$ are boundary punctures (in counterclockwise order in $\partial D^2$), such that $u$ has a positive end over the chord $c_1$ at $z_1$, and negative ends over $c_i$ at $z_i$ for $i>1$. 
		
		$u$ are always considered up to puncture-preserving reparametrization of the domain.
	\end{Def}

	\begin{Def}
		Let $\A$ be the free associative algebra generated over $\mathbb{Z}_2$ by $\C$. Define a differential $d:\A\to \A$ by first setting
		\[ d(c) = \sum \# \left( \M(c, c_2, \dots, c_k)/\mathbb{R} \right) c_2\dots c_k, \]
		where the sum is over all tuples $c_2, \dots, c_k$ such that the space $\M(c, c_2,\dots, c_k)/\mathbb{R}$ is 0-dimensional.
		
		Extend $d$ to all of $\A$ by linearity and the Leibniz rule. 
	\end{Def}
	
	\begin{Th}[Proposition 2.8.1 in \cite{egh}]
		$d^2=0$.
	\end{Th}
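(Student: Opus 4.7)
The plan is the standard Floer-theoretic argument: show that $d^2(c)$, for any Reeb chord generator $c$, is a mod-$2$ count of the boundary points of the compactification of a $1$-dimensional moduli space, and hence is zero. Since $d$ was defined on generators and extended by linearity and the Leibniz rule, it suffices to verify $d^2(c)=0$ for each $c\in\C$.

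First I would write out $d^2(c)$ as a sum over pairs of rigid configurations. Expanding,
\[ d^2(c) = \sum \#\left(\M(c, c_2,\dots, c_k)/\mathbb{R}\right) \cdot d(c_2 c_3 \cdots c_k), \]
and applying the Leibniz rule to each $d(c_2 \cdots c_k)$ rewrites this as a count of broken configurations: a rigid disk in $\M(c, c_2,\dots, c_k)/\mathbb{R}$ together with a rigid disk in $\M(c_i, b_1,\dots, b_\ell)/\mathbb{R}$ glued at the negative puncture $c_i$ of the first disk, with the remaining negative chords reassembled into a monomial in the correct cyclic order. Thus $d^2(c)$ equals the mod-$2$ count of all such two-level broken buildings asymptotic to $c$ at the unique positive end and to an ordered tuple of chords at the negative ends.

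Next I would realize this count as the boundary of a moduli space. For each choice of negative asymptotics $(b_1,\dots, b_m)$ with total expected dimension one (after quotienting by $\mathbb{R}$), consider the compactified moduli space $\overline{\M(c, b_1,\dots, b_m)}/\mathbb{R}$. Under the standing transversality and genericity assumptions underlying the SFT setup of \cite{egh}, this space is a compact $1$-manifold with boundary. The SFT compactness theorem identifies its boundary with precisely the two-level broken configurations described in the previous paragraph; no other codimension-one degenerations occur, because sphere and disk bubbling are excluded by the exactness of the symplectization and the Lagrangian $L = \Lambda \times \mathbb{R}$, and because each level must carry at least one positive puncture to have positive $d\alpha$-energy. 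Since a compact $1$-manifold has an even number of boundary points, the mod-$2$ count vanishes, giving $d^2(c) = 0$.

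The main obstacle is the compactness/gluing input: one must verify that every sequence of holomorphic disks in a $1$-dimensional component of $\M(c,b_1,\dots,b_m)/\mathbb{R}$ subconverges (after reparametrization) to a two-level broken building of the above form, and conversely that each such broken building arises as a genuine boundary point via a gluing construction. In the LCH setting with only Reeb chord asymptotics and no closed Reeb orbits, this is the content of the SFT compactness theorem combined with the standard Lagrangian gluing analysis; everything else in the argument is formal bookkeeping of which chord attaches where, which is exactly what the noncommutative algebra structure on $\A$ records.
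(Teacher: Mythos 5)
Your argument is correct and is the standard analytic proof of the result as stated in the generality of \cite{egh}: identify $d^2(c)$ with the mod-$2$ count of two-level broken buildings, realize these as the boundary of the compactified $1$-dimensional moduli spaces via SFT compactness and gluing, and conclude by evenness of the boundary of a compact $1$-manifold. The paper itself does not prove the theorem in this generality --- it cites it to \cite{egh} --- and its own proof of $d^2=0$, given later for the combinatorial formulation in $\mathbb{R}^3$, takes a genuinely different and more elementary route: a disk $u \in D(q)$ and a disk $u' \in D(q')$ attached at a negative corner $q'$ of $u$ are glued along their shared boundary segment into an immersed region with a single non-convex corner $q''$ (the height monotonicity of \Cref{lem:height} rules out the degenerate case $q = q''$), and cutting that region the other way at $q''$ produces a second, distinct decomposition contributing the same monomial, so the terms of $d^2(q)$ cancel in pairs. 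That combinatorial pairing is the planar shadow of your boundary-of-a-$1$-manifold argument, but it requires no transversality, compactness, or gluing analysis, because in the Lagrangian projection the rigid curves are exactly immersed convex-cornered polygons. Your approach buys generality (any contact manifold with the requisite analytic setup) at the cost of deferring the substantive content to the SFT compactness and gluing theorems; the paper's approach is self-contained but specific to $\mathbb{R}^3$. One point you should make explicit if you pursue the analytic route: in a generic codimension-one degeneration only one nontrivial lower level appears (the remaining negative ends carry trivial strips), which is what justifies reading the broken configurations off from the Leibniz-rule expansion of $d(c_2\cdots c_k)$.
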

	
	\-\

	Now, further specializing to the case $(M,\alpha) = (\mathbb{R}^3, dz-ydx)$, we can reformulate the above combinatorially as follows.
	
	Consider the projection $\Pi: N \to \mathbb{R}^2_{xy}$, $(x,y,z,t) \mapsto (x,y)$. This projects $L$ down to an immersed Lagrangian $\Pi(L) \subset \mathbb{R}^2$, and Reeb chords $c_i$ to self-intersections of $\Pi(L)$. Elements of $\M(c_1,\dots, c_k)$ are projected to immersed or branched disks in $\mathbb{R}^2$ with boundary on $\Pi(L)$, and in particular elements of zero-dimensional moduli spaces project to immersed disks. Conversely, each immersed disk in $\mathbb{R}^2$ with boundary on $\Pi(L)$ which is convex at each corner lifts uniquely to an element of a zero-dimensional moduli space $\M(c_1,\dots, c_k)$. Therefore, we may reformulate the definition of $\A(\Lambda)$ to be generated by self-intersections of $\Pi(L)$, with a differential that counts immersed disks with convex corners.

	We now review the combinatorial formulation in more detail.

	\subsection{Combinatorial formulation of the Chekanov-Eliashberg DGA}
	
	\-\
	
	See also \cite{ngetn} for more details.
	
	Let $\Lambda \subset \mathbb{R}^3$ be a Legendrian knot. Let $\Pi_{xy}: \mathbb{R}^3 \to \mathbb{R}^2$ denote the Lagrangian projection, and let $\Pi_{xz}: \mathbb{R}^3 \to \mathbb{R}^2$ denote the front projection. We assume that $\Lambda$ is generic in the sense that all the self-intersections of $\Pi_{xy}(\Lambda)$ are transverse double points. %
	
	(See \Cref{fig:frontandlag} for an example of a Legendrian in front and Lagrangian projection.)
	
	Note that self-intersections of $\Pi_{xy}(\Lambda)$ correspond to Reeb chords of $\Lambda$, since the Reeb vector field of the standard contact structure on $\mathbb{R}^3$ is $R = \partial_z$.

	\begin{figure}
		\def\a{1.2}
		\def\b{0.8}
		\def\c{1.6}
		\def\d{0.4}
		
		\def\xI{-4}
		\def\xII{-1}
		\def\xIII{0}
		\def\xIV{1}
		\def\xV{4}
		
		\def\yI{0.6}
		\def\yII{2}
		\def\yIII{2.5}
		\def\yIV{3.5}
		\def\yV{4}
		\def\yVI{5.4}

		\begin{tikzpicture}[scale=0.5]
			\begin{knot}
				[
				clip width=12]
				\strand[black, thick] (\xI,\yV) .. controls +(\a,0) and +(-\a,0) .. (\xIII,\yVI) .. controls +(\a,0) and +(-\a,0) .. (\xV,\yV);
				\strand[black, thick] (\xV,\yV) .. controls +(-\a,0) and +(\a,0) .. (\xIV,\yIII) .. controls +(-\b,0) and +(\b,0) .. (\xII,\yIV) .. controls +(-\b,0) and +(\b,0) .. (\xI,\yII);
				\strand[black, thick] (\xI,\yII) .. controls +(\a,0) and +(-\a,0) .. (\xIII,\yI) .. controls +(\a,0) and +(-\a,0) .. (\xV,\yII);
				\strand[black, thick] (\xV,\yII) .. controls +(-\a,0) and +(\a,0) .. (\xIV,\yIV) .. controls +(-\b,0) and +(\b,0) .. (\xII,\yIII) .. controls +(-\b,0) and +(\b,0) .. (\xI,\yV);
				\flipcrossings{1,3};
			\end{knot}
		\end{tikzpicture}		
		\def\a{1.2}
		\def\b{0.8}
		\def\c{2}
		\def\d{0.4}
		\def\xI{-3}
		\def\xII{-1}
		\def\xIII{0}
		\def\xIV{1}
		\def\xV{4}
		\def\xVI{4.4}
		\def\xVII{5}	
		\def\yI{0.6}
		\def\yII{2}
		\def\yIII{2.5}
		\def\yIV{3.5}
		\def\yV{4}
		\def\yVI{5.4}
		\def\yVZ{4.5}
		\def\yIZ{1.5}
		\begin{tikzpicture}[scale=0.5]
			\begin{knot}
				[
				clip width=12]
				\strand[black, thick] (\xI,\yV) .. controls +(0,\a) and +(-\a,0) .. (\xIII,\yVI) .. controls +(\c,0) and +(-\a,0) .. (\xVI,\yIV);
				\strand[black, thick] (\xVI, \yIV) .. controls +(\d,0) and +(0,-\d) .. (\xVII,\yV) .. controls +(0,\d) and +(\d,0) .. (\xVI, \yVZ);
				\strand[black, thick] (\xVI,\yVZ) .. controls +(-\a,0) and +(\a,0) .. (\xIV,\yIII) .. controls +(-\b,0) and +(\b,0) .. (\xII,\yIV) .. controls +(-\b,0) and +(0,\b) .. (\xI,\yII);
				\strand[black, thick] (\xI,\yII) .. controls +(0,-\a) and +(-\a,0) .. (\xIII,\yI) .. controls +(\c,0) and +(-\a,0) .. (\xVI,\yIII);
				\strand[black, thick] (\xVI, \yIII) .. controls +(\d,0) and +(0,\d) .. (\xVII,\yII) .. controls +(0,-\d) and +(\d,0) .. (\xVI, \yIZ);			
				\strand[black, thick] (\xVI,\yIZ) .. controls +(-\a,0) and +(\a,0) .. (\xIV,\yIV) .. controls +(-\b,0) and +(\b,0) .. (\xII,\yIII) .. controls +(-\b,0) and +(0,-\b) .. (\xI,\yV);
				\flipcrossings{2,4,5};
			\end{knot}
		\end{tikzpicture}
		
		\caption[A Legendrian trefoil in the front and Lagrangian projections]{Left: a Legendrian trefoil in the front projection. Right: a Legendrian trefoil in the Lagrangian projection.}\label{fig:frontandlag}
	\end{figure}
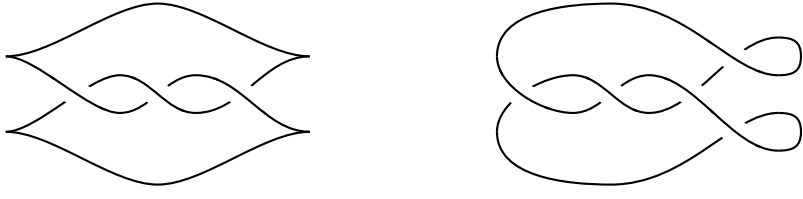
	
	Label the self-intersections of $\Pi_{xy}(\Lambda)$ as $q_1, \dots, q_n$. At each self-intersection, label the four quadrants with a $+$ or $-$ sign in accordance with the following rule: if the overstrand is counterclockwise from the understrand in a given quadrant, we label it $+$, otherwise $-$ (see \Cref{fig:reebsigns}).
	
	\begin{Def}
		The \emph{Chekanov-Eliashberg DGA} $\A(\Lambda)$ is the free associative algebra generated over $\mathbb{Z}_2$ by the elements $\{q_1,\dots, q_n\}$. The differential $d:\A \to \A$ will be defined in \Cref{def:cediff}.
	\end{Def}
	
	The grading on $\A(\Lambda)$ is defined as follows. For a generator $q_i$, let $\gamma_i$ be one of the two paths in $\Lambda$ starting at the overstrand at $q_i$ and ending at the understrand. Then we define
	\[ |q_i| = \lfloor 2 rot(\gamma_i) \rfloor \in \mathbb{Z}_{2rot(\Lambda)}\]

	\begin{figure}
		\begin{tikzpicture}
			\node (P1) at (-0.7,0) {\LARGE $+$};
			\node (P2) at (0.7,0) {\LARGE $+$};
			\node (N1) at (0,-0.7) {\LARGE $-$};
			\node (N2) at (0,0.7) {\LARGE $-$};
			\begin{knot}
				[
				clip width=15]
				\strand[black, thick] (-1,-1) -- (1,1);
				\strand[black, thick] (-1,1) -- (1,-1);
				\flipcrossings{1}
			\end{knot}
		\end{tikzpicture}
		\caption[Reeb signs at a crossing]{Reeb signs at a crossing.}\label{fig:reebsigns}
	\end{figure}
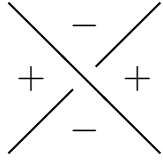

	The differential on $\A(\Lambda)$ is defined by counting admissible disks, defined as follows. 
	
	\begin{Def}\label{def:admdisk1}
		An \emph{admissible disk} is a map $u: (D^2, \partial D^2) \to (\mathbb{R}^2, \Pi_{xy}(\Lambda))$ which satisfies the following properties:
		
		\begin{enumerate}
			\item $u$ is an immersion apart from a finite set of boundary points, where it maps to self-intersections of $\Pi_{xy}(\Lambda)$. These points are called corners of $u$.
			\item At each corner $c \in \partial D^2$ of $u$, a neighborhood of $c$ is mapped to precisely one quadrant of $q = u(c)$. Label a corner as positive or negative in accordance with the Reeb sign of this quadrant.
			\item $u$ has precisely one positive corner (and any number of negative corners).
		\end{enumerate}
		
		Admissible disks are always considered up to reparametrization of the domain.
		
		\-\
		
		Let $D(q)$ denote the set of all admissible disks whose positive corner is at $q$. 
		
		Given $u \in D(q)$, define $\partial u$ to be the monomial in $\A(\Lambda)$ formed by reading off the corners of $u$ counterclockwise starting at $q$ (but excluding $q$ itself).	
	\end{Def}
	
	\begin{Def}\label{def:cediff}
		Define $d: \A(\Lambda) \to \A(\Lambda)$ by first setting 
		
		\[ d(q_i) = \sum_{u\in D(q_i)} \partial u \]
		
		and extending the definition to $\A(\Lambda)$ by linearity and the Leibniz rule. 
	\end{Def}

	\begin{Prop}[Theorem 3.1 in \cite{Chekanov}]\label{prop:cediffwell}
		
		\-\
		
		$d$ is well-defined; that is, $\sum_{u\in D(q_i)} \partial u$ is always a finite sum.
	\end{Prop}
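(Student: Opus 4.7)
The plan is to prove finiteness of $D(q_i)$ by an action argument: assign a positive height to each Reeb chord, show that the height at the positive corner of any admissible disk equals the sum of heights at the negative corners plus a positive area, and deduce that only finitely many disks can appear.

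Concretely, I would assign to each self-intersection $q$ of $\Pi_{xy}(\Lambda)$ its \emph{height} $h(q) := z(q^+) - z(q^-) > 0$, where $q^\pm$ are the two lifts of $q$ to $\Lambda$ ordered by $z$-coordinate. Because $\Lambda$ is Legendrian ($dz = y\,dx$ along $\Lambda$), integrating $y\,dx$ along arcs of $\Lambda$ records the corresponding height changes. Applying Stokes' theorem to the primitive $-y\,dx$ of $\omega_{std} = dx \wedge dy$ on any admissible disk $u \in D(q_i)$ with negative corners $r_1,\dots,r_k$, and evaluating the boundary integral piecewise via the lifts to $\Lambda$ of the arcs of $\partial u$ between consecutive corners, one obtains the action balance
\[ h(q_i) = \sum_{j=1}^{k} h(r_j) + A(u), \]
where $A(u) > 0$ is the (positive) symplectic area of $u$ viewed as an immersed oriented disk in $\mathbb{R}^2$.

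Since $\Pi_{xy}(\Lambda)$ has only finitely many self-intersections, the minimum height $h_{\min} := \min_q h(q)$ is strictly positive, so the identity forces $k \leq h(q_i)/h_{\min}$, uniformly bounding the number of negative corners. Hence only finitely many cyclic sequences of corners (with their quadrant data) can occur for disks in $D(q_i)$. For each such combinatorial type, only finitely many admissible disks realize it: the quadrant data at each corner pins down the outgoing direction of $\partial u$; between consecutive corners $\partial u$ follows an arc of $\Pi_{xy}(\Lambda)$ that does not switch strands at intermediate crossings, and the area bound $A(u) \leq h(q_i)$ prevents such arcs from wrapping arbitrarily; the filling immersion $u$ is then recovered uniquely from its boundary loop.

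The main obstacle will be the careful verification of the action balance identity, in particular checking that the positive corner contributes $+h(q_i)$ and each negative corner contributes $-h(r_j)$ to the telescoped boundary integral. This reduces to a local sign analysis at each corner, using the Reeb-sign convention of \Cref{fig:reebsigns} to track which of the two lifts in $\Lambda$ the arc of $\partial u$ belongs to on either side of the corner, and to match the orientation of $\partial D^2$ with the counterclockwise ordering used to define positive and negative corners.
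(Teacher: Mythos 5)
Your proposal takes essentially the same route as the paper: the Stokes'-theorem action balance $H(q_i) = \sum_j H(r_j) + A(u)$ with $A(u)>0$ is exactly \Cref{lem:height}, which is the paper's entire stated justification for the proposition. You additionally spell out the deduction of finiteness from that inequality (the uniform bound on the number of negative corners and the finiteness of disks per combinatorial type), which the paper leaves implicit.
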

	
	This follows from the following \Cref{lem:height}.
	
	\begin{Def}
		Given a crossing $q$ of $\Pi_{xy}(\Lambda)$, define $H(q)$ to be the height of the Reeb chord corresponding to $q$. In other words, $H(q) = |z_2-z_1|$, where $z_i$ are the $z$-coordinates of the points of $\Lambda$ projecting to the crossing $q$.
	\end{Def}
	
	\begin{Lemma}\label{lem:height}
		Let $u\in D(q)$. Let $q_{i_1}, \dots, q_{i_k}$ be the negative corners of $u$. Then $H(q) > \sum H(q_{i_j})$.
	\end{Lemma}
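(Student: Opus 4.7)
The plan is to use the symplectic/contact action principle: the symplectic area of $u$ is computable from the $z$-heights at the corners, and strict positivity of this area gives the inequality.

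First, I would apply Stokes' theorem to the symplectic form $\omega = dx \wedge dy = d(y\,dx)$ on $\mathbb{R}^2_{xy}$, obtaining
\[ \int_{D^2} u^*(dx \wedge dy) \;=\; \oint_{\partial D^2} u^*(y\,dx). \]
The key observation is that along $\partial D^2$, the map $u$ factors through the Legendrian $\Lambda$ (away from the corners) via the projection $\Pi_{xy}$, and the Legendrian condition $dz = y\,dx$ on $\Lambda$ means that $u^*(y\,dx)$ on each smooth boundary arc equals the pullback of $dz$ along the corresponding lift to $\Lambda$.

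Next, I would split the boundary integral into the contributions from the arcs between consecutive corners. Each such arc lifts to an arc in $\Lambda$, and its $y\,dx$-integral telescopes to the difference of $z$-coordinates of its endpoints. At each corner $q' = u(c)$, there are two $z$-values $z_1(q') < z_2(q')$ (the two sheets of $\Lambda$ over the self-intersection), differing by $H(q')$. At the unique positive corner $q$, the Reeb-sign convention forces the boundary lift to jump upward from $z_1(q)$ to $z_2(q)$ as one traverses $\partial D^2$ counterclockwise, contributing $+H(q)$; at each negative corner $q_{i_j}$, the jump is downward, contributing $-H(q_{i_j})$. Summing,
\[ \int_{D^2} u^*(dx \wedge dy) \;=\; H(q) - \sum_{j=1}^{k} H(q_{i_j}). \]

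Finally, I would conclude by noting that the left-hand side is strictly positive: $u$ is an immersion away from finitely many boundary points, so $u^*(dx\wedge dy)$ is a nonnegative $2$-form (with respect to the standard orientation of $D^2$, after possibly reversing orientation; the convex-corner convention together with the ordering of punctures fixes the orientation so the integrand is nonnegative) and is pointwise positive on the open dense set of immersion points. Hence $\int_{D^2} u^*(dx\wedge dy) > 0$, which yields $H(q) > \sum_j H(q_{i_j})$.

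The only delicate step is the sign/orientation bookkeeping at the corners: one must verify that the Reeb-sign labeling of \Cref{fig:reebsigns} is exactly what records whether the lift to $\Lambda$ jumps up or down in $z$ as $\partial D^2$ is traversed counterclockwise, and simultaneously that this orientation makes $u^*(dx\wedge dy)$ nonnegative. This is a local check at a transverse double point, using that the admissibility condition selects a single convex quadrant at each corner; once this is done the rest of the argument is essentially Stokes' theorem.
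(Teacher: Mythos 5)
Your argument is correct and is precisely the Stokes'-theorem argument the paper invokes (the paper offers no further detail, saying only that the lemma ``follows from an application of Stokes' Theorem''). The one quibble is the choice of primitive --- $d(y\,dx) = -\,dx\wedge dy$, so the boundary integrand should be $-y\,dx$ (equivalently $x\,dy$) --- but since you explicitly defer the sign and orientation bookkeeping to the local check at the corners, this does not affect the structure of the proof.
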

		
	This lemma follows from an application of Stokes' Theorem.

	\subsection{Proof of $d^2=0$}
	
	\-\
	
	In this section, we review the proof that $d^2=0$. (Theorem 3.2 in \cite{Chekanov})

	\begin{proof}
		Let $u\in D(q)$, and let $u' \in D(q')$, where $q'$ is a factor in $\partial u$. Since $u$ has a negative corner at $q'$ while $u'$ has a positive corner at $q$, it follows that $u$ and $u'$ have a shared boundary segment with one endpoint at $q'$. Let $q''$ be the other endpoint of this shared boundary segment. %

		We claim that the figure $u*u'$ obtained by gluing $u$ and $u'$ along the shared boundary segment must have a non-convex corner (that is, a corner which fills three quadrants) at $q''$ (\Cref{fig:nonconvex}; left). Indeed, otherwise $u$ and $u'$ would have to both have a corner at $q''$, with one being positive and the other negative. If it were $u'$ having the positive corner at $q''$, then we would necessarily have $q'=q''$, since $u'$ can have only one positive corner. Then $u*u'$ has a non-convex corner, filling three quadrants at $q'=q''$. Otherwise, it is $u$ with the positive corner at $q''$; in other words, $q=q''$. But by \Cref{lem:height}, $H(q) > H(q') > H(q'') = H(q)$, so this case is not possible.

		Now, we may cut the glued disk $u*u'$ in the other way at the non-convex corner (\Cref{fig:nonconvex}; right). This will yield a different decomposition $u''*u'''$ which comes from another summand in $d^2(q)$ representing the same element in $\A$. Therefore, the summands of $d^2(q)$ all cancel in pairs. 
	\end{proof}
	
	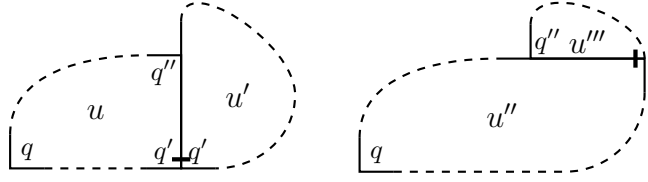
\begin{figure}			
		\def\corner{0.3}
		\def\width{1.5}
		\def\height{1}
		\def\offset{0}
		
		\def\heightII{1.3}
		\def\widthII{1}
		
		\def\coffset{0.15}
		\def\boffset{0.08}
		\begin{tikzpicture}[font=\normalsize, scale=1.5]
			\begin{knot}
				[draft mode=strands]
				
				\pgfmathsetmacro{\wc}{\width-\corner}
				
				\strand[black, thick] (\corner,0) to (0,0) to (0,\corner);
				\strand[black, thick, only when rendering/.style={dashed}] (\corner,0) to (\wc,0);
				\strand[black, thick] (\wc,0) to (\width,0) to (\width,\height) to (\wc,\height);
				\strand[black, thick, only when rendering/.style={dashed}] (0, \corner) to [out=up, in=left] (\wc, \height);

				\pgfmathsetmacro{\woc}{\width+\offset+\corner}
				\pgfmathsetmacro{\wo}{\width+\offset}
				\pgfmathsetmacro{\wow}{\width+\offset+\widthII}
				
				\strand[black, thick] (\woc, 0) to (\wo,0) to (\wo, \heightII);
				\strand[black, thick, only when rendering/.style={dashed}] (\wo, \heightII) to [out=up, in=up] (\wow, 0.5*\heightII) 
				to [out=down, in=right] (\woc, 0);

				\node (u) at (0.5*\width, 0.5*\height) {\Large $u$};
				
				\pgfmathsetmacro{\woh}{\width+\offset+0.5*\widthII}
				
				\node (u') at (\woh, 0.5*\heightII) {\Large $u'$};
				
				\pgfmathsetmacro{\qqw}{\width-\coffset}
				\pgfmathsetmacro{\qqh}{\height-\coffset}
				\pgfmathsetmacro{\qqqw}{\width+\offset+\coffset}
				
				\node (q) at (\coffset, \coffset) {$q$};
				\node (q') at (\qqw, \coffset) {$q'$};
				\node (q'') at (\qqw, \qqh) {$q''$};
				\node (q'q) at (\qqqw, \coffset) {$q'$};

				\pgfmathsetmacro{\bw}{\width-\boffset}
				\pgfmathsetmacro{\bww}{\width+\offset+\boffset}			
				
				\strand[black, ultra thick] (\bw, \boffset) to (\bww, \boffset);

			\end{knot}
			
		\end{tikzpicture}
		\qquad
		\begin{tikzpicture}[font=\normalsize, scale=1.5]
			\begin{knot}
				[draft mode=strands]
				
				\pgfmathsetmacro{\wc}{\width-\corner}
				\pgfmathsetmacro{\woc}{\width+\offset+\corner}
				\pgfmathsetmacro{\wo}{\width+\offset}
				\pgfmathsetmacro{\wow}{\width+\offset+\widthII}
				\pgfmathsetmacro{\hc}{\height-\corner}
				\pgfmathsetmacro{\ho}{\height+\offset}
				
				\strand[black, thick] (\corner,0) to (0,0) to (0,\corner);
				\strand[black, thick, only when rendering/.style={dashed}] (\corner,0) to (\woc,0) to [out=right, in=down] (\wow, \hc);
				\strand[black, thick] (\wow, \hc) to (\wow, \height) to (\wc, \height);
				\strand[black, thick, only when rendering/.style={dashed}] (0, \corner) to [out=up, in=left] (\wc, \height);

				\strand[black, thick] (\wo, \heightII) to (\wo,\ho) to (\wow, \ho);
				\strand[black, thick, only when rendering/.style={dashed}] (\wo, \heightII) to [out=up, in=up] (\wow, \ho);

				\pgfmathsetmacro{\ux}{0.5*(\width+\offset+\widthII)}
				\pgfmathsetmacro{\woh}{\width+\offset+0.5*\widthII}
				\pgfmathsetmacro{\upy}{\height+\offset+0.5*(\heightII-\height+\offset)}
				
				\node (u) at (\ux, 0.5*\height) {\Large $u''$};
				\node (u') at (\woh, \upy) {\Large $u'''$};
				
				\pgfmathsetmacro{\qqw}{\width+\offset+\coffset}
				\pgfmathsetmacro{\qqh}{\height+\offset+\coffset}

				\node (q) at (\coffset, \coffset) {$q$};
				\node (q'') at (\qqw, \qqh) {$q''$};

				\pgfmathsetmacro{\bw}{\wow-\boffset}
				\pgfmathsetmacro{\bh}{\height-\boffset}
				\pgfmathsetmacro{\bhh}{\height+\offset+\boffset}			
				
				\strand[black, ultra thick] (\bw, \bh) to (\bw, \bhh);

			\end{knot}
		\end{tikzpicture}
		\caption[Two ways to cut a disk at a non-convex corner]{A disk with a non-convex corner may be cut in two ways.}\label{fig:nonconvex}
	\end{figure}

	\subsection{Resolutions, and the Chekanov-Eliashberg DGA for front projections}
	
	\-\
	
	In this section, we outline the procedure of resolution of front projections \cite{ngres} and review the properties of the reformulation of the Chekanov-Eliashberg DGA in the front projection.
	
	Given the front projection $\Pi_{xz}(\Lambda)$ of a Legendrian knot, define the resolution of $\Pi_{xz}(\Lambda)$ to be the diagram in $\mathbb{R}^2_{xy}$ given by smoothing out each left cusp, and replacing each right cusp with a loop, as depicted in \Cref{fig:resolution}.
	
	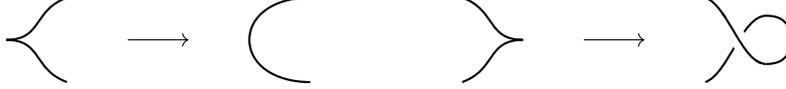
\begin{figure}
		\begin{tikzpicture}[scale=0.8]
			
			\def\a{0.6}
			\def\b{0.4}
			
			\def\sep{1}
			\def\arrlen{1}
			
			\begin{knot}
				[draft mode=strands]

				\strand[black, thick] (-2,2) .. controls +(\a,0) and +(200:\a) .. (-1,2.7);
				\strand[black, thick] (-2,2) .. controls +(\a,0) and +(160:\a) .. (-1,1.3);

			\end{knot}
			
			\pgfmathsetmacro{\arrx}{-1+\sep}
			\pgfmathsetmacro{\arrxx}{\arrx+\arrlen}

			\draw[->] (\arrx,2) -- (\arrxx,2);
			
			\pgfmathsetmacro{\cx}{\arrxx+\sep}
			\pgfmathsetmacro{\cxx}{\cx+\sep}		
			
			\begin{knot}
				\strand[black, thick] (\cxx,2.7) .. controls +(-\a,0) and +(0,\b) .. (\cx, 2) .. controls +(0,-\b) and +(-\a,0) .. (\cxx, 1.3); 
			\end{knot}

			\def\bigsep{2.5}
			
			\pgfmathsetmacro{\rx}{\cxx+\bigsep}
			\pgfmathsetmacro{\rxx}{\rx+\sep}
			
			\begin{knot}
				[draft mode=strands]

				\strand[black, thick] (\rxx,2) .. controls +(-\a,0) and +(340:\a) .. (\rx,2.7);
				\strand[black, thick] (\rxx,2) .. controls +(-\a,0) and +(20:\a) .. (\rx,1.3);

			\end{knot}
			
			\pgfmathsetmacro{\arrx}{\rxx+\sep}
			\pgfmathsetmacro{\arrxx}{\arrx+\arrlen}

			\draw[->] (\arrx,2) -- (\arrxx,2);
			
			\pgfmathsetmacro{\cx}{\arrxx+\sep}
			\pgfmathsetmacro{\cxx}{\cx+\sep}		
			\pgfmathsetmacro{\cxxx}{\cxx+0.4}		
			
			\def\c{0.1}
			\def\d{0.4}
			
			\def\vy{1.6}
			\def\vyy{2.4}

			\begin{knot}
				[clip width=8]
				\strand[black, thick] (\cx,2.7) .. controls +(340:\d) and +(-\d,0) .. (\cxx,\vy);
				\strand[black, thick] (\cxx, \vy) .. controls +(\d,0) and +(0,-\c) .. (\cxxx,2) .. controls +(0,\c) and +(\d,0) .. (\cxx, \vyy) .. controls +(-\d,0) and +(20:\d) .. (\cx,1.3);
			\end{knot}
		\end{tikzpicture}
		\caption[Resolution of a front projection]{Resolution of left and right cusps from a front projection to a Lagrangian projection.}\label{fig:resolution}
	\end{figure}

	\begin{Th}[Proposition 2.2 in \cite{ngres}]
		The resolution of $\Pi_{xz}(\Lambda)$ is the Lagrangian projection $\Pi_{xy}(\Lambda')$ for another Legendrian knot $\Lambda'$ Legendrian isotopic to $\Lambda$. 
	\end{Th}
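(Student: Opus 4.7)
The plan is to construct the Legendrian $\Lambda'$ explicitly by performing local Legendrian isotopies near each cusp of $\Lambda$, and then to verify that the Lagrangian projection of the resulting Legendrian coincides with the prescribed resolution.

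First I would analyze the Lagrangian projection of a standard Legendrian cusp. The Legendrian condition $dz = y\,dx$ forces a local model of the form $t\mapsto(\pm t^2,\, \pm t,\, \tfrac{2}{3}t^3)$ (up to translation and rescaling), with signs determined by whether the cusp is a left or right cusp and by the orientation of the strand. For a left cusp, the Lagrangian projection of this model is a smooth parabolic arc opening to the right, which already matches the local picture of the resolution; hence no modification is required near left cusps.

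For a right cusp, the Lagrangian projection of the standard model is a smooth parabolic arc opening to the left, which does not match the loop demanded by the resolution. I would construct an explicit compactly supported Legendrian isotopy near each right cusp whose effect on the Lagrangian projection is to push the tip of the parabola slightly to the right of its former position and to introduce a transverse self-intersection, producing the small loop depicted in \Cref{fig:resolution}. Concretely, one can perturb the local model by adding a small bump function to the $x$-parametrization and then adjusting $y$ and $z$ via $y = dz/dx$ so that the Legendrian condition is preserved throughout the family; the isotopy can be chosen to be the identity outside a small neighborhood of the cusp point. Away from cusps, the front consists of smooth strands and transverse double points, and the passage to the Lagrangian projection is given pointwise by $y = dz/dx$, with over- and under-strands at each crossing determined by the sign of the slope difference, in agreement with the convention used in the resolved diagram.

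Gluing the local isotopies at the right cusps with the identity elsewhere yields a global Legendrian isotopy from $\Lambda$ to a Legendrian $\Lambda'$ whose Lagrangian projection is, by construction, the resolution of $\Pi_{xz}(\Lambda)$. The main obstacle is to arrange the right-cusp isotopies so that they are genuinely compactly supported, do not interfere with neighboring cusps or crossings, and produce exactly the prescribed loop rather than a planar-isotopic variant; beyond this bookkeeping, the argument reduces to a local computation in the contact form $dz - y\,dx$.
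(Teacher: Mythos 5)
This statement is imported from Ng's paper (Proposition 2.2 of \cite{ngres}) and is not reproved in the present work, so there is no in-paper argument to compare against; judged on its own terms, your proposal has a genuine gap, concentrated in two places. First, the step ``away from cusps, the passage to the Lagrangian projection is given pointwise by $y=dz/dx$, with over- and under-strands at each crossing determined by the sign of the slope difference'' conflates crossings of the front with crossings of the Lagrangian projection. A crossing of $\Pi_{xz}(\Lambda)$ is a pair of points with equal $x$ and equal $z$ but \emph{different} slopes, so under $y=dz/dx$ the two strands land at \emph{distinct} points of the $xy$-plane: there is no double point of $\Pi_{xy}(\Lambda)$ there. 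The double points of $\Pi_{xy}(\Lambda)$ are the Reeb chords, i.e.\ pairs of front points with equal $x$ and equal \emph{slope}, and for an arbitrary front these can occur anywhere and in any number (two strands that cross once in the front may have many parallel-tangency points). Showing that after an isotopy there is exactly one such pair near each front crossing and one associated to each right cusp, with the correct over/under data, is the actual content of the proposition, and it requires a global normalization of the front --- for instance, isotoping so that away from crossings and cusps the $k$-th strand from the top has slope close to a fixed constant $-k$ --- not a pointwise identification.

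Second, the local fix at right cusps cannot work as described. The Lagrangian projection of a Legendrian is always an immersion (if $\dot x=\dot y=0$ then $\dot z=y\dot x=0$), so a Legendrian isotopy induces a regular homotopy of $\Pi_{xy}(\Lambda)$, under which double points are created and destroyed in pairs; equivalently, the writhe of $\Pi_{xy}(\Lambda)$ equals the Thurston--Bennequin number and cannot change. The standard right cusp model $(x,z)=(-t^2,t^3)$ has $\Pi_{xy}$ an embedded arc, with zero double points near the cusp, while the resolved picture has one; an isotopy supported in a small neighborhood of the cusp changes the number of double points over that $x$-interval only by an even amount, so no compactly supported isotopy can produce the loop. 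The loop's crossing is not created at the cusp: it is the Reeb chord sitting at the interior maximum of the difference $z_1-z_2$ of the two strands bounding the cusp, which lives somewhere to the left of the cusp and is merely relocated next to it (up to planar isotopy of the diagram) by the global slope normalization above. Your left-cusp computation is correct, but the two essential steps --- the region between the cusps and the loops at the right cusps --- both require the global argument that the proposal omits.
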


	Using the resolution procedure, we may reformulate the Chekanov-Eliashberg DGA in terms of the front projection. 
	
	In this reformulation, we let $\A(\Lambda)$ be the free associative algebra generated over $\mathbb{Z}_2$ by the set of crossings and right cusps of $\Pi_{xz}(\Lambda)$ (we include right cusps as generators since resolving a right cusp introduces a new crossing in the Lagrangian projection). Label the crossings and right cusps as $q_1, \dots, q_n$. 
	
	The differential again counts admissible disks, but we need to allow for additional types of singularities near right cusps in the front projection to account for the resolution.
	
	\begin{Def}\label{def:frprojdisk}
		An \emph{admissible disk in the front projection} is a map $u: (D^2, \partial D^2) \to (\mathbb{R}^2, \Pi_{xz}(\Lambda))$ which satisfies the following properties:
		
		\begin{enumerate}
			\item $u$ is an immersion apart from a finite set of boundary points, where it maps to either crossings or cusps of $\Pi_{xz}(\Lambda)$. These points are called corners of $u$.
			\item Each corner of $u$ is of one of the forms depicted in Figure 5 of \cite{ngres}.
			\item $u$ has precisely one positive corner (and any number of negative corners).
		\end{enumerate}
		
		Admissible disks are always considered up to reparametrization of the domain.
		
		\-\
		
		Let $D(q)$ denote the set of all admissible disks whose positive corner is at $q$. 
		
		Given $u \in D(q)$, define $\partial u$ to be the word in $\A(\Lambda)$ formed by reading off the contributions of the corners of $u$ (in the manner described by Figure 5 of \cite{ngres}) counterclockwise starting at $q$.
	\end{Def}

	\begin{Def}\label{def:cedifffront}
		Define $d: \A(\Lambda) \to \A(\Lambda)$ by first setting
		\[ d (q) = \begin{cases}
			\sum_{u\in D(q)} \partial u & q \text{ is a crossing} \\ 
			1 + \sum_{u \in D(q)} \partial u & q \text{ is a right cusp}
		\end{cases}  \]
		
	\end{Def}
	The reason for the extra $1$ in the case of a right cusp is the additional disk with one positive corner and no negative corners bounded by the small loop that comes from the resolution of the right cusp.
	
	It is clear that the differential in the front projection reformulation is the same as the differential in the original Lagrangian formulation. In particular, $d^2=0$ still holds.
	
	\-\
	
	\subsubsection{Simple fronts} \label{subsec:simple} \-\
	
	The front projection formulation is especially useful for simple fronts:
	
	\begin{Def}
		A Legendrian front is \emph{simple} if all of its right cusps are at an equal $x$ coordinate.
	\end{Def}

	For simple fronts, some of the more complicated singularities from Figure 5 of \cite{ngres} become impossible. This in turn implies that every admissible disk is in fact an embedding in its interior.	Thus, the differential is greatly simplified in the case of a simple front.

	Using Legendrian Reidemeister moves \cite{reidemeister}, every Legendrian front may be transformed into a simple front by simply pulling all the right cusps to the right until they are on the `outside' at an equal $x$ coordinate. %

	\section{Bordered Chekanov-Eliashberg DGA}\label{sec:sivek}
	
	In this section, we review Sivek's construction of the bordered Chekanov-Eliashberg DGA \cite{sivek}.

	As in \Cref{sec:cedga}, let $\Lambda \subset \mathbb{R}^3$ be a Legendrian knot, and let $\Pi_{xy}$ and $\Pi_{xz}$ denote the Lagrangian and front projections, respectively. The bordered Chekanov-Eliashberg DGA is formulated in terms of the front projection. We assume that the front projection of $\Lambda$ is simple. 
	
	Cut $\Pi_{xz}(\Lambda)$ into two pieces by a vertical dividing line $M$ in the front projection (the dividing line is assumed to be chosen generically, so that it does not pass through any crossing or cusp of the front projection). Label the left and right half-knots as $\Lambda^L$ and $\Lambda^R$ respectively, and label the left and right half-planes as $\mathbb{R}^2_L$ and $\mathbb{R}^2_R$ respectively (\Cref{fig:leftfront} and \Cref{fig:rightfront}). 
	
	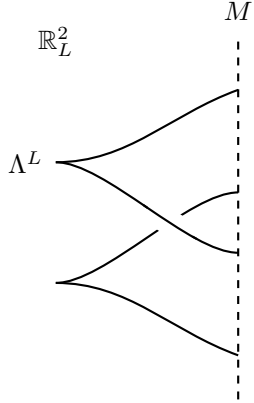
\begin{figure}
				
		\begin{tikzpicture}[scale=0.8]
			\def\a{1.2}
			\def\b{0.8}
			\def\c{1.6}
			\def\d{0.4}
			
			\def\xI{-4}
			\def\xII{-1}
			\def\xIII{0}
			\def\xIV{1}
			\def\xV{4}
			
			\def\yO{0}
			\def\yI{0.6}
			\def\yII{2}
			\def\yIII{2.5}
			\def\yIV{3.5}
			\def\yV{4}
			\def\yVI{5.4}
			\def\yVII{6}
			
			\def\ypI{0.8}
			\def\ypII{5.2}
			\begin{knot}
				[
				clip width=12]
				\strand[black, thick] (\xI,\yV) .. controls +(\a,0) and +(200:\a) .. (\xII,\ypII); %
				\strand[black, thick] (\xII,\yIV) .. controls +(-\b,0) and +(\b,0) .. (\xI,\yII);
				\strand[black, thick] (\xI,\yII) .. controls +(\a,0) and +(160:\a) .. (\xII,\ypI); %
				\strand[black, thick] (\xII,\yIII) .. controls +(-\b,0) and +(\b,0) .. (\xI,\yV);
				\flipcrossings{1,3};
			\end{knot}
			
			\draw[black,thick,dashed] (\xII,\yVII) -- (\xII,\yO);
			\node (M) at (\xII,6.5) {$M$};
			
			\node(LL) at (-4.5, \yV) {$\Lambda^L$};
			
			\node (L) at (\xI, 6) {$\mathbb{R}^2_L$};
		\end{tikzpicture}
		\caption[A left half-diagram]{A left half-diagram.}\label{fig:leftfront}
	\end{figure}
	
	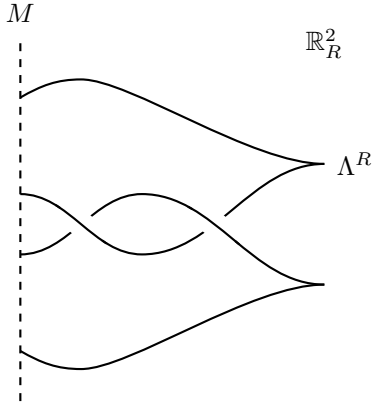
\begin{figure}
		\begin{tikzpicture}[scale=0.8]
			\def\a{1.2}
			\def\b{0.8}
			\def\c{1.6}
			\def\d{0.4}
			
			\def\xI{-4}
			\def\xII{-1}
			\def\xIII{0}
			\def\xIV{1}
			\def\xV{4}
			
			\def\yO{0}
			\def\yI{0.6}
			\def\yII{2}
			\def\yIII{2.5}
			\def\yIV{3.5}
			\def\yV{4}
			\def\yVI{5.4}
			\def\yVII{6}
			
			\def\ypI{0.9}
			\def\ypII{5.1}
			\begin{knot}
				[
				clip width=12]
				\strand[black, thick] (\xII,\ypII) .. controls +(30:\d) and +(-\d,0) .. (\xIII,\yVI) .. controls +(\b,0) and +(-\a,0) .. (\xV,\yV);
				\strand[black, thick] (\xV,\yV) .. controls +(-\a,0) and +(\a,0) .. (\xIV,\yIII) .. controls +(-\b,0) and +(\b,0) .. (\xII,\yIV); %
				\strand[black, thick] (\xII,\ypI) .. controls +(-30:\d) and +(-\d,0) .. (\xIII,\yI) .. controls +(\b,0) and +(-\a,0) .. (\xV,\yII);
				\strand[black, thick] (\xV,\yII) .. controls +(-\a,0) and +(\a,0) .. (\xIV,\yIV) .. controls +(-\b,0) and +(\b,0) .. (\xII,\yIII); %
				\flipcrossings{1,3};
			\end{knot}
			
			\draw[black,thick,dashed] (\xII,\yVII) -- (\xII,\yO);
			\node (M) at (\xII,6.5) {$M$};
			
			\node(LR) at (4.5, \yV) {$\Lambda^R$};
			
			\node (R) at (\xV, 6) {$\mathbb{R}^2_R$};
		\end{tikzpicture}
		\caption[A right half-diagram]{A right half-diagram.}\label{fig:rightfront}
	\end{figure}

	The bordered Chekanov-Eliashberg theory associates a differential graded algebra $\A^L$ and $\A^R$ to each half-diagram, as well as a middle differential graded algebra $\A^M$ associated to the dividing line itself (or rather, to the intersection of the dividing line with $\Lambda$). The result is the Seifert van Kampen-type theorem
	
	\begin{Th}[Theorem 2.14 in \cite{sivek}] \label{thm:mainsec3}

		There is a pushout square
		
		\[\begin{tikzcd}
			\A^M \arrow[r] \arrow[d] & \A^L \arrow[d] \\
			\A^R \arrow[r]           & \A
		\end{tikzcd} \]
	\end{Th}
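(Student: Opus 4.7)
The plan is to verify that $\A$ satisfies the universal property of the pushout of $\A^L$ and $\A^R$ over $\A^M$ in the category of differential graded $\mathbb{Z}_2$-algebras. Such a pushout may be constructed by first forming the pushout of the underlying graded free associative algebras and then equipping the result with the unique differential that makes both $\A^L$ and $\A^R$ into DGA subalgebras. Thus the argument splits into two parts: identifying the generators, and identifying the differentials.

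For the first part, which is essentially bookkeeping, I would appeal to the choice of dividing line. Because $M$ is chosen generically, every crossing and right cusp of $\Pi_{xz}(\Lambda)$ lies strictly in $\mathbb{R}^2_L$ or in $\mathbb{R}^2_R$; the only generators of $\A^L$ and $\A^R$ that can be shared are those corresponding to strand pairs along $M$, and these are precisely the images of the generators of $\A^M$ under the two inclusions. Consequently, the generators of the free-algebra pushout $\A^L \sqcup_{\A^M} \A^R$ are in bijection with the generators of $\A$, with gradings matching on the nose.

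The geometric heart of the proof is the identification of the differentials. I would set up a bijection between admissible disks in $\Pi_{xz}(\Lambda)$ and matched tuples of admissible bordered disks in $\Pi_{xz}(\Lambda^L)$ and $\Pi_{xz}(\Lambda^R)$. Given a disk $u: D^2 \to \mathbb{R}^2$ contributing to $d_\A(q)$, the preimage $u^{-1}(M)$ is a disjoint union of properly embedded arcs in $D^2$ with endpoints on $\partial D^2$: there are no interior circles because each connected component of $\mathbb{R}^2 \setminus M$ is simply connected, and the endpoints must lie on $\partial D^2$ because $u$ is an immersion away from corners and $M$ meets $\Pi_{xz}(\Lambda)$ transversally. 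Cutting $D^2$ along these arcs produces sub-disks, each mapped into a single half-plane, and each satisfying the conditions of an admissible bordered disk, with the arc endpoints becoming $M$-corners that pair up across the cuts. Conversely, any tuple of bordered disks whose $M$-corners match up glues to recover an admissible disk in $\Pi_{xz}(\Lambda)$. Under this correspondence the full differential $d_\A(q)$ is recovered by iteratively applying the bordered differentials of $\A^L$ and $\A^R$ and identifying along $\A^M$, which is exactly the differential induced on the pushout.

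The main obstacle is the careful verification of this disk cutting and gluing bijection. One must rule out degenerate configurations (interior circles of $u^{-1}(M)$, tangencies, or subdisks that fail to be admissible in the bordered sense), and one must check that the matching condition on $M$-corners enumerates all gluings exactly once. The assumption of a simple front, combined with the explicit classification of local singularities from Figure 5 of \cite{ngres} (whose occurrences are forced away from $M$ since $M$ avoids all cusps and crossings), confines the analysis near $M$ to a transverse picture of smooth strands, keeping the local model manageable. Once the bijection is in place, the universal property of the pushout is immediate: any DGA $\mathcal{B}$ with compatible DGA maps from $\A^L$ and $\A^R$ extends uniquely to a graded algebra map out of $\A^L \sqcup_{\A^M} \A^R \cong \A$, and the chain-map property is automatic because $d_\A$ and the induced pushout differential are computed from the same bordered disk counts.
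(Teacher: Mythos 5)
Your proposal is correct in outline, but it organizes the argument quite differently from the paper. The paper's proof of this theorem is purely formal: it takes as already established (from the separately proved Proposition 2.7 of \cite{sivek}, reproduced just above the theorem) that $\ell$, $r$, $L$, $R$ are chain maps, observes that every generator of $\A$ is $L(s)$ or $R(s)$ for a generator $s$, defines the induced map $h$ on generators by $f(s)$ or $g(s)$ accordingly, and checks the chain-map property of $h$ by a one-line computation that only uses the chain-map property of $L$ and $R$. You instead build the pushout explicitly as the algebra pushout equipped with the induced differential and then identify it with $(\A,d)$ via a cut-and-paste bijection between whole-diagram disks and matched bordered disks; this inlines into the pushout proof exactly the geometric content that the paper quarantines in the earlier chain-map proposition (which is proved there by the glue-then-recut-at-the-non-convex-corner argument rather than by cutting along $u^{-1}(M)$). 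Your route is more self-contained and makes the geometry visible at the point of use; the paper's route is shorter and isolates the only nontrivial input. The one step you should not wave at: the definitions of $H^R(q;i,j)$ and $H^L(i,j)$ require \emph{exactly one} boundary segment on $M$, so your bijection needs that an admissible disk with its positive corner in $\mathbb{R}^2_R$ meets $M$ in at most a single arc (equivalently, cuts into one right piece and at most one left piece, the latter with no positive corners). A priori $u^{-1}(M)$ could be several arcs, and then the right piece would carry two $\alpha$-corners and fail to be an admissible right half-disk; ruling this out is where the simple-front hypothesis does real work, via the argument that two $M$-arcs on the right piece force two boundary segments in $\Lambda^R$ whose maximal $x$-coordinates cannot coincide, so the smaller one occurs at a crossing and produces a non-convex corner. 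You flag this as the main obstacle but should make that specific lemma explicit, since without it the stated bijection does not land in the sets over which $d^R$ and $R$ are actually defined.
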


	(where $\A$ is the Chekanov-Eliashberg DGA of $\Lambda$).
	
	We now proceed with defining the bordered algebras.
	
	\subsection{Left algebra} \-\
	
	Consider the left half-diagram $\Pi_{xz}(\Lambda^L)$. The left algebra, denoted $\A^L$, is the free associative algebra generated over $\mathbb{Z}_2$ by the crossings and right cusps in $\Pi_{xz}(\Lambda^L)$. The differential $d^L: \A^L\to \A^L$ is defined in the same way as the differential for the regular Chekanov-Eliashberg DGA (\Cref{def:cedifffront}). Every admissible disk whose positive corner is in $\Pi_{xz}(\Lambda^L)$ must map entirely into $\mathbb{R}^2_L$, and thus $d^L$ is well-defined. It immediately follows that $d^2=0$.

	\begin{Rem}
		This algebra is called the type-A algebra in \cite{sivek}, following the nomenclature of Bordered Floer Homology \cite{borderedhf}. We rename it $\A^L$ here so as to avoid confusion with $\A$.
	\end{Rem}
	
	\subsection{Right algebra} \-\
	
	Consider the right half-diagram $\Pi_{xz}(\Lambda^R)$. Since disks counted in the differential of $\A$ whose positive corner is in $\Pi_{xz}(\Lambda^R)$ {need not be} entirely contained in $\mathbb{R}^2_R$, but rather may cross over into $\mathbb{R}^2_L$, the right algebra $\A^R$ will not be defined as merely a subalgebra of $\A$, unlike the definition of $\A^L$. Instead, $\A^R$ is defined as follows.
	
	Label the points in the intersection of $\Pi_{xz}(\Lambda)$ and the dividing line $\{1,\dots, n\}$. $\A^R$ is defined to be the free associative algebra generated over $\mathbb{Z}_2$ by the crossings and right cusps in $\Pi_{xz}(\Lambda^R)$, as well as by elements $\alpha_{ij}$ for each $i,j$ such that $1\leq i<j\leq n$. 
	
	The grading on $\A^R$ is defined as follows. Choose a Maslov potential function $\mu:\{1,\dots, n\} \to \mathbb{Z}_k$. Set $|q_i| = 1$ when $q_i$ is a right cusp, $|q_i| = \mu(a)-\mu(b)$ when $q_i$ is a crossing (where $a$ and $b$ are the points on the dividing line corresponding to the over- and under- strands of $q_i$), and $|\alpha_{ij}| = \mu(i)-\mu(j)-1$.

	The differential counts admissible disks and admissible right half-disks.
	
	\begin{Def}
		An \emph{admissible right half-disk} is a map $u: (D^2, \partial D^2) \to (\mathbb{R}^2_R, \Pi_{xz}(\Lambda^R) \cup M)$ which satisfies the following properties:
		
		\begin{enumerate}
			\item $u$ is an immersion apart from a finite set of boundary points, where it maps to either crossings or cusps of $\Pi_{xz}(\Lambda^R)$, or to $\Pi_{xz}(\Lambda) \cap M$. Whenever $u$ has a corner at a crossing, a neighborhood of the corner maps to a single quadrant of the crossing.
			\item $u$ has precisely one positive corner (and any number of negative corners).
			\item Exactly one segment of $\partial D^2$ maps to $M$. This is called the dividing line segment.
		\end{enumerate}
		
		Admissible disks are always considered up to reparametrization of the domain.
		
		\-\
		
		Let $H^R(q; i,j)$ denote the set of all admissible right-half disks whose positive corner is at $q$, and whose dividing line boundary segment is the portion of $M$ between the points labeled $i$ and $j$. 
		
		Let $H^R(q)$ denote the union of $H^R(q;i,j)$ over all $1\leq i<j\leq n$.
		
		Given $u \in H^R(q; i,j)$, define $\partial u$ to be the word in $\A^R$ formed by reading off the corners of $u$ counterclockwise starting at $q$ (excluding $q$ itself) until we get to the dividing line boundary segment, then reading off $\alpha_{ij}$, and then continuing to read off the corners of $u$ counterclockwise back to $q$ (again excluding $q$ itself).
	\end{Def}

	Also, denote by $D^R(q)$ the subset of $D(q)$ (see \Cref{def:frprojdisk}) consisting of those $u$ which map entirely into $\mathbb{R}^2_R$.

	\begin{Def}
		The differential $d^R: \A^R \to \A^R$ is defined by first setting
		\[ d^R(q) = \sum_{u \in H^R(q)} \partial u + \sum_{u \in D^R(q)} \partial u\]
		if $q$ is a crossing, or
		\[ d^R(q) = 1 + \sum_{u \in H^R(q)} \partial u + \sum_{u \in D^R(q)} \partial u\]
		if $q$ is a cusp, and 
		\[ d^R(\alpha_{ij}) = \sum_{i<k<j} \alpha_{ik}\alpha_{kj} \]
		Then extend $d^R$ to all of $\A^R$ by linearity and the Leibniz rule.
	\end{Def}

	\begin{Prop}[Proposition 2.11 in \cite{sivek}]
		$(d^R)^2 = 0$.
	\end{Prop}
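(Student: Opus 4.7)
My plan is to adapt Chekanov's gluing-and-recutting argument for $d^2=0$ in the Chekanov-Eliashberg DGA, enlarging it to accommodate the dividing-line segments on $M$ and the formal differential of $\alpha_{ij}$.

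I would first dispose of the case $(d^R)^2(\alpha_{ij})=0$ by direct algebraic computation. Applying $d^R$ twice and using the Leibniz rule gives
\[ (d^R)^2(\alpha_{ij}) \;=\; \sum_{i<l<k<j}\alpha_{il}\alpha_{lk}\alpha_{kj} \;+\; \sum_{i<k<l<j}\alpha_{ik}\alpha_{kl}\alpha_{lj}, \]
and after relabeling these two sums coincide and cancel over $\mathbb{Z}_2$.

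For $q$ a crossing or right cusp, I would expand $(d^R)^2(q)$ via the Leibniz rule and classify each resulting monomial by its geometric origin. Each summand is either a \emph{gluing contribution}, where $d^R$ applied to a negative corner $q'$ of some $u \in D^R(q) \cup H^R(q)$ brings in another disk or half-disk $u' \in D^R(q') \cup H^R(q')$, or a \emph{dividing-line contribution}, where $d^R$ is applied to an $\alpha_{ij}$ factor inside the boundary word of a half-disk $u\in H^R(q;i,j)$. For each gluing contribution I would form the glued figure $u*u'$ along the shared boundary segment; its other endpoint $q''$ is a non-convex corner of $u*u'$, with $q''\ne q$ by the same height/area monotonicity argument used in the standard proof of \Cref{prop:cediffwell} together with \Cref{lem:height}. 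When $q''$ lies at a crossing or cusp of $\Pi_{xz}(\Lambda^R)$, cutting $u*u'$ the other way at $q''$ produces a different pair contributing the same boundary word, and the two terms cancel in pairs. This handles all (full, full), (full, half), (half, full) gluings, and those (half, half) gluings whose two dividing segments on $M$ remain disjoint.

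The main obstacle is the remaining case: a (half, half) gluing whose combined figure has two dividing segments sharing an endpoint $k$ on $M$, so the ``other cut'' at the non-convex corner would have to occur on $M$ rather than at a Reeb chord. Such pairs arise precisely when $u_2 \in H^R(q;i,k)$ has a negative corner $q'$ sitting on the strand emanating from $k$ and $u_3 \in H^R(q';k,j)$ has its dividing segment starting at $q'$, and they contribute monomials of the form $w_1\alpha_{ik}\alpha_{kj}w_2$. I would exhibit a bijection between such pinched (half, half) pairs and the dividing-line contributions $w_1\alpha_{ik}\alpha_{kj}w_2$ produced from a single half-disk $u_1\in H^R(q;i,j)$ whose dividing segment passes through $k$, the bijection being obtained by smoothing the pinch. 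Geometrically, $u_1$ and $u_2\cup u_3$ represent the two ends of a one-parameter family of half-disks whose dividing segment degenerates by pinching at $k$. Once this matching is established, every term in $(d^R)^2(q)$ cancels in pairs, completing the proof.
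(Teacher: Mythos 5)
Your proposal is correct and is essentially the argument of the cited source: the paper states this proposition only by reference to Proposition 2.11 of \cite{sivek} without reproducing a proof, and your combination of Chekanov's glue-and-recut argument at non-convex corners with the matching of pinched (half, half) gluings against the $\alpha_{ik}\alpha_{kj}$ terms of $d^R(\alpha_{ij})$ is exactly the mechanism used there. It is also the same two-case dichotomy (the splitting strand either exits the region at a crossing, giving a recut pair, or runs to the dividing line, giving a term of $d^R(\alpha_{ij})$) that this paper does spell out in its proof that $\ell$ and $R$ are chain maps, so no genuinely new idea is required.
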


	\subsection{The middle algebra} \-\
	
	The middle algebra $\A^M$ is associated to the set of points on the intersection of $\Pi_{xz}(\Lambda)$ with the dividing line. Label these points $\{1,\dots, n\}$. Then $\A^M$ is generated over $\mathbb{Z}_2$ by elements $\alpha_{ij}$ for each pair $i,j$ such that  $1\leq i<j\leq n$. 
	
	We define the differential $d^M:\A^M\to \A^M$ by setting
	
	\[ d^M( \alpha_{ij} ) = \sum_{i<k<j} \alpha_{ik}\alpha_{kj}, \]
	
	and extending to all of $\A^M$ by linearity and the Leibniz rule.
	
	\begin{Prop}
		$(d^M)^2 = 0$.
	\end{Prop}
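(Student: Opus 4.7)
The plan is to verify $(d^M)^2(\alpha_{ij}) = 0$ by a direct combinatorial calculation, since the differential on the middle algebra is defined purely algebraically (no disk-counting) and every element is a polynomial in the generators $\alpha_{ij}$. Once we check vanishing on generators, the fact that $d^M$ is a derivation (Leibniz rule) plus linearity automatically extends $(d^M)^2 = 0$ to all of $\A^M$, since squaring a derivation is again a derivation.

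To compute on a generator, I first apply $d^M$ once to obtain
\[ d^M(\alpha_{ij}) = \sum_{i<k<j} \alpha_{ik}\alpha_{kj}, \]
and then apply $d^M$ again using the Leibniz rule:
\[ (d^M)^2(\alpha_{ij}) = \sum_{i<k<j} \Bigl( d^M(\alpha_{ik})\,\alpha_{kj} + \alpha_{ik}\,d^M(\alpha_{kj}) \Bigr) = \sum_{i<k<j}\sum_{i<l<k} \alpha_{il}\alpha_{lk}\alpha_{kj} + \sum_{i<k<j}\sum_{k<l<j} \alpha_{ik}\alpha_{kl}\alpha_{lj}. \]
The key step is then to observe that both double sums range over the same indexing set of triples, which is the set of pairs of indices $(a,b)$ with $i<a<b<j$: the first sum parametrizes such a triple $\alpha_{ia}\alpha_{ab}\alpha_{bj}$ by $(k,l) = (b,a)$, and the second sum parametrizes the same monomial by $(k,l) = (a,b)$. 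Hence each monomial $\alpha_{ia}\alpha_{ab}\alpha_{bj}$ with $i<a<b<j$ appears exactly twice in the total sum, and the two contributions cancel modulo $2$.

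There is no real obstacle here: the statement is purely formal, depending only on a bijection between the two indexing sets of triples, and working over $\mathbb{Z}_2$ kills the paired terms. The only mild care required is to ensure the grading makes $d^M$ a (degree $-1$) derivation so that the Leibniz rule is the signed version we want; but since we are over $\mathbb{Z}_2$, signs are irrelevant, and the calculation goes through verbatim. This is the algebraic shadow of the geometric picture that cutting a disk at a non-convex corner pairs summands of $d^2$ in the Chekanov--Eliashberg case, specialized here to the trivial combinatorics of composing intervals on the dividing line.
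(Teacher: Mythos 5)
Your computation is correct: both double sums in $(d^M)^2(\alpha_{ij})$ are indexed by pairs $i<a<b<j$ and contribute the same word $\alpha_{ia}\alpha_{ab}\alpha_{bj}$, so everything cancels over $\mathbb{Z}_2$, and the reduction to generators via the (unsigned) Leibniz rule is valid. The paper states this proposition without proof, and your direct combinatorial argument is exactly the standard one that fills that gap, so there is nothing to compare beyond noting agreement.
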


	\subsection{The van Kampen theorem for the Chekanov-Eliashberg DGA} \-\
	
	In this section, we define the maps involved in the pushout square in \Cref{thm:mainsec3}. %

	There is an obvious inclusion map $r: \A^M \to \A^R$,  $\alpha_{ij} \mapsto \alpha_{ij}$. 
	
	Likewise, we have an inclusion $L: \A^L \to \A$. 
	
	The other two maps expand the $\alpha_{ij}$ placeholder elements to admissible left half-disks.
	
	\begin{Def}
		An \emph{admissible left half-disk} is a map $u: (D^2, \partial D^2) \to (\mathbb{R}^2_L, \Pi_{xz}(\Lambda^L) \cup M)$ which satisfies the following properties:
		
		\begin{enumerate}
			\item $u$ is an immersion apart from a finite set of boundary points, where it maps to either crossings or cusps of $\Pi_{xz}(\Lambda^L)$, or to $\Pi_{xz}(\Lambda) \cap M$. Whenever $u$ has a corner at a crossing, a neighborhood of the corner maps to a single quadrant of the crossing.
			\item $u$ has no positive corners (and any number of negative corners).
			\item Exactly one segment of $\partial D^2$ maps to $M$. This is called the dividing line segment.
		\end{enumerate}
		
		Admissible disks are always considered up to reparametrization of the domain.
		
		\-\
		
		Let $H^L(i,j)$ denote the set of all admissible left-half disks whose dividing line boundary segment is the portion of $M$ between the points labeled $i$ and $j$.

		Given $u \in H^L(i,j)$, define $\partial u$ to be the monomial in $\A^L$ formed by reading off the corners of $u$ counterclockwise starting at the point labeled $i$ and ending at the point labeled $j$.
	\end{Def}

	Now, let $\ell: \A^M \to \A^L$ be the map defined by first setting
	\[ \ell(\alpha_{ij}) = \sum_{u\in H^L(i,j)} \partial u \]
	and extending to all of $\A^M$ by $\ell(wz) = \ell(w)\ell(z)$. 
	
	Similarly, $R: \A^R \to \A$ is defined by first setting
	\[ R(\alpha_{ij}) = \sum_{u\in H^L(i,j)} \partial u \]
	and $R(q) = q$ for all crossings and cusps $q$, and then extending to all of $\A^R$ by $R(wz)=R(w)R(z)$.

	\begin{proof}[Proof that $\ell$ and $R$ are chain maps (Proposition 2.7 in \cite{sivek})]
		Let $u \in H^L(i,j)$, $q$ a factor in $\partial u$, and $u' \in D(q)$. This data gives one summand in $d^L(\ell(\alpha_{ij}))$. Gluing $u$ and $u'$ together, we may form a region which is a left half-disk with dividing boundary segment $ij$, but having one non-convex corner. We may split this region up using the other strand at the non-convex corner, and here there are two cases. If the other strand exits the interior of $u$ before the dividing line, then this splitting gives us another summand in $d^L(\ell(\alpha_{ij}))$ which represents the same monomial in $\A^L$. Otherwise, if the strand hits the dividing line while always staying inside $u$, then this splitting gives us a summand in $\ell(d^M(\alpha_{ij}))$. Either way, we see that terms in $d^L(\ell(\alpha_{ij})) + \ell(d^M(\alpha_{ij}))$ come in canceling pairs.
		
		A similar argument shows that $R$ is a chain map.
	\end{proof}

	\begin{proof}[Proof of \Cref{thm:mainsec3}]
		We want to show that
		\[\begin{tikzcd}
			\A^M \arrow[r, "\ell"] \arrow[d, "r"] & \A^L \arrow[d, "L"] \\
			\A^R \arrow[r, "R"]           & \A
		\end{tikzcd} \]
		is a pushout square. First, we note that the diagram clearly commutes.
		
		Now, suppose we have another DGA $Q$ together with a commutative diagram
		
		\[\begin{tikzcd}
			\A^M \arrow[r, "\ell"] \arrow[d, "r"] & \A^L \arrow[d, "f"] \\
			\A^R \arrow[r, "g"]           & Q
		\end{tikzcd} \]
		
		We need to construct a morphism $h: \A \to Q$ which makes the diagram 
		
		\[\begin{tikzcd}
			\A^M \arrow[r, "\ell"] \arrow[d, "r"]          & \A^L \arrow[d, "L"] \arrow[rdd, bend left, "f"] &   \\
			\A^R \arrow[r, "R"] \arrow[rrd, bend right, "g"] & \A \arrow[rd, dotted, "h"]               &   \\
			&                                    & Q
		\end{tikzcd} \]
		
		commute.
		
		Every generator $q$ of $\A$ can be written either as $L(s)$ for a generator $s \in \A^L$ or as $R(s)$ for a generator $s \in \A^R$. 
		
		We therefore define
		
		\[ h(q) = \begin{cases}
			f(s), & q = L(s) \\
			g(s), & q = R(s) 
		\end{cases}\]
		
		This choice clearly makes everything commute. It remains to show that $h$ is a chain map.
		
		Suppose $q = R(s)$ for $s\in \A^R$. Then 
		\begin{equation*}
			h(dq) = h(d(R(s))) = h(R(d^R(s))) = g(d^R(s)) = d^Q(g(s)) = d^Q(h(R(s))) = d^Q(h(q))
		\end{equation*}
		The case $q = L(s)$ is similar. 
		
	\end{proof}

	\section{Legendrian Rational Symplectic Field Theory}\label{sec:lsft}

	In this section, we review the construction of Legendrian Rational Symplectic Field Theory (LSFT) from \cite{ng}. LSFT incorporates genus zero curves with an arbitrary number of positive ends into the differential. Unlike the Chekanov-Eliashberg DGA, LSFT does not form a differential graded algebra, but rather a \emph{curved} differential graded algebra. However, the commutative version of LSFT is a (regular) differential graded algebra. In \cite{ng}, the theory is developed over $\mathbb{Z}$, but here we consider only the $\mathbb{Z}_2$ theory. 
	
	Let $\Lambda \subset \mathbb{R}^3$ be a Legendrian knot, and let $\Pi_{xy}(\Lambda)$ be the Lagrangian projection of $\Lambda$, as before. Choose two base points, $\bullet$ and $*$, on $\Lambda$. For simplicity, we assume $*$ is immediately prior to $\bullet$, i.e., there are no crossings of $\Pi_{xy}(\Lambda)$ between $*$ and $\bullet$. 
	
	Label the self-intersections of $\Pi_{xy}(\Lambda)$ as $1,\dots, n$, and for each crossing, consider two variables, $p_i$ and $q_i$. Label the positive quadrants of the $i^{th}$ crossing as $p_i$ and the negative quadrants as $q_i$. Furthermore, consider two more variables, $t$ and $t^{-1}$, representing all of $\Lambda$ in its positive and negative orientations. 
	
	$\A_{SFT}$ is defined to be the associative algebra generated over $\mathbb{Z}_2$ by the elements $\{t, t^{-1}, p_1, \dots, p_n, q_1, \dots, q_n\}$ which is free apart from the relation $tt^{-1} = t^{-1}t=1$. 
	
	The grading on $\A_{SFT}$ is defined as follows. The grading of $q_i$ is the same as in the Chekanov-Eliashberg DGA. For the other generators, we set $|p_i| = -1-|q_i|$, $|t| = -2rot(\Lambda)$ and $|t^{-1}| = -|t|$. 
	
	The differential on $\A_{SFT}$ consists of two terms - an SFT component which counts genus zero curves, and a correction term coming from string topology. 
	
	We will also consider the cyclic complex $\A_{SFT}^{cyc}$ and the commutative algebra $\A_{SFT}^{comm}$.

	\begin{Def}[Cyclic complex] %
		Let $\A_{SFT}^{cyc} = \A_{SFT}/\mathcal{I}$, where $\mathcal{I}$ is the submodule of $\A_{SFT}$ generated over $\mathbb{Z}_2$ by all commutators $[x,y]$ such that either $x$ or $y$ has at least one $p$ factor in each summand.
	\end{Def}

	\begin{Def}[Commutative algebra]
		Let $\A_{SFT}^{comm} = \A_{SFT}/\mathcal{J}$, where $\mathcal{J}$ is the subalgebra of $\A_{SFT}$ generated over $\A_{SFT}$ by all commutators $[x,y]$.
	\end{Def}

	\subsection{String differential}
	
	\-\
	
	In this section, we define the string differential, which is the correction term to the SFT differential that accounts for boundary bubbling.

	\begin{Def}
		A \emph{broken closed string} is a map $\gamma: S^1 \to \Lambda$ which is continuous except for the following type of allowed discontinuity: $\gamma$ is allowed to jump from one end of a Reeb chord of $\Lambda$ to the other. In other words, for finitely many points $s_0 \in S^1$, we allow 
		\[ \lim_{s\to s_0^{\pm}}  \gamma(t) = R^\pm \]
		or 
		\[ \lim_{s\to s_0^{\pm}}  \gamma(t) = R^\mp \]
		where $R^\pm$ are the endpoints of a Reeb chord $R$. These discontinuities are called the \emph{corners} of $\gamma$. 
		Broken closed strings are considered up to reparametrization of the domain.	
	\end{Def}
	
	\begin{Def}
		A \emph{based broken closed string} is a broken closed string starting and ending at the marked point $\bullet$. 
	\end{Def}
	
	Based broken closed strings correspond to words in $\A_{SFT}$ in the following way.
	
	We can associate a word $w(\gamma) \in \A_{SFT}$ to a based broken closed string $\gamma$ by reading off either $p_i$ if we encounter a discontinuity of the form 
	
	\[ \lim_{s\to s_0^{\pm}}  \gamma(t) = R_i^\pm \]
	or $q_i$ if we encounter a discontinuity of the form 
	\[ \lim_{s\to s_0^{\pm}}  \gamma(t) = R_i^\mp \]
	or $t^{\pm 1}$ if $\gamma$ passes through the marked point $*$ (with the sign of the exponent depending on the orientation of $\gamma$).

	Conversely, suppose we have a word $w \in \A_{SFT}$. We may associate a based broken closed string $\gamma_w$ to $w$ as follows. 
	
	\begin{itemize}
		\item If $w=p_i$, there is a unique path $\gamma_{p_i}^+$ from $\bullet$ to $R_i^+$ which does not pass through $*$ and a unique path $\gamma_{p_i}^-$ from $R_i^-$ to $\bullet$ which does not pass through $*$. Let $\gamma_{p_i}$ be the concatenation of these two paths.
		\item If $w=q_i$, there is a unique path $\gamma_{q_i}^+$ from $\bullet$ to $R_i^-$ which does not pass through $*$ and a unique path $\gamma_{q_i}^-$ from $R_i^+$ to $\bullet$ which does not pass through $*$. Let $\gamma_{q_i}$ be the concatenation of these two paths.
		\item Let $\gamma_{t}$ be the string looping around $\Lambda$ once, in the orientation of $\Lambda$.
		\item Let $\gamma_{t^{-1}}$ be the string looping around $\Lambda$ once, in the opposite orientation.
		\item For all other words $w$, we construct $\gamma_w$ by concatenating strings of the above forms.
	\end{itemize}
	
	The map 
	\[ \{ \text{based broken closed strings} \} / \text{homotopy} \to \{ \text{words in } \A_{SFT} \}\]
	
	is a bijection.
	
	Similarly, there is a bijection
	\[ \{ \text{broken closed strings}\} / \text{homotopy} \to \{\text{words in } \A_{SFT}^{cyc}\}. \]

	\begin{Def}[Holomorphic corners]
		A corner of a based broken closed string $\gamma$ is called \emph{holomorphic} if, in the Lagrangian projection, $\gamma$ makes a left turn at the corresponding crossing. Note that any based broken closed string is homotopic to one with holomorphic corners.
	\end{Def}

	Now, we define the string differential.
	
	\begin{Def}[$pq$ insertion into a based broken closed string]
		
		\-\
		
		Let $\gamma$ be a based broken closed string and let $R_i$ be an internal Reeb chord (that is, $\gamma(t_0) = R_i^+$ or $\gamma(t_0) = R_i^-$ at some point $t_0$ where $\gamma$ is continuous). Write $w(\gamma) = w_1w_2$ where $w_1 = w(\gamma|_{[0,t_0]})$ and $w_2 = w(\gamma|_{[t_0,1]})$. Furthermore, let $s = p_iq_i$ if $\gamma(t_0) = R_i^+$ and $q_ip_i$ otherwise. Then the word $w' = w_1sw_2$ is called a $pq$-insertion into $\gamma$ at $R_i$. In other words, $w'$ is the word associated to a based broken closed string obtained from $\gamma$ by inserting two discontinuities, across $R_i$ and back again, at $t_0$. 
	\end{Def}
	
	\begin{Def}[String differential]
		
		\-\
		
		Let $w\in \A_{SFT}$. Let $\gamma$ be a generic based broken closed string with holomorphic corners representing $w$. Define 
		\[ \delta_{str}(w) = \sum pq \text{-insertions into } \gamma \]
	\end{Def}
	
	See \Cref{sec:examples} for some example calculations.
	
	Next, we outline the key properties of the string differential.

	\begin{Prop}[Prop 3.8 in \cite{ng}]
		
		\-\
		
		\begin{enumerate}
			\item $\delta_{str}$ is well-defined, i.e. $\delta_{str}(w)$ is independent of the choice of generic based broken closed string $\gamma$ representing $w$.
			\item $\delta_{str}(xy) = (\delta_{str} (x))y + x(\delta_{str} (y))$.
			\item $\delta_{str}^2(x) = 0$.
		\end{enumerate}
	\end{Prop}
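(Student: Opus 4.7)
The plan is to prove the three parts in turn. Part (1) is the foundation, (2) is essentially immediate from the construction, and (3) is the main content.

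For (1), I would connect any two generic based broken closed strings $\gamma_0,\gamma_1$ with holomorphic corners representing the same word $w\in \A_{SFT}$ by a generic one-parameter family $\{\gamma_s\}_{s\in[0,1]}$ in the space of based broken closed strings, and verify that $\delta_{str}$ is unchanged at each codimension-one degeneration along the family. The generic events are: an arc of $\gamma_s$ sweeping tangentially across a crossing of $\Pi_{xy}(\Lambda)$ (so that a continuous passage of $\gamma_s$ through some Reeb chord $R_i$ is born or dies while the left-turn side of a nearby holomorphic corner flips); the time-ordering of two distinct corners at different Reeb chords interchanging; $\gamma_s$ sliding across the base point $\bullet$ or $*$; and a pair of corners at the same Reeb chord appearing or cancelling. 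For each event I would check that the mod-$2$ sum of $pq$-insertions is unchanged; the only nontrivial case is the tangential sweep, where the new insertion opportunity at the newborn passage is balanced by a vanishing contribution from the adjacent configuration that has just stopped being realized.

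For (2), take $\gamma_{xy}:=\gamma_x\cdot \gamma_y$, the concatenation based at $\bullet$ of generic based broken closed strings representing $x$ and $y$. The internal continuous passages of $\gamma_{xy}$ through Reeb chords split as the disjoint union of those of $\gamma_x$ and those of $\gamma_y$, and the $pq$-insertions at the two sets produce exactly $(\delta_{str}(x))y$ and $x(\delta_{str}(y))$ respectively, giving the Leibniz rule.

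For (3), fix a generic representative $\gamma$ of $w$ with continuous passages $(t_1,R_{j_1}),\dots,(t_k,R_{j_k})$. Each summand of $\delta_{str}^2(w)$ is encoded by a first insertion at some $(t_a,R_{j_a})$, producing an intermediate string $\gamma^{(a)}$, followed by a second insertion on $\gamma^{(a)}$. The second insertion either occurs at another original passage $(t_b,R_{j_b})$ with $b\neq a$, or in the short constant segment sitting at $R_{j_a}^{\mp}$ introduced by the first insertion. Summands of the first kind pair with the reversed-order summand in which the insertions are carried out in the opposite order; both orderings yield the same element of $\A_{SFT}$, so they cancel over $\mathbb{Z}_2$. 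The self-interaction summands can only insert at $R_{j_a}$ itself, since $R_{j_a}^{\mp}$ is generically the endpoint of no other Reeb chord; I would pair these among themselves via the involution that interchanges the two corners of the first-insertion segment relative to the second insertion and check that the resulting words in $\A_{SFT}$ match.

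The main obstacle is the self-interaction subcase of part~(3): it requires a careful local check involving the holomorphic-corner convention inside the inserted constant piece at $R_{j_a}^{\mp}$, to verify that the proposed involution really does produce a sign-free cancellation over $\mathbb{Z}_2$ rather than leaving a leftover term. Once this local combinatorics is pinned down, the rest of the argument is a bookkeeping exercise built on the well-definedness established in (1).
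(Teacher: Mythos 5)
First, note that the paper does not supply its own proof of this proposition: it is quoted as Prop 3.8 of \cite{ng} and used as a black box. The closest in-paper comparison is the bordered analogue, \Cref{prop:deltaL} together with the lemma that $(\delta_{str}^L)^2=0$, which are proved by exactly the strategy you outline: invariance under the local moves of \Cref{fig:localmoves} for well-definedness, concatenation of strings for the Leibniz rule, and order-reversal of the two insertions for $\delta^2=0$. So your overall architecture is the standard one and matches both Ng's argument and the paper's treatment of the bordered case.

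Two points need repair. In part (1), the mechanism at a tangential sweep is not that a ``newborn passage is balanced by a vanishing contribution'': continuous passages through a fixed chord endpoint are created and destroyed in \emph{pairs} under a generic homotopy, the two members of a newborn pair are separated by an arc contributing nothing to the word, and hence their two $pq$-insertions produce the \emph{same} monomial and cancel mod $2$. In part (3), the ``self-interaction'' case you flag as the main obstacle does not actually occur, and the involution you propose for it should be discarded. After inserting $p_iq_i$ at a continuous passage through $R_i^+$, that passage is consumed: it is replaced by two corners (discontinuities), at which no further insertion is permitted, and the short arc at $R_i^-$ between them, taken generically (as part (1) entitles you to do when computing $\delta_{str}$ of the new word), starts and ends at $R_i^-$ but does not pass through $R_i^-$ or any other chord endpoint in its interior. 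Hence the set of insertion sites of the once-inserted string is exactly the original set minus the consumed site, every term of $\delta_{str}^2(w)$ is indexed by an ordered pair of distinct original passages, and the order-reversal pairing already cancels everything. This is precisely why the paper's proof that $(\delta_{str}^L)^2=0$ on $W$ consists of the single sentence about reversing the order of insertions, with no residual case.
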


	\subsection{SFT bracket}
	
	\-\
	
	In this section, we define the SFT bracket which will be used in the definition of the SFT differential.

	Let $\gamma$ and $\gamma'$ be two broken closed strings, and suppose that we can write $w(\gamma) = w_1 p_i w_2$ and $w(\gamma') = w_1' q_i w_2'$ (in other words, $\gamma$ and $\gamma'$ have corners at the same crossing of $\Pi_{xy}(\Lambda)$, but in neighboring quadrants). Then we may glue $\gamma$ and $\gamma'$ as shown in \Cref{fig:gluepq} to get a new broken closed string $\gamma * \gamma'$ whose associated word is $w(\gamma*\gamma') = w_1'w_2 w_2' w_1$.

	\begin{figure}
		\begin{tikzpicture}
			\def\xaI{0}
			\def\xaII{5}
			
			\def\yaI{0}
			\def\yaII{-4}
			
			\node(10a1) at (\xaI,\yaI) {
				
				\begin{tikzpicture}[scale=1]
					\node (L) at (1,0.6) {$\Lambda$};
					\node[red] (G) at (-1,0.5) {$x$};
					\node[red] (Gt) at (-0.5,1) {$y$};
					\node (q) at (0,-0.3) {$q$};
					\node (p) at (0.3,0) {$p$};
					\def\off{0.2}
					\def\offII{0.5}
					\def\ou{-2}
					\pgfmathsetmacro{\xoff}{-1+\off}
					\begin{knot}
						[
						clip width=15, draft mode=strands]
						\strand[black, thick] (-1,-1) -- (1,1);
						\strand[black, thick] (-1,1) -- (1,-1);
						\strand[red, thick] (-1,-\xoff) -- (-0.2, 0);
						\strand[red, thick] (-0.2,0) -- (-1,\xoff);
						\strand[red, thick, dashed] (-1,\xoff) to[out=south west, in=south] (\ou,0);
						\strand[red, thick, dashed] (\ou, 0) to[out=north, in=north west] (-1,-\xoff);
						
						\strand[red, thick] (-\xoff,1) -- (0, 0.2);
						\strand[red, thick] (0,0.2) -- (\xoff,1);
						\strand[red, thick, dashed] (\xoff,1) to[out=north west, in=west] (0,-\ou);
						\strand[red, thick, dashed] (0, -\ou) to[out=east, in=north east] (-\xoff,1);
						
						\strand[black, very thick] (-0.3,0) -- (0,0.3);
					\end{knot}
				\end{tikzpicture}
			};

			\node(10a2) at (\xaII, \yaI) {
				\begin{tikzpicture}[scale=1]
					\node (L) at (1,0.6) {$\Lambda$};
					\node[red] (G) at (-1.2,0.3) {$\{x,y\}$};
					\node (q) at (0,-0.3) {$q$};
					\node (p) at (0.3,0) {$p$};
					\def\off{0.2}
					\def\offII{0.5}
					\def\ou{-2}
					\pgfmathsetmacro{\xoff}{-1+\off}
					\begin{knot}
						[
						clip width=15, draft mode=strands]
						\strand[black, thick] (-1,-1) -- (1,1);
						\strand[black, thick] (-1,1) -- (1,-1);
						\strand[red, thick, dashed] (-1,\xoff) to[out=south west, in=south] (\ou,0);
						\strand[red, thick, dashed] (\ou, 0) to[out=north, in=north west] (-1,-\xoff);
						
						\strand[red, thick] (-\xoff,1) -- (-1, \xoff);
						\strand[red, thick, dashed] (\xoff,1) to[out=north west, in=west] (0,-\ou);
						\strand[red, thick, dashed] (0, -\ou) to[out=east, in=north east] (-\xoff,1);
						
						\strand[red, thick] (-1,-\xoff) .. controls +(-45:0.1) and +(225:0.1) .. (-\off,\off);
						\strand[red,thick] (-\off,\off) .. controls +(45:0.1) and +(-45:0.1) .. (\xoff,1);
					\end{knot}
				\end{tikzpicture}
			};
			
			\draw[->] (10a1) to (10a2);
		\end{tikzpicture}
		\caption{$x$ contains a $p$ factor and $y$ contains a corresponding $q$ factor; we may glue as shown to get a summand of $\{x,y\}$.}\label{fig:gluepq}
	\end{figure}
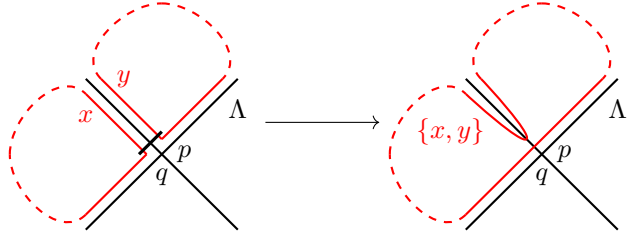

	\begin{Def}[SFT bracket]
		
		\-\

		Let $\{\cdot, \cdot \}: \A_{SFT}^{cyc} \otimes \A_{SFT}^{cyc} \to \A_{SFT}^{cyc}$ be defined by setting $\{w, w'\}$ to be the sum of $w(\gamma_w*\gamma_{w'})$ over all ways to glue $\gamma_w$ and $\gamma_{w'}$.
	\end{Def}
	
	A similar definition can be made for $\{\cdot, \cdot\}: \A_{SFT}^{cyc} \otimes \A_{SFT} \to \A_{SFT}$.

	The following are the key properties of the SFT bracket.
	
	\begin{Prop}[Prop 3.4 in \cite{ng}]\label{prop:lsftjacobi}
		
		\-\
		
		\begin{enumerate}
			\item If $x,y \in \A_{SFT}^{cyc}$, then $\{x,y\} = \{y,x\}$.
			\item $\{x,yz\} = \{x,y\}z + y\{x,z\}$.
			\item $\{x,\{y,z\}\}  + \{y,\{x,z\}\} = \{\{x,y\}, z\}$. 
			\item $\delta_{str}\{x,y\} = \{\delta_{str} (x), y\} + \{x, \delta_{str} (y)\} + [\bullet(x), y]$,
		\end{enumerate}
		where $\bullet$ is the function defined below (\Cref{def:bullet}).
	\end{Prop}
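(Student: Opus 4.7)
The plan is to analyze each identity by tracking the geometry of based broken closed strings and their gluings. Parts (1)--(3) reduce to combinatorial bookkeeping on gluings, while (4) requires analyzing the interaction of $\delta_{str}$ with the gluing operation and will be the main obstacle.

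For (1), observe that the set of matched $p_i/q_i$ corner pairs used to define $\{w,w'\}$ is symmetric in $w$ and $w'$, since we sum over all ways to glue (so both $p$-in-$\gamma_w$/$q$-in-$\gamma_{w'}$ and the reverse pairing contribute). The gluing at a fixed matched pair of corners produces a single broken closed string whose class in $\A_{SFT}^{cyc}$ does not depend on where one starts reading, so $\{w,w'\}$ and $\{w',w\}$ agree summand-by-summand.

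For (2), decompose $\gamma_{yz}$ as the concatenation $\gamma_y \cdot \gamma_z$. Any corner of $\gamma_{yz}$ at which one glues in $\gamma_x$ lies either in the $\gamma_y$-portion (the glued string is then a concatenation of ``$\gamma_x$ inserted into $\gamma_y$'' with $\gamma_z$, contributing a term of the form $\{x,y\}z$) or in the $\gamma_z$-portion (contributing $y\{x,z\}$). Summing over all gluing corners gives the Leibniz rule. For (3), apply (2) to expand $\{x,\{y,z\}\}$ and classify each gluing corner as lying in $\gamma_y$ or $\gamma_z$; using (1) to identify the two resulting pieces with $\{\{x,y\},z\}$ and $\{y,\{x,z\}\}$, and then rearranging (working mod $2$), yields the Jacobi identity.

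Part (4) is the main obstacle. The plan is to expand both sides of the identity by classifying $pq$-insertions into the glued string $\gamma_x * \gamma_y$ according to their location. An insertion at an internal Reeb chord lying strictly in the $\gamma_x$-portion gives a term matching a summand of $\{\delta_{str}(x),y\}$, and similarly for $\gamma_y$. Conversely, $\{\delta_{str}(x),y\}$ and $\{x,\delta_{str}(y)\}$ each contain ``degenerate'' summands in which the freshly inserted $p_i$ or $q_i$ corner produced by $\delta_{str}$ is itself used as the gluing corner; generic degenerate contributions can be paired up so as to cancel, leaving only the exceptional configurations near the absorbed basepoint.

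The main technical difficulty lies in the boundary contributions arising when the $pq$-insertion occurs at a Reeb chord adjacent to the image of $\gamma_x$'s basepoint $\bullet$, which has been folded into the interior of the glued string by the gluing. These contributions are precisely what fail to appear in $\{\delta_{str}(x),y\}+\{x,\delta_{str}(y)\}$, and the claim is that their total equals $[\bullet(x),y]$: the commutator reflects that a $pq$-insertion at the former basepoint admits two natural cyclic readings for the resulting word (depending on which side of $\bullet$ the insertion is placed), whose difference is exactly $\bullet(x)\cdot y - y\cdot \bullet(x)$. Once the operator $\bullet(x)$ is introduced (\Cref{def:bullet}), matching these boundary terms with $[\bullet(x),y]$ should be a direct, if intricate, enumeration, and I expect this is where most of the care in the argument will need to go.
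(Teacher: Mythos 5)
First, note that the paper itself does not prove this proposition: it is quoted verbatim as Proposition 3.4 of \cite{ng} in the background review of \Cref{sec:lsft}, so there is no internal proof to compare against and your attempt must be judged on its own terms. Your arguments for (1) and (2) are sound: the gluing at a matched $p_i$/$q_i$ corner pair is geometrically symmetric and the output is only read up to cyclic rotation, and the Leibniz rule follows by sorting gluing corners into the $\gamma_y$- or $\gamma_z$-portion of the concatenation. For (3), however, your stated reduction does not close: the two pieces of $\{x,\{y,z\}\}$ (outer gluing at a $y$-corner versus at a $z$-corner) are \emph{not} individually equal to $\{\{x,y\},z\}$ and $\{y,\{x,z\}\}$, because each of those double brackets itself splits into two pieces (for instance $\{\{x,y\},z\}$ also contains terms in which $z$ glues to an $x$-corner). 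The correct bookkeeping is a pairing of six pieces across the three double brackets: the $(x\text{-}y,\,y\text{-}z)$ gluing patterns match between $\{x,\{y,z\}\}$ and $\{\{x,y\},z\}$, the $(x\text{-}z,\,y\text{-}z)$ patterns match between $\{x,\{y,z\}\}$ and $\{y,\{x,z\}\}$, and the $(x\text{-}y,\,x\text{-}z)$ patterns match between the remaining two. This is repairable, but as written the identification is false.

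The genuine gap is in (4), which is where essentially all of the content of the proposition sits. You correctly observe that insertions away from the gluing locus match between $\delta_{str}\{x,y\}$ and $\{\delta_{str}(x),y\}+\{x,\delta_{str}(y)\}$, and that the latter contains extra terms in which a freshly inserted $p_j$ or $q_j$ is itself consumed by the gluing; but you then assert, without argument, both that these exceptional terms cancel in pairs and that the residue equals $[\bullet(x),y]$. Neither is automatic: the exceptional-term analysis is exactly the delicate local case check of Figure 3.8 in \cite{ng} (compare the bordered analogue the present paper must carry out in \Cref{fig:twoexcepterms} and \Cref{fig:boundaryexcepts}), and your proposed mechanism for the commutator --- ``two cyclic readings of the word at the absorbed basepoint'' --- would at best produce a single commutator $[x,y]$, not $[\bullet(x),y]$. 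The operator $\bullet$ carries a multiplicity $n(w)$ equal to the total $t$-exponent (the number of signed passes of $\gamma_x$ through the marked point $*$) and, for a cyclic class, sums over all representatives ending in a $p$ or $q$; your heuristic offers no mechanism generating either the integer coefficient or that sum over basings. Since the proposition's only nontrivial assertion beyond formal bookkeeping is precisely the identification of the defect with $[\bullet(x),y]$, this step cannot be left as ``a direct, if intricate, enumeration'': it is the proof.
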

	
	\begin{Def}[$\bullet$ map]\label{def:bullet}
		
		\-\
		
		Let $w$ be a word in $\A_{SFT}$. Define $\bullet(w) = n(w) w$ where $n(w)$ is the sum of the exponents of $t$ in $w$. For $[w] \in \A_{SFT}^{cyc}$, define $\bullet([w]) = \sum_{w'} \bullet(w')$ where the sum is taken over all representatives $w'$ of $[w]$ ending in a $p$ or $q$.  This defines a map $\bullet: \A_{SFT}^{cyc} \to \A_{SFT}$.
	\end{Def}

	\subsection{Hamiltonian and SFT differential}
	
	\begin{Def}
		An \emph{admissible disk} is a map $u: (D^2, \partial D^2) \to (\mathbb{R}^2, \Pi_{xy}(\Lambda))$ which satisfies the following properties:
		
		\begin{enumerate}
			\item $u$ is an immersion apart from a finite set of boundary points, where it maps to self-intersections of $\Pi_{xy}(\Lambda)$. These points are called corners of $u$.
			\item At each corner $q$ of $u$, a neighborhood of $q$ is mapped to precisely one quadrant of $q$. Label a corner as positive or negative in accordance with the Reeb sign of this quadrant.
			\item $u$ may have any number of positive and negative corners.
		\end{enumerate}
		
		Admissible disks are always considered up to reparametrization of the domain. 
		
		\-\
		
		Given an admissible disk $u$, let $\partial u$ be the word in $\A_{SFT}^{cyc}$ formed by reading off the corners of $u$ counterclockwise.
		
	\end{Def}
	
	In other words, an admissible disk in the context of LSFT is the same as in the case of the Chekanov-Eliashberg DGA (\Cref{def:admdisk1}), except that we allow for multiple positive corners.
	
	\begin{Def}
		The \emph{Hamiltonian} $h\in \A_{SFT}^{cyc}$ is defined as  
		\[ h = \sum_{u} \partial u\]
		where the sum is over all admissible disks.
	\end{Def}

	\begin{Def}[SFT differential]
		Let $d_{SFT}: \A_{SFT}\to \A_{SFT}$ be defined as $d_{SFT}(x) = \{h,x\}$. 
	\end{Def}

	Finally, the differential on $\A_{SFT}$ is the sum of the string and SFT differentials:
	
	\begin{Def}[Differential on $\A_{SFT}$]
		$d:\A_{SFT} \to \A_{SFT}$ is defined as \[ d(x) = \{h,x\} + \delta_{str}(x)\]
	\end{Def}

	\begin{Th}[Quantum master equation, Prop 3.13 in \cite{ng}]\label{thm:quantum}
		\[ \delta_{str}(h) + \frac{1}{2} \{h,h\} = 0.\]
	\end{Th}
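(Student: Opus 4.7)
The plan is to interpret $\delta_{str}(h)+\tfrac12\{h,h\}$ as a count of cuts of a certain class of non-rigid disks, so that the total vanishes by a pairing argument analogous to the proof that $d^2=0$ in the Chekanov--Eliashberg DGA.

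First I would introduce \emph{almost-admissible disks}: maps $u\colon (D^2,\partial D^2)\to(\mathbb{R}^2,\Pi_{xy}(\Lambda))$ satisfying the admissible disk conditions of LSFT except that $u$ carries exactly one additional non-convex boundary corner (filling three quadrants) at some crossing $q^{**}$, in addition to any number of ordinary positive and negative corners. At this non-convex corner there are two distinct ways to cut the disk, one along each of the strands of $\Pi_{xy}(\Lambda)$ passing through $q^{**}$, and each such cut yields one of two geometric outcomes:
\begin{enumerate}
\item The cut separates the disk into two admissible pieces $u_1$ and $u_2$, each carrying a convex corner at $q^{**}$. One piece has a positive corner there (contributing $p_{**}$) and the other has the complementary negative corner (contributing $q_{**}$), so reading off the boundary words cyclically recovers exactly one gluing of $h$ with itself under $\{\cdot,\cdot\}$, hence a summand of $\tfrac12\{h,h\}$.
\item The cut leaves the domain connected, producing a single admissible disk whose boundary path passes through two quadrants at $q^{**}$. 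Reading off its boundary word then inserts a $pq$ or $qp$ across the internal Reeb chord corresponding to $q^{**}$ into the broken closed string of an admissible disk counted by $h$, producing exactly one summand of $\delta_{str}(h)$.
\end{enumerate}

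Conversely, I would show the map from (almost-admissible disk, choice of cut) to summands of $\delta_{str}(h)+\tfrac12\{h,h\}$ is a bijection: every $p/q$ gluing of two admissible disks reconstructs a unique almost-admissible disk cut of type (1), and every $pq$-insertion at an internal Reeb chord of an admissible disk reconstructs a unique almost-admissible disk cut of type (2). Since each almost-admissible disk carries exactly two cuts, these pair up; summing over all such disks gives the desired cancellation mod $2$.

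The main obstacle is the combinatorial bookkeeping, especially matching the cyclic word structure of $\A_{SFT}^{cyc}$ through the cut--gluing correspondence and tracking the basepoints $\bullet$ and $*$, whose placement controls the $t^{\pm 1}$ factors that appear. The subtlest case is when the strand being cut at $q^{**}$ crosses a basepoint or when the non-convex corner forces $u_1$ and $u_2$ to share the $\bullet$-arc, since then one must verify that the $t$-exponents bookkeeping in the gluing defining $\{h,h\}$ agrees with those produced by the $pq$-insertion defining $\delta_{str}(h)$; the genericity hypothesis on the position of $\bullet$ and $*$ relative to crossings of $\Pi_{xy}(\Lambda)$ ensures this can be arranged case by case.
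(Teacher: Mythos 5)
This theorem is not proved in the paper --- it is imported from Ng (Proposition 3.13 of \cite{ng}); the closest thing to a proof here is the bordered analogue $h^L\to h^L=\delta_{str}^L(h^L)$, established by pairing off local pictures (\Cref{fig:bordercancelings}). Your overall strategy --- an involution on composite geometric objects, each of whose two resolutions produces one summand of $\delta_{str}(h)+\tfrac12\{h,h\}$ --- is indeed how that argument goes. But your implementation has a genuine gap. Cutting a disk along an embedded arc of $\Pi_{xy}(\Lambda)$ running from one boundary point to another always disconnects it, so your outcome (2), in which ``the cut leaves the domain connected,'' cannot occur. And cutting a three-quadrant corner along either strand splits the quadrants as $1+2$: one piece acquires a convex corner at $q^{**}$, but the other piece's boundary passes smoothly through $q^{**}$, so outcome (1) --- both pieces carrying complementary convex corners at $q^{**}$, hence a bracket gluing there --- is also impossible (the bracket requires a $p$-corner of one disk adjacent to a $q$-corner of the other, i.e.\ a two-quadrant configuration, not three). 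So neither kind of summand is produced where you claim it is.

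The correct bookkeeping sits elsewhere. A summand of $\tfrac12\{h,h\}$ coming from gluing $u_1,u_2$ at a crossing $i$ determines a maximal shared boundary arc $\sigma$ of $u_1$ and $u_2$ starting at $i$; generically $\sigma$ becomes an interior arc of the union $v=u_1\cup u_2$ ending at a second crossing $j$ where $u_1,u_2$ again have complementary convex corners, and the glued word equals the boundary word of $v$ with a $pq$ inserted at $j$ --- not at $i$. Thus the natural indexing object is an admissible disk together with a distinguished interior arc joining two internal Reeb chords of its boundary; its two gluings cancel against the two $pq$-insertions at the endpoints of the arc. Three-quadrant corners do enter, but only to cancel exceptional $\{h,h\}$-summands against each other (both resolutions of an obtuse corner yield bracket terms), and further exceptional cases occur when $\sigma$ terminates at a corner of $v$ or at the basepoint; these cases, which you do not address beyond a general remark about $t$-exponents, are exactly what Figure 3.13 of \cite{ng} and \Cref{fig:bordercancelings} here enumerate. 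Since the entire content of the proof is this case analysis, the proposal as written does not establish the theorem.
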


	\begin{Th} [Prop 3.15 in \cite{ng}]
		
		\-\
		
		$(\A_{SFT}, d)$ is a curved DGA with curvature $\bullet(h)$. 
	\end{Th}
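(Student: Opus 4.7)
The plan is to verify the three defining properties of a curved DGA with curvature $\bullet(h)$: that $d$ is a graded derivation of $\A_{SFT}$, that $d^2(x) = [\bullet(h), x]$ for every $x$, and the Bianchi identity $d(\bullet(h)) = 0$. The first is immediate, since $d = \{h, \cdot\} + \delta_{str}$ is a sum of derivations by Prop 3.4(2) and Prop 3.8(2).

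For $d^2$, expand
\[ d^2(x) = \{h, \{h, x\}\} + \delta_{str}(\{h, x\}) + \{h, \delta_{str}(x)\} + \delta_{str}^2(x). \]
The last term vanishes by Prop 3.8(3); applying Prop 3.4(4) to $\delta_{str}(\{h, x\})$ rewrites it as $\{\delta_{str}(h), x\} + \{h, \delta_{str}(x)\} + [\bullet(h), x]$, and over $\mathbb{Z}_2$ the two copies of $\{h, \delta_{str}(x)\}$ cancel, leaving
\[ d^2(x) = \{h, \{h, x\}\} + \{\delta_{str}(h), x\} + [\bullet(h), x]. \]
So it suffices to show $\{h, \{h, x\}\} + \{\delta_{str}(h), x\} = 0$. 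Morally this is the Jacobi identity Prop 3.4(3) applied to $(h, h, x)$, which over $\mathbb{Z}$ reads $2\{h, \{h, x\}\} = \{\{h, h\}, x\}$, combined with the quantum master equation $\delta_{str}(h) = \tfrac{1}{2}\{h, h\}$ of \Cref{thm:quantum}.

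The main obstacle is making these factors of $\tfrac{1}{2}$ precise over $\mathbb{Z}_2$, where the naive statements collapse to $0 = 0$. I would resolve this by interpreting $\tfrac{1}{2}\{h, h\}$ geometrically as a count of \emph{unordered} gluings of pairs of $h$-disks, which is a well-defined element of $\A_{SFT}^{cyc}$, and reproving the self-Jacobi identity $\{h, \{h, x\}\} = \{\tfrac{1}{2}\{h, h\}, x\}$ by a direct bijection between nested gluing configurations and pairs (unordered $h$-$h$ gluing, further gluing with $x$). Granting this, the QME yields $\{\tfrac{1}{2}\{h, h\}, x\} = \{\delta_{str}(h), x\}$, so the two offending terms cancel mod $2$ and $d^2(x) = [\bullet(h), x]$.

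Finally, for the Bianchi identity $d(\bullet(h)) = 0$, I would apply $\bullet$ to the quantum master equation and exploit the compatibility of $\bullet$ with $\delta_{str}$ and with $\{h, \cdot\}$ when both arguments are $h$; alternatively, one can observe that $d^3 = d \circ d^2 = [d(\bullet(h)), \cdot] + [\bullet(h), d(\cdot)]$ must agree with $d^2 \circ d = [\bullet(h), d(\cdot)]$, forcing $d(\bullet(h))$ to be central, and then a grading argument eliminates nonzero central elements in the relevant degree.
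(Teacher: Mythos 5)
Your argument is correct and is essentially the approach of the cited source and of this paper: the statement itself is quoted from \cite{ng} without proof here, but your decomposition of $d^2$ — derivation property, $\delta_{str}^2=0$, the correction term $[\bullet(h),x]$ supplied by \Cref{prop:lsftjacobi}(4), and the cancellation of $\{h,\{h,x\}\}$ against $\{\delta_{str}(h),x\}$ via the quantum master equation (\Cref{thm:quantum}) — is exactly the scheme the paper reuses for its bordered analogues (\Cref{thm:dM2=0} and \Cref{thm:dL2=0}). You also correctly isolate the one genuine subtlety, namely that over $\mathbb{Z}_2$ one must interpret $\tfrac12\{h,h\}$ as the asymmetrized count of unordered gluings (Ng's $h\to h$, the paper's $h_2$) and prove $\{h,\{h,x\}\}=\{h\to h,x\}$ by a direct bijection rather than by the Jacobi identity, which is precisely what \Cref{lem:h2M} does in the middle-algebra case.
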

	
	In other words, $d$ satisfies the equation $d^2(x) = [\bullet(h), x]$.

	$d$ descends to a differential on $\A_{SFT}^{cyc}$ and $\A_{SFT}^{comm}$. In particular, $(\A_{SFT}^{comm}, d)$ is a (non-curved) DGA since $[\bullet(h),x]$ vanishes after taking the commutative quotient. Moreover, the differential on $\A_{SFT}^{comm}$ is independent of the base point $\bullet$.

	\section{Bordered Legendrian Rational Symplectic Field Theory}\label{sec:blsft}

	In this section, we construct the bordered version of Legendrian Rational Symplectic Field Theory. 
	
	Let $\Lambda \subset \mathbb{R}^3$ be a Legendrian knot, and let $\Pi_{xz}(\Lambda) \subset \mathbb{R}^2$ denote the front projection of $\Lambda$. We assume the front projection of $\Lambda$ is simple, i.e. all right cusps are at an equal $x$-coordinate (this can always be achieved by repeatedly applying Type II Legendrian Reidemeister moves to pull all the right cusps out; see \Cref{subsec:simple}). Cut $\Pi_{xz}(\Lambda)$ into two pieces by a vertical dividing line $M$ in the front projection (the dividing line is assumed to be chosen generically, so that it does not pass through any crossing or cusp of the front projection). Let $N$ denote the plane in $\mathbb{R}^3$ which projects under $\Pi_{xz}$ to $M$. Let $\Lambda^L$ and $\Lambda^R$ denote the left and right half-knots divided by the plane $N$. 
	
	In the front projection, label the left and right half-diagrams as $\Pi_{xz}(\Lambda^L)$ and $\Pi_{xz}(\Lambda^R)$ respectively, and label the left and right half-planes as $\mathbb{R}^2_L$ and $\mathbb{R}^2_R$ respectively. %
	
	As a convention, we choose our two marked points $*$ and $\bullet$ to be directly to the left of the dividing line, on the topmost strand. 
	
	Bordered LSFT will associate to each half-diagram a differential graded algebra, denoted $\A^L_{SFT}$ and $\A^R_{SFT}$. Furthermore, there is a middle DGA $\A^M_{SFT}$ associated to the dividing line itself. The result is the Seifert van Kampen-type theorem 
	
	\begin{Th} \label{thm:main}
		There exists a pushout square
		
		\[\begin{tikzcd}
			\A^M_{SFT} \arrow[r] \arrow[d] & \A^L_{SFT} \arrow[d] \\
			\A^R_{SFT} \arrow[r]           & \A^{comm}_{SFT}
		\end{tikzcd} \]
		where $\A^{comm}_{SFT}$ is the commutative LSFT algebra of $\Lambda$ with $\mathbb{Z}_2$ coefficients [\Cref{sec:lsft}]
	\end{Th}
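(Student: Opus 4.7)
The plan is to follow the template of Sivek's argument for \Cref{thm:mainsec3}, extending it to accommodate the disks with multiple positive punctures and the string topology correction that are characteristic of LSFT. First I would define the four maps in the square. The maps $r:\A^M_{SFT}\to\A^R_{SFT}$ and $L:\A^L_{SFT}\to\A^{comm}_{SFT}$ should be essentially inclusions on the natural generators (the $\alpha_{ij}$ on the dividing line, and the left-half crossings and cusps together with the appropriate $p$-, $q$-, and $t^{\pm 1}$-variables, remembering that the marked points $*,\bullet$ lie on the left). The maps $\ell:\A^M_{SFT}\to\A^L_{SFT}$ and $R:\A^R_{SFT}\to\A^{comm}_{SFT}$ should agree on $\alpha$-generators (with $R$ acting as the inclusion on right-half crossings and cusps), sending each $\alpha_{ij}$ to a sum over admissible left half-configurations -- half-disks, now possibly with multiple positive punctures, together with string pieces joining the corners -- whose dividing-line boundary segment runs from point $i$ to point $j$. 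Commutativity of the square on generators would then be immediate.

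Next I would verify that $\ell$ and $R$ are chain maps. The core argument generalizes Sivek's glue-and-cut at a non-convex corner: given a left half-configuration $u$ with a corner at a crossing $q$ and an admissible disk or broken-string $u'$ contributing to the differential on the other side, gluing produces a region with a non-convex corner that either splits the other way into another summand of $d^L(\ell(\alpha_{ij}))$, or splits along the dividing line to contribute to $\ell(d^M(\alpha_{ij}))$. Because $u'$ may now carry several positive punctures, and because the string differential's $pq$-insertions must also be paired across the dividing line, the cancellation will require the full SFT bracket formalism together with the quantum master equation $\delta_{str}(h)+\tfrac12\{h,h\}=0$ of \Cref{thm:quantum}. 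Working in the commutative quotient is what allows one to reorder factors so the glued pieces reassemble correctly and so that the curvature term $\bullet(h)$ from the ambient LSFT drops out.

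The universal property is then the easiest step. Given a DGA $Q$ and maps $f:\A^L_{SFT}\to Q$, $g:\A^R_{SFT}\to Q$ with $f\ell=gr$, every generator of $\A^{comm}_{SFT}$ lies in the image of $L$ or of $R$, so $h$ is forced to satisfy $h(L(s))=f(s)$ and $h(R(s))=g(s)$; well-definedness on the overlap follows from the commuting condition, and the chain map property reduces on generators to the chain map properties of $f$ and $g$.

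The hard part will be the chain map check. Unlike in Sivek's setting, a disk with several positive punctures can enter and leave the dividing line many times, producing on one side a collection of disk pieces joined by several dividing segments; these have no direct analog in bordered Chekanov-Eliashberg, where each disk has a single positive puncture and crosses $M$ in at most one connected arc. The bookkeeping to ensure that all such pieces assemble into a coherent $\alpha$-decorated half-configuration, and that the $pq$-insertions of the string differential likewise split cleanly across $M$, is what I expect will force the relatively intricate definitions of $\A^R_{SFT}$ and $\A^M_{SFT}$ and necessitate passing to the commutative quotient on the right side of the diagram.
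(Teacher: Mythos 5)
Your overall architecture --- define the four maps, verify they are chain maps by glue-and-cut cancellation, then obtain the universal property essentially for free since every generator of $\A^{comm}_{SFT}$ lies in the image of $L$ or $R$ --- matches the paper, and your closing observation that passing to the commutative quotient is what makes the maps morphisms is exactly right. But the way you set up the maps imports an asymmetry from Sivek's construction that fails in the LSFT setting, and this is a genuine gap. You take $L:\A^L_{SFT}\to\A^{comm}_{SFT}$ to be essentially an inclusion, i.e.\ you treat the left algebra as generated only by the left-half crossings and cusps together with $t^{\pm 1}$. In the Chekanov--Eliashberg case this works because a disk with a single positive corner on the left stays in $\mathbb{R}^2_L$; in LSFT it does not, since $d(q)=\{h,q\}+\delta_{str}(q)$ and the Hamiltonian counts disks with many positive corners that freely cross the dividing line, so $d$ of a left generator already contains right-side letters (in the paper's trefoil example, $d_{SFT}(q_0)=p_1$ with $p_1$ a right crossing) and the left algebra is not closed under $d$. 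The paper's fix is symmetric placeholders on \emph{both} sides: $\A^L_{SFT}$ carries extra generators $\alpha_{ij}$ (standing for right half-disks) and $\beta_{ij}$ (standing for right strands of $\Lambda$), and $L$ is not an inclusion but sends $\alpha_{ij}\mapsto\sum_{u\in D^R(i,j)}\partial^*u$, a sum over right admissible disks (which stay in $\mathbb{R}^2_R$ for a simple front by \Cref{lem:rightdisk}), and $\beta_{ij}\mapsto\sum p_kq_k$ over the Reeb chords on the corresponding right strand. Correspondingly $\A^M_{SFT}$ needs two families $\alpha^L_{ij},\alpha^R_{ij}$ plus $\beta^L,\beta^R$ generators, with $\ell$ expanding the $L$-family and including the $R$-family and $r$ doing the mirror image; a single-family middle algebra modeled on Sivek's $I_n$, which is what your description of $r$ and $\ell$ presupposes, cannot mediate between the two half-algebras.

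The $\beta$ generators are not cosmetic: they are what intercepts the string differential. A $pq$-insertion into a based broken closed string occurs at any Reeb chord the string passes over, including chords on the far side of $M$ (in the example, $\delta_{str}(q_0)=q_0(p_1q_1+p_2q_2)$ with both insertions at right-side crossings), so without a placeholder whose image under $L$ is $\sum p_kq_k$ over the far-side chords of each strand, the identity $\delta_{str}\circ L=L\circ\delta^L_{str}$ has no chance of holding. You flag the splitting of $pq$-insertions across $M$ as a difficulty but propose no mechanism for it. A smaller correction: the quantum master equation enters the paper's argument in proving $(d^L)^2=0$, via $h^L\to h^L=\delta^L_{str}(h^L)$, not in the chain-map check itself; the latter rests on \Cref{lem:rightdisk} --- a full-diagram disk decomposes into one connected left piece and several right admissible components, each of which is a summand of $L(\alpha_{i_kj_k})$ --- together with explicit pairwise cancellation of the boundary exceptional terms.
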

	
	This generalizes Sivek's construction of a bordered Chekanov-Eliashberg DGA. [\Cref{sec:sivek}]
	
	\begin{Rem}
		One may also construct noncommutative versions of $\A^M_{SFT}$, $\A^L_{SFT}$ and $\A^R_{SFT}$, which would then be curved DGAs, like the noncommutative $\A_{SFT}$ [\Cref{sec:lsft}]. However, it turns out that in the noncommutative case, we do not get a pushout square. In fact, the maps involved in \Cref{thm:main} fail to be morphisms in general in the noncommutative case. For this reason, we restrict consideration to the commutative case throughout this section.
	\end{Rem}
	
	\subsection{The middle algebra}\label{subsec:AM}
	
	\-\
	
	In this section, we construct the DGA $\A^M_{SFT}$ associated to the dividing line of a front diagram. 
	
	More precisely, $\A^M_{SFT}$ is associated to a finite set $\{1,\dots, n\}$, representing the set of points in $\Pi_{xz}(\Lambda) \cap M$, together with two pairings $\beta^L, \beta^R$ of $\{1, \dots, n\}$, representing the combinatorics of how $\Lambda$ connects these points together on the left and right sides. In other words, $\{i,j\} \in \beta^L$ if there is a strand of $\Pi_{xz}(\Lambda^L)$ connecting points $i$ and $j$, and similarly for $\beta^R$ (see \Cref{fig:midalg}).

	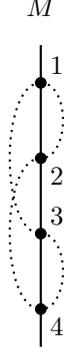
\begin{figure}
		\begin{tikzpicture}
			\def\x{0}
			\def\xNum{0.15}
			\def\yM{4}
			\def\yTop{3.5}
			\def\yI{3}
			\def\yII{2}
			\def\yIII{1}
			\def\yIV{0}
			\def\yBottom{-0.5}
			
			\def\ySep{0.1}
			
			\pgfmathsetmacro{\yIp}{\yI+\ySep}
			\pgfmathsetmacro{\yIIp}{\yII+\ySep}
			\pgfmathsetmacro{\yIIIp}{\yIII+\ySep}
			\pgfmathsetmacro{\yIVp}{\yIV+\ySep}						
			
			\node (M) at (\x, \yM) {$M$};
			
			\filldraw[black] (\x,\yI) circle (2pt) node[anchor=south west]{$1$};
			\filldraw[black] (\x,\yII) circle (2pt) node[anchor=north west]{$2$};
			\filldraw[black] (\x,\yIII) circle (2pt) node[anchor=south west]{$3$};
			\filldraw[black] (\x,\yIV) circle (2pt) node[anchor=north west]{$4$};

			\draw[black, thick] (\x, \yTop) -- (\x, \yBottom);
			
			\draw[dotted, black, thick] (\x, \yI) to [out=right, in=right] (\x, \yII);
			\draw[dotted, black, thick] (\x, \yIII) to [out=right, in=right] (\x, \yIV);
			
			\draw[dotted, black, thick] (\x, \yI) to [out=left, in=left,looseness=0.7] (\x, \yIII);
			\draw[dotted, black, thick] (\x, \yII) to [out=left, in=left,looseness=0.7] (\x, \yIV);
			
		\end{tikzpicture}
		\caption[A diagram associated to a middle algebra]{A diagram associated to a middle algebra. The thick line is the dividing line $M$, and the dotted curves represent the left and right pairings $\beta^L = \{(1,3),(2,4)\}$ and $\beta^R=\{(1,2),(3,4)\}$, respectively. The generators of this algebra are $\{\alpha_{12}^L, \alpha_{13}^L, \alpha_{14}^L, \alpha_{23}^L, \alpha_{24}^L, \alpha_{34}^L, \alpha_{12}^R, \alpha_{13}^R, \alpha_{14}^R, \alpha_{23}^R, \alpha_{24}^R, \alpha_{34}^R, \beta_{13}^L, \beta_{24}^L, \beta_{12}^R, \beta_{34}^R\}$. }\label{fig:midalg}
	\end{figure}

	Since we are only considering Legendrian knots (and not links with multiple components), we require $\beta^L$ and $\beta^R$ to satisfy the property that connecting the points on the dividing line according to these pairings would yield a single connected loop. In other words, representing $\beta^L$ and $\beta^R$ as involutions on $\{1,\dots, n\}$, we require that 
	\begin{equation} \label{pairingknotcondition}
		\{1, \beta^R(1), \beta^L(\beta^R(1)), \beta^R(\beta^L(\beta^R(1))), \beta^L(\beta^R(\beta^L(\beta^R(1)))), \dots\} = \{1,\dots, n\}.
	\end{equation} 
	
	Given the data above, we define $\A^M_{SFT}$ as follows.
	
	\begin{Def}
		
		$\A^M_{SFT}$ is the commutative algebra over $\mathbb{Z}_2$ freely generated by the following elements:
		
		\begin{itemize}
			\item $\alpha_{ij}^R$ for all $1\leq i<j \leq n$. (representing right half-disks with boundary on the $ij$-segment of $M$)
			\item $\alpha_{ij}^L$ for all $1\leq i<j \leq n$. (representing left half-disks with boundary on the $ij$-segment of $M$)
			\item $\beta_{ij}^R$ for $1\leq i<j \leq n$ such that $\{i,j\} \in \beta^R$ (representing a right strand of $\Lambda$ connecting points $i$ and $j$)
			\item $\beta_{ij}^L$ for $1\leq i<j \leq n$ such that $\{i,j\} \in \beta^L$. (representing a left strand of $\Lambda$ connecting points $i$ and $j$) 
		\end{itemize}
		
	\end{Def}

	The grading on $\A^M_{SFT}$ is defined using an auxiliary Maslov potential function $\mu: \{1,\dots, n\} \to \mathbb{Z}$. Set 
	\begin{align*}
		|\alpha_{ij}^L| &= \mu(i)-\mu(j)-1,\\
		|\alpha_{ij}^R| &= \mu(j)-\mu(i)-1,\\
		|\beta_{ij}^*| &= -1.
	\end{align*}

	\subsubsection{SFT differential and SFT bracket on $\A_{SFT}^M$}
	
	\-\
	
	The differential on $\A^M_{SFT}$ has an SFT component and a string topology component. In this section we define the SFT component of the differential as well as the SFT bracket.
	
	\begin{Def}[$d_{SFT}^M$] \label{def:dsftm} \-\
		
		$d_{SFT}^M$ is defined on generators as:
		
		\begin{itemize}
			\item $d_{SFT}^M(\alpha_{ij}^R) = \sum_{k>j} \alpha_{ik}^R \alpha_{jk}^L + \sum_{k<i} \alpha_{kj}^R \alpha_{ki}^L$.
			\item $d_{SFT}^M(\alpha_{ij}^L) = \sum_{k>j} \alpha_{ik}^L \alpha_{jk}^R + \sum_{k<i} \alpha_{kj}^L \alpha_{ki}^R$. 
			\item $d_{SFT}^M(\beta_{ij}^L) = \sum_{k<i} \alpha_{ki}^L\alpha_{ki}^R + \sum_{k>i, k\neq j} \alpha_{ik}^L\alpha_{ik}^R + \sum_{k<j, k\neq i} \alpha_{kj}^L\alpha_{kj}^R + \sum_{k>j} \alpha_{jk}^L\alpha_{jk}^R$.
			\item $d_{SFT}^M(\beta_{ij}^R) = \sum_{k<i} \alpha_{ki}^L\alpha_{ki}^R + \sum_{k>i, k\neq j} \alpha_{ik}^L\alpha_{ik}^R + \sum_{k<j, k\neq i} \alpha_{kj}^L\alpha_{kj}^R + \sum_{k>j} \alpha_{jk}^L\alpha_{jk}^R$.
		\end{itemize}
		
		Extend $d_{SFT}^M$ to a map $\A^M_{SFT}\to \A^M_{SFT}$ by linearity and the Leibniz rule.
	\end{Def}
	
	See \Cref{sec:examples} for some example computations.
	
	This definition can be rewritten using an SFT bracket, which is defined as follows:
	
	\begin{Def}[SFT bracket] \-\
		
		Define $\{\cdot, \cdot\}: \A^M_{SFT} \times \A^M_{SFT} \to \A^M_{SFT}$ by first setting
		\begin{itemize}
			\item $\{\alpha_{ij}^x, \alpha_{k\ell}^x\} = \begin{cases}
				\alpha_{i\ell}^x & \quad \text{if } j=k \\
				\alpha_{kj}^x & \quad \text{if } i=\ell \\
				0 & \quad \text{otherwise}
			\end{cases}$
			\item $\{\alpha_{ij}^x, \beta_{k\ell}^x\} = \begin{cases}
				\alpha_{ij}^x & \quad \text{if } |\{i,j\} \cap \{k,\ell\}| = 1 \\
				0 & \quad \text{otherwise}
			\end{cases}$
			\item $\{\beta_{ij}^x, \beta_{k\ell}^x\} = 0$.
			\item $\{g^L, h^R\} = 0$,
		\end{itemize}
		where $x$ is either an $L$ or an $R$ superscript, and $g^L$ and $h^R$ are any generators with an $L$ or $R$ superscript, respectively.

		Now, extend $\{\cdot, \cdot\}$ to a map $\A^M_{SFT}\times \A^M_{SFT}\to \A^M_{SFT}$ by declaring that $\{x,y\} = \{y,x\}$, by linearity:
		\[ \{x, y+z\} = \{x,y\} + \{x,z\}, \]
		and by the Leibniz rule:
		\[ \{x,yz\} = \{x,y\}z + y\{x,z\}.\]

	\end{Def}

	\begin{Lemma}[Jacobi identity] \label{lem:AMJac} \-\
		
		The SFT bracket satisfies the Jacobi identity 
		\[\{x,\{y,z\}\} + \{y,\{z,x\}\} + \{z,\{x,y\}\} = 0 \]
	\end{Lemma}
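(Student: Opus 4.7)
The plan follows the standard reduction for a Poisson-type Jacobi identity on an algebra where the bracket is extended from generators by symmetry and the Leibniz rule. Setting
\[
J(x,y,z) := \{x,\{y,z\}\} + \{y,\{z,x\}\} + \{z,\{x,y\}\},
\]
I would first show, by a short direct computation from the Leibniz rule and the symmetry $\{a,b\}=\{b,a\}$, that $J(x,y,zw) = J(x,y,z)\,w + z\,J(x,y,w)$. Combined with the manifest symmetry of $J$ in its three arguments (valid over $\mathbb{Z}_2$), this makes $J$ a derivation in each slot, so it vanishes on all of $\A^M_{SFT}$ as soon as it vanishes on triples of generators.

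Next I would exploit the decoupling $\{g^L, h^R\} = 0$: for any triple of generators containing both $L$- and $R$-superscripts, the corresponding $J(x,y,z)$ is identically zero, since each nested bracket either involves a cross-superscript inner bracket (which is zero) or yields a same-superscript inner element that cannot pair nontrivially with the remaining opposite-superscript outer argument. This reduces verification to triples in which all three generators share a single superscript, and by the $L/R$ symmetry it suffices to treat one side; I drop the superscript henceforth.

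For the main check, I would identify the linear span $V$ of generators with a subspace of $\mathfrak{gl}_n(\mathbb{Z}_2)$ via $\alpha_{ij} \mapsto E_{ij}$ (for $i<j$) and $\beta_{ij} \mapsto E_{ii}+E_{jj}$, and verify that our bracket on $V$ pulls back from the matrix commutator mod $2$. Direct computation gives $[E_{ij}, E_{k\ell}] \equiv \delta_{jk}E_{i\ell} + \delta_{i\ell}E_{kj} \pmod{2}$, which under $i<j,\, k<\ell$ recovers the $\{\alpha,\alpha\}$ rule; the identity $[E_{ij}, E_{kk}+E_{\ell\ell}] \equiv |\{i,j\}\cap\{k,\ell\}|\cdot E_{ij} \pmod{2}$ recovers the $\{\alpha,\beta\}$ rule; and $[E_{ii}+E_{jj}, E_{kk}+E_{\ell\ell}] = 0$ recovers $\{\beta,\beta\}=0$. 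The Jacobi identity on generators then follows from the Jacobi identity for matrix commutators.

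The main obstacle is checking that the matrix commutator really does recover our bracket rules under the ordering constraint $i<j$ on $\alpha$ generators: one must verify that the cases $j=k$ and $i=\ell$ in $[E_{ij}, E_{k\ell}]$ are mutually exclusive given $i<j,\, k<\ell$ (so no combined term appears), and that the resulting matrix units always have indices in the correct order to lie in $V$. If one prefers to avoid the matrix picture, an equivalent alternative is to enumerate the generator-type triples $(\alpha,\alpha,\alpha)$, $(\alpha,\alpha,\beta)$, $(\alpha,\beta,\beta)$, $(\beta,\beta,\beta)$ together with the possible index overlap patterns, and verify mod-$2$ cancellation directly in each case.
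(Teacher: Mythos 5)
Your proposal is correct, and while the reduction steps coincide with the paper's, the core verification is genuinely different. Like the paper, you reduce to triples of generators via the Leibniz rule and dispose of mixed-superscript triples using $\{g^L,h^R\}=0$ (your observation that the Jacobiator is a derivation in each slot, with the cross terms $\{y,z\}\{x,w\}+\{x,z\}\{y,w\}$ cancelling mod $2$, is exactly what justifies the paper's one-line reduction). Where you diverge is the generator check: the paper splits into four cases by the number of $\beta$'s among $x,y,z$ and verifies cancellation by hand, using an inclusion--exclusion count of index overlaps mod $2$ in the hardest case; you instead realize the bracket on the span of generators as the mod-$2$ matrix commutator in $\mathfrak{gl}_n(\mathbb{Z}_2)$ via $\alpha_{ij}\mapsto E_{ij}$, $\beta_{ij}\mapsto E_{ii}+E_{jj}$, and inherit Jacobi from associativity of matrix multiplication. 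The compatibility checks you flag do all go through: $j=k$ and $i=\ell$ cannot hold simultaneously when $i<j$ and $k<\ell$; the resulting matrix units are strictly upper triangular so they lie in the span of the $\alpha$'s; and the case $|\{i,j\}\cap\{k,\ell\}|=2$ in the $\{\alpha,\beta\}$ rule reconciles because $2\equiv 0 \pmod 2$. Your route buys a conceptual explanation of where the bracket comes from and eliminates the casework; the paper's route is self-contained and avoids having to verify that the embedding intertwines the brackets in all index configurations.
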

	\begin{proof}
		By linearity and the Leibniz rule, it suffices to prove the identity when $x$, $y$, $z$ are generators. 
		
		Suppose the superscripts of $x$, $y$, and $z$ are not all the same (either not all $L$ or not all $R$). Then the Jacobi identity trivially holds, since all three terms will vanish.
		
		So without loss of generality assume that $x$, $y$, $z$ all have a superscript of $L$. There are four cases according to the number of $\beta$'s among $x,y,z$. 
		
		\begin{enumerate}
			\item Each one of $x$,$y$,$z$ is a $\beta$ generator:
			
			In this case the identity is trivial since all terms vanish. \\
			
			\item Two of $x,y,z$ are $\beta$ generators.
			
			Without loss of generality, suppose they are $x$ and $y$. Write $x = \beta_{ij}^L, y = \beta_{k\ell}^L, z = \alpha_{pq}^L$.

			\[ \{ \beta_{ij}^L, \{\beta_{k\ell}^L, \alpha_{pq}^L \}\} + \{ \beta_{k\ell}^L, \{\alpha_{pq}^L, \beta_{ij}^L \}\} + \{\alpha_{pq}^L , \{\beta_{ij}^L, \beta_{k\ell}^L \}\} \]
			
			The first two terms cancel with each other, while the third term vanishes.
			
			\item One of $x$, $y$, $z$ is a $\beta$ generator:
			
			Without loss of generality, suppose it is $x$. Write $x=\beta_{ij}^L$, $y=\alpha_{k\ell}^L$, $z=\alpha_{pq}^L$.
			
			We must show 
			
			\[ \{ \beta_{ij}^L, \{\alpha_{k\ell}^L, \alpha_{pq}^L \}\} + \{ \alpha_{k\ell}^L, \{\alpha_{pq}^L, \beta_{ij}^L \}\} + \{\alpha_{pq}^L , \{\beta_{ij}^L, \alpha_{k\ell}^L \}\} = 0 \]

			We now split into subcases according to the value of $\{\alpha_{k\ell}^L, \alpha_{pq}^L\}$. 
			
			\begin{enumerate}
				\item $\{\alpha_{k\ell}^L, \alpha_{pq}^L\} = 0$. 
				
				In this case, the first term vanishes. Furthermore, since $\{\alpha_{pq}^L, \beta_{ij}^L\}$ must be either $0$ or $\alpha_{pq}^L$ the second term will vanish as well. Similarly, the third term must also vanish.
				
				\item $\{\alpha_{k\ell}^L, \alpha_{pq}^L\} = \alpha_{kq}^L$.
				
				This happens when $\ell = p$. 
				
				Let $A=\{i,j\}$, $B=\{k,\ell\}$, $C=\{p,q\}$, $D=\{k,q\}$. 
				
				Now, 
				
				\begin{align*}
					& \{ \beta_{ij}^L, \{\alpha_{k\ell}^L, \alpha_{pq}^L \}\} + \{ \alpha_{k\ell}^L, \{\alpha_{pq}^L, \beta_{ij}^L \}\} + \{\alpha_{pq}^L , \{\beta_{ij}^L, \alpha_{k\ell}^L \}\}  \\
					&= (|A\cap D|_2) \alpha_{kq}^L + (|A\cap C|_2)\alpha_{kq}^L + (|A\cap B|_2) \alpha_{kq}^L \\
					&= \left( |A\cap B|_2 + |A\cap C|_2 + |A\cap D|_2\right) \alpha_{kq}^L
				\end{align*}
				
				where $|\cdot |_2$ denotes cardinality mod 2.
				
				By the inclusion-exclusion principle one may check that 
				\[ |A\cap B|_2 + |A\cap C|_2 + |A\cap D|_2 = 0. \]

				\item $\{\alpha_{k\ell}^L, \alpha_{pq}^L\} = \alpha_{p\ell}^L$. 
				
				This is analogous to case (b).

			\end{enumerate}
			
			\item None of $x,y,z$ is a $\beta$ generator. Write $x = \alpha_{ij}^L, y = \alpha_{k\ell}^L, z=\alpha_{pq}^L$.
			
			\begin{align*}
				& \{ \alpha_{ij}^L, \{\alpha_{k\ell}^L, \alpha_{pq}^L \}\} + \{ \alpha_{k\ell}^L, \{\alpha_{pq}^L, \alpha_{ij}^L \}\} + \{\alpha_{pq}^L , \{\alpha_{ij}^L, \alpha_{k\ell}^L \}\}
			\end{align*}
			
			Suppose $i,k,p$ are not all different. Then it is easy to see that all three terms vanish. 
			
			Therefore we may assume without loss of generality that $i<k<p$. 
			
			This implies that the middle term must vanish, while the outer two terms either both vanish or both equal $\alpha_{iq}^L$ and hence cancel each other.
			
		\end{enumerate}
		
	\end{proof}
	
	Now, define the Hamiltonian:
	
	\begin{Def}
		\[ h^M := \sum_{1\leq i<j\leq n} \alpha_{ij}^L\alpha_{ij}^R \]
	\end{Def}
	
	One may check that the SFT differential (\Cref{def:dsftm}) may be rewritten using the SFT bracket and Hamiltonian:

	\begin{Lemma}
		$d_{SFT}^M = \{h^M, \cdot\}$.
	\end{Lemma}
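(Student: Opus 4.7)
The plan is a direct verification on generators. Both $d_{SFT}^M$ and $\{h^M, \cdot\}$ are derivations of the commutative product --- the former by its definition via the Leibniz rule, the latter because the bracket was declared to satisfy $\{x, yz\} = \{x,y\}z + y\{x,z\}$. Hence it suffices to check that $\{h^M, g\} = d_{SFT}^M(g)$ for each of the four types of generators $\alpha_{ab}^L$, $\alpha_{ab}^R$, $\beta_{ab}^L$, $\beta_{ab}^R$.

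For each generator $g$, I would start from the expansion
\[ \{h^M, g\} \;=\; \sum_{1 \leq i < j \leq n} \bigl( \{\alpha_{ij}^L, g\}\, \alpha_{ij}^R + \alpha_{ij}^L\, \{\alpha_{ij}^R, g\} \bigr), \]
obtained by commutativity of the bracket and the Leibniz rule, and then read off which summands survive. Because $\{g^L, h^R\} = 0$, at most one of the two inner brackets is nonzero in each case, which cuts the computation in half. For $g = \alpha_{ab}^L$ (resp. $\alpha_{ab}^R$), only the $\{\alpha^L, \alpha^L\}$ (resp. $\{\alpha^R, \alpha^R\}$) brackets contribute, and by definition this bracket is nonzero precisely when the two index pairs share an endpoint. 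Separating into the cases $j = a$ and $i = b$ (subject to $i < j$) yields exactly the two sums appearing on the right-hand side of $d_{SFT}^M(\alpha_{ab}^L)$ and $d_{SFT}^M(\alpha_{ab}^R)$ respectively.

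For $g = \beta_{ab}^y$ with $y \in \{L, R\}$, only the $\{\alpha_{ij}^y, \beta_{ab}^y\}$ brackets contribute, and these equal $\alpha_{ij}^y$ precisely when $|\{i,j\} \cap \{a,b\}| = 1$. Assuming without loss of generality that $a < b$, this condition partitions the pairs $(i,j)$ with $i < j$ into four disjoint subcases according to which of $a, b$ lies in $\{i, j\}$ and in which slot: $i = a$ with $j > a,\ j \neq b$; $j = a$ with $i < a$; $i = b$ with $j > b$; and $j = b$ with $i < b,\ i \neq a$. These four subsums agree term for term with the four sums in the defining formula for $d_{SFT}^M(\beta_{ab}^y)$.

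There is no conceptual obstacle here; the proof is purely a matching of index sets. The only thing requiring care is the bookkeeping --- one must verify that the constraint $i < j$ coming from $h^M$, combined with $a < b$, reproduces the ranges $k < i$, $k > j$, $k \neq i$, $k \neq j$ appearing in the definition of $d_{SFT}^M$ without double-counting or omitting a case. The case analysis sketched above handles this cleanly.
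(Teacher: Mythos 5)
Your verification is correct and is exactly the computation the paper leaves to the reader (the lemma is stated with ``one may check'' and no written proof). The generator-by-generator index matching you describe does reproduce the four formulas in \Cref{def:dsftm}, so there is nothing to add.
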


	\subsubsection{String differential  on $\A_{SFT}^M$}\-\
	
	In this section we define the string component of the differential on $\A^M_{SFT}$. 
	
	\begin{Def} [Paths in $\A^M_{SFT}$] \label{def:ampath}\-\
		
		Given any $i\in \{1,\dots, n\}$, \Cref{pairingknotcondition} ensures that by starting at $1$ and alternately applying  $\beta^R$ and $\beta^L$  (starting with $\beta^R$), one will eventually reach $i$. Let $\Gamma_i$ be the set of the $\beta$ generators used in this path from $1$ to $i$. In other words, if $i=1$, $\Gamma_i = \varnothing$; otherwise
		\[ \Gamma_i = \{\beta_{1,\beta^R(1)}^R, \beta_{\beta^R(1), \beta^L(\beta^R(1))}^L , \dots \} \]
		(ending at the first occurrence of $\beta^*_{*, i}$ or $\beta^*_{i, *}$). Let $\gamma_i$ be the sum of the elements in $\Gamma_i$, i.e.
		\[ \gamma_i = \beta_{1,\beta^R(1)}^R + \beta_{\beta^R(1), \beta^L(\beta^R(1))}^L + \dots\]
		
	\end{Def}
	
	\begin{Def}[String differential]\label{def:amstrdiff} \-\
		
		$\delta_{str}^M$ is defined on generators as:

		\begin{itemize}
			\item $\delta_{str}^M(\alpha_{ij}^R) = (\gamma_i+\gamma_j)\alpha_{ij}^R + \sum_{i<k<j} \alpha_{ik}^R\alpha_{kj}^R$ %
			\item $\delta_{str}^M(\alpha_{ij}^L) = (\gamma_i+\gamma_j) \alpha_{ij}^L + \sum_{i<k<j} \alpha_{ik}^L \alpha_{kj}^L$
			\item $\delta_{str}^M(\beta_{ij}^R) = (\beta_{ij}^R)^2$ %
			\item $\delta_{str}^M(\beta_{ij}^L) = (\beta_{ij}^L)^2$ 
		\end{itemize}
		
		Extend $\delta_{str}^M$ to $\A^M_{SFT}\to \A^M_{SFT}$ by linearity and the Leibniz rule. 
		
	\end{Def}
	
	The following is a useful property of the string differential.
	
	\begin{Lemma}[$\delta_{str}^M$ is a derivation of the SFT bracket] \label{lem:deltaMderivation}
		
		\[ \delta_{str}^M\left(\{x,y\}\right) = \{\delta_{str}^M( x), y\} + \{x, \delta_{str}^M(y)\}.\]
		
	\end{Lemma}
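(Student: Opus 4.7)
The plan is to reduce the identity to the case of pairs of generators by linearity and the Leibniz rule, and then check each case by direct calculation. For the reduction: since $\{-,-\}$ is bilinear and satisfies the Leibniz rule in each argument and $\delta_{str}^M$ is a derivation of the product, a standard expansion argument shows that if the identity holds for every pair of generators, it propagates to arbitrary monomials in both arguments. Together with the symmetry $\{x,y\}=\{y,x\}$, this reduces the claim to a finite list of case-pairs with $x,y \in \{\alpha_{ij}^L,\alpha_{ij}^R,\beta_{ij}^L,\beta_{ij}^R\}$.

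I would split the case analysis into: (i) both $x,y$ are $\beta$'s---here everything vanishes, since $\{\beta^{\star},\beta^{\star}\}=0$ kills the left-hand side, while $\delta_{str}^M(\beta_{ij}^{\star})=(\beta_{ij}^{\star})^{2}$ combined with the Leibniz rule gives $2\beta_{ij}^{\star}\{\beta_{ij}^{\star},y\}\equiv 0 \pmod 2$ on the right; (ii) one $\alpha$ and one $\beta$, handled by direct expansion; (iii) two $\alpha$'s of the same superscript; and (iv) two $\alpha$'s of opposite superscripts, where the left-hand side is zero but both terms on the right contain nonzero $\gamma$-decorated contributions that must cancel.

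The substantive case is (iii), typified by $x=\alpha_{ab}^{R}$, $y=\alpha_{bc}^{R}$ with $a<b<c$, so $\{x,y\}=\alpha_{ac}^{R}$. Expanding $\delta_{str}^M\{x,y\}$ and $\{\delta_{str}^M x,y\}+\{x,\delta_{str}^M y\}$ via the Leibniz rule and collecting the pure $\alpha\cdot\alpha$ contributions on each side shows they differ by exactly $\alpha_{ab}^{R}\alpha_{bc}^{R}$ (the ``middle'' term $m=b$ is present only on the left); the $\gamma$-decorated contributions on the right combine via $\gamma_b+\gamma_b\equiv 0$ to produce $(\gamma_a+\gamma_c)\alpha_{ac}^{R}$ plus two residual brackets $\{\gamma_a+\gamma_b,\alpha_{bc}^{R}\}\alpha_{ab}^{R}$ and $\{\alpha_{ab}^{R},\gamma_b+\gamma_c\}\alpha_{bc}^{R}$. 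Since $\A^M_{SFT}$ is commutative and the bracket of a $\beta_{pq}^{R}$-summand with $\alpha_{jk}^{R}$ equals $\alpha_{jk}^{R}$ exactly when $|\{p,q\}\cap\{j,k\}|=1$, matching the two sides reduces to a mod-$2$ identity $N+M\equiv 1$, where $N$ (resp.\ $M$) counts the $\beta^R$-edges on the walk $a\to b$ (resp.\ $b\to c$) with exactly one endpoint in $\{b,c\}$ (resp.\ $\{a,b\}$).

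The main obstacle is proving combinatorial identities of this kind, along with the analogous identity in case (iv) that balances $\beta^L$-edges on the $R$-side with $\beta^R$-edges on the $L$-side. These follow from the single-component condition \Cref{pairingknotcondition}: the alternating walk $1\to\beta^R(1)\to\beta^L(\beta^R(1))\to\cdots$ visits each element of $\{1,\dots,n\}$ exactly once, so the paths $\gamma_i,\gamma_j,\gamma_k,\gamma_\ell$ can be compared along a single global cyclic order. I would verify the needed counts by induction on the length of the $\gamma$-paths, peeling off one $\beta$-edge at a time and using that the endpoint of the peeled edge determines how the overlap count changes mod~$2$; the base case (one of the indices equals $1$) reduces to an explicit check.
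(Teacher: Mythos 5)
Your proof follows the paper's strategy---reduce to pairs of generators via bilinearity and the Leibniz rule, then case-split on how many of $x,y$ are $\beta$'s and whether the superscripts agree---and your cases (i), (ii), (iv) match the paper's computations; in (iv) the paper likewise shows that the two residual terms $\{\gamma_i+\gamma_j,\alpha_{k\ell}^L\}\alpha_{ij}^R$ and $\{\alpha_{ij}^R,\gamma_k+\gamma_\ell\}\alpha_{k\ell}^L$ each equal the interlacing parity of $\{i,j\}$ and $\{k,\ell\}$ in the cyclic order of the alternating walk, hence cancel. Where you genuinely diverge is case (iii): the paper folds the same-superscript $\alpha$--$\alpha$ case into ``all other cases are similar,'' but as you correctly observe it is the one case in which $\{x,y\}$ is itself a nonzero $\alpha$-generator whose entire string differential must be recovered, and it hinges on the parity identity $N+M\equiv 1$ that you isolate. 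That identity is true, and the single-cycle condition \Cref{pairingknotcondition} is exactly what makes it so: writing the walk as $v_0=1,v_1,\dots,v_{n-1}$ with $e_k=\{v_{k-1},v_k\}$ belonging to $\beta^R$ iff $k$ is odd, each interior vertex of an arc meets exactly one $\beta^R$-edge of that arc, so $N$ and $M$ reduce to sums of endpoint parities and membership indicators, and checking the six possible walk-orderings of $a,b,c$ gives $N+M\equiv 1$ in every case; this direct count is somewhat cleaner than the edge-peeling induction you propose, though either works. The only thing to add to your sketch is the sub-case of (iii) where the two same-superscript $\alpha$'s share no index: there the left-hand side vanishes and the right-hand side cancels by the same interlacing-parity argument as in (iv).
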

	
	\begin{proof}
		It suffices to prove it when $x,y$ are generators.
		\begin{itemize}
			\item $x=\beta_{ij}^*$, $y=\beta_{k\ell}^*$.
			
			Both sides vanish so the formula holds trivially.
			
			\item $x=\alpha_{ij}^R$, $y=\beta_{k\ell}^L$. 
			
			Both sides vanish so the formula holds trivially.

			\item $x=\alpha_{ij}^R$, $y=\beta_{k\ell}^R$. 
			
			\begin{align*}
				&\{\alpha_{ij}^R, \delta_{str}^M(\beta_{k\ell}^R)\} = \{\alpha_{ij}^R, (\beta_{k\ell}^R)^2\} = 2\{\alpha_{ij}^R, \beta_{k\ell}^R\}\beta_{k\ell}^R = 0
			\end{align*}
			
			So it suffices to show
			
			\[ \delta_{str}^M\left(\{\alpha_{ij}^R,\beta_{k\ell}^R\}\right) = \{\delta_{str}^M(\alpha_{ij}^R), \beta_{k\ell}^R\}\]
			
			Expand the RHS.
			
			\begin{align*}
				\{\delta_{str}^M(\alpha_{ij}^R), \beta_{k\ell}^R\} &= \left\{ (\gamma_i+\gamma_j) \alpha_{ij}^R + \sum_{i<m<j} \alpha_{im}^R\alpha_{mj}^R , \beta_{k\ell}^R \right\} \\
				&= \left\{(\gamma_i+\gamma_j), \beta_{k\ell}^R \right\} \alpha_{ij}^R + (\gamma_i+\gamma_j)\left\{ \alpha_{ij}^R, \beta_{k\ell}^R \right\} + \sum_{i<m<j} \left\{\alpha_{im}^R\alpha_{mj}^R, \beta_{k\ell}^R \right\}
			\end{align*}
			
			The first term vanishes since $\gamma_i$ and $\gamma_j$ are both just a sum of $\beta$ generators. We now split into cases based on $\{k,\ell\} \cap \{i,j\}$.
			
			\begin{enumerate}[label=Case \arabic*.]
				\item $\{k,\ell\} \cap \{i,j\} = \varnothing$.
				
				Then the second term $(\gamma_i+\gamma_j)\left\{ \alpha_{ij}^R, \beta_{k\ell}^R \right\}$ vanishes. 
				
				For the third term, for each $i<m<j$, the only way $\left\{\alpha_{im}^R\alpha_{mj}^R, \beta_{k\ell}^R \right\}$ might be nonzero is if $m\in \{k,\ell\}$. But even then $\left\{\alpha_{im}^R\alpha_{mj}^R, \beta_{k\ell}^R \right\} = 2\alpha_{im}^R\alpha_{mj}^R = 0$. Therefore the third term vanishes and 
				
				\[ \{\delta_{str}^M(\alpha_{ij}^R), \beta_{k\ell}^R\} = 0\]
				
				Likewise, 
				
				\[ \delta_{str}^M\left(\{\alpha_{ij}^R,\beta_{k\ell}^R\}\right) = 0. \]
				
				\item $\{k, \ell\} = \{i,j\}$.
				
				For each $i<m<j$, $\left\{\alpha_{im}^R\alpha_{mj}^R, \beta_{k\ell}^R \right\} = 2\alpha_{im}^R\alpha_{mj}^R = 0$. Everything else also vanishes as in case 1.
				
				\item $|\{k,\ell\} \cap \{i,j\}| = 1$. 
				
				Assume $i=k$ and $\ell\neq j$; the other possibilities are analogous.
				
				Let $i<m<j$. If $\ell \neq m$ then
				
				\begin{align*}
					\left\{\alpha_{im}^R\alpha_{mj}^R, \beta_{k\ell}^R \right\} &= \alpha_{im}^R\left\{\alpha_{mj}^R, \beta_{k\ell}^R \right\} + \left\{\alpha_{im}^R, \beta_{k\ell}^R \right\}\alpha_{mj}^R =  0+\alpha_{im}^R\alpha_{mj}^R = \alpha_{im}^R\alpha_{mj}^R
				\end{align*}
				
				If $\ell = m$ then
				
				\begin{align*}
					\left\{\alpha_{im}^R\alpha_{mj}^R, \beta_{k\ell}^R \right\} &= \alpha_{im}^R\left\{\alpha_{mj}^R, \beta_{k\ell}^R \right\} + \left\{\alpha_{im}^R, \beta_{k\ell}^R \right\}\alpha_{mj}^R =  \alpha_{im}^R\alpha_{mj}^R+0 = \alpha_{im}^R\alpha_{mj}^R
				\end{align*}
				
				Thus we have 
				
				\begin{align*}
					\{\delta_{str}^M(\alpha_{ij}^R), \beta_{k\ell}^R\} &= (\gamma_i+\gamma_j) \alpha_{ij}^R + \sum_{i<m<j} \alpha_{im}^R\alpha_{mj}^R \\
					&= \delta_{str}^M(\alpha_{ij}^R) \\
					&= \delta_{str}^M\left(\{\alpha_{ij}^R,\beta_{k\ell}^R\}\right)
				\end{align*}
				
				as desired.
				
			\end{enumerate}

			\item $x=\alpha_{ij}^R$, $y=\alpha_{k\ell}^L$. 
			
			\begin{align*}
				\delta_{str}^M\left(\{\alpha_{ij}^R,\alpha_{k\ell}^L\}\right) = 0
			\end{align*}
			
			\begin{align*}
				& \{\delta_{str}^M(\alpha_{ij}^R), \alpha_{k\ell}^L\} + \{\alpha_{ij}^R, \delta_{str}^M(\alpha_{k\ell}^L)\} \\
				&=\left\{(\gamma_i+\gamma_j)\alpha_{ij}^R + \sum_{i<m<j} \alpha_{im}^R\alpha_{mj}^R , \alpha_{k\ell}^L\right\} + \left\{\alpha_{ij}^R, (\gamma_k+\gamma_{\ell}) \alpha_{k\ell}^L + \sum_{k<m<\ell} \alpha_{km}^L \alpha_{m\ell}^L  \right\} \\
				&= \left\{\gamma_i+\gamma_j , \alpha_{k\ell}^L\right\}\alpha_{ij}^R + \left\{\alpha_{ij}^R, \gamma_k+\gamma_{\ell}   \right\}\alpha_{k\ell}^L
			\end{align*}
			
			Introduce an ordering $<'$ of $\{1,\dots, n\}$ according to the sequence 
			\[ 1 <' \beta^R(1) <' \beta^L(\beta^R(1)) <' \beta^R(\beta^L(\beta^R(1))) <' \dots \]
			(stopping just before $1$ is reached again).
			If $ij$ and $k\ell$ are not interlaced with respect to $<'$, then both terms above will vanish. If $ij$ and $k\ell$ are interlaced, e.g. $i<'k<'j<'\ell$, then
			
			\[ \{\gamma_i, \alpha_{k\ell}^L\} = 0; \quad \{\gamma_j, \alpha_{k\ell}^L\} = \alpha_{k\ell}^L \]
			
			and 
			
			\[ \{\alpha_{ij}^R, \gamma_k\} = \alpha_{ij}^R; \quad \{\alpha_{ij}^R, \gamma_\ell\} = 0 \]

			Thus, 
			
			\[ \left\{\gamma_i+\gamma_j , \alpha_{k\ell}^L\right\}\alpha_{ij}^R + \left\{\alpha_{ij}^R, \gamma_k+\gamma_{\ell}   \right\}\alpha_{k\ell}^L = 0,\]
			
			as desired.

			\item All other cases are similar to one of the cases above.

		\end{itemize}
	\end{proof}
	
	Now, the full differential on $\A^M_{SFT}$ is simply the sum of the SFT and string components.
	
	\begin{Def}
		$d^M:= d_{SFT}^M + \delta_{str}^M$. 
	\end{Def}
	
	See \Cref{sec:examples} for some example computations. 
	
	\subsection{$d^M$ is a differential}\-\
	
	In this section we prove the following theorem.
	
	\begin{Th} \label{thm:dM2=0}
		$(\A^M, d^M)$ is a differential graded algebra. In other words, $d^M$ satisfies $(d^M)^2 = 0$.
	\end{Th}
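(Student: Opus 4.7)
The plan is to verify $(d^M)^2 = 0$ by reducing to generators and then splitting the identity into two pieces, each checked directly. First, since both $d_{SFT}^M$ and $\delta_{str}^M$ are derivations of the commutative algebra $\A^M_{SFT}$, so is $d^M$; moreover, in characteristic $2$ the square of any derivation of a commutative algebra is itself a derivation, because the would-be Leibniz defect $2\,d^M(f)\,d^M(g)$ vanishes. Hence it suffices to verify $(d^M)^2(x)=0$ for each of the four families of generators $\alpha_{ij}^L,\alpha_{ij}^R,\beta_{ij}^L,\beta_{ij}^R$. Expanding
\[ (d^M)^2 \;=\; (d_{SFT}^M)^2 \;+\; \delta_{str}^M\,d_{SFT}^M \;+\; d_{SFT}^M\,\delta_{str}^M \;+\; (\delta_{str}^M)^2, \]
and using $d_{SFT}^M=\{h^M,\cdot\}$ together with \Cref{lem:deltaMderivation}, the two mixed terms collapse in characteristic $2$ to $\{\delta_{str}^M(h^M),x\}$ (the two copies of $\{h^M,\delta_{str}^M(x)\}$ cancel). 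The identity to check becomes the ``master equation''
\[ \{h^M,\{h^M,x\}\} \;+\; \{\delta_{str}^M(h^M),x\} \;+\; (\delta_{str}^M)^2(x) \;=\; 0. \]
I would establish this by proving two separate statements: (a)~$(\delta_{str}^M)^2(x)=0$, and (b)~$\{h^M,\{h^M,x\}\}=\{\delta_{str}^M(h^M),x\}$ in the commutative algebra, so that their sum vanishes modulo $2$.

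For (a), the $\beta$-case is immediate: $\delta_{str}^M(\beta_{ij}^*)=(\beta_{ij}^*)^2$ is a square, and the derivative of a square in characteristic $2$ is $2\beta\,\delta_{str}^M(\beta)=0$. For the $\alpha$-case the essential ingredient is the Frobenius-type identity $\delta_{str}^M(\gamma_i)=\gamma_i^2$, which holds because $\gamma_i$ is a sum of $\beta$'s each of which is sent to its square. Applying $\delta_{str}^M$ a second time to $(\gamma_i+\gamma_j)\alpha_{ij}^R + \sum_{i<k<j}\alpha_{ik}^R\alpha_{kj}^R$ and collecting: the $\gamma^2\alpha$ terms produced by Frobenius exactly match the $\gamma^2\alpha$ terms from the Leibniz expansion of $(\gamma_i+\gamma_j)\,\delta_{str}^M(\alpha_{ij}^R)$ and cancel; the mixed $\gamma$-decorated triple products cancel via the telescoping identity $(\gamma_i+\gamma_j)+(\gamma_i+\gamma_k)+(\gamma_k+\gamma_j)=0$; and the pure triple products $\sum_{i<\ell<k<j}\alpha_{i\ell}^R\alpha_{\ell k}^R\alpha_{kj}^R$ and $\sum_{i<k<m<j}\alpha_{ik}^R\alpha_{km}^R\alpha_{mj}^R$ parametrize the same family of ordered triples after relabeling, and so double in characteristic $2$.

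For (b), I would first compute $\delta_{str}^M(h^M)=\sum_{p<q<r}\bigl[\alpha_{pq}^L\alpha_{qr}^L\alpha_{pr}^R + \alpha_{pr}^L\alpha_{pq}^R\alpha_{qr}^R\bigr]$; all $\gamma$-contributions from the Leibniz expansion of $\delta_{str}^M(\alpha_{pq}^L\alpha_{pq}^R)$ appear twice and cancel in characteristic $2$. Both sides of (b) then unfold as sums of cascading triple $\alpha$-products: the left-hand side from two successive bracket applications (first yielding a two-factor expression via $\{h^M,x\}$, then acting by $\{h^M,\cdot\}$ on each factor), and the right-hand side from bracketing the three-factor summands of $\delta_{str}^M(h^M)$ with the factors of $x$. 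I would enumerate configurations of index triples (with the appropriate $L/R$ superscripts) and verify that each triple product appears with matching coefficient on both sides. The $\beta$-generator case is a finite check using the formula $\{\alpha^*,\beta^*\}=\alpha^*$ when $|\{i,j\}\cap\{k,\ell\}|=1$; the $\alpha$-generator case requires a more involved enumeration.

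The principal obstacle will be this enumeration in step (b) for the $\alpha$ generators: one must stratify by how the indices of the generator align with the indices of the two $\alpha^L\alpha^R$ factors appearing in $h^M$ (equivalently, with the indices of the triple-product terms of $\delta_{str}^M(h^M)$), and track the $L/R$ superscripts in each contribution. The cancellation pattern is systematic, since every triple of indices corresponds to the same combinatorial configuration of two adjacent/nested pairs on both sides, so the matching is routine but intricate, and can be organized by cases paralleling the index positions used in the proof of \Cref{lem:deltaMderivation}.
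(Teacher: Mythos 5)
Your proposal is correct and follows essentially the same route as the paper: it reduces to the master equation $\{h^M,\{h^M,x\}\}+\{\delta_{str}^M(h^M),x\}+(\delta_{str}^M)^2(x)=0$ via \Cref{lem:deltaMderivation}, proves $(\delta_{str}^M)^2=0$ by the same Frobenius/telescoping/double-counting cancellations, and your computation of $\delta_{str}^M(h^M)$ agrees with the paper's $h_2^M$, so your step (b) is exactly the paper's \Cref{lem:h2M} combined with \Cref{lem:deltahM}. The only (cosmetic) differences are that you make the reduction to generators explicit and merge those two lemmas into a single identity.
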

	
	\begin{Lemma} \label{lem:deltaM2=0}
		$(\delta_{str}^M)^2 = 0$.
	\end{Lemma}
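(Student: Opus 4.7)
The plan is to reduce the statement to a check on generators by observing that over $\mathbb{Z}_2$ the square of any derivation on a commutative algebra is itself a derivation: if $d$ is a derivation then $d^2(xy) = d^2(x)y + 2d(x)d(y) + xd^2(y) = d^2(x)y + xd^2(y)$. Since $\delta_{str}^M$ is defined on $\A^M_{SFT}$ as a derivation, $(\delta_{str}^M)^2$ is then also a derivation and it suffices to check that it vanishes on each of the generators $\alpha_{ij}^L, \alpha_{ij}^R, \beta_{ij}^L, \beta_{ij}^R$.

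The $\beta$ cases are immediate: $(\delta_{str}^M)^2(\beta_{ij}^*) = \delta_{str}^M((\beta_{ij}^*)^2) = 2\beta_{ij}^*\delta_{str}^M(\beta_{ij}^*) = 0$ in characteristic $2$. For the $\alpha$ cases (I treat $\alpha_{ij}^R$, the $L$ case being identical), the conceptually key observation is a Frobenius identity: because $\gamma_i$ is a sum of $\beta$-generators, the freshman's dream yields
\[ \delta_{str}^M(\gamma_i) \;=\; \sum_{\beta \in \Gamma_i} \beta^2 \;=\; \Bigl(\sum_{\beta \in \Gamma_i}\beta\Bigr)^{\!2} \;=\; \gamma_i^2. \]
Hence $\delta_{str}^M(\gamma_i+\gamma_j) = (\gamma_i+\gamma_j)^2$, and when I expand $\delta_{str}^M\bigl((\gamma_i+\gamma_j)\alpha_{ij}^R\bigr)$ via the Leibniz rule the two resulting $(\gamma_i+\gamma_j)^2\alpha_{ij}^R$ contributions cancel, leaving only $(\gamma_i+\gamma_j)\sum_{i<k<j}\alpha_{ik}^R\alpha_{kj}^R$.

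It then remains to handle the nested sum $\sum_{i<k<j}\delta_{str}^M(\alpha_{ik}^R\alpha_{kj}^R)$. Applying Leibniz, the interior $\gamma_k$ factors appear once from each side and cancel mod $2$, so this contribution collapses to $(\gamma_i+\gamma_j)\sum_{i<k<j}\alpha_{ik}^R\alpha_{kj}^R$ plus a triple product coming from the $\sum \alpha \alpha$ terms of the inner $\delta_{str}^M$. That triple sum is of the form $\sum_{i<m<k<j}\alpha_{im}^R\alpha_{mk}^R\alpha_{kj}^R + \sum_{i<k<m<j}\alpha_{ik}^R\alpha_{km}^R\alpha_{mj}^R$, and relabelling $k\leftrightarrow m$ in either sum shows the two are identical, hence cancel in characteristic $2$. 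Adding the two surviving $(\gamma_i+\gamma_j)\sum_{i<k<j}\alpha_{ik}^R\alpha_{kj}^R$ contributions produces $0$, completing the check.

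The proof is essentially a bookkeeping computation; the only nontrivial conceptual step is recognizing the Frobenius identity $\delta_{str}^M(\gamma_i)=\gamma_i^2$, which is what makes the otherwise obstructing $(\gamma_i+\gamma_j)^2\alpha_{ij}^R$ contribution disappear. Once that observation is in hand, the remaining cancellations are forced by characteristic $2$ and the symmetry of the triple index sums.
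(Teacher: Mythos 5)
Your proof is correct and follows essentially the same route as the paper's: reduce to generators, cancel the $(\gamma_i+\gamma_j)^2\alpha_{ij}^R$ contributions via the mod-$2$ identity $\delta_{str}^M(\gamma_i)=\sum_{\beta\in\Gamma_i}\beta^2=\gamma_i^2$, and kill the remaining $\gamma$-terms and the two triple sums by the same pairwise cancellations. The only (welcome) addition is that you explicitly justify the reduction to generators by noting that the square of a derivation is again a derivation in characteristic $2$, a step the paper leaves implicit.
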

	\begin{proof}
				
		It suffices to show that $(\delta_{str}^M)^2(x) = 0$ for all generators $x$. 	
		
		This is trivial for $x = \beta_{ij}^R$ and $x=\beta_{ij}^L$. 
		
		\begin{align*}
			(\delta_{str}^M)^2(\alpha_{ij}^R) &= \delta_{str}^M \left( (\gamma_i+\gamma_j)\alpha_{ij}^R + \sum_{i<k<j} \alpha_{ik}^R\alpha_{kj}^R \right) \\
			&= \delta_{str}^M(\gamma_i+\gamma_j)\alpha_{ij}^R + (\gamma_i+\gamma_j) \delta_{str}^M(\alpha_{ij}^R) + \sum_{i<k<j} \left( \delta_{str}^M(\alpha_{ik}^R)\alpha_{kj}^R+ \alpha_{ik}^R\delta_{str}^M(\alpha_{kj}^R) \right) \\
			&= \left( \sum_{\beta \in \Gamma_i} \delta_{str}^M (\beta) + \sum_{\beta \in \Gamma_j} \delta_{str}^M(\beta) \right) \alpha_{ij}^R + (\gamma_i+\gamma_j) \left( (\gamma_i+\gamma_j)\alpha_{ij}^R + \sum_{i<k<j} \alpha_{ik}^R\alpha_{kj}^R \right) \\ 
			&+ \sum_{i<k<j} \left( \delta_{str}^M(\alpha_{ik}^R)\alpha_{kj}^R+ \alpha_{ik}^R\delta_{str}^M(\alpha_{kj}^R) \right) \\
			&= \left( \sum_{\beta \in \Gamma_i} \beta^2 + \sum_{\beta \in \Gamma_j} \beta^2 \right) \alpha_{ij}^R + (\gamma_i+\gamma_j)^2 \alpha_{ij}^R + (\gamma_i+\gamma_j) \sum_{i<k<j} \alpha_{ik}^R\alpha_{kj}^R \\
			&+ \sum_{i<k<j}  \delta_{str}^M(\alpha_{ik}^R)\alpha_{kj}^R+ \sum_{i<k<j}  \alpha_{ik}^R\delta_{str}^M(\alpha_{kj}^R) \\
			&= \left( \sum_{\beta \in \Gamma_i} \beta^2 + \sum_{\beta \in \Gamma_j} \beta^2 \right) \alpha_{ij}^R + (\gamma_i^2+\gamma_j^2) \alpha_{ij}^R + (\gamma_i+\gamma_j) \sum_{i<k<j} \alpha_{ik}^R\alpha_{kj}^R \\
			&+ \sum_{i<k<j}  \left( (\gamma_i+\gamma_k)\alpha_{ik}^R + \sum_{i<k'<k} \alpha_{ik'}^R\alpha_{k'k}^R \right)\alpha_{kj}^R+ \sum_{i<k<j}  \alpha_{ik}^R\left( (\gamma_k+\gamma_j)\alpha_{kj}^R + \sum_{k<k'<j} \alpha_{kk'}^R\alpha_{k'j}^R \right) 
		\end{align*}
		
		The first two summands cancel, so we are left with 
		
		\begin{align*}
			&= (\gamma_i+\gamma_j) \sum_{i<k<j} \alpha_{ik}^R\alpha_{kj}^R + \sum_{i<k<j}  \left( (\gamma_i+\gamma_k)\alpha_{ik}^R + \sum_{i<k'<k} \alpha_{ik'}^R\alpha_{k'k}^R \right)\alpha_{kj}^R \\
			&+ \sum_{i<k<j}  \alpha_{ik}^R\left( (\gamma_k+\gamma_j)\alpha_{kj}^R + \sum_{k<k'<j} \alpha_{kk'}^R\alpha_{k'j}^R \right) \\
			&= (\gamma_i+\gamma_j) \sum_{i<k<j} \alpha_{ik}^R\alpha_{kj}^R + \sum_{i<k<j} (\gamma_i+\gamma_k)\alpha_{ik}^R\alpha_{kj}^R + \sum_{i<k<j}  \alpha_{ik}^R (\gamma_k+\gamma_j)\alpha_{kj}^R   \\
			&+ \sum_{i<k<j} \sum_{i<k'<k} \alpha_{ik'}^R\alpha_{k'k}^R \alpha_{kj}^R  + \sum_{i<k<j}\sum_{k<k'<j} \alpha_{ik}^R \alpha_{kk'}^R\alpha_{k'j}^R  \\
		\end{align*}
		
		Now the first three terms cancel with each other, as do the last two terms. So, $(\delta_{str}^M)^2(\alpha_{ij}^R) = 0$.
		
		A similar computation shows that $(\delta_{str}^M)^2(\alpha_{ij}^L) = 0$.

	\end{proof}

	\begin{Def}
		$h^M_2:= \sum_{1\leq i<k<j\leq n} \alpha_{ik}^R\alpha_{kj}^R\alpha_{ij}^L + \sum_{1\leq i<k<j\leq n} \alpha_{ik}^L\alpha_{kj}^L\alpha_{ij}^R$.
	\end{Def}
	This is the analogue of $\frac{1}{2}\{h^M, h^M\}$ or $h^M\to h^M$ in the notation of \cite{ng} (See \Cref{sec:lsft})

	\begin{Lemma} \label{lem:h2M}
		$\{h^M, \{h^M, x\}\} = \{h^M_2, x\}$.
	\end{Lemma}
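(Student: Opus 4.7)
The strategy is to reduce the identity to a check on generators of $\A^M_{SFT}$ and then to verify it case by case.

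Both sides of the desired identity, viewed as operators in $x$, are derivations of the commutative algebra $\A^M_{SFT}$. For $\{h^M_2, \cdot\}$ this is immediate from the Leibniz rule for the SFT bracket. For $\{h^M, \{h^M, \cdot\}\}$, iterating Leibniz yields
\[
\{h^M, \{h^M, xy\}\} = \{h^M, \{h^M, x\}\}\, y + 2\,\{h^M, x\}\{h^M, y\} + x\,\{h^M, \{h^M, y\}\},
\]
and the cross term vanishes because we work over $\mathbb{Z}_2$. Consequently it suffices to verify $\{h^M, \{h^M, x\}\} = \{h^M_2, x\}$ when $x$ is a single generator, namely one of $\alpha_{pq}^L, \alpha_{pq}^R, \beta_{ij}^L, \beta_{ij}^R$.

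Each case is a direct unfolding. Take $x = \beta_{ij}^L$ as an example. Using the explicit formula for the bracket, one computes $\{h^M, \beta_{ij}^L\}$ to be a sum of monomials $\alpha_{kl}^L\alpha_{kl}^R$ over pairs $(k,l)$ with $|\{k,l\}\cap\{i,j\}|_2 = 1$, and then applies $\{h^M, \cdot\}$ once more. After reindexing, every resulting monomial takes a canonical form $\alpha_{ab}^R\alpha_{bc}^R\alpha_{ac}^L$ or $\alpha_{ab}^L\alpha_{bc}^L\alpha_{ac}^R$ with $a<b<c$, and the coefficient of each such monomial is $|\{a,b\}\cap\{i,j\}|_2 + |\{b,c\}\cap\{i,j\}|_2$ modulo $2$. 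Expanding $\{h^M_2, \beta_{ij}^L\}$ directly instead gives the coefficient $|\{a,c\}\cap\{i,j\}|_2$ on the $\alpha^R\alpha^R\alpha^L$ monomial (from $\{\alpha_{ac}^L, \beta_{ij}^L\}$) and $|\{a,b\}\cap\{i,j\}|_2 + |\{b,c\}\cap\{i,j\}|_2$ on the $\alpha^L\alpha^L\alpha^R$ monomial (from the two $\alpha^L$ factors). The two expansions then agree via the elementary identity
\[
|\{a,b\}\cap\{i,j\}|_2 + |\{b,c\}\cap\{i,j\}|_2 + |\{a,c\}\cap\{i,j\}|_2 \equiv 0 \pmod{2},
\]
which holds because every element of $\{i,j\}\cap\{a,b,c\}$ lies in exactly two of the three pairs $\{a,b\},\{b,c\},\{a,c\}$. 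The case $x = \beta_{ij}^R$ is symmetric. For $x = \alpha_{pq}^L$ (and analogously $\alpha_{pq}^R$), the same kind of unfolding works: every monomial in $\{h^M_2, \alpha_{pq}^L\}$ arises from exactly two orderings of the two $h^M$-gluings on the left-hand side, and these combine correctly in the commutative algebra after relabeling indices.

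The main obstacle is purely bookkeeping: iterating the bracket produces several nested sums, and one must take care to match monomials into their canonical forms. We note that one cannot shortcut the argument by invoking \Cref{lem:AMJac}: setting $y = z = h^M$ in the Jacobi identity collapses in characteristic $2$ to $\{x, \{h^M, h^M\}\} = 0$, which is consistent with the direct computation $\{h^M, h^M\} = 0$ but conveys no information about $h^M_2$. Thus the $\mathbb{Z}_2$ setting genuinely forces the case-by-case verification sketched above.
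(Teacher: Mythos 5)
Your overall strategy---reduce to generators via the derivation property and then verify case by case---is exactly the paper's (the paper leaves the reduction to generators implicit, so your explicit Leibniz expansion with the vanishing cross term is a welcome addition), and your treatment of the $\beta_{ij}^{L}$, $\beta_{ij}^{R}$ cases, including the mod-$2$ inclusion--exclusion identity $|\{a,b\}\cap\{i,j\}|_2+|\{b,c\}\cap\{i,j\}|_2+|\{a,c\}\cap\{i,j\}|_2=0$, agrees with the paper's computation. Your closing remark that the Jacobi identity degenerates to $\{x,\{h^M,h^M\}\}=0$ in characteristic $2$ also matches the paper's own caveat that the lemma does not follow from \Cref{lem:AMJac}.

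The gap is in the $\alpha_{pq}^{L}$ and $\alpha_{pq}^{R}$ cases, which are where the paper does almost all of its work. Your one-sentence justification---that every monomial in $\{h^M_2,\alpha_{pq}^L\}$ arises from exactly two orderings of the two $h^M$-gluings on the left-hand side---cannot be right as stated: a monomial arising from exactly two orderings would appear with coefficient $2=0$ in $\{h^M,\{h^M,\alpha_{pq}^L\}\}$ and hence could not account for a nonzero term of $\{h^M_2,\alpha_{pq}^L\}$. The actual structure is the opposite. Writing $\{h^M,\alpha_{pq}^L\}=\sum_{j>q}\alpha_{pj}^L\alpha_{qj}^R+\sum_{i<p}\alpha_{iq}^L\alpha_{ip}^R$ and applying $\{h^M,\cdot\}$ again, the terms in which the second disk glues back onto the surviving endpoint of the original $\alpha_{pq}^L$ (namely $\sum_{i<p<q<j}\alpha_{ij}^L\alpha_{ip}^R\alpha_{qj}^R$, the paper's $B$ and $B'$) arise once from each of the two orderings and cancel in pairs, while the terms that survive are those in which the second disk glues onto a factor created by the first gluing, each arising from only one ordering; it is these survivors that must then be matched, after a further regrouping of index ranges, against the four pieces $(h_2^M)_1,\dots,(h_2^M)_4$ of $\{h_2^M,\alpha_{pq}^L\}$. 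That matching is genuinely the content of the lemma in the $\alpha$ case, and your sketch does not supply it.
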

	\begin{proof}
		Note that this doesn't follow directly from the Jacobi identity since we are working over $\mathbb{Z}_2$. 
		
		\begin{itemize}
			\item $x=\beta_{ij}^L$.
			
			Let $h^M_{ij} = \sum \alpha_{k\ell}^L\alpha_{k\ell}^R$, where the sum is over $1 \leq k < \ell \leq n$ such that $|\{i,j\} \cap \{k,\ell\}| = 1$. Clearly, $\{h^M, \beta_{ij}^L\} = h_{ij}^M$.
			\begin{align*}
				\{h^M, \{h^M, \beta_{ij}^L\}\} &= \{h^M, h^M_{ij}\} \\
				&= \sum \alpha_{km}^L \alpha_{m\ell}^L \alpha_{k\ell}^R + \sum \alpha_{km}^R \alpha_{m\ell}^R \alpha_{k\ell}^L 
			\end{align*}
			where both sums are over the set of $1\leq k<m<\ell\leq n$ such that $|\{k,\ell\} \cap \{i,j\}| = 1$. 
			
			Likewise, 
			\begin{align*}
				\{h_2^M, \beta_{ij}^L\} &= \sum \alpha_{km}^L \alpha_{m\ell}^L \alpha_{k\ell}^R + \sum \alpha_{km}^R \alpha_{m\ell}^R \alpha_{k\ell}^L 
			\end{align*}
			where both sums are over the set of $1\leq k<m<\ell\leq n$ such that $|\{k,\ell\} \cap \{i,j\}| = 1$. 
			
			\item $x=\beta_{ij}^R$.
			
			Similar to the previous case.
			
			\item $x=\alpha_{pq}^L$.

			We may expand 
			
			\begin{align*}
				\{h^M, \alpha_{pq}^L \} &= \sum_{q < j \leq n} \alpha_{pj}^L\alpha_{qj}^R + \sum_{1\leq i < p} \alpha_{iq}^L\alpha_{ip}^R.
			\end{align*}
			
			Then
			
			\begin{align*}
				\{h^M, \{h^M, \alpha_{pq}^L \}\} &= \underbrace{\left\{h^M, \sum_{q < j \leq n} \alpha_{pj}^L\alpha_{qj}^R \right\}}_{A} + \underbrace{\left\{ h^M, \sum_{1\leq i < p} \alpha_{iq}^L\alpha_{ip}^R \right\}}_{A'} \\
			\end{align*}
			
			We first expand $A$.
			
			\begin{align*}
				A &= \underbrace{\sum_{1\leq i<p<q<j\leq n} \alpha_{ij}^L\alpha_{ip}^R\alpha_{qj}^R}_{B} + \underbrace{\sum_{1\leq i<q<j\leq n} \alpha_{iq}^L\alpha_{pj}^L\alpha_{ij}^R}_{C} +\\
				&+ \underbrace{\sum_{q<j<k\leq n} \alpha_{pk}^L\alpha_{qj}^R\alpha_{jk}^R}_{D} + \underbrace{\sum_{q<j<k\leq n} \alpha_{pj}^L\alpha_{jk}^L\alpha_{qk}^R}_{E}
			\end{align*}
			
			$A'$ has a similar form.
			
			\begin{align*}
				A' &= \underbrace{\sum_{1\leq i<p<q<j\leq n} \alpha_{ij}^L\alpha_{ip}^R\alpha_{qj}^R}_{B'} + \underbrace{\sum_{1\leq i<p<j\leq n} \alpha_{iq}^L\alpha_{pj}^L\alpha_{ij}^R}_{C'} +\\
				&+ \underbrace{\sum_{1<k<i<p} \alpha_{kq}^L\alpha_{ki}^R\alpha_{ip}^R}_{D'} + \underbrace{\sum_{1<k<i<p} \alpha_{ki}^L\alpha_{iq}^L\alpha_{kp}^R}_{E'}
			\end{align*}
			
			Note that $B=B'$. Thus 
			
			\begin{align*}
				\{h^M, \{h^M, \alpha_{pq}^L \}\} &= C+D+E+C'+D'+E'
			\end{align*}

			On the other hand, if we write 
			
			\begin{align*}
				(h_2^M)_1 &= \sum_{1\leq i=q<k<j\leq n} \alpha_{ik}^R\alpha_{kj}^R\alpha_{ij}^L + \sum_{1\leq i=q<k<j\leq n} \alpha_{ik}^L\alpha_{kj}^L\alpha_{ij}^R \\
				(h_2^M)_2 &= \sum_{1\leq i<k=q<j\leq n} \alpha_{ik}^R\alpha_{kj}^R\alpha_{ij}^L + \sum_{1\leq i<k=q<j\leq n} \alpha_{ik}^L\alpha_{kj}^L\alpha_{ij}^R \\
				(h_2^M)_3 &= \sum_{1\leq i<k=p<j\leq n} \alpha_{ik}^R\alpha_{kj}^R\alpha_{ij}^L + \sum_{1\leq i<k=p<j\leq n} \alpha_{ik}^L\alpha_{kj}^L\alpha_{ij}^R \\
				(h_2^M)_4 &= \sum_{1\leq i<k<j=p\leq n} \alpha_{ik}^R\alpha_{kj}^R\alpha_{ij}^L + \sum_{1\leq i<k<j=p\leq n} \alpha_{ik}^L\alpha_{kj}^L\alpha_{ij}^R \\
			\end{align*}
			
			then 
			
			\begin{align*}
				\{h_2^M , \alpha_{pq}^L\} &= \underbrace{\{(h_2^M)_1 , \alpha_{pq}^L\}}_F + \underbrace{\{(h_2^M)_2 , \alpha_{pq}^L\}}_G + \underbrace{\{(h_2^M)_3 , \alpha_{pq}^L\}}_H + \underbrace{\{(h_2^M)_4 , \alpha_{pq}^L\}}_I
			\end{align*}
			
			(all other terms vanish) and one may check that $F = E$, $G = C+D$, $H = C'+D'$, and $I=E'$. 
			
			\item $x=\alpha_{ij}^R$.
			
			Similar to the previous case.
			
		\end{itemize}

	\end{proof}

	\begin{Lemma} \label{lem:deltahM}
		$\delta_{str}^M(h^M) = h_2^M$.
	\end{Lemma}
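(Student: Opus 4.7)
The plan is to compute $\delta_{str}^M(h^M)$ directly by expanding via the Leibniz rule and substituting the definitions of $\delta_{str}^M$ on the generators $\alpha_{ij}^L$ and $\alpha_{ij}^R$. Since $h^M$ is a sum of products of $\alpha$-generators and involves no $\beta$'s, no subtlety about $\delta_{str}^M(\beta_{ij}^*) = (\beta_{ij}^*)^2$ will enter. The key mechanism that makes the calculation match $h_2^M$ exactly will be commutativity combined with $\mathbb{Z}_2$-coefficients, which forces the ``path'' contributions $(\gamma_i + \gamma_j)\alpha_{ij}^L\alpha_{ij}^R$ to cancel in pairs.

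In more detail, first I would apply the Leibniz rule to write
\[ \delta_{str}^M(h^M) = \sum_{1 \le i<j \le n} \Bigl( \delta_{str}^M(\alpha_{ij}^L)\,\alpha_{ij}^R + \alpha_{ij}^L\,\delta_{str}^M(\alpha_{ij}^R) \Bigr). \]
Then I would substitute the formulas of \Cref{def:amstrdiff}, which produce four types of summands for each $(i,j)$: a $(\gamma_i+\gamma_j)\alpha_{ij}^L\alpha_{ij}^R$ term arising from $\delta_{str}^M(\alpha_{ij}^L)\alpha_{ij}^R$; a matching $\alpha_{ij}^L(\gamma_i+\gamma_j)\alpha_{ij}^R$ term from $\alpha_{ij}^L\delta_{str}^M(\alpha_{ij}^R)$; the ``splitting'' terms $\sum_{i<k<j}\alpha_{ik}^L\alpha_{kj}^L\alpha_{ij}^R$; and the splitting terms $\sum_{i<k<j}\alpha_{ij}^L\alpha_{ik}^R\alpha_{kj}^R$.

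Next I would invoke commutativity of $\A_{SFT}^M$ to move the $\gamma$-factors past $\alpha_{ij}^L$ in the second term, so that both $(\gamma_i+\gamma_j)$-summands become equal; over $\mathbb{Z}_2$ they then cancel. What remains is exactly
\[ \sum_{1 \le i<j \le n} \sum_{i<k<j} \alpha_{ik}^L\alpha_{kj}^L\alpha_{ij}^R + \sum_{1 \le i<j \le n} \sum_{i<k<j} \alpha_{ij}^L\alpha_{ik}^R\alpha_{kj}^R, \]
and reindexing the double sums as a single sum over the triples $1 \le i<k<j \le n$ recovers $h_2^M$ verbatim.

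This is a short, essentially mechanical calculation; there is no real obstacle. The only point requiring care is the cancellation of the $(\gamma_i+\gamma_j)$ terms, which is precisely where both commutativity and the $\mathbb{Z}_2$-coefficient assumption are used—this is consistent with the earlier remark that the pushout theorem (and hence the full package of identities like this one) fails noncommutatively.
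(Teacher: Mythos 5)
Your proposal is correct and follows essentially the same route as the paper's own proof: expand by the Leibniz rule, substitute \Cref{def:amstrdiff}, cancel the two $(\gamma_i+\gamma_j)\alpha_{ij}^L\alpha_{ij}^R$ terms using commutativity and $\mathbb{Z}_2$ coefficients, and reindex the surviving splitting terms as a sum over triples $i<k<j$ to recover $h_2^M$. No issues.
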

	\begin{proof}
		\begin{align*}
			\delta_{str}^M(h^M) &= \delta_{str}^M\left( \sum_{1\leq i<j\leq n} \alpha_{ij}^L \alpha_{ij}^R \right) \\
			&= \sum_{1\leq i<j\leq n} \left( \delta_{str}^M(\alpha_{ij}^L)\alpha_{ij}^R +  \alpha_{ij}^L\delta_{str}^M(\alpha_{ij}^R) \right) \\
			&= \sum_{1\leq i<j\leq n} \left( (\gamma_i+\gamma_j)\alpha_{ij}^L\alpha_{ij}^R + \left(\sum_{i<k<j} \alpha_{ik}^L\alpha_{kj}^L\right) \alpha_{ij}^R +  \alpha_{ij}^L (\gamma_i+\gamma_j)\alpha_{ij}^R + \alpha_{ij}^L \left(\sum_{i<k<j} \alpha_{ik}^R\alpha_{kj}^R\right)  \right) \\
			&= \sum_{1\leq i<k<j\leq n} \alpha_{ik}^L \alpha_{kj}^L\alpha_{ij}^R + \sum_{1\leq i<k<j\leq n} \alpha_{ij}^L \alpha_{ik}^R\alpha_{kj}^R \\
			&= h_2^M 
		\end{align*}
	\end{proof}

	\begin{proof}[Proof of \Cref{thm:dM2=0}] \-\
		
		Using \Cref{lem:deltaMderivation}, \Cref{lem:deltaM2=0}, \Cref{lem:h2M}, \Cref{lem:deltahM}:
		\begin{align*}
			(d^M)^2(x) &= d^M(\{h^M, x\} + \delta_{str}^M(x)) \\
			&= \{h^M, \{h^M, x\}\} + \delta_{str}^M\left( \{h^M, x\} \right) + \{h^M, \delta_{str}^M(x)\} + (\delta_{str}^M)^2(x) \\
			&= \{h_2^M, x\} + \{\delta_{str}^M(h^M), x\} + (\delta_{str}^M)^2(x) \\
			&= \{h_2^M + \delta_{str}^M(h^M), x\} \\
			&= 0.
		\end{align*}
	\end{proof}

	\begin{Rem}
		$\A^M_{SFT}$ generalizes the (commutative quotient of the) DGA $I_n$ from \cite{sivek} (denoted $\A^M$ in \Cref{sec:sivek}). Setting all $\beta_{**}^*$ and $\alpha_{**}^R$ equal to $0$, one can check that $d_{SFT}^M$ vanishes while $\delta_{str}^M$ reduces to $\delta_{str}^M(\alpha_{ij}^L) = \sum_{i<k<j} \alpha_{ik}^L\alpha_{kj}^L$, so we recover the differential on $\A^M$.
	\end{Rem}

	\subsection{Definition of the left and right algebras: $\A^L_{SFT}$ and $\A^R_{SFT}$}\label{subsec:ALAR}\-\
	
	In this section, we define the DGAs associated to the left and right half-diagrams. 
	
	\subsubsection{Generators of $\A^L_{SFT}$}\-\
	
	Consider first a left half-diagram $\Pi_{xz}(\Lambda^L)$ (\Cref{fig:leftlag}). Label the points on the dividing line as $\{ 1, \dots, n \}$.

	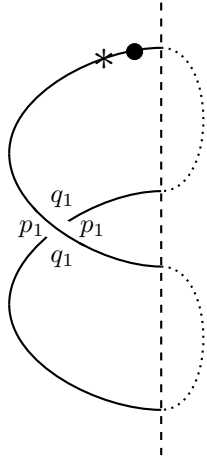
\begin{figure}
		\begin{tikzpicture}
			\def\a{1.2}
			\def\b{0.8}
			\def\c{2}
			\def\d{0.4}
			
			\def\xI{-3}
			\def\xII{-1}
			\def\xIII{0}
			\def\xIV{1}
			\def\xV{4}
			\def\xVI{4.4}
			\def\xVII{5}
			
			\def\yI{0.6}
			\def\yII{2}
			\def\yIII{2.5}
			\def\yIV{3.5}
			\def\yV{4}
			\def\yVI{5.4}
			\def\yVZ{4.5}
			\def\yIZ{1.5}
			\def\yVII{6}
			\def\yO{0}
			
			\def\xoffset{0.4}
			\def\yoffset{0.4}
			\def\nxoffset{0.15}	
			\def\nyoffset{0.15}
			
			\begin{knot}
				[
				clip width=12]
				\strand[black, thick] (\xI,\yV) .. controls +(0,\b) and +(-\b,0) .. (\xII,\yVI);
				\strand[black, thick] (\xII,\yIV) .. controls +(-\b,0) and +(0,\b) .. (\xI,\yII);
				\strand[black, thick] (\xI,\yII) .. controls +(0,-\b) and +(-\b,0) .. (\xII,\yI);
				\strand[black, thick] (\xII,\yIII) .. controls +(-\b,0) and +(0,-\b) .. (\xI,\yV);
				\flipcrossings{1};
			\end{knot}

			\draw[dashed, black, thick] (\xII, \yVII) -- (\xII, \yO);

			\filldraw[black] (-1.35,5.35) circle (3pt) node[anchor=south east]{};
			\node (asterisk) at (-1.75,5.25) {\huge $*$};

			\draw[dotted, black, thick] (\xII, \yVI) to [out=right, in=right] (\xII, \yIV);
			\draw[dotted, black, thick] (\xII, \yIII) to [out=right, in=right] (\xII, \yI);

			\pgfmathsetmacro{\fivex}{-2.3}
			\pgfmathsetmacro{\fivey}{3}

			\pgfmathsetmacro{\pfivex}{\fivex+\xoffset}
			\pgfmathsetmacro{\pfivey}{\fivey}		
			\node (p5) at (\pfivex, \pfivey) {$p_1$};
			
			\pgfmathsetmacro{\pfivepx}{\fivex-\xoffset}
			\pgfmathsetmacro{\pfivepy}{\fivey}		
			\node (p5p) at (\pfivepx, \pfivepy) {$p_1$};
			
			\pgfmathsetmacro{\qfivex}{\fivex}
			\pgfmathsetmacro{\qfivey}{\fivey+\yoffset}		
			\node (q5) at (\qfivex, \qfivey) {$q_1$};
			
			\pgfmathsetmacro{\qfivepx}{\fivex}
			\pgfmathsetmacro{\qfivepy}{\fivey-\yoffset}		
			\node (q5p) at (\qfivepx, \qfivepy) {$q_1$};

		\end{tikzpicture}
		\caption{A left half-diagram. The dotted curves represent $\beta = \{(1,2),(3,4)\}$. The generators of $\A^L_{SFT}$ in this case are: $\{p_1,q_1, \alpha_{12}, \alpha_{13}, \alpha_{14}, \alpha_{23}, \alpha_{24}, \alpha_{34}, \beta_{12}, \beta_{34}, t, t^{-1}\}$.}\label{fig:leftlag}
	\end{figure}

	We require the auxiliary data of a single pairing $\beta$ of the points on the dividing line. Since we only consider Legendrian knots, we require $\beta$ to satisfy the property that closing up $\Lambda^L$ by connecting the points on the dividing line according to $\beta$ would yield a single connected loop. 
	
	\begin{Def}

		The algebra $\A^L_{SFT}$ is the commutative algebra generated over $\mathbb{Z}_2$ by the following elements: 
		
		\begin{itemize}
			\item Two generators $p_i$, $q_i$ corresponding to each crossing and right cusp in $\Pi_{xz}(\Lambda^L)$.
			\item A generator $\alpha_{ij}$ for every $1\leq i < j \leq n$. (representing a right half-disk)
			\item A generator $\beta_{ij}$ for every $1\leq i<j\leq n$ such that  $\{i,j\} \in \beta$. (representing a right strand connecting $i$ and $j$)
			\item Two generators $t, t^{-1}$,
		\end{itemize}
		subject to the relation $tt^{-1} = 1$.
		
	\end{Def}

	\subsubsection{String differential on $\A^L_{SFT}$}\-\
	
	The differential on $\A^L_{SFT}$ again consists of an SFT component and a string topology component. In this section we define the string component.

	\begin{Def}\label{def:path}
		A \emph{path in $\Lambda^L$} is a map $\gamma: (a,b) \to \Lambda^L$ which is continuous except for the following allowed discontinuity:
		\[ \lim_{s\to s_0^-} \gamma(s) = x_1 \text{ and } \lim_{s\to s_0^+} \gamma(s) = x_2,\] 
		where $x_1$ and $x_2$ are points that project in the front projection to paired points $i$ and $\beta(i)$ on the dividing line $M$. 
		(in other words, at the dividing line we allow $\gamma$ to jump from the endpoint of one strand of $\Lambda^L$ to another, according to the pairing $\beta$).
	\end{Def}

	There is always a unique (up to homotopy) path from $\bullet$ to any point in $\Lambda^L$, which does not pass through $*$.
	
	\begin{Def}
		A \emph{path in $M$} is a map $\gamma_{ij}: [0,1] \to M$ which is monotonic in the $z$ direction and which satisfies $\Pi_{xz}(\gamma(0)) = i$ and $\Pi_{xz}(\gamma(1)) = j$.
	\end{Def}

	\begin{Def}
		A \emph{broken closed string} in $\Lambda^L$ is a map $\gamma: S^1 \setminus\{s_0,\dots, s_k\} \to (\Lambda^L \cup N)$ such that for each $i$, $\gamma|_{(s_i, s_{i+1})}$ is either a path in $\Lambda^L$ or a path in $M$, and at the points $s_i$, $\gamma$ may have a discontinuity of the form $\lim_{s\to s_i^\pm} \gamma(s) = R^\pm$ or $\lim_{s\to s_i^\pm}  \gamma(s) = R^\mp$, where $R^{\pm}$ are the endpoints of a Reeb chord $R$ (in other words, we allow $\gamma$ to jump from one endpoint of a Reeb chord to the other).
	\end{Def}
		
	We can associate a word $w(\gamma) \in \A^L_{SFT}$ to each broken closed string $\gamma$ by reading off either 
	
	\begin{itemize}
		\item $p_i$ if we encounter a discontinuity of the form 
		\[ \lim_{s\to s_j^\pm} \gamma(s) = R_i^\pm,\]
		\item $q_i$ if we encounter a discontinuity of the form
		\[ \lim_{s\to s_j^\pm} \gamma(s) = R_i^\mp, \]
		\item $\alpha_{ij}$ if there we encounter a segment $[s_m,s_{m+1}] \subset S^1$ such that $\gamma|_{[s_{m}, s_{m+1}]}$ is a path in $M$ between $i$ and $j$,
		\item $t$ (resp. $t^{-1}$) if $\gamma$ passes the marked point $\bullet$ with the same (resp. opposite) orientation as $\Lambda$.
		\-\ 
		
	\end{itemize}

	Conversely, we can associate a broken closed string to any word $w$ not containing any $\beta$ factors as follows.

	\begin{itemize}
		\item 	If $w=p_i$, there is a unique path $\gamma_{p_i}^+$ from $\bullet$ to $R_i^+$ and a unique path $\gamma_{p_i}^-$ from  $R_i^-$ to $\bullet$. Let $\gamma(p_i)$ be the concatenation of these two paths, perturbed to have holomorphic corners (see \Cref{sec:lsft}).
		
		\item If $w=q_i$, there is a unique path $\gamma_{q_i}^+$ from $\bullet$ to $R_i^-$ and a unique path $\gamma_{q_i}^-$ from $R_i^+$ to $\bullet$. Let $\gamma(q_i)$ be the concatenation of these two paths, perturbed to have holomorphic corners.
		
		\item If $w=\alpha_{ij}$, there is a unique path $\gamma_i$ from $\bullet$ to $i$, a unique path $\gamma^M_{ij} \subset M$ from $i$ to $j$, and a unique path $\gamma_j^{-1}$ from $j$ to $\bullet$. Let $\gamma(\alpha_{ij})$ be the concatenation of these three paths.
		
		\item If $w = t$, let $\gamma(t)$ be the string looping around $\Lambda^L$ once in the orientation of $\Lambda^L$ (or with the opposite orientation if $w = t^{-1}$).
	\end{itemize}
	
	Finally, if $w$ is any word not containing any $\beta$ factors, let $\gamma(w)$ be an appropriate concatenation of the strings described above.

	The map 
	\[ \{ \text{broken closed strings}\} / \text{homotopy} \to \{ \text{words not containing } \beta's \}\]
	
	is a bijection. %

	\begin{Def}
		Let 
		\[ W:= \{\text{words not containing } \beta's \} \]
	\end{Def}
	
	$W$ represents the set of elements of $\A^L_{SFT}$ which are representable as broken closed strings.

	Next, we define the string differential $\delta_{str}^L$ as follows.
	
	\begin{Def}[$pq$-insertion into a broken closed string] \-\
		
		Let $\gamma$ be a broken closed string. For any internal Reeb chord $R_i$ along $\gamma$ (that is, $\gamma(t_0) = R_i^{\pm}$ and $\gamma$ is continuous at $t_0$), we say that the monomial $p_iq_i w(\gamma)$ is an insertion of a $p_iq_i$ pair into $\gamma$.
	\end{Def}
	
	See \Cref{fig:pqinsertion}.
	
	\begin{Def} [$\alpha$-insertion into a broken closed string] \-\
		
		Let $\gamma$ be a broken closed string. For any segment $[s_m, s_{m+1}]$ for which $\gamma|_{[s_m, s_{m+1}]}$ is a path in $M$ between $i$ and $j$, and for any $i<k<j$, we say that the monomial $w'$ obtained from $w(\gamma)$ by splitting the corresponding $\alpha_{ij}$ in $w(\gamma)$ into $\alpha_{ik}\alpha_{kj}$ is an $\alpha$-insertion into $\gamma$. 
	\end{Def}
	
	See \Cref{fig:alphainsertion}.
	
	\begin{Def}[$\beta$-insertion into a broken closed string] \-\
		
		Let $\gamma$ be a broken closed string. For any discontinuity of $\gamma$ where $\gamma$ jumps from the endpoint $i$ of one strand of $\Lambda^L$ to $\beta(i)$ (see \Cref{def:path}), we say that the monomial $\beta_{ij} w(\gamma)$ is a $\beta$-insertion into $\gamma$.
		
	\end{Def}
	
	See \Cref{fig:betainsertion}.

	\begin{figure}
		\begin{tikzpicture}
			
			\node (A) at (0,0) {
				\begin{tikzpicture}
					\def\a{1.2}
					\def\b{0.8}
					\def\c{2}
					\def\d{0.4}
					
					\def\xI{-3}
					\def\xII{-1}
					\def\xIII{0}
					\def\xIV{1}
					\def\xV{4}
					\def\xVI{4.4}
					\def\xVII{5}
					
					\def\yI{0.6}
					\def\yII{2}
					\def\yIII{2.5}
					\def\yIV{3.5}
					\def\yV{4}
					\def\yVI{5.4}
					\def\yVZ{4.5}
					\def\yIZ{1.5}
					\def\yVII{6}
					\def\yO{0}
					
					\def\xoffset{0.4}
					\def\yoffset{0.4}
					\def\nxoffset{0.15}	
					\def\nyoffset{0.15}

					\begin{knot}
						[
						clip width=12]
						\strand[black, thick] (\xI,\yV) .. controls +(0,\b) and +(-\b,0) .. (\xII,\yVI);
						\strand[black, thick] (\xII,\yIV) .. controls +(-\b,0) and +(0,\b) .. (\xI,\yII);
						\strand[black, thick] (\xI,\yII) .. controls +(0,-\b) and +(-\b,0) .. (\xII,\yI);
						\strand[black, thick] (\xII,\yIII) .. controls +(-\b,0) and +(0,-\b) .. (\xI,\yV);
						\flipcrossings{1};
					\end{knot}
					
					\def\insoff{0.15}
					
					\pgfmathsetmacro{\yIVoff}{\yIV-\insoff}
					\pgfmathsetmacro{\yIoff}{\yI+\insoff}
					\pgfmathsetmacro{\xIoff}{\xI+\insoff}
					\pgfmathsetmacro{\xIIoff}{\xII-\insoff/2}
					
					\def\bp{0.7}
					
					\begin{knot}
						[
						clip width=12]
						\strand[red, thick] (\xIIoff,\yIVoff) .. controls +(-\bp,0) and +(0,\bp) .. (\xIoff,\yII);
						\strand[red, thick] (\xIoff,\yII) .. controls +(0,-\bp) and +(-\bp,0) .. (\xIIoff,\yIoff);
						\strand[red, thick] (\xIIoff,\yIVoff) to (\xIIoff, \yIoff);
					\end{knot}

					\draw[dashed, black, thick] (\xII, \yVII) -- (\xII, \yO);

					\draw[dotted, black, thick] (\xII, \yVI) to [out=right, in=right] (\xII, \yIV);
					\draw[dotted, black, thick] (\xII, \yIII) to [out=right, in=right] (\xII, \yI);

					\pgfmathsetmacro{\fivex}{-2.3}
					\pgfmathsetmacro{\fivey}{3}

					\pgfmathsetmacro{\pfivex}{\fivex+\xoffset}
					\pgfmathsetmacro{\pfivey}{\fivey}		
					\node (p5) at (\pfivex, \pfivey) {$p$};
					
					\pgfmathsetmacro{\pfivepx}{\fivex-\xoffset}
					\pgfmathsetmacro{\pfivepy}{\fivey}		
					\node (p5p) at (\pfivepx, \pfivepy) {$p$};
					
					\pgfmathsetmacro{\qfivex}{\fivex}
					\pgfmathsetmacro{\qfivey}{\fivey+\yoffset}		
					\node (q5) at (\qfivex, \qfivey) {$q$};
					
					\pgfmathsetmacro{\qfivepx}{\fivex}
					\pgfmathsetmacro{\qfivepy}{\fivey-\yoffset}		
					\node (q5p) at (\qfivepx, \qfivepy) {$q$};

				\end{tikzpicture}
			};
			\node(B) at (5.5,0) {
				\begin{tikzpicture}
					\def\a{1.2}
					\def\b{0.8}
					\def\c{2}
					\def\d{0.4}
					
					\def\xI{-3}
					\def\xII{-1}
					\def\xIII{0}
					\def\xIV{1}
					\def\xV{4}
					\def\xVI{4.4}
					\def\xVII{5}
					
					\def\yI{0.6}
					\def\yII{2}
					\def\yIII{2.5}
					\def\yIV{3.5}
					\def\yV{4}
					\def\yVI{5.4}
					\def\yVZ{4.5}
					\def\yIZ{1.5}
					\def\yVII{6}
					\def\yO{0}
					
					\def\xoffset{0.4}
					\def\yoffset{0.4}
					\def\nxoffset{0.15}	
					\def\nyoffset{0.15}

					\begin{knot}
						[
						clip width=12]
						\strand[black, thick] (\xI,\yV) .. controls +(0,\b) and +(-\b,0) .. (\xII,\yVI);
						\strand[black, thick] (\xII,\yIV) .. controls +(-\b,0) and +(0,\b) .. (\xI,\yII);
						\strand[black, thick] (\xI,\yII) .. controls +(0,-\b) and +(-\b,0) .. (\xII,\yI);
						\strand[black, thick] (\xII,\yIII) .. controls +(-\b,0) and +(0,-\b) .. (\xI,\yV);
						\flipcrossings{1};
					\end{knot}
					
					\def\insoff{0.15}
					
					\pgfmathsetmacro{\yIVoff}{\yIV-\insoff}
					\pgfmathsetmacro{\yIoff}{\yI+\insoff}
					\pgfmathsetmacro{\xIoff}{\xI+\insoff}
					\pgfmathsetmacro{\xIIoff}{\xII-\insoff/2}
					
					\def\bp{0.7}
					\def\gxoff{0.08}
					\def\gyoff{0.08}
					
					\def\fivex{-2.3}
					\def\fivey{3}
					
					\pgfmathsetmacro{\uoff}{\insoff/2}
					
					\pgfmathsetmacro{\xIIupper}{\fivex+\uoff+\gxoff/2}
					\pgfmathsetmacro{\xIIlower}{\fivex+\uoff-\gxoff}	
					\pgfmathsetmacro{\yIIupper}{\fivey-\uoff+\gyoff/2}
					\pgfmathsetmacro{\yIIlower}{\fivey-\uoff-\gyoff}
					
					\def\angle{30}
					
					\def\bpp{0.3}

					\def\xPer{0.18}
					\def\yPer{0.10}
					
					\pgfmathsetmacro{\xIns}{\xIIupper+\xPer}
					\pgfmathsetmacro{\yIns}{\yIIlower-\yPer}
					
					\begin{knot}
						[
						clip width=12]
						\strand[red, thick] (\xIIoff,\yIVoff) .. controls +(-\bpp,0) and +(\angle:\bpp) .. (\xIIupper, \yIIupper);
						\strand[red,thick] (\xIIlower, \yIIlower) .. controls +(\angle:-\bpp) and +(0,\bpp) .. (\xIoff,\yII);
						\strand[red, thick] (\xIoff,\yII) .. controls +(0,-\bp) and +(-\bp,0) .. (\xIIoff,\yIoff);
						\strand[red, thick] (\xIIoff,\yIVoff) to (\xIIoff, \yIoff);
						
						\strand[red, thick] (\xIIupper,\yIIupper) to (\xIns, \yIns);
						\strand[red, thick] (\xIIlower, \yIIlower) to (\xIns, \yIns);
					\end{knot}

					\draw[dashed, black, thick] (\xII, \yVII) -- (\xII, \yO);

					\draw[dotted, black, thick] (\xII, \yVI) to [out=right, in=right] (\xII, \yIV);
					\draw[dotted, black, thick] (\xII, \yIII) to [out=right, in=right] (\xII, \yI);

					\pgfmathsetmacro{\pfivex}{\fivex+\xoffset}
					\pgfmathsetmacro{\pfivey}{\fivey}		
					\node (p5) at (\pfivex, \pfivey) {$p$};
					
					\pgfmathsetmacro{\pfivepx}{\fivex-\xoffset}
					\pgfmathsetmacro{\pfivepy}{\fivey}		
					\node (p5p) at (\pfivepx, \pfivepy) {$p$};
					
					\pgfmathsetmacro{\qfivex}{\fivex}
					\pgfmathsetmacro{\qfivey}{\fivey+\yoffset}		
					\node (q5) at (\qfivex, \qfivey) {$q$};
					
					\pgfmathsetmacro{\qfivepx}{\fivex}
					\pgfmathsetmacro{\qfivepy}{\fivey-\yoffset}		
					\node (q5p) at (\qfivepx, \qfivepy) {$q$};

				\end{tikzpicture}
			};
			\draw[thick,black,->] (A) to (B);
		\end{tikzpicture}
		\caption[A $pq$-insertion into a broken closed string]{A broken closed string representing the element $\alpha_{24}$ (left; drawn red and slightly offset), and a $pq$ insertion resulting in the element $pq\alpha_{24}$ (right; drawn red and slightly offset. At the crossing, the new broken closed string jumps across the Reeb chord from the lower strand to the upper strand and back). }\label{fig:pqinsertion}
	\end{figure}
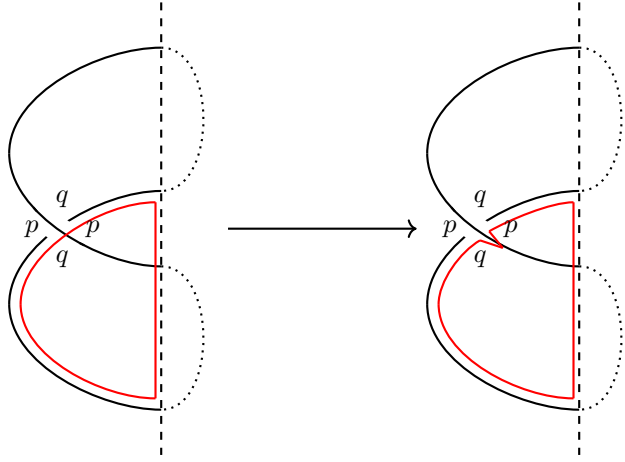

	\begin{figure}
		\begin{tikzpicture}
			\node (A) at (0,0) {
				\begin{tikzpicture}
					\def\a{1.2}
					\def\b{0.8}
					\def\c{2}
					\def\d{0.4}
					
					\def\xI{-3}
					\def\xII{-1}
					\def\xIII{0}
					\def\xIV{1}
					\def\xV{4}
					\def\xVI{4.4}
					\def\xVII{5}
					
					\def\yI{0.6}
					\def\yII{2}
					\def\yIII{2.5}
					\def\yIV{3.5}
					\def\yV{4}
					\def\yVI{5.4}
					\def\yVZ{4.5}
					\def\yIZ{1.5}
					\def\yVII{6}
					\def\yO{0}
					
					\def\xoffset{0.4}
					\def\yoffset{0.4}
					\def\nxoffset{0.15}	
					\def\nyoffset{0.15}

					\begin{knot}
						[
						clip width=12]
						\strand[black, thick] (\xI,\yV) .. controls +(0,\b) and +(-\b,0) .. (\xII,\yVI);
						\strand[black, thick] (\xII,\yIV) .. controls +(-\b,0) and +(0,\b) .. (\xI,\yII);
						\strand[black, thick] (\xI,\yII) .. controls +(0,-\b) and +(-\b,0) .. (\xII,\yI);
						\strand[black, thick] (\xII,\yIII) .. controls +(-\b,0) and +(0,-\b) .. (\xI,\yV);
						\flipcrossings{1};
					\end{knot}
					
					\def\insoff{0.15}
					
					\pgfmathsetmacro{\yIVoff}{\yIV-\insoff}
					\pgfmathsetmacro{\yIoff}{\yI+\insoff}
					\pgfmathsetmacro{\xIoff}{\xI+\insoff}
					\pgfmathsetmacro{\xIIoff}{\xII-\insoff/2}
					
					\def\bp{0.7}
					
					\begin{knot}
						[
						clip width=12]
						\strand[red, thick] (\xIIoff,\yIVoff) .. controls +(-\bp,0) and +(0,\bp) .. (\xIoff,\yII);
						\strand[red, thick] (\xIoff,\yII) .. controls +(0,-\bp) and +(-\bp,0) .. (\xIIoff,\yIoff);
						\strand[red, thick] (\xIIoff,\yIVoff) to (\xIIoff, \yIoff);
					\end{knot}

					\draw[dashed, black, thick] (\xII, \yVII) -- (\xII, \yO);

					\draw[dotted, black, thick] (\xII, \yVI) to [out=right, in=right] (\xII, \yIV);
					\draw[dotted, black, thick] (\xII, \yIII) to [out=right, in=right] (\xII, \yI);

					\pgfmathsetmacro{\fivex}{-2.3}
					\pgfmathsetmacro{\fivey}{3}

					\pgfmathsetmacro{\pfivex}{\fivex+\xoffset}
					\pgfmathsetmacro{\pfivey}{\fivey}		
					\node (p5) at (\pfivex, \pfivey) {$p$};
					
					\pgfmathsetmacro{\pfivepx}{\fivex-\xoffset}
					\pgfmathsetmacro{\pfivepy}{\fivey}		
					\node (p5p) at (\pfivepx, \pfivepy) {$p$};
					
					\pgfmathsetmacro{\qfivex}{\fivex}
					\pgfmathsetmacro{\qfivey}{\fivey+\yoffset}		
					\node (q5) at (\qfivex, \qfivey) {$q$};
					
					\pgfmathsetmacro{\qfivepx}{\fivex}
					\pgfmathsetmacro{\qfivepy}{\fivey-\yoffset}		
					\node (q5p) at (\qfivepx, \qfivepy) {$q$};
				\end{tikzpicture}
			};
			\node(B) at (5.5,0) {
				\begin{tikzpicture}
					\def\a{1.2}
					\def\b{0.8}
					\def\c{2}
					\def\d{0.4}
					
					\def\xI{-3}
					\def\xII{-1}
					\def\xIII{0}
					\def\xIV{1}
					\def\xV{4}
					\def\xVI{4.4}
					\def\xVII{5}
					
					\def\yI{0.6}
					\def\yII{2}
					\def\yIII{2.5}
					\def\yIV{3.5}
					\def\yV{4}
					\def\yVI{5.4}
					\def\yVZ{4.5}
					\def\yIZ{1.5}
					\def\yVII{6}
					\def\yO{0}
					
					\def\xoffset{0.4}
					\def\yoffset{0.4}
					\def\nxoffset{0.15}	
					\def\nyoffset{0.15}

					\begin{knot}
						[
						clip width=12]
						\strand[black, thick] (\xI,\yV) .. controls +(0,\b) and +(-\b,0) .. (\xII,\yVI);
						\strand[black, thick] (\xII,\yIV) .. controls +(-\b,0) and +(0,\b) .. (\xI,\yII);
						\strand[black, thick] (\xI,\yII) .. controls +(0,-\b) and +(-\b,0) .. (\xII,\yI);
						\strand[black, thick] (\xII,\yIII) .. controls +(-\b,0) and +(0,-\b) .. (\xI,\yV);
						\flipcrossings{1};
					\end{knot}
					
					\def\insoff{0.15}
					
					\pgfmathsetmacro{\yIVoff}{\yIV-\insoff}
					\pgfmathsetmacro{\yIoff}{\yI+\insoff}
					\pgfmathsetmacro{\xIoff}{\xI+\insoff}
					\pgfmathsetmacro{\xIIoff}{\xII-\insoff/2}
					
					\def\gyoff{0.08}
					
					\pgfmathsetmacro{\yIIIupper}{\yIII+\gyoff}
					\pgfmathsetmacro{\yIIIlower}{\yIII-\gyoff}
					
					\pgfmathsetmacro{\xIns}{\xIIoff-0.24}
					\pgfmathsetmacro{\yIns}{\yIII+0.04}
					
					\def\bp{0.7}
					
					\begin{knot}
						[
						clip width=12]
						\strand[red, thick] (\xIIoff,\yIVoff) .. controls +(-\bp,0) and +(0,\bp) .. (\xIoff,\yII);
						\strand[red, thick] (\xIoff,\yII) .. controls +(0,-\bp) and +(-\bp,0) .. (\xIIoff,\yIoff);
						\strand[red, thick] (\xIIoff,\yIVoff) to (\xIIoff, \yIIIupper);
						\strand[red, thick] (\xIIoff,\yIIIlower) to (\xIIoff, \yIoff);
						\strand[red, thick] (\xIIoff, \yIIIupper) to (\xIns,\yIns);
						\strand[red, thick] (\xIIoff, \yIIIlower) to (\xIns, \yIns);
					\end{knot}

					\draw[dashed, black, thick] (\xII, \yVII) -- (\xII, \yO);

					\draw[dotted, black, thick] (\xII, \yVI) to [out=right, in=right] (\xII, \yIV);
					\draw[dotted, black, thick] (\xII, \yIII) to [out=right, in=right] (\xII, \yI);

					\pgfmathsetmacro{\fivex}{-2.3}
					\pgfmathsetmacro{\fivey}{3}

					\pgfmathsetmacro{\pfivex}{\fivex+\xoffset}
					\pgfmathsetmacro{\pfivey}{\fivey}		
					\node (p5) at (\pfivex, \pfivey) {$p$};
					
					\pgfmathsetmacro{\pfivepx}{\fivex-\xoffset}
					\pgfmathsetmacro{\pfivepy}{\fivey}		
					\node (p5p) at (\pfivepx, \pfivepy) {$p$};
					
					\pgfmathsetmacro{\qfivex}{\fivex}
					\pgfmathsetmacro{\qfivey}{\fivey+\yoffset}		
					\node (q5) at (\qfivex, \qfivey) {$q$};
					
					\pgfmathsetmacro{\qfivepx}{\fivex}
					\pgfmathsetmacro{\qfivepy}{\fivey-\yoffset}		
					\node (q5p) at (\qfivepx, \qfivepy) {$q$};
					
				\end{tikzpicture}	
			};
			\draw[thick,black,->] (A) to (B);
		\end{tikzpicture}
		\caption[An $\alpha$-insertion into a broken closed string]{A broken closed string representing the element $\alpha_{24}$ (left; drawn red and slightly offset), and an $\alpha$ insertion resulting in the element $\alpha_{23}\alpha_{34}$.}\label{fig:alphainsertion}
	\end{figure}
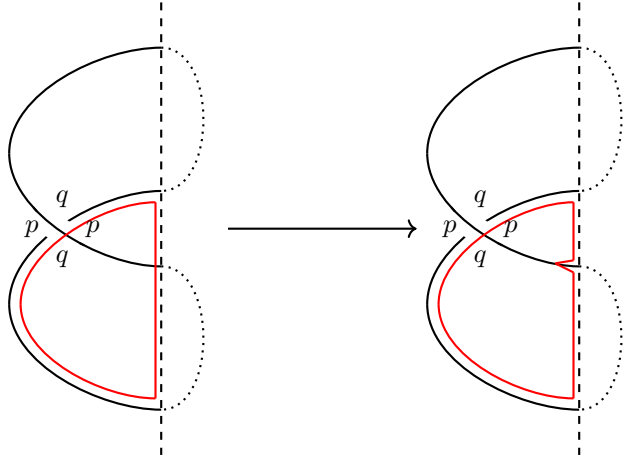

	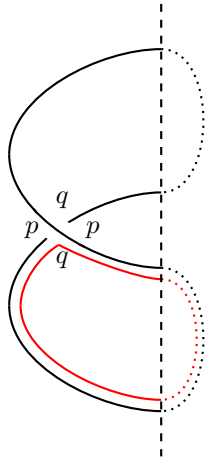
\begin{figure}
		\begin{tikzpicture}
			\def\a{1.2}
			\def\b{0.8}
			\def\c{2}
			\def\d{0.4}
			
			\def\xI{-3}
			\def\xII{-1}
			\def\xIII{0}
			\def\xIV{1}
			\def\xV{4}
			\def\xVI{4.4}
			\def\xVII{5}
			
			\def\yI{0.6}
			\def\yII{2}
			\def\yIII{2.5}
			\def\yIV{3.5}
			\def\yV{4}
			\def\yVI{5.4}
			\def\yVZ{4.5}
			\def\yIZ{1.5}
			\def\yVII{6}
			\def\yO{0}
			
			\def\xoffset{0.4}
			\def\yoffset{0.4}
			\def\nxoffset{0.15}	
			\def\nyoffset{0.15}

			\begin{knot}
				[
				clip width=12]
				\strand[black, thick] (\xI,\yV) .. controls +(0,\b) and +(-\b,0) .. (\xII,\yVI);
				\strand[black, thick] (\xII,\yIV) .. controls +(-\b,0) and +(0,\b) .. (\xI,\yII);
				\strand[black, thick] (\xI,\yII) .. controls +(0,-\b) and +(-\b,0) .. (\xII,\yI);
				\strand[black, thick] (\xII,\yIII) .. controls +(-\b,0) and +(0,-\b) .. (\xI,\yV);
				\flipcrossings{1};
			\end{knot}
			
			\def\insoff{0.15}
			
			\pgfmathsetmacro{\yIVoff}{\yIV-\insoff}
			\pgfmathsetmacro{\yIoff}{\yI+\insoff}
			\pgfmathsetmacro{\xIoff}{\xI+\insoff}
			\pgfmathsetmacro{\xIIoff}{\xII-\insoff/2}
			
			\def\bp{0.7}
			\def\gxoff{0.12}
			\def\gyoff{0.12}
			
			\def\fivex{-2.3}
			\def\fivey{3}
			
			\pgfmathsetmacro{\uoff}{\insoff/2}
			
			\pgfmathsetmacro{\xIIupper}{\fivex+\uoff+\gxoff/2}
			\pgfmathsetmacro{\xIIlower}{\fivex+\uoff-\gxoff}	
			\pgfmathsetmacro{\yIIupper}{\fivey-\uoff+\gyoff/2}
			\pgfmathsetmacro{\yIIlower}{\fivey-\uoff-\gyoff}
			
			\def\angle{30}
			
			\def\bpp{0.3}

			\def\xPer{0.18}
			\def\yPer{0.10}
			
			\pgfmathsetmacro{\xIns}{\xIIupper+\xPer}
			\pgfmathsetmacro{\yIns}{\yIIlower-\yPer}
			
			\pgfmathsetmacro{\yIIIupper}{\yIII+\insoff}
			\pgfmathsetmacro{\yIIIlower}{\yIII-\insoff}
			
			\begin{knot}
				[
				clip width=12]
				\strand[red,thick] (\xIIlower, \yIIlower) .. controls +(\angle:-\bpp) and +(0,\bpp) .. (\xIoff,\yII);
				\strand[red, thick] (\xIoff,\yII) .. controls +(0,-\bp) and +(-\bp,0) .. (\xII,\yIoff);
				
				\strand[red, thick] (\xIIlower, \yIIlower) .. controls +(-\angle:\bpp) and +(-\bpp,0) .. (\xII, \yIIIlower);
				\strand[red, thick, dotted] (\xII, \yIIIlower) to [out=right, in=right] (\xII, \yIoff);
			\end{knot}

			\draw[dashed, black, thick] (\xII, \yVII) -- (\xII, \yO);

			\draw[dotted, black, thick] (\xII, \yVI) to [out=right, in=right] (\xII, \yIV);
			\draw[dotted, black, thick] (\xII, \yIII) to [out=right, in=right] (\xII, \yI);

			\pgfmathsetmacro{\pfivex}{\fivex+\xoffset}
			\pgfmathsetmacro{\pfivey}{\fivey}		
			\node (p5) at (\pfivex, \pfivey) {$p$};
			
			\pgfmathsetmacro{\pfivepx}{\fivex-\xoffset}
			\pgfmathsetmacro{\pfivepy}{\fivey}		
			\node (p5p) at (\pfivepx, \pfivepy) {$p$};
			
			\pgfmathsetmacro{\qfivex}{\fivex}
			\pgfmathsetmacro{\qfivey}{\fivey+\yoffset}		
			\node (q5) at (\qfivex, \qfivey) {$q$};
			
			\pgfmathsetmacro{\qfivepx}{\fivex}
			\pgfmathsetmacro{\qfivepy}{\fivey-\yoffset}		
			\node (q5p) at (\qfivepx, \qfivepy) {$q$};

		\end{tikzpicture}
		\caption[A $\beta$-insertion into a broken closed string]{A broken closed string representing the element $q$ (red, drawn slightly offset). Because the broken closed string has a jump from strand 3 to 4, we can perform a $\beta$ insertion resulting in the element $q\beta_{34}$ (not pictured because $q\beta_{34} \not\in W$).}\label{fig:betainsertion}
	\end{figure}

	\begin{Def}[$\delta_{str}^L$]\label{def:alstrdiff}\-\
		
		First let $w \in W$. Choose a broken closed string $\gamma$ representing $w$. Then 
		\[ \delta_{str}^L(w):= \sum (\text{pq-insertions into } \gamma) + \sum (\alpha \text{-insertions into } \gamma) + \sum (\beta \text{-insertions into } \gamma), \]
		
		(for independence of the choice of $\gamma$, see \Cref{prop:deltaL})
		
		Then define $\delta_{str}^L(\beta_{ij}) = \beta_{ij}^2$. 
		
		Finally, extend $\delta_{str}^L$ to all of $\A^L_{SFT}$ by linearity and by declaring 
		\[ \delta_{str}^L(x\beta_{ij}) = \delta_{str}^L(x)\beta_{ij} + x\delta_{str}^L(\beta_{ij})\]
	\end{Def}
	
	\begin{Prop}[Properties of $\delta_{str}^L$] \label{prop:deltaL}
		
		\-\
		
		\begin{enumerate}
			\item $\delta_{str}^L$ is well defined, i.e. when $w \in W$, $\delta_{str}^L(w)$ is independent of the choice of $\gamma$. 
			\item $\delta_{str}^L$ satisfies the Leibniz rule 
			\[ \delta_{str}^L(xy) = \delta_{str}^L(x)y + x\delta_{str}^L(y) \]
		\end{enumerate}
	\end{Prop}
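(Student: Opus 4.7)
The plan is to follow the string topology argument from Ng's proof of Proposition 3.8 in \cite{ng}, modified to accommodate the new $\alpha$- and $\beta$-insertions of the bordered setting.

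For part (1), observe first that the $\alpha$-insertion count is a combinatorial function of $w$ alone: for each $\alpha_{ij}$-factor of $w$ and each labeled point $k$ with $i < k < j$, one obtains the monomial obtained from $w$ by replacing that $\alpha_{ij}$ with $\alpha_{ik}\alpha_{kj}$. This depends only on $w$, not on the particular representative $\gamma$. It therefore remains to check that the total pq- and $\beta$-insertion counts are independent of $\gamma$. By the bijection between broken closed strings up to homotopy and elements of $W$, any two generic representatives $\gamma, \gamma'$ of a fixed $w \in W$ lie in the same homotopy class, hence may be connected by a generic one-parameter family $\gamma_t$; it suffices to verify invariance across each codimension-one bifurcation of $\gamma_t$. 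The relevant events are:
\begin{enumerate}
\item A smooth subarc of $\gamma_t$ becomes tangent to a Reeb chord endpoint $R_i^{\pm}$, creating or annihilating a pair of internal Reeb chord occurrences; the two new pq-insertions each contribute the same monomial $p_iq_iw$ and cancel in $\mathbb{Z}_2$.
\item A smooth subarc of $\gamma_t$ develops or loses a pair of $\beta$-jumps at the dividing line (a small loop at a strand endpoint on $M$); the two new $\beta$-insertions each contribute the same $\beta_{i,\beta(i)}w$ and cancel in $\mathbb{Z}_2$.
\item A corner of $\gamma_t$ or a $\beta$-jump slides past a crossing or Reeb chord endpoint of $\Pi_{xy}(\Lambda^L)$; the Reeb chord or $\beta$-label at the feature is preserved, and any ambient pq- or $\beta$-insertion sites created or destroyed come in canceling pairs by (1) and (2).
\end{enumerate}
In every case the insertion count is preserved modulo $2$, so $\delta_{str}^L(w)$ is independent of the chosen representative.

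For part (2), by linearity and the extension rule $\delta_{str}^L(x\beta_{ij}) = \delta_{str}^L(x)\beta_{ij} + x\delta_{str}^L(\beta_{ij})$ it suffices to verify the Leibniz identity when both $x, y \in W$. Choose generic representatives $\gamma_x$ of $x$ and $\gamma_y$ of $y$, and take $\gamma_{xy} := \gamma_x \cdot \gamma_y$, the concatenation at $\bullet$. Every pq-, $\alpha$-, or $\beta$-insertion into $\gamma_{xy}$ is localized either in the $\gamma_x$ half or in the $\gamma_y$ half (no insertion straddles the concatenation point). In the commutative algebra $\A^L_{SFT}$, the sum of the insertions in the $\gamma_x$ half is $\delta_{str}^L(x) \cdot y$ and the sum of those in the $\gamma_y$ half is $x \cdot \delta_{str}^L(y)$, yielding the Leibniz identity.

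The main obstacle is case (3) of the bifurcation analysis, namely sliding a $\beta$-jump past a Reeb chord endpoint, where simultaneous changes to the pq- and $\beta$-insertion counts must be checked to cancel in unison. A careful local model near the dividing line should suffice: the $\beta$-jump contributes a fixed $\beta_{i,\beta(i)}$ factor regardless of where along $M$ it is placed, and the local change in pq-insertion sites reduces to the same pairing-and-cancellation mechanism as in cases (1) and (2). Once this local check is in place, the rest of the argument is essentially formal, paralleling Ng's original treatment for pq-insertions in the unbordered case.
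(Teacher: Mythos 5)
Your proposal is correct and follows essentially the same route as the paper: part (1) is proved by checking invariance of the insertion counts under the codimension-one local moves of a generic homotopy (creation/annihilation of a pair of internal Reeb chord occurrences, creation/annihilation of a pair of jumps at the dividing line, and sliding a corner past a crossing — these are exactly the local moves the paper lists in \Cref{fig:localmoves}), and part (2) is proved by concatenating representatives at $\bullet$ and observing that each insertion is localized in one factor. Your extra observation that the $\alpha$-insertions are purely combinatorial in $w$, and your explicit flagging of the $\beta$-jump-past-crossing interaction, are refinements of detail rather than a different argument.
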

	\begin{proof}\-\
		
		\begin{enumerate}
			\item Suffices to check that $\delta_{str}^L(w)$ doesn't change when $\gamma$ changes by one of the local moves in \Cref{fig:localmoves}, and this is clear in all cases.
			\item It suffices to prove this when $x, y \in W$. 
			
			Let $\gamma_x$ and $\gamma_y$ be broken closed strings representing $x$ and $y$. Then the concatenation $\gamma_{x}*\gamma_{y}$ is a broken closed string representing $xy$. Any insertion into $\gamma_x*\gamma_y$ will have a corresponding term either in $\delta_{str}^L(x)y$ or $x\delta_{str}^L(y)$, depending on where the insertion takes place.
			
		\end{enumerate}
	\end{proof}

	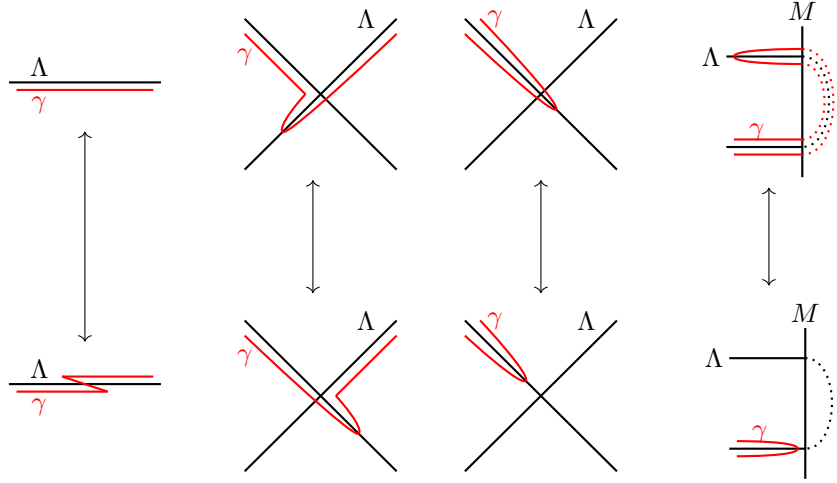
\begin{figure}	
		\begin{tikzpicture}
			\def\xaI{0}
			\def\xaII{3}
			\def\xaIII{6}
			\def\xaIV{9}
			
			\def\yaI{0}
			\def\yaII{-4}

			\node(9a1) at (\xaI,\yaI) {
				\begin{tikzpicture}
					\node (L) at (-0.6,0.2) {$\Lambda$};
					\node[red] (G) at (-0.6,-0.3) {$\gamma$};
					\def\off{0.1}
					\pgfmathsetmacro{\xoff}{-1+\off}
					\begin{knot}
						[
						clip width=15, draft mode=strands]
						\strand[black, thick] (-1,0) -- (1,0);
						\strand[red, thick] (-0.9,-\off) -- (0.9,-\off);
					\end{knot}
				\end{tikzpicture}
			};
			
			\node(9a2) at (\xaI,\yaII) {
				\begin{tikzpicture}
					\node (L) at (-0.6,0.2) {$\Lambda$};
					\node[red] (G) at (-0.6,-0.3) {$\gamma$};
					\def\off{0.1}
					\pgfmathsetmacro{\xoff}{-1+\off}
					\begin{knot}
						[
						clip width=15, draft mode=strands]
						\strand[black, thick] (-1,0) -- (1,0);
						\strand[red, thick] (-0.9,-\off) -- (0.3,-\off);
						\strand[red, thick] (0.3,-\off) -- (-0.3, \off);
						\strand[red, thick] (-0.3,\off) -- (0.9,\off);
					\end{knot}
				\end{tikzpicture}
			};
			
			\node(9b1) at (\xaII,\yaI) {
				\begin{tikzpicture}
					\node (L) at (0.6,1) {$\Lambda$};
					\node[red] (G) at (-1,0.5) {$\gamma$};
					\def\off{0.2}
					\def\offII{0.5}
					\pgfmathsetmacro{\xoff}{-1+\off}
					\begin{knot}
						[
						clip width=15, draft mode=strands]
						\strand[black, thick] (-1,-1) -- (1,1);
						\strand[black, thick] (-1,1) -- (1,-1);
						\strand[red, thick] (-1,-\xoff) -- (-0.2, 0);
						\strand[red,thick] (-0.2, 0) .. controls +(225:0.1) and +(135:0.1) .. (-\offII,-\offII);
						\strand[red, thick] (-\offII,-\offII) .. controls +(-45:0.1) and +(225:0.1) .. (1,-\xoff);
					\end{knot}
				\end{tikzpicture}
			};
			
			\node(9b2) at (\xaII,\yaII) {
				\begin{tikzpicture}
					\node (L) at (0.6,1) {$\Lambda$};
					\node[red] (G) at (-1,0.5) {$\gamma$};
					\def\off{0.2}
					\def\offII{0.5}
					\pgfmathsetmacro{\xoff}{-1+\off}
					\begin{knot}
						[
						clip width=15, draft mode=strands]
						\strand[black, thick] (-1,-1) -- (1,1);
						\strand[black, thick] (-1,1) -- (1,-1);
						\strand[red, thick] (-1,-\xoff) .. controls +(-45:0.1) and +(225:0.1) .. (\offII, -\offII);
						\strand[red,thick] (\offII, -\offII) .. controls +(45:0.1) and +(-45:0.1) .. (0.2, 0);
						\strand[red, thick] (0.2, 0) -- (1, -\xoff);
					\end{knot}
				\end{tikzpicture}
			};
			
			\node(9c1) at (\xaIII,\yaI) {
				\begin{tikzpicture}
					\node (L) at (0.6,1) {$\Lambda$};
					\node[red] (G) at (-0.6,1) {$\gamma$};
					\def\off{0.2}
					\pgfmathsetmacro{\xoff}{-1+\off}
					\begin{knot}
						[
						clip width=15, draft mode=strands]
						\strand[black, thick] (-1,-1) -- (1,1);
						\strand[black, thick] (-1,1) -- (1,-1);
						\strand[red, thick] (-1,-\xoff) .. controls +(-45:0.1) and +(225:0.1) .. (\off,-\off);
						\strand[red,thick] (\off,-\off) .. controls +(45:0.1) and +(-45:0.1) .. (\xoff,1);
					\end{knot}
				\end{tikzpicture}
			};
			
			\node(9c2) at (\xaIII,\yaII) {
				\begin{tikzpicture}
					\node (L) at (0.6,1) {$\Lambda$};
					\node[red] (G) at (-0.6,1) {$\gamma$};
					\def\off{0.2}
					\pgfmathsetmacro{\xoff}{-1+\off}
					\begin{knot}
						[
						clip width=15, draft mode=strands]
						\strand[black, thick] (-1,-1) -- (1,1);
						\strand[black, thick] (-1,1) -- (1,-1);
						\strand[red, thick] (-1,-\xoff) .. controls +(-45:0.1) and +(225:0.1) .. (-\off,\off);
						\strand[red,thick] (-\off,\off) .. controls +(45:0.1) and +(-45:0.1) .. (\xoff,1);
					\end{knot}
				\end{tikzpicture}
			};
			
			\node(9d1) at (\xaIV,\yaI) {
				\begin{tikzpicture}
					\node (L) at (-1.2,0.6) {$\Lambda$};
					\node[red] (G) at (-0.6,-0.4) {$\gamma$};
					\node (M) at (0,1.2) {$M$};
					
					\def\hei{0.6}
					\def\off{0.1}
					
					\pgfmathsetmacro{\yU}{\hei+\off}
					\pgfmathsetmacro{\yL}{\hei-\off}
					
					\begin{knot}
						[
						clip width=15, draft mode=strands]
						\strand[black, thick] (0,1) -- (0,-1);
						\strand[black, thick] (-1,\hei) -- (0,\hei);
						\strand[black, thick] (-1,-\hei) -- (0,-\hei);
						\strand[black, dotted, thick] (0,\hei) to[out=right, in=right] (0,-\hei);
						\strand[red, thick] (-0.9, -\yL) -- (0, -\yL);
						\strand[red, dotted, thick] (0,-\yL) to[out=right, in=right] (0,\yL);
						\strand[red, thick] (0,\yL) .. controls +(180:0.1) and +(270:0.1) .. (-0.9,\hei);
						\strand[red, thick] (-0.9, \hei) .. controls +(90:0.1) and +(180:0.1) .. (0, \yU);
						\strand[red, dotted, thick] (0,\yU) to[out=right, in=right] (0,-\yU);
						\strand[red, thick] (0,-\yU) -- (-0.9, -\yU);
					\end{knot}
				\end{tikzpicture}
			};
			
			\node(9d2) at (\xaIV,\yaII) {
				\begin{tikzpicture}
					\node (L) at (-1.2,0.6) {$\Lambda$};
					\node[red] (G) at (-0.6,-0.4) {$\gamma$};
					\node (M) at (0,1.2) {$M$};
					
					\def\hei{0.6}
					\def\off{0.1}
					
					\pgfmathsetmacro{\yU}{\hei+\off}
					\pgfmathsetmacro{\yL}{\hei-\off}
					
					\begin{knot}
						[
						clip width=15, draft mode=strands]
						\strand[black, thick] (0,1) -- (0,-1);
						\strand[black, thick] (-1,\hei) -- (0,\hei);
						\strand[black, thick] (-1,-\hei) -- (0,-\hei);
						\strand[black, dotted, thick] (0,\hei) to[out=right, in=right] (0,-\hei);
						\strand[red, thick] (-0.9, -\yL) .. controls +(0:0.1) and +(90:0.1) .. (-0.1, -\hei);
						\strand[red, thick] (-0.1,-\hei) .. controls +(270:0.1) and +(0:0.1) .. (-0.9, -\yU);
					\end{knot} 
				\end{tikzpicture}
			};
			\draw[<->] (9a1) to (9a2);
			\draw[<->] (9b1) to (9b2);
			\draw[<->] (9c1) to (9c2);
			\draw[<->] (9d1) to (9d2);			
		\end{tikzpicture}
		\caption[Local moves of broken closed strings]{Local moves of broken closed strings.}\label{fig:localmoves}
	\end{figure}

	\subsubsection{SFT bracket and SFT differential on $\A^L_{SFT}$}\-\

	\begin{Def}[SFT bracket] \-\
		
		$\{\cdot, \cdot\}$ is defined on the generators of $\A^L_{SFT}$ by:
		
		\begin{itemize}
			\item $\{p_i, q_i\} = 1$.
			\item $\{p_i, x\} = 0$ for all generators $x\neq q_i$.
			\item $\{x, q_i \} = 0$ for all generators $x\neq p_i$. 
			\item $\{\alpha_{ij}, \alpha_{k\ell}\} = \begin{cases}
				\alpha_{i\ell} & \quad \text{if } j=k \\
				\alpha_{kj} & \quad \text{if } i=\ell \\
				0 & \quad \text{otherwise}
			\end{cases}$ \\
			\item $\{\alpha_{ij}, \beta_{k\ell}\} = \begin{cases}
				\alpha_{ij} & \quad \text{if } |\{i,j\} \cap \{k,\ell\}| = 1 \\
				0 & \quad \text{otherwise}
			\end{cases}$
			\item $\{\beta_{ij}, \beta_{k\ell} \} = 0$.
		\end{itemize}
		
		Extend $\{\cdot, \cdot\}$ to $\A^L_{SFT}\times \A^L_{SFT}\to \A^L_{SFT}$ by declaring that $\{x,y\} = \{y,x\}$, by linearity:
		\[ \{x,y+z\} = \{x,y\} + \{x,z\},\]
		and by the Leibniz rule:
		\[ \{x,yz\} = \{x,y\} z + y\{x,z\}.\]
	\end{Def}
	
	\begin{Lemma}[Jacobi identity]
		The SFT bracket satisfies
		\[\{x,\{y,z\}\} + \{y,\{z,x\}\} + \{z,\{x,y\}\} = 0. \]
	\end{Lemma}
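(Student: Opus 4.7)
The plan is to reduce, by multilinearity and the Leibniz rule for each slot of the bracket, to the case where $x$, $y$, $z$ are all generators, and then perform a case analysis according to how many of $x,y,z$ belong to the Reeb sector $\{p_*, q_*\}$ versus the dividing-line sector $\{\alpha_{ij}, \beta_{ij}\}$. The two sectors are essentially decoupled under the bracket, so this split is natural.

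First I would dispense with all cases in which at least one of $x,y,z$ is a $p$ or $q$ generator. The key observations are that $\{p_i, \cdot\}$ and $\{q_i, \cdot\}$ vanish on every generator except for the canonical pairing $\{p_i, q_i\} = 1$, and that $\{1, \cdot\} = 0$ by the Leibniz rule. Enumerating three subcases (exactly one, two, or three of $x,y,z$ are Reeb generators), each inner bracket is either identically zero, a bracket involving non-conjugate $p,q$ factors (hence zero), or equal to $1$; in every situation the outer bracket then kills the term. Thus all three summands in the cyclic sum vanish individually in these cases.

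The remaining case is when $x,y,z$ are all of type $\alpha$ or $\beta$. Here I would observe that the restriction of the SFT bracket to the $\alpha/\beta$-subalgebra of $\A^L_{SFT}$ is, structurally, identical to the single-side sub-bracket of $\A^M_{SFT}$ (take, say, only the $L$-superscripted generators and drop the superscript). Hence the identity reduces to the content of \Cref{lem:AMJac}, and I would simply run the same case analysis: split by the number of $\beta$ generators among $x,y,z$, dispatch the two-$\beta$, three-$\beta$, and all-$\alpha$ subcases by direct computation, and for the one-$\beta$ subcase apply the mod-$2$ inclusion-exclusion identity $|A\cap B|_2 + |A\cap C|_2 + |A\cap D|_2 = 0$ that already appeared there.

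The main obstacle is really just the bookkeeping in the one-$\beta$ subcase, but since this has already been handled in \Cref{lem:AMJac} the only genuinely new content here is verifying that the Reeb pairs $p_i, q_i$ do not spoil the identity. That verification is essentially automatic from the fact that $\{p_i, \cdot\}$ and $\{q_i, \cdot\}$ are concentrated on a single conjugate generator and land in scalars, so I do not anticipate any subtlety beyond what is already present in \Cref{lem:AMJac}.
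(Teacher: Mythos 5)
Your proposal is correct and follows essentially the same route as the paper: reduce to generators, handle the all-$\alpha/\beta$ case by the argument of \Cref{lem:AMJac}, and observe that every term vanishes in the remaining cases. The only cosmetic difference is that for three Reeb generators the paper simply cites the Jacobi identity for LSFT (\Cref{prop:lsftjacobi}), whereas you note directly that the inner bracket of two generators is $0$ or $1$ and is therefore killed by the outer bracket; both dispositions are fine.
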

	
	\begin{proof}
		It suffices to prove the identity when $x,y,z$ are generators. 
		
		\begin{itemize}
			\item Each of $x,y,z$ is either an $\alpha$ or $\beta$:
			
			same as \Cref{lem:AMJac}
			
			\item $x$, $y$, and $z$ are all either a $p$ or $q$: 
			
			same as Jacobi identity for LSFT (\Cref{prop:lsftjacobi}).
			
			\item Some of $x,y,z$ are $p$ or $q$ while others are $\alpha$ or $\beta$: 
			
			everything must vanish.
		\end{itemize}
	\end{proof}
	
	\begin{Rem}
		When $x,y \in W$, we can define $\{x,y\}$ in terms of broken closed strings as follows. Let $\gamma_x$ and $\gamma_y$ be broken closed strings representing $x$ and $y$. Whenever $\gamma_x$ has a corner at some $p_i$ and $\gamma_y$ has a corner at $q_i$ (or vice versa), we can glue $\gamma_x$ and $\gamma_y$ into a single broken closed string as depicted in \Cref{fig:borderedgluings}. Likewise, if $\gamma_x$ has an $\alpha_{ij}$ segment and $\gamma_y$ has an $\alpha_{jk}$ segment (or vice versa), we can glue them into a single broken closed string as also depicted in \Cref{fig:borderedgluings}. Let $\{x,y\}$ be the sum of all such possible gluings.
	\end{Rem}

	\begin{figure}
		\begin{tikzpicture}
			\def\xaI{0}
			\def\xaII{5}
			
			\def\yaI{0}
			\def\yaII{-4}
			
			\node(10a1) at (\xaI,\yaI) {
				
				\begin{tikzpicture}[scale=1]
					\node (L) at (1,0.6) {$\Lambda$};
					\node[red] (G) at (-1,0.5) {$x$};
					\node[red] (Gt) at (-0.5,1) {$y$};
					\node (q) at (0,-0.3) {$q$};
					\node (p) at (0.3,0) {$p$};
					\def\off{0.2}
					\def\offII{0.5}
					\def\ou{-2}
					\pgfmathsetmacro{\xoff}{-1+\off}
					\begin{knot}
						[
						clip width=15, draft mode=strands]
						\strand[black, thick] (-1,-1) -- (1,1);
						\strand[black, thick] (-1,1) -- (1,-1);
						\strand[red, thick] (-1,-\xoff) -- (-0.2, 0);
						\strand[red, thick] (-0.2,0) -- (-1,\xoff);
						\strand[red, thick, dashed] (-1,\xoff) to[out=south west, in=south] (\ou,0);
						\strand[red, thick, dashed] (\ou, 0) to[out=north, in=north west] (-1,-\xoff);
						
						\strand[red, thick] (-\xoff,1) -- (0, 0.2);
						\strand[red, thick] (0,0.2) -- (\xoff,1);
						\strand[red, thick, dashed] (\xoff,1) to[out=north west, in=west] (0,-\ou);
						\strand[red, thick, dashed] (0, -\ou) to[out=east, in=north east] (-\xoff,1);
						
						\strand[black, very thick] (-0.3,0) -- (0,0.3);
					\end{knot}
				\end{tikzpicture}
			};

			\node(10a2) at (\xaII, \yaI) {
				\begin{tikzpicture}[scale=1]
					\node (L) at (1,0.6) {$\Lambda$};
					\node[red] (G) at (-1.2,0.3) {$\{x,y\}$};
					\node (q) at (0,-0.3) {$q$};
					\node (p) at (0.3,0) {$p$};
					\def\off{0.2}
					\def\offII{0.5}
					\def\ou{-2}
					\pgfmathsetmacro{\xoff}{-1+\off}
					\begin{knot}
						[
						clip width=15, draft mode=strands]
						\strand[black, thick] (-1,-1) -- (1,1);
						\strand[black, thick] (-1,1) -- (1,-1);
						\strand[red, thick, dashed] (-1,\xoff) to[out=south west, in=south] (\ou,0);
						\strand[red, thick, dashed] (\ou, 0) to[out=north, in=north west] (-1,-\xoff);
						
						\strand[red, thick] (-\xoff,1) -- (-1, \xoff);
						\strand[red, thick, dashed] (\xoff,1) to[out=north west, in=west] (0,-\ou);
						\strand[red, thick, dashed] (0, -\ou) to[out=east, in=north east] (-\xoff,1);
						
						\strand[red, thick] (-1,-\xoff) .. controls +(-45:0.1) and +(225:0.1) .. (-\off,\off);
						\strand[red,thick] (-\off,\off) .. controls +(45:0.1) and +(-45:0.1) .. (\xoff,1);
					\end{knot}
				\end{tikzpicture}
			};

			\node(10b1) at (\xaI,\yaII) {
				\begin{tikzpicture}[scale=1.2]
					\node (L) at (-1.2,1.2) {$\Lambda$};
					\node[red] (G) at (-0.8,-0.4) {$y$};
					\node[red] (Gt) at (-0.8,0.6) {$x$};
					\node (M) at (0,1.7) {$M$};
					
					\def\mhei{1.5}
					\def\hei{1}
					\def\off{0.1}
					
					\pgfmathsetmacro{\yU}{\hei+\off}
					\pgfmathsetmacro{\yL}{\hei-\off}
					
					\begin{knot}
						[
						clip width=15, draft mode=strands]
						\strand[black, thick] (0,\mhei) -- (0,-\mhei);
						\strand[black, thick] (-1,\hei) -- (0,\hei);
						\strand[black, thick] (-1,-\hei) -- (0,-\hei);
						\strand[black, thick] (-1,0) -- (0,0);
						
						\strand[red, thick] (-1, -\yL) -- (-0.1, -\yL);
						\strand[red, thick] (-0.1, -\yL) -- (-0.1, -\off);
						\strand[red,thick] (-0.1,-\off) -- (-1, -\off);
						\strand[red,thick, dashed] (-1,-\off) to[out=west, in=west] (-1,-\yL);
						
						\strand[red, thick] (-1, \yL) -- (-0.1, \yL);
						\strand[red, thick] (-0.1, \yL) -- (-0.1, \off);
						\strand[red,thick] (-0.1,\off) -- (-1, \off);
						\strand[red,thick, dashed] (-1,\off) to[out=west, in=west] (-1,\yL);
						
						\strand[black, very thick] (-0.2,-0.2) -- (-0.2,0.2);
					\end{knot}

					\filldraw[black] (0,\hei) circle (1.5pt) node[anchor=south west]{$i$};
					\filldraw[black] (0,0) circle (1.5pt) node[anchor=south west]{$j$};
					\filldraw[black] (0,-\hei) circle (1.5pt) node[anchor=south west]{$k$};	
				\end{tikzpicture}
			};

			\node(10b2) at (\xaII,\yaII) {
				\begin{tikzpicture}[scale=1.2]
					\node (L) at (-1.2,1.2) {$\Lambda$};
					\node[red] (Gt) at (-0.6,0.6) {$\{x,y\}$};
					\node (M) at (0,1.7) {$M$};
					
					\def\mhei{1.5}
					\def\hei{1}
					\def\off{0.1}
					
					\pgfmathsetmacro{\yU}{\hei+\off}
					\pgfmathsetmacro{\yL}{\hei-\off}
					
					\begin{knot}
						[
						clip width=15, draft mode=strands]
						\strand[black, thick] (0,\mhei) -- (0,-\mhei);
						\strand[black, thick] (-1,\hei) -- (0,\hei);
						\strand[black, thick] (-1,-\hei) -- (0,-\hei);
						\strand[black, thick] (-1,0) -- (0,0);
						
						\strand[red, thick] (-1, -\yL) -- (-0.1, -\yL);
						\strand[red,thick, dashed] (-1,-\off) to[out=west, in=west] (-1,-\yL);
						
						\strand[red, thick] (-1, \yL) -- (-0.1, \yL);
						\strand[red,thick, dashed] (-1,\off) to[out=west, in=west] (-1,\yL);
						
						\strand[red, thick] (-0.1, -\yL) -- (-0.1, \yL);
						\strand[red, thick] (-1, \off) .. controls +(0:0.1) and +(90:0.1) .. (-0.2, 0);
						\strand[red, thick] (-0.2,0) .. controls +(270:0.1) and +(0:0.1) .. (-1, -\off);
						
					\end{knot}

					\filldraw[black] (0,\hei) circle (1.5pt) node[anchor=south west]{$i$};
					\filldraw[black] (0,0) circle (1.5pt) node[anchor=south west]{$j$};
					\filldraw[black] (0,-\hei) circle (1.5pt) node[anchor=south west]{$k$};	
				\end{tikzpicture}
			};
			\draw[->] (10a1) to (10a2);
			\draw[->] (10b1) to (10b2);
		\end{tikzpicture}
		\caption[The SFT bracket on $\A_{SFT}^L$]{Top: $x$ contains a $p$ factor and $y$ contains a corresponding $q$ factor; we may glue as shown to get a summand of $\{x,y\}$. Bottom: $x$ contains an $\alpha_{ij}$ factor and $y$ contains an adjacent $\alpha_{jk}$ factor; we may glue as shown to get a summand of $\{x,y\}$.}\label{fig:borderedgluings}
	\end{figure}
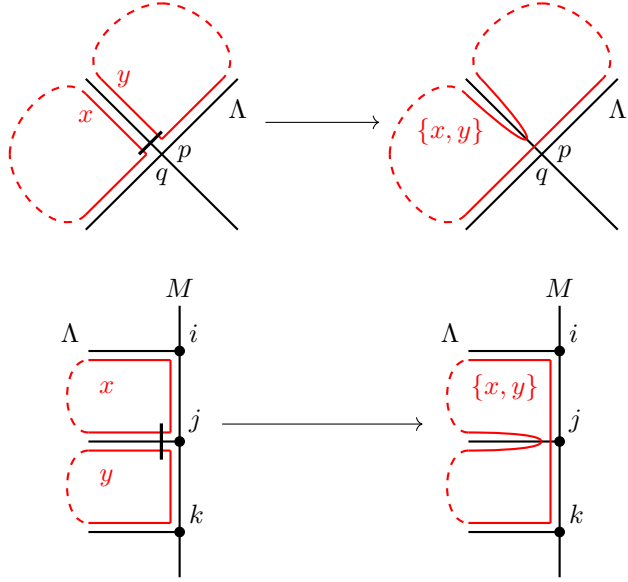
	
	\subsubsection{Hamiltonian on $\A^L_{SFT}$}\-\

	In this section, we define the Hamiltonian on $\A^L_{SFT}$. First, we define admissible disks.
	
	\begin{Def}[Admissible disk] \-\
		
		An \emph{admissible disk} is an orientation preserving map $u:(D^2, \partial D^2) \to (\mathbb{R}^2_L, \Pi_{xz}(\Lambda^L) \cup M)$ which satisfies the following properties.
		
		\begin{enumerate}
			\item $u$ is an immersion apart from a finite set of points $\{z_1,\dots, z_k\} \subset \partial D^2$ which map either to crossings, cusps, or points in $\Pi_{xz}(\Lambda^L) \cap M$. 
			\item If $z_i$ maps to a crossing, then a neighborhood of $z_i$ is mapped to a single quadrant of that crossing.
		\end{enumerate}
		
		$u$ may have any number of positive and negative corners, and any number of boundary segments on $M$. 
		
		Given an admissible disk $u$, we define $\partial u$ to be the monomial in $\A^L_{SFT}$ defined by the broken closed string associated to the boundary of $u$. 
		
	\end{Def}

	\begin{Def}[Hamiltonian]
		
		\[ h^L := \sum_{u} \partial u + \sum_{i} p_i, \]
		
		where the first sum ranges over admissible disks $u$ (considered up to domain reparametrization), and the second sum ranges over the indices of the vertices which are right cusps.

	\end{Def}

	The second sum comes from the fact that the morsification of a front projection turns a right cusp into a small loop, which adds a single holomorphic disk with one corner at $p_i$ (see \Cref{fig:resolution}).

	\begin{Def}[SFT differential]
		\[ d_{SFT}^L := \{h^L, \cdot\}\]
	\end{Def}
	
	As before, the full differential on $\A^L_{SFT}$ is the sum of the SFT and string contributions.
	
	\begin{Def}[Differential on $\A^L_{SFT}$]
		\[d^L := \{h^L,\cdot\} + \delta_{str}^L \]
	\end{Def}

	\subsection{$d^L$ is a differential}\-\
	
	In this section we prove the following. 
	
	\begin{Th}\label{thm:dL2=0}
		$(\A^L_{SFT}, d^L)$ is a differential graded algebra. In other words, $d^L$ satisfies $(d^L)^2 = 0$. 
	\end{Th}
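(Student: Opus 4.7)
The plan is to mirror the proof of \Cref{thm:dM2=0}, which itself parallels Prop 3.15 of \cite{ng}. I would first introduce an auxiliary element $h^L_2 \in \A^L_{SFT}$ -- the bordered $\mathbb{Z}_2$-analogue of $\tfrac12\{h^L,h^L\}$ -- encoding all ways of gluing two admissible disks along a shared Reeb chord (with opposite Reeb signs) or along adjacent $\alpha$-segments on $M$. By construction together with the Jacobi identity for the bracket, this element would satisfy $\{h^L,\{h^L,x\}\} = \{h^L_2,x\}$ for every $x$, in direct analogy with \Cref{lem:h2M}. The proof then reduces to three lemmas: (i) $(\delta_{str}^L)^2 = 0$; (ii) $\delta_{str}^L$ is a derivation of the SFT bracket, i.e.\ $\delta_{str}^L(\{x,y\}) = \{\delta_{str}^L(x),y\} + \{x,\delta_{str}^L(y)\}$; and (iii) the bordered quantum master equation $\delta_{str}^L(h^L) = h^L_2$. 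Granted these, expanding
\[
(d^L)^2(x) = \{h^L,\{h^L,x\}\} + \delta_{str}^L\{h^L,x\} + \{h^L,\delta_{str}^L(x)\} + (\delta_{str}^L)^2(x)
\]
and applying (i)--(iii) together with the derivation property yields $\{h^L_2 + \delta_{str}^L(h^L), x\} = 0$ over $\mathbb{Z}_2$.

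Item (i) would follow from a case analysis on the types of two consecutive insertions ($pq$, $\alpha$, $\beta$), showing that they either commute outright or produce the same word as a different pair, so that every term of $(\delta_{str}^L)^2(w)$ appears an even number of times; this combines \Cref{lem:deltaM2=0} with the $pq$-pair cancellation underlying Prop 3.8 of \cite{ng}, and no anomaly appears because $\A^L_{SFT}$ is commutative. Item (ii) would again be verified generator-by-generator: the $\alpha,\beta$ cases reduce to \Cref{lem:deltaMderivation}, the $p,q$ cases mimic Prop 3.4(4) of \cite{ng}, and all mixed cases are either immediate or follow from the same broken-closed-string picture. Crucially, the commutator anomaly $[\bullet(x),y]$ appearing in \Cref{prop:lsftjacobi}(4) vanishes in the commutative quotient, which is why the theorem is only stated for the commutative algebra.

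The main obstacle is (iii), the geometric heart of the argument. I would prove it by analysing generic $1$-parameter families of ``almost-admissible'' disks whose boundary is allowed a single extra degeneration -- an interior Reeb-chord self-intersection, a transverse $M$-crossing, or a boundary jump across the $\beta$-pairing. The two ends of each $1$-dimensional moduli space then contribute either a term of $\delta_{str}^L(h^L)$ (from the insertion of a $pq$-, $\alpha$-, or $\beta$-pair into the boundary of a single admissible disk) or a term of $h^L_2$ (from a degeneration that splits the disk into two admissible disks glued along the new interior feature). The Reeb-chord case reproduces the unbordered quantum master equation \Cref{thm:quantum}, while the $M$- and $\beta$-degenerations provide the new bordered contributions, paralleling the computation for $\A^M_{SFT}$ in \Cref{lem:deltahM} but with genuine holomorphic disks in place of $\alpha$-pairs.
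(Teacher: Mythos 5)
Your proposal follows essentially the same route as the paper: the same decomposition $(d^L)^2(x)=\{h^L\to h^L+\delta_{str}^L(h^L),x\}$ via the same four lemmas (your $h^L_2$ is the paper's $h^L\to h^L$, and your moduli-theoretic phrasing of the bordered quantum master equation corresponds to the paper's direct pairwise cancellation of interior and boundary configurations). One small caution: the identity $\{h^L,\{h^L,x\}\}=\{h^L_2,x\}$ does \emph{not} follow from the Jacobi identity alone over $\mathbb{Z}_2$ (as the paper notes in \Cref{lem:h2M}); it requires a separate direct verification.
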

	
	\begin{Lemma}
		$\delta_{str}^L$ is a derivation of $\{\cdot, \cdot\}$.
		\[ \delta_{str}^L \{x,y\} = \{\delta_{str}^L(x), y\} + \{x, \delta_{str}^L(y)\}\]
	\end{Lemma}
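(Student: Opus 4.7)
The plan is to reduce the identity to the case where $x$ and $y$ are generators of $\A^L_{SFT}$, and then verify it case-by-case. Since both $\{\cdot,\cdot\}$ and $\delta_{str}^L$ satisfy the Leibniz rule (by definition in the case of $\{\cdot,\cdot\}$, and by \Cref{prop:deltaL} for $\delta_{str}^L$), and both sides of the desired identity are bilinear in $x$ and $y$, a standard induction on word length reduces to the case of generators. After this reduction, one splits into cases based on the types (i.e., $p_i$, $q_i$, $t^{\pm 1}$, $\alpha_{ij}$, or $\beta_{ij}$) of $x$ and $y$.

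The case where $x$ and $y$ are both in $\{\alpha_{ij}, \beta_{ij}\}$ is handled essentially as in \Cref{lem:deltaMderivation}. There is one new complication: $\delta_{str}^L(\alpha_{ij})$ may pick up additional $pq$- and $\beta$-insertions coming from Reeb chords encountered along the broken closed string $\gamma(\alpha_{ij})$, beyond the $(\gamma_i+\gamma_j)\alpha_{ij} + \sum_{i<k<j}\alpha_{ik}\alpha_{kj}$ contributions. However, such additional terms always contain a $p_*$ or $q_*$ factor, and brackets of these against any $\alpha$ or $\beta$ generator vanish, so they contribute nothing to $\{\delta_{str}^L(\alpha_{ij}), z\}$ when $z$ is an $\alpha$ or $\beta$ generator. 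One therefore recovers exactly the equation checked in \Cref{lem:deltaMderivation}.

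The case where both $x$ and $y$ lie in $\{p_i, q_i, t^{\pm 1}\}$ is the LSFT situation. The only nontrivial bracket is $\{p_i, q_i\} = 1$, whose string differential is zero, so the LHS vanishes; the RHS is then to be checked via the broken-closed-string interpretation of $\delta_{str}^L$, exactly as in the proof of \Cref{prop:lsftjacobi}(4) in \cite{ng}. That statement includes a correction term $[\bullet(x), y]$, which vanishes in $\A^L_{SFT}$ because the algebra is commutative. The mixed cases where exactly one of $x, y$ is a $p/q$ generator and the other is an $\alpha/\beta$ generator have vanishing LHS since the two types of generators bracket trivially; for the RHS one checks that after expanding $\delta_{str}^L$, the only surviving brackets involve pairings at the specific Reeb chord or dividing-line segment in question, and these cancel in pairs. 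The geometric picture is the familiar one: insertions into a glued string $\gamma_x * \gamma_y$ split naturally according to whether the insertion site lies in the $\gamma_x$-part or the $\gamma_y$-part, producing exactly the two summands on the RHS.

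The main obstacle will be bookkeeping in the $p/q$ cases, where one has to verify directly that the cyclic rearrangements at a gluing site — which are precisely what produces the $[\bullet(x), y]$ obstruction in the noncommutative LSFT — become trivial after commutativization. Beyond this, the remaining cases are routine case analysis of the type already carried out in \Cref{lem:deltaMderivation}.
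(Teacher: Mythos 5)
Your reduction to generators via the Leibniz rules is valid in principle (and is a different organization from the paper, which instead argues directly for arbitrary $x,y\in W$ using the broken-closed-string picture and treats $\beta$ factors separately). However, the execution of the generator cases contains a genuine error. You claim that the extra $pq$-insertion terms in $\delta_{str}^L(\alpha_{ij})$ ``contribute nothing to $\{\delta_{str}^L(\alpha_{ij}),z\}$'' when $z$ is an $\alpha$ or $\beta$ generator because $p$'s and $q$'s bracket trivially with such $z$. This is false: a term such as $p_mq_m\alpha_{ij}$ satisfies $\{p_mq_m\alpha_{ij},\alpha_{jk}\}=p_mq_m\{\alpha_{ij},\alpha_{jk}\}=p_mq_m\alpha_{ik}\neq 0$ by the Leibniz rule, since the monomial still carries the $\alpha_{ij}$ factor. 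These contributions are not only nonzero but essential: $\delta_{str}^L(\alpha_{ik})$ on the left-hand side itself contains $pq$-insertions along the broken closed string $\gamma(\alpha_{ik})$ (e.g.\ $p_0q_0\alpha_{13}$ in the trefoil example of the paper), and these can only be produced by exactly the terms you discard. The identity holds because the Reeb chords and dividing-line jumps along the two sub-paths through the shared endpoint $j$ (from $\bullet$ to $j$ in $\gamma(\alpha_{jk})$ and from $j$ to $\bullet$ in $\gamma(\alpha_{ij})$) coincide and cancel mod $2$, leaving precisely the insertions along $\gamma(\alpha_{ik})$. This geometric cancellation at the gluing region is the new content of the lemma relative to \Cref{lem:deltaMderivation}, and it is what the paper's analysis of boundary exceptional terms (\Cref{fig:boundaryexcepts}) supplies; your proposal asserts it away rather than proving it.

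A secondary issue of the same kind appears in your mixed and $p/q$ cases: deferring the interior exceptional terms to Ng's argument is reasonable, but the strings $\gamma(p_i)$, $\gamma(q_i)$ in $\Lambda^L$ may jump across the dividing line, so $\delta_{str}^L(p_i)$ and $\delta_{str}^L(q_i)$ acquire $\beta$-insertions, and gluings can occur adjacent to $\alpha$ segments on $M$; the resulting boundary exceptional terms are not present in the closed-knot LSFT proof and require a separate pairwise cancellation argument, which your proposal does not identify.
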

	\begin{proof}\-\
		
		\begin{enumerate}
			\item $x, y \in W$.
			
			Then $\{x,y\}$ will be a sum of elements in $W$. Therefore, each term of $\delta_{str}^L\{x,y\}$ will be some insertion into a broken closed string $\gamma$ representing a summand of $\{x,y\}$. 
			
			If the insertion takes place away from the gluing region, it will be canceled by a corresponding term either in $\{\delta_{str}^L(x), y\}$ or $\{x, \delta_{str}^L(y)\}$.
			
			The only remaining terms are the exceptional terms, where an insertion takes place at the gluing region (see example in \Cref{fig:twoexcepterms}). The `interior' exceptional terms (where the insertion/gluing happens in the interior of the half-diagram, away from the dividing line) are the same as those for LSFT (see Figure 3.8 in \cite{ng}). Now, the only remaining terms are those exceptional terms which interact with the boundary. These are depicted in \Cref{fig:boundaryexcepts}, and are again seen to cancel pairwise.

			\item $x = \beta_{ij}$, $y \in W$.
			
			$\{\delta_{str}^L(\beta_{ij}), y\} = 0$, so it suffices to show
			
			\[ \delta_{str}^L \{\beta_{ij},y\} = \{\beta_{ij}, \delta_{str}^L(y)\}.\]
			
			$\{\beta_{ij}, y\} = c_y y$, where $c_y$ is the number mod 2 of factors of $y$ which are of the  form $\alpha_{k\ell}$ satisfying $|\{i,j\} \cap \{k,\ell\}| = 1$. 
			
			Therefore, 
			\[ \delta_{str}^L \{\beta_{ij}, y\} = c_y\delta_{str}^L(y)\]

			Then we can write $\delta_{str}^L(y) = \sum w$, where the sum ranges over all $w$ that are either $pq$-, $\alpha$-, or $\beta$- insertions into $y$. 
			
			Thus
			\[ \delta_{str}^L \{\beta_{ij}, y\} = \sum c_y w\]  
			
			On the other hand,
			
			\[ \{\beta_{ij}, \delta_{str}^L(y)\} = \left\{\beta_{ij}, \sum w \right\} = \sum c_w w\]
			
			where $c_w$ is the number mod 2 of factors of $w$ which are of the  form $\alpha_{k\ell}$ satisfying $|\{i,j\} \cap \{k,\ell\}| = 1$. 
			
			Therefore it suffices to show that $c_y = c_w$ for all insertions $w$.
			
			Clearly this is true if $w$ is a $pq$ or $\beta$ insertion. If $w$ is an $\alpha$ insertion, then an $\alpha_{k\ell}$ contributing to $c_y$ can be split into $\alpha_{km}\alpha_{m\ell}$. But in this case, we must have either $|\{i,j\} \cap \{k,m\}| = 1$ or $|\{i,j\} \cap \{m,\ell\}| = 1$ (but not both). So either $\alpha_{km}$ or $\alpha_{m\ell}$ (but not both) will contribute to $c_w$. Thus $c_y=c_w$ in all cases.

			\item $x = \beta_{ij}$, $y = \beta_{k\ell}$.
			
			Everything vanishes so the identity is trivial.

		\end{enumerate}
		
	\end{proof}

	\begin{figure}
		\begin{tikzpicture}
			\def\xaI{0}
			\def\xaII{5}
			\def\xaIII{10}
			
			\def\xbI{2.5}
			\def\xbII{7.5}
			
			\def\yaI{0}
			\def\ybI{-5}
			
			\def\scale{1}
			
			\node(1101) at (\xaI, \yaI) {			
				\begin{tikzpicture}[scale=\scale]
					\node (L) at (1,0.6) {$\Lambda$};
					\node[red] (G) at (-1,0.4) {$x$};
					\node[red] (Gt) at (-0.5,1) {$y$};
					\def\off{0.2}
					\def\offII{0.5}
					\def\ou{-2.5}
					\pgfmathsetmacro{\xoff}{-1+\off}
					\begin{knot}
						[
						clip width=15, draft mode=strands]
						\strand[black, thick] (-1,-1) -- (1,1);
						\strand[black, thick] (-1.5,1.5) -- (0.5,-0.5);
						\strand[black, thick] (-2,0) -- (0,2);
						\strand[red, thick] (-1,-\xoff) -- (-0.2, 0);
						\strand[red, thick] (-0.2,0) -- (-1,\xoff);
						\strand[red, thick] (-1,-\xoff) -- (-1.8,0);
						\strand[red, thick, dashed] (-1.8,0) to[out=south west, in=south west] (-1,\xoff);
						
						\strand[red, thick] (-\xoff,1) -- (0, 0.2);
						\strand[red, thick] (0,0.2) -- (-1.3,1.5);
						\strand[red, thick, dashed] (-1.3,1.5) to[out=north west, in=west] (0,-\ou);
						\strand[red, thick, dashed] (0, -\ou) to[out=east, in=north east] (-\xoff,1);
						
						\strand[black, very thick] (-0.3,0) -- (0,0.3);
					\end{knot}
				\end{tikzpicture}
			};

			\node(1102) at (\xaII,\yaI) {
				\begin{tikzpicture}[scale=\scale]
					\node (L) at (1,0.6) {$\Lambda$};
					\node[red] (G) at (0,1) {$\{x,y\}$};
					\def\off{0.2}
					\def\offII{0.5}
					\def\ou{-2.5}
					\pgfmathsetmacro{\xoff}{-1+\off}
					\begin{knot}
						[
						clip width=15, draft mode=strands]
						\strand[black, thick] (-1,-1) -- (1,1);
						\strand[black, thick] (-1.5,1.5) -- (0.5,-0.5);
						\strand[black, thick] (-2,0) -- (0,2);
						\strand[red, thick] (-1,-\xoff) -- (-1.8,0);
						\strand[red, thick, dashed] (-1.8,0) to[out=south west, in=south west] (-1,\xoff);
						
						\strand[red, thick, dashed] (-1.3,1.5) to[out=north west, in=west] (0,-\ou);
						\strand[red, thick, dashed] (0, -\ou) to[out=east, in=north east] (-\xoff,1);
						
						\strand[red, thick] (-\xoff,1) -- (-1,\xoff);
						\strand[red, thick] (-1,-\xoff) .. controls +(-45:0.1) and +(225:0.1) .. (-0.3,0.3);
						\strand[red,thick] (-0.3,0.3) .. controls +(45:0.1) and +(-45:0.1) .. (-1.3,1.5);
					\end{knot}
				\end{tikzpicture}
			};
			
			\node(1103) at (\xaIII, \yaI) {
				\begin{tikzpicture}[scale=\scale]
					\node (L) at (1,0.6) {$\Lambda$};
					\node[red] (G) at (0,1) {$\delta\{x,y\}$};
					\def\off{0.2}
					\def\offII{0.5}
					\def\ou{-2.5}
					\pgfmathsetmacro{\xoff}{-1+\off}
					\def\xI{-0.15}
					\def\yI{0.05}
					\def\xII{-0.05}
					\def\yII{0.15}
					\begin{knot}
						[
						clip width=15, draft mode=strands]
						\strand[black, thick] (-1,-1) -- (1,1);
						\strand[black, thick] (-1.5,1.5) -- (0.5,-0.5);
						\strand[black, thick] (-2,0) -- (0,2);
						\strand[red, thick] (\xI,\yI) -- (-1,\xoff);
						\strand[red, thick] (-1,-\xoff) -- (-1.8,0);
						\strand[red, thick, dashed] (-1.8,0) to[out=south west, in=south west] (-1,\xoff);
						
						\strand[red, thick] (-\xoff,1) -- (\xII, \yII);
						\strand[red, thick, dashed] (-1.3,1.5) to[out=north west, in=west] (0,-\ou);
						\strand[red, thick, dashed] (0, -\ou) to[out=east, in=north east] (-\xoff,1);
						
						\strand[red, thick] (-1,-\xoff) .. controls +(-45:0.1) and +(225:0.1) .. (-0.3,0.3);
						\strand[red,thick] (-0.3,0.3) .. controls +(45:0.1) and +(-45:0.1) .. (-1.3,1.5);
						\strand[red,thick] (\xI,\yI) -- (-0.2,0.2);
						\strand[red,thick] (-0.2,0.2) -- (\xII,\yII);	
					\end{knot}
				\end{tikzpicture}
			};
			
			\node(1104) at (\xbI, \ybI) {
				\begin{tikzpicture}[scale=\scale]
					\node (L) at (1,0.6) {$\Lambda$};
					\node[red] (G) at (-1,0.4) {$x$};
					\node[red] (Gt) at (-0.4,1) {$\delta y$};
					\def\off{0.2}
					\def\offII{0.5}
					\def\ou{-2.5}
					\pgfmathsetmacro{\xoff}{-1+\off}
					\def\xI{-0.95}
					\def\yI{1.15}
					\def\xII{-0.75}
					\def\yII{1.25}
					\def\xIII{-0.85}
					\def\yIII{1.05}
					\begin{knot}
						[
						clip width=15, draft mode=strands]
						\strand[black, thick] (-1,-1) -- (1,1);
						\strand[black, thick] (-1.5,1.5) -- (0.5,-0.5);
						\strand[black, thick] (-2,0) -- (0,2);
						\strand[red, thick] (-1,-\xoff) -- (-0.2, 0);
						\strand[red, thick] (-0.2,0) -- (-1,\xoff);
						\strand[red, thick] (-1,-\xoff) -- (-1.8,0);
						\strand[red, thick, dashed] (-1.8,0) to[out=south west, in=south west] (-1,\xoff);
						
						\strand[red, thick] (-\xoff,1) -- (0, 0.2);
						\strand[red, thick, dashed] (-1.3,1.5) to[out=north west, in=west] (0,-\ou);
						\strand[red, thick, dashed] (0, -\ou) to[out=east, in=north east] (-\xoff,1);
						
						\strand[red, thick] (0,0.2) -- (\xIII,\yIII);
						\strand[red,thick] (\xIII,\yIII) -- (\xII,\yII);
						\strand[red,thick] (\xII,\yII) -- (\xI,\yI);
						\strand[red,thick] (\xI,\yI) -- (-1.3,1.5);
						
						\strand[black, very thick] (-1,0.7) -- (-0.7,1);
					\end{knot}
				\end{tikzpicture}
			};
			
			\node(1105) at (\xbII, \ybI) {
				\begin{tikzpicture}[scale=\scale]
					\node (L) at (1,0.6) {$\Lambda$};
					\node[red] (G) at (0,1) {$\{x,\delta y\}$};
					\def\off{0.2}
					\def\offII{0.5}
					\def\ou{-2.5}
					\pgfmathsetmacro{\xoff}{-1+\off}
					\def\xI{-0.95}
					\def\yI{1.15}
					\def\xII{-0.75}
					\def\yII{1.25}
					\def\xIII{-0.85}
					\def\yIII{1.05}
					\begin{knot}
						[
						clip width=15, draft mode=strands]
						\strand[black, thick] (-1,-1) -- (1,1);
						\strand[black, thick] (-1.5,1.5) -- (0.5,-0.5);
						\strand[black, thick] (-2,0) -- (0,2);
						\strand[red, thick] (-0.2,0) -- (-1,\xoff);
						\strand[red, thick, dashed] (-1.8,0) to[out=south west, in=south west] (-1,\xoff);
						
						\strand[red, thick] (-\xoff,1) -- (0, 0.2);
						\strand[red, thick, dashed] (-1.3,1.5) to[out=north west, in=west] (0,-\ou);
						\strand[red, thick, dashed] (0, -\ou) to[out=east, in=north east] (-\xoff,1);
						
						\strand[red,thick] (\xII,\yII) -- (\xI,\yI);
						\strand[red,thick] (\xI,\yI) -- (-1.3,1.5);
						
						\strand[red,thick] (-1.8,0) .. controls +(45:0.1) and +(-45:0.1) .. (\xII,\yII);
						\strand[red,thick] (-0.2,0) .. controls +(135:0.1) and +(225:0.1) .. (-0.7,0.7);
						\strand[red,thick] (0,0.2) .. controls +(135:0.1) and +(45:0.1) .. (-0.7,0.7);
						
					\end{knot}
				\end{tikzpicture}
			};
			\draw[->] (1101) to (1102);
			\draw[->] (1102) to (1103);
			\draw[->] (1104) to (1105);
			\draw[double,-,double distance=3pt,>=stealth] (1103) to (1105);
			
		\end{tikzpicture}
		\caption[Two canceling exceptional terms in $\delta\{x,y\} + \{x,\delta y\}$]{Top: an exceptional term in $\delta\{x,y\}$. Bottom: an exceptional term in $\{x,\delta y\}$. The top and bottom terms give the same element in $\A_{SFT}^L$ and thus cancel.} \label{fig:twoexcepterms}
	\end{figure}
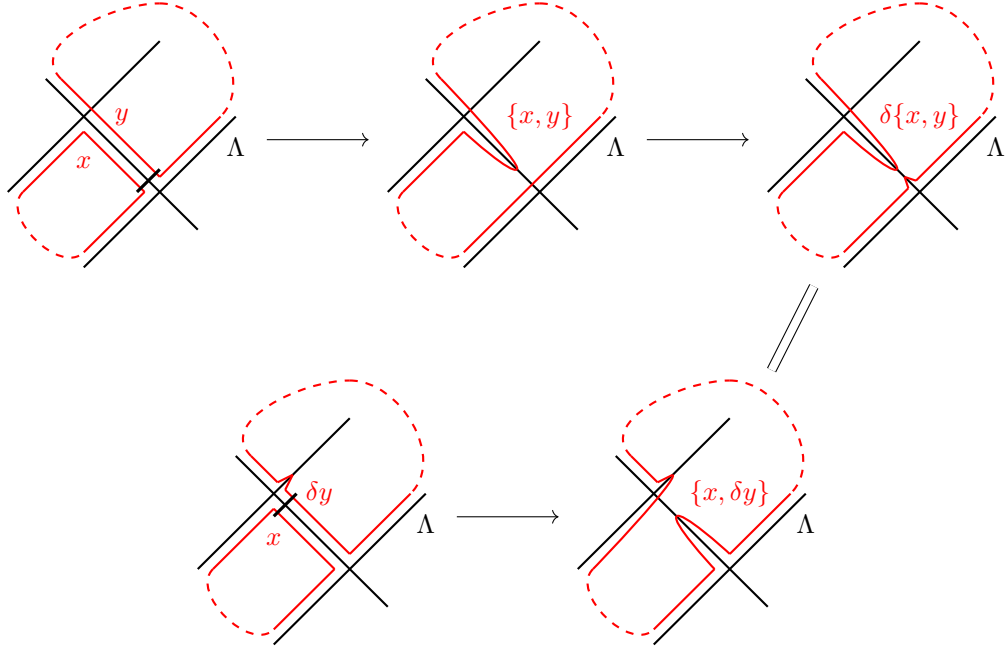

	\begin{figure}
		\begin{tikzpicture}
			
			\def\xaI{0}
			\def\xaII{4}
			\def\xaIII{8}
			
			\def\yaI{0}
			\def\yaII{-5.4}

			\node(11a1) at (\xaI,\yaI) {			
				\begin{tikzpicture}
					\node (L) at (-1.2,1.2) {$\Lambda$};
					\node[red] (G) at (-0.8,-0.4) {$x$};
					\node[red] (Gt) at (-0.8,0.6) {$y$};
					\node (M) at (0,1.7) {$M$};
					
					\def\mhei{1.5}
					\def\hei{1}
					\def\off{0.1}
					
					\pgfmathsetmacro{\yU}{\hei+\off}
					\pgfmathsetmacro{\yL}{\hei-\off}
					
					\def\xI{-0.5}
					\def\xII{-1.2}
					\def\yI{-0.3}
					
					\def\xIII{-1.6}

					\begin{knot}
						[
						clip width=15, draft mode=strands]
						\strand[black, thick] (0,\mhei) -- (0,-\mhei);
						\strand[black, thick] (-1,\hei) -- (0,\hei);
						\strand[black, thick] (-0.5,-\hei) -- (0,-\hei);
						\strand[black, thick] (-1.6,0) -- (0,0);
						
						\strand[red, thick] (\xI, -\yL) -- (-0.1, -\yL);
						\strand[red, thick] (-0.1, -\yL) -- (-0.1, -\off);
						\strand[red,thick] (-0.1,-\off) -- (\xII, -\off);
						\strand[red,thick] (\xII, -\off) -- (\xII, \yI);
						\strand[red,thick, dashed] (\xII,\yI) to[out=south, in=west] (\xI,-\yL);
						
						\strand[red, thick] (-1, \yL) -- (-0.1, \yL);
						\strand[red, thick] (-0.1, \yL) -- (-0.1, \off);
						\strand[red,thick] (-0.1,\off) -- (\xIII, \off);
						\strand[red,thick, dashed] (\xIII,\off) to[out=west, in=west] (-1,\yL);
						
						\strand[black, very thick] (-0.2,-0.2) -- (-0.2,0.2);
					\end{knot}

				\end{tikzpicture}
			};

			\node(11a2) at (\xaI,\yaII) {
				\begin{tikzpicture}
					\node (L) at (-1.2,1.2) {$\Lambda$};
					\node[red] (G) at (-0.8,-0.4) {$x$};
					\node[red] (Gt) at (-0.8,0.6) {$\delta y$};
					\node (M) at (0,1.7) {$M$};
					
					\def\mhei{1.5}
					\def\hei{1}
					\def\off{0.1}
					
					\pgfmathsetmacro{\yU}{\hei+\off}
					\pgfmathsetmacro{\yL}{\hei-\off}
					
					\def\xI{-0.5}
					\def\xII{-1.2}
					\def\yI{-0.3}
					
					\def\xIII{-1.6}
					
					\pgfmathsetmacro{\xBar}{\xII+\off}
					\pgfmathsetmacro{\yIBar}{2*\off}
					\pgfmathsetmacro{\yIIBar}{-2*\off}	
					
					\pgfmathsetmacro{\xIV}{\xII-0.5*\off}
					\pgfmathsetmacro{\xV}{\xII-\off}
					\def\yIV{3*\off}	
					
					\begin{knot}
						[
						clip width=15, draft mode=strands]
						\strand[black, thick] (0,\mhei) -- (0,-\mhei);
						\strand[black, thick] (-1,\hei) -- (0,\hei);
						\strand[black, thick] (-0.5,-\hei) -- (0,-\hei);
						\strand[black, thick] (-1.6,0) -- (0,0);
						
						\strand[red, thick] (\xI, -\yL) -- (-0.1, -\yL);
						\strand[red, thick] (-0.1, -\yL) -- (-0.1, -\off);
						\strand[red,thick] (-0.1,-\off) -- (\xII, -\off);
						\strand[red,thick] (\xII, -\off) -- (\xII, \yI);
						\strand[red,thick, dashed] (\xII,\yI) to[out=south, in=west] (\xI,-\yL);
						
						\strand[red, thick] (-1, \yL) -- (-0.1, \yL);
						\strand[red, thick] (-0.1, \yL) -- (-0.1, \off);
						
						\strand[red,thick] (-0.1,\off) -- (\xII, \off);
						\strand[red,thick] (\xII,\off) -- (\xIV,\yIV);
						\strand[red,thick] (\xIV,\yIV) -- (\xV,\off);
						\strand[red, thick] (\xV,\off) -- (\xIII, \off);
						\strand[red,thick, dashed] (\xIII,\off) to[out=west, in=west] (-1,\yL);
						
						\strand[black, very thick] (\xBar,\yIBar) -- (\xBar,\yIIBar);
					\end{knot}

				\end{tikzpicture}
			};

			\node(11b1) at (\xaII,\yaI) {
				\begin{tikzpicture}
					\node (L) at (-0.7,1.2) {$\Lambda$};
					\node[red] (G) at (-0.8,-0.4) {$x$};
					\node[red] (Gt) at (-0.8,0.4) {$y$};
					\node (M) at (0,1.7) {$M$};
					
					\def\mhei{1.5}
					\def\hei{1}
					\def\off{0.1}
					
					\pgfmathsetmacro{\yU}{\hei+\off}
					\pgfmathsetmacro{\yL}{\hei-\off}
					
					\def\xI{-0.5}
					\def\xII{-1.2}
					\def\yI{-0.3}
					
					\def\xIII{-1.6}

					\begin{knot}
						[
						clip width=15, draft mode=strands]
						\strand[black, thick] (0,\mhei) -- (0,-\mhei);
						\strand[black, thick] (-0.5,\hei) -- (0,\hei);
						\strand[black, thick] (-0.5,-\hei) -- (0,-\hei);
						\strand[black, thick] (\xII,0) -- (0,0);
						
						\strand[red, thick] (\xI, -\yL) -- (-0.1, -\yL);
						\strand[red, thick] (-0.1, -\yL) -- (-0.1, -\off);
						\strand[red,thick] (-0.1,-\off) -- (\xII, -\off);
						\strand[red,thick] (\xII, -\off) -- (\xII, \yI);
						\strand[red,thick, dashed] (\xII,\yI) to[out=south, in=west] (\xI,-\yL);
						
						\strand[red, thick] (\xI, \yL) -- (-0.1, \yL);
						\strand[red, thick] (-0.1, \yL) -- (-0.1, \off);
						\strand[red,thick] (-0.1,\off) -- (\xII, \off);
						\strand[red,thick] (\xII, \off) -- (\xII, -\yI);
						\strand[red,thick, dashed] (\xII,-\yI) to[out=north, in=west] (\xI,\yL);
						
						\strand[black, very thick] (-0.2,-0.2) -- (-0.2,0.2);
					\end{knot}

				\end{tikzpicture}
			};

			\node(11b2) at (\xaII,\yaII) {
				\begin{tikzpicture}
					\node (L) at (-0.7,1.2) {$\Lambda$};
					\node[red] (G) at (-0.8,-0.4) {$x$};
					\node[red] (Gt) at (-0.8,0.4) {$y$};
					\node (M) at (0,1.7) {$M$};
					
					\def\mhei{1.5}
					\def\hei{1}
					\def\off{0.1}
					
					\pgfmathsetmacro{\yU}{\hei+\off}
					\pgfmathsetmacro{\yL}{\hei-\off}
					
					\def\xI{-0.5}
					\def\xII{-1.2}
					\def\yI{-0.3}
					
					\def\xIII{-1.6}
					
					\pgfmathsetmacro{\xBar}{\xII+\off}
					\pgfmathsetmacro{\yIBar}{2*\off}
					\pgfmathsetmacro{\yIIBar}{-2*\off}

					\begin{knot}
						[
						clip width=15, draft mode=strands]
						\strand[black, thick] (0,\mhei) -- (0,-\mhei);
						\strand[black, thick] (-0.5,\hei) -- (0,\hei);
						\strand[black, thick] (-0.5,-\hei) -- (0,-\hei);
						\strand[black, thick] (\xII,0) -- (0,0);
						
						\strand[red, thick] (\xI, -\yL) -- (-0.1, -\yL);
						\strand[red, thick] (-0.1, -\yL) -- (-0.1, -\off);
						\strand[red,thick] (-0.1,-\off) -- (\xII, -\off);
						\strand[red,thick] (\xII, -\off) -- (\xII, \yI);
						\strand[red,thick, dashed] (\xII,\yI) to[out=south, in=west] (\xI,-\yL);
						
						\strand[red, thick] (\xI, \yL) -- (-0.1, \yL);
						\strand[red, thick] (-0.1, \yL) -- (-0.1, \off);
						\strand[red,thick] (-0.1,\off) -- (\xII, \off);
						\strand[red,thick] (\xII, \off) -- (\xII, -\yI);
						\strand[red,thick, dashed] (\xII,-\yI) to[out=north, in=west] (\xI,\yL);
						
						\strand[black, very thick] (\xBar,\yIBar) -- (\xBar,\yIIBar);
					\end{knot}

				\end{tikzpicture}
			};

			\node(11c1) at (\xaIII,\yaI) {
				\begin{tikzpicture}
					\node (L) at (-0.7,1.7) {$\Lambda$};
					\node[red] (G) at (-0.4,0) {$x$};
					\node[red] (Gt) at (-0.4,1) {$y$};
					\node (M) at (0,2.2) {$M$};
					
					\def\mhei{2}
					\def\heiI{-1.5}
					\def\heiII{-0.5}
					\def\heiIII{0.5}
					\def\heiIV{1.5}
					\def\off{0.1}
					
					\pgfmathsetmacro{\yIU}{\heiI+\off}
					\pgfmathsetmacro{\yIL}{\heiI-\off}
					
					\pgfmathsetmacro{\yIIU}{\heiII+\off}
					\pgfmathsetmacro{\yIIL}{\heiII-\off}
					
					\pgfmathsetmacro{\yIIIU}{\heiIII+\off}
					\pgfmathsetmacro{\yIIIL}{\heiIII-\off}
					
					\pgfmathsetmacro{\yIVU}{\heiIV+\off}
					\pgfmathsetmacro{\yIVL}{\heiIV-\off}
					
					\def\xI{-0.5}
					\def\xII{-1.1}
					\def\xIII{-1.8}
					
					\pgfmathsetmacro{\xIIU}{\xII+\off}
					\pgfmathsetmacro{\xIIL}{\xII-\off}
					
					\pgfmathsetmacro{\xBar}{-2*\off}
					\pgfmathsetmacro{\yBarL}{\yIIL-\off}
					\pgfmathsetmacro{\yBarU}{\yIIU+\off}
					
					\begin{knot}
						[
						clip width=15, draft mode=strands]
						\strand[black, thick] (0,\mhei) -- (0,-\mhei);
						\strand[black, thick] (\xI,\heiI) -- (0,\heiI);
						\strand[black, thick] (\xI,\heiII) -- (0,\heiII);
						\strand[black, thick] (\xI,\heiIII) -- (0,\heiIII);
						\strand[black, thick] (\xI,\heiII) to[out=west,in=south] (\xII,0);
						\strand[black,thick] (\xII,0) to[out=north, in=west] (\xI,\heiIII);
						\strand[black, thick] (\xI,\heiIV) -- (0,\heiIV);
						
						\strand[red, thick] (-\off, \yIIIL) -- (-\off, \yIIU);
						\strand[red, thick] (-\off, \yIIU) -- (\xI, \yIIU);
						\strand[red,thick] (\xI,\yIIU) to[out=west,in=south] (\xIIU,0);
						\strand[red,thick] (\xIIU,0) to[out=north,in=west] (\xI,\yIIIL);
						\strand[red,thick] (\xI,\yIIIL) -- (-\off, \yIIIL);

						\strand[red,thick] (-\off,\yIIL) -- (-\off,\yIU);
						\strand[red,thick] (-\off,\yIU) -- (\xI, \yIU);
						\strand[red,thick,dashed] (\xI,\yIU) to[out=west,in=south] (\xIII,0);
						\strand[red,thick,dashed] (\xIII,0) to[out=north,in=west] (\xI,\yIVL);
						\strand[red,thick] (\xI,\yIVL) -- (-\off, \yIVL);
						\strand[red,thick] (-\off,\yIVL) -- (-\off,\yIIIU);
						\strand[red,thick] (-\off,\yIIIU) -- (\xI,\yIIIU);
						\strand[red,thick] (\xI,\yIIIU) to[out=west,in=north] (\xIIL,0);
						\strand[red,thick] (\xIIL,0) to[out=south,in=west] (\xI,\yIIL);
						\strand[red,thick] (\xI,\yIIL) -- (-\off, \yIIL);
						
						\strand[black, very thick] (\xBar,\yBarU) -- (\xBar,\yBarL);
					\end{knot}

				\end{tikzpicture}
			};

			\node(11c2) at (\xaIII,\yaII) {
				\begin{tikzpicture}
					\node (L) at (-0.7,1.7) {$\Lambda$};
					\node[red] (G) at (-0.4,0) {$x$};
					\node[red] (Gt) at (-0.4,1) {$y$};
					\node (M) at (0,2.2) {$M$};
					
					\def\mhei{2}
					\def\heiI{-1.5}
					\def\heiII{-0.5}
					\def\heiIII{0.5}
					\def\heiIV{1.5}
					\def\off{0.1}
					
					\pgfmathsetmacro{\yIU}{\heiI+\off}
					\pgfmathsetmacro{\yIL}{\heiI-\off}
					
					\pgfmathsetmacro{\yIIU}{\heiII+\off}
					\pgfmathsetmacro{\yIIL}{\heiII-\off}
					
					\pgfmathsetmacro{\yIIIU}{\heiIII+\off}
					\pgfmathsetmacro{\yIIIL}{\heiIII-\off}
					
					\pgfmathsetmacro{\yIVU}{\heiIV+\off}
					\pgfmathsetmacro{\yIVL}{\heiIV-\off}
					
					\def\xI{-0.5}
					\def\xII{-1.1}
					\def\xIII{-1.8}
					
					\pgfmathsetmacro{\xIIU}{\xII+\off}
					\pgfmathsetmacro{\xIIL}{\xII-\off}
					
					\pgfmathsetmacro{\xBar}{-2*\off}
					\pgfmathsetmacro{\yBarL}{\yIIIL-\off}
					\pgfmathsetmacro{\yBarU}{\yIIIU+\off}
					
					\begin{knot}
						[
						clip width=15, draft mode=strands]
						\strand[black, thick] (0,\mhei) -- (0,-\mhei);
						\strand[black, thick] (\xI,\heiI) -- (0,\heiI);
						\strand[black, thick] (\xI,\heiII) -- (0,\heiII);
						\strand[black, thick] (\xI,\heiIII) -- (0,\heiIII);
						\strand[black, thick] (\xI,\heiII) to[out=west,in=south] (\xII,0);
						\strand[black,thick] (\xII,0) to[out=north, in=west] (\xI,\heiIII);
						\strand[black, thick] (\xI,\heiIV) -- (0,\heiIV);
						
						\strand[red, thick] (-\off, \yIIIL) -- (-\off, \yIIU);
						\strand[red, thick] (-\off, \yIIU) -- (\xI, \yIIU);
						\strand[red,thick] (\xI,\yIIU) to[out=west,in=south] (\xIIU,0);
						\strand[red,thick] (\xIIU,0) to[out=north,in=west] (\xI,\yIIIL);
						\strand[red,thick] (\xI,\yIIIL) -- (-\off, \yIIIL);

						\strand[red,thick] (-\off,\yIIL) -- (-\off,\yIU);
						\strand[red,thick] (-\off,\yIU) -- (\xI, \yIU);
						\strand[red,thick,dashed] (\xI,\yIU) to[out=west,in=south] (\xIII,0);
						\strand[red,thick,dashed] (\xIII,0) to[out=north,in=west] (\xI,\yIVL);
						\strand[red,thick] (\xI,\yIVL) -- (-\off, \yIVL);
						\strand[red,thick] (-\off,\yIVL) -- (-\off,\yIIIU);
						\strand[red,thick] (-\off,\yIIIU) -- (\xI,\yIIIU);
						\strand[red,thick] (\xI,\yIIIU) to[out=west,in=north] (\xIIL,0);
						\strand[red,thick] (\xIIL,0) to[out=south,in=west] (\xI,\yIIL);
						\strand[red,thick] (\xI,\yIIL) -- (-\off, \yIIL);
						
						\strand[black, very thick] (\xBar,\yBarU) -- (\xBar,\yBarL);
					\end{knot}

				\end{tikzpicture}
			};
			\draw[<->] (11a1) to (11a2);
			\draw[<->] (11b1) to (11b2);
			\draw[<->] (11c1) to (11c2);						
		\end{tikzpicture}
		\caption[Boundary exceptional terms in $\delta\{x,y\} + \{x,\delta y\}$]{The boundary exceptional terms cancel pairwise as shown.}\label{fig:boundaryexcepts}
	\end{figure}
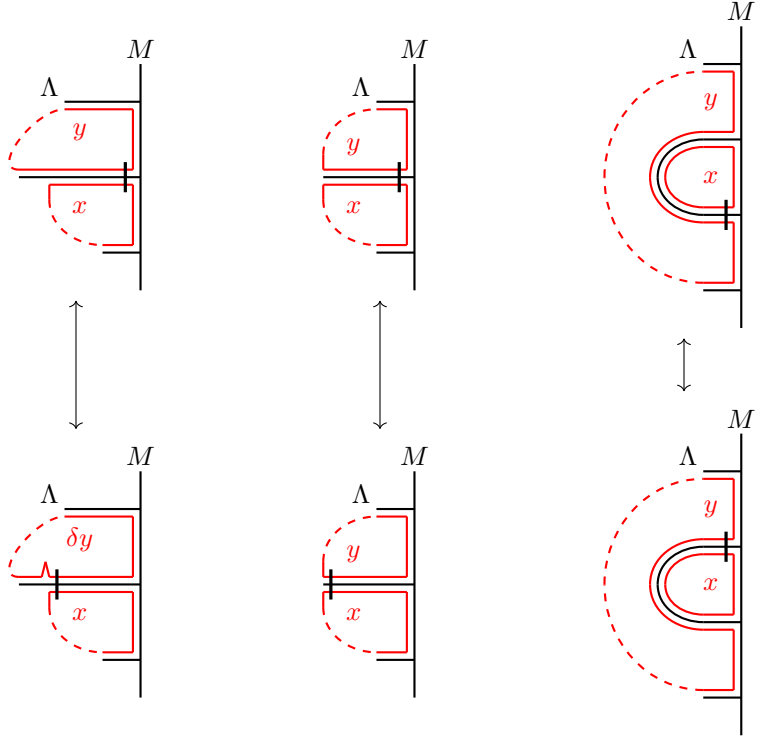

	\begin{Lemma}
		$(\delta_{str}^L)^2=0$.
	\end{Lemma}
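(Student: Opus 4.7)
The plan is to reduce $(\delta_{str}^L)^2(x)=0$ to the case where $x$ is a single generator, using the Leibniz rule established in \Cref{prop:deltaL}. For a $\beta$-generator the computation is immediate:
\[
(\delta_{str}^L)^2(\beta_{ij}) \;=\; \delta_{str}^L(\beta_{ij}^{2}) \;=\; 2\beta_{ij}\,\delta_{str}^L(\beta_{ij}) \;=\; 2\beta_{ij}^{3} \;=\; 0
\]
over $\mathbb{Z}_2$. For every other generator $x$ (namely $p_i$, $q_i$, $\alpha_{ij}$, or $t^{\pm 1}$), $x$ lies in $W$, so I would fix a broken closed string $\gamma$ representing $x$ and interpret each summand of $(\delta_{str}^L)^2(x)$ as the result of two successive insertions into $\gamma$, each of type $pq$, $\alpha$, or $\beta$.

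The three insertion types act on disjoint features of $\gamma$: $pq$-insertions at continuous passages through Reeb chord endpoints in the interior of $\Lambda^L$, $\alpha$-insertions at $M$-segments of $\gamma$, and $\beta$-insertions at dividing-line jumps of $\gamma$. Moreover, provided the lobe created by an $\alpha$-insertion is chosen small enough, no insertion of one type creates a new valid site for a different type. Since $\A^L_{SFT}$ is commutative, any ordered pair $(I_1,I_2)$ of insertions acting at disjoint features of $\gamma$ is matched by the reversed pair $(I_2,I_1)$; both yield the same monomial, so the two contributions cancel modulo two. This eliminates all mixed-type pairs and all same-type pairs at distinct disjoint features.

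The remaining interacting same-type contributions split into three cases. The $(\alpha,\alpha)$ case reduces to the identity
\[
\sum_{i<k<j}\sum_{i<m<k}\alpha_{im}\alpha_{mk}\alpha_{kj} \;=\; \sum_{i<k<j}\sum_{k<m<j}\alpha_{ik}\alpha_{km}\alpha_{mj},
\]
both sides being the sum of $\alpha_{ia}\alpha_{ab}\alpha_{bj}$ over $i<a<b<j$; this is precisely the cancellation already used for $\alpha_{ij}^R$ in the proof of \Cref{lem:deltaM2=0}. The $(\beta,\beta)$ case is handled by noting that summing over ordered pairs of $\beta$-insertions at jumps $k,k'$ of $\gamma$ produces, after cancellation of off-diagonal pairs by commutativity, a diagonal contribution $\sum_{k}\beta_{i_kj_k}^{2}w$ from $k=k'$; this exactly matches the self-insertion contribution $\sum_{k}\beta_{i_kj_k}^{2}w$ produced by $\delta_{str}^L(\beta_{i_kj_k})=\beta_{i_kj_k}^{2}$ appearing in the Leibniz expansion of the second step, and the two sums together vanish in $\mathbb{Z}_2$.

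The main obstacle is the $(pq,pq)$ case, which genuinely requires the closed-theory cancellation from Proposition 3.8 of \cite{ng}: here two $pq$-insertions can interact through the interior Lagrangian geometry in a way that is not resolved by naive commutative swap, and the combinatorics of representing $p_aq_a w$ by different broken closed strings must be controlled. Since $pq$-insertions are supported entirely in the interior of $\Lambda^L$ and are insensitive to the presence of $M$ or to $\alpha$- and $\beta$-insertions, Ng's argument transfers verbatim to the bordered setting. Combining these ingredients, all terms in $(\delta_{str}^L)^2(x)$ cancel pairwise, which yields $(\delta_{str}^L)^2=0$.
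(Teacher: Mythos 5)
Your proof takes the same approach as the paper's: reduce to generators, dispose of the $\beta_{ij}$ case via $2\beta_{ij}^3=0$, and cancel the remaining terms of $(\delta_{str}^L)^2(x)$ for $x\in W$ in pairs by reversing the order of the two insertions. The paper's version of the second step is a two-line sketch; yours supplies the details it elides — notably the diagonal $(\beta,\beta)$ terms, which are their own order-reversals and instead cancel against the $w\,\delta_{str}^L(\beta_{i_kj_k})=w\beta_{i_kj_k}^2$ contributions from the Leibniz extension, and the interacting $(pq,pq)$ configurations handled as in Proposition 3.8 of \cite{ng} — and these details check out.
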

	\begin{proof}\-\
		
		\begin{enumerate}
			\item Let $x = \beta_{ij}$. Then $(\delta_{str}^L)^2(x) = 2\beta_{ij}^3 = 0$.
			
			\item  Let $x \in W$. Then each term in $(\delta_{str}^L)^2(x)$ is the result of making two insertions into $x$. Reversing the order in which the insertions are made yields the same element; therefore all terms cancel in pairs.
		\end{enumerate}
		
	\end{proof}

	\begin{Def}[$x\to y$]\-\
		
		Define $x\to y$ as an asymmetrized version of $\{x,y\}$, i.e. only allow a $p$ in $x$ to glue to a $q$ in $y$ (but not vice versa), or an $\alpha_{ij}$ in $x$ to glue to an $\alpha_{jk}$ in $y$ (but not vice versa).
	\end{Def}

	\begin{Lemma}
		\[ \{h^L\to h^L, x\} = \{h^L, \{h^L, x\}\}\]
	\end{Lemma}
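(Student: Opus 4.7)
The plan is to establish that both sides of the identity are derivations in $x$, reduce to the case when $x$ is a generator, and then expand both sides as sums over iterated gluings, matching them via a bijection after canceling pairs that vanish over $\mathbb{Z}_2$.

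The reduction step is straightforward: the bracket is Leibniz in its second argument (shown earlier), so $\{h^L\to h^L,\cdot\}$ is a derivation. Setting $\phi=\{h^L,\cdot\}$, one checks
\[
\phi^2(xy)=\phi\bigl(\phi(x)y+x\phi(y)\bigr)=\phi^2(x)y+2\phi(x)\phi(y)+x\phi^2(y)=\phi^2(x)y+x\phi^2(y)
\]
in characteristic $2$, so $\{h^L,\{h^L,\cdot\}\}$ is a derivation as well. Hence it suffices to verify the identity when $x$ is a generator of $\A^L_{SFT}$.

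For such $x$, expand $\{h^L,\{h^L,x\}\}=\sum_{a_1,a_2}\{a_1,\{a_2,x\}\}$, where $a_1,a_2$ range over the monomial summands of $h^L$ (the admissible-disk boundaries $\partial u$ and the isolated cusp terms $p_i$). A typical summand is indexed by an ordered pair $(a_1,a_2)$, a bracket-gluing $G_2$ pairing a factor of $a_2$ with a factor of $x$, and a bracket-gluing $G_1$ pairing a factor of $a_1$ with a factor of the resulting product. Split these contributions into Case A, in which $G_1$ uses a remaining $x$-factor, and Case B, in which $G_1$ uses a remaining $a_2$-factor. In Case A the involution $(a_1,a_2,G_1,G_2)\leftrightarrow(a_2,a_1,G_2,G_1)$ preserves the resulting monomial of $\A^L_{SFT}$, so the Case A contributions cancel pairwise.

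The bijection between Case B and the LHS is the heart of the argument. A summand of $\{h^L\to h^L,x\}=\sum_{b_1,b_2}\{b_1\to b_2,x\}$ is indexed by an ordered pair $(b_1,b_2)$, an asymmetric gluing $H_1$ of $b_1\to b_2$ (source-type factor in $b_1$, sink-type factor in $b_2$), and a further bracket-gluing $H_2$ of the merged object with $x$. Given a Case B datum $(a_1,a_2,G_1,G_2)$, read off the direction of the factor in $a_1$ paired by $G_1$: if it is source-type (a $p$ or an $\alpha_{ij}$ paired with an $\alpha_{jk}$) set $(b_1,b_2)=(a_1,a_2)$ with $H_1=G_1$ and $H_2=G_2$; if it is sink-type, swap to $(b_1,b_2)=(a_2,a_1)$ and reinterpret the same match as a $b_1\to b_2$ gluing, again with $H_2=G_2$. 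The main obstacle is case-checking this bijection across all factor types, especially the $\alpha$-$\beta$ interaction where $\{\alpha_{ij},\beta_{k\ell}\}$ does not remove both factors; fortunately this interaction is confined to the $x$-gluings $H_2$ and $G_2$, since $h^L$ itself contains no $\beta$-factors, so the asymmetric $h^L\to h^L$ bookkeeping need only treat $p/q$ and $\alpha/\alpha$ matches. Verifying that every term on each side appears exactly once under this correspondence then completes the proof.
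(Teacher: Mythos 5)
Your argument is correct, but it takes a genuinely different route from the paper's. The paper does not prove this lemma by a gluing bijection at all: it simply points to \Cref{lem:h2M}, the analogue for the middle algebra, which is established by brute force --- expanding $\{h^M,\{h^M,x\}\}$ and $\{h_2^M,x\}$ generator by generator and matching labelled sums ($B=B'$, $F=E$, $G=C+D$, and so on) --- with the implicit claim that the same expansion goes through for $\A^L_{SFT}$. Your proof instead runs the standard SFT argument: reduce to generators via the derivation property (your characteristic-$2$ computation showing $\{h^L,\{h^L,\cdot\}\}$ is a derivation is the right justification, and the paper omits it), then split the double bracket into configurations where both disks attach to $x$ --- killed in pairs by the order-swap involution, which is fixed-point-free because the two gluings use distinct sites of $x$ --- and configurations where the second disk attaches to the first, which biject with the asymmetrized self-gluings of $h^L$ followed by a single attachment to $x$. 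This is more uniform and closer in spirit to the proof of the unbordered quantum master equation in \cite{ng}. Two points deserve slightly more care than your sketch gives them. First, for $\alpha$-type gluings the bracket merges factors rather than deleting them, so ``remaining $x$-factor'' should be read as ``a site of the merged factor descending from $x$''; with that reading your Case A is exactly the $B=B'$ cancellation in the paper's explicit computation. Second, for $x=\beta_{k\ell}$ the gluing $G_2$ consumes nothing, so matching coefficients on the two sides requires that the number of $\alpha$-factors meeting $\{k,\ell\}$ in exactly one point is preserved mod $2$ under an $\alpha$-$\alpha$ merge --- the same $c_y=c_w$ observation the paper uses when proving that $\delta_{str}^L$ is a derivation of the bracket. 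Neither point breaks your argument; they are precisely the content of the case-check you defer at the end.
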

	
	(See \Cref{lem:h2M}).

	\begin{Lemma}
		\[ h^L\to h^L = \delta_{str}^L(h^L)\]
	\end{Lemma}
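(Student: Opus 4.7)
The plan is to prove the identity $h^L\to h^L = \delta_{str}^L(h^L)$ by matching, term by term, the monomial contributions on each side; both expand as sums indexed by admissible-disk configurations, and the identity follows from a bijection analogous to the one behind the quantum master equation \Cref{thm:quantum} of \cite{ng}, with two new geometric phenomena appearing at the dividing line.

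First I would expand $h^L = \sum_u \partial u + \sum_i p_i$ into both sides. On the left, bilinearity and the Leibniz rule give
\[ h^L\to h^L = \sum_{u_1,u_2} \partial u_1 \to \partial u_2 + \sum_{u,i} \left(\partial u \to p_i + p_i \to \partial u\right) + \sum_{i,j} p_i \to p_j, \]
and each $\partial u_1 \to \partial u_2$ decomposes further into one summand for each choice of a $p$-factor in $\partial u_1$ paired with a matching $q$-factor in $\partial u_2$ coming from the same Reeb chord, or an $\alpha_{ij}$-factor in $\partial u_1$ paired with an adjacent $\alpha_{jk}$-factor in $\partial u_2$ sharing the dividing-line endpoint $j$. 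On the right, $\delta_{str}^L(h^L) = \sum_u \delta_{str}^L(\partial u) + \sum_i \delta_{str}^L(p_i)$, and for each $u$ each summand of $\delta_{str}^L(\partial u)$ corresponds to either a $pq$-insertion at an internal Reeb chord traversed by the boundary of $u$, an $\alpha$-insertion at an intermediate dividing-line point along an $\alpha$-segment of the boundary, or (potentially) a $\beta$-insertion.

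The main step is then to set up, for each combinatorial type, a bijection between gluing data on the left and insertion data on the right. For $pq$-terms, a pair $(u_1,u_2)$ glued at a Reeb chord $c$ corresponds to the single admissible disk obtained by smoothing the node at $c$; under this correspondence, the monomial assembled by the bracket agrees with the monomial produced by inserting $p_cq_c$ into the broken closed string representing the boundary of the smoothed disk at the position of $c$. An analogous bijection pairs $\alpha$-gluings with $\alpha$-insertions: gluing $u_1$ to $u_2$ at a dividing-line point $j$ yields a nodal disk whose boundary word contains $\alpha_{ij}\alpha_{jk}$ precisely where the smoothed disk's boundary word contains $\alpha_{ik}$, so that the insertion produces the same monomial as the bracket. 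The cusp-loop contributions $\sum_i p_i$ are accounted for by the same mechanism, since each right cusp contributes a small $p_i$-loop whose gluing with a $q_i$-corner of another admissible disk is precisely a $pq$-insertion into a larger admissible disk.

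The hard part will be controlling the $\beta$-insertion contributions on the right, since they produce $\beta$-generators that have no counterpart on the left. The plan here is to show that, for any admissible disk $u$, the $\beta$-insertions appearing in $\delta_{str}^L(\partial u)$ collectively vanish: the boundary of $u$ is a continuous loop in $\Pi_{xz}(\Lambda^L)\cup M$ and therefore lifts naturally to a broken closed string using only $\alpha$-segments and $p/q$-jumps, with no intrinsic $\beta$-jumps, so by well-definedness of $\delta_{str}^L$ on $W$ (\Cref{prop:deltaL}) any $\beta$-insertions arising from a different representative must cancel pairwise. This $\beta$-cancellation step, together with the local analysis at each dividing-line endpoint needed to verify the $\alpha$-bijection, is the genuinely new obstacle beyond \cite{ng} and will require the most care.
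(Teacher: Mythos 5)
Your overall framing---model the argument on the quantum master equation (\Cref{thm:quantum}) and isolate the new phenomena at the dividing line---is right, but the central mechanism you propose, a type-preserving bijection between the gluing terms of $h^L\to h^L$ and the insertion terms of $\delta_{str}^L(h^L)$, does not exist, and the ``smooth the node'' correspondence does not even produce equal monomials. Gluing \emph{deletes} a $p_cq_c$ pair (resp.\ merges $\alpha_{ij}\alpha_{jk}$ into $\alpha_{ik}$), while insertion \emph{adds} a $p_cq_c$ pair (resp.\ splits $\alpha_{ik}$ into $\alpha_{ij}\alpha_{jk}$): if $v$ is the disk obtained by smoothing the node of $u_1\cup u_2$ at $c$, the bracket contributes $\partial v$ itself, whereas the $p_cq_c$-insertion into $\partial v$ at $c$ contributes $p_cq_c\,\partial v$, a different monomial. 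Both failures are visible in the paper's trefoil example (\Cref{sec:examples}), where $h^L = tq_0\alpha_{12}+p_0\alpha_{23}+q_0\alpha_{34}+t\alpha_{13}+\alpha_{24}$: the monomial $tq_0\alpha_{14}$ occurs twice in $h^L\to h^L$ (from gluing the $\alpha_{12}$ of $tq_0\alpha_{12}$ to the $\alpha_{24}$ of $\alpha_{24}$, and from gluing the $\alpha_{13}$ of $t\alpha_{13}$ to the $\alpha_{34}$ of $q_0\alpha_{34}$); these two \emph{gluing} terms cancel each other and have no partner in $\delta_{str}^L(h^L)$, which contains no $\alpha_{14}$ at all since splitting can only shorten the span of an $\alpha$. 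Likewise the $p_0$--$q_0$ gluing of $p_0\alpha_{23}$ with $tq_0\alpha_{12}$ yields $t\alpha_{12}\alpha_{23}$, whose canceling partner is the $\alpha$-insertion into $t\alpha_{13}$ at the dividing-line point $2$, not a $pq$-insertion. The correct statement is that the multiset of \emph{all} summands of $(h^L\to h^L)+\delta_{str}^L(h^L)$ cancels in pairs, where a pair may match two gluing terms against each other or a gluing term against an insertion term, with the $pq$- and $\alpha$-types mixing; this is exactly what \Cref{fig:bordercancelings} records (its first three pairs are gluing against gluing).

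What the proof therefore requires, and what your plan never produces, is the enumeration of these canceling pairs: each summand is realized by a planar region carrying one distinguished degenerate feature (an obtuse corner at a crossing, a marked interior point of a dividing-line segment, a tangency of two boundary arcs along $M$ or along $\Lambda^L$), and the pairing comes from resolving that feature in its second way. The interior configurations are those of Figure 3.13 of \cite{ng}; the genuinely new content of this lemma is the list of boundary configurations in \Cref{fig:bordercancelings}. Your handling of the $\beta$-insertions is closer to the mark---the boundary of an admissible disk, read as a broken closed string, makes no $\beta$-jumps, so one can argue that $\delta_{str}^L(\partial u)$ contains no $\beta$-terms---but as written you invoke well-definedness to make $\beta$-terms ``cancel pairwise'' rather than to show they never appear, and if one instead computes $\delta_{str}^L$ factor by factor via the Leibniz rule (as in the tables of \Cref{sec:examples}), $\beta$-terms do arise generator by generator and their cancellation across factors still has to be exhibited. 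Finally, the cusp contribution $\sum_i p_i$ is vacuous for a left half-diagram of a simple front (all right cusps lie in the right half), so that part of your argument is not where the difficulty lies.
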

	\begin{proof}
		Consider the summands in $(h^L\to h^L) + \delta_{str}^L(h^L)$. Some `interior' terms cancel pairwise in the same way as in the proof of \Cref{thm:quantum} (see Figure 3.13 in \cite{ng}), but here we also have to deal with terms involving the dividing line. These `boundary' terms are depicted in \Cref{fig:bordercancelings}, and are seen to cancel pairwise as well.
	\end{proof}

	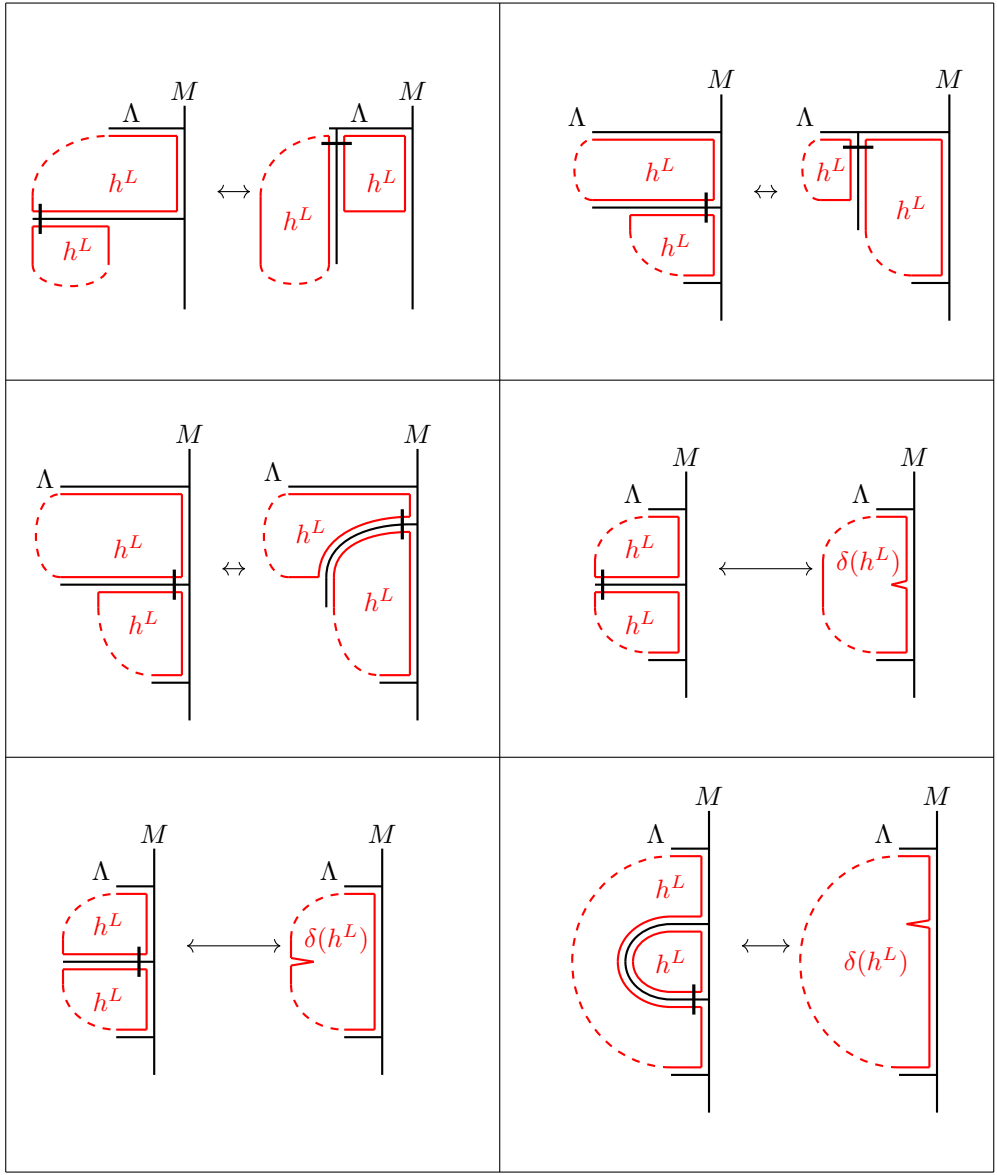
\begin{figure}
		\begin{tikzpicture}
			
			\def\xaI{0}
			\def\xaII{3}
			\def\xaIII{7}
			\def\xaIV{10}
			
			\def\yaI{0}
			\def\yaII{-5}
			\def\yaIII{-10}
			
			\pgfmathsetmacro{\minX}{\xaI-1.5}
			\pgfmathsetmacro{\midX}{0.5*(\xaII+\xaIII)}
			\pgfmathsetmacro{\maxX}{\xaIV+1.5}
			
			\pgfmathsetmacro{\minY}{\yaI+2.5}
			\pgfmathsetmacro{\midYI}{0.5*(\yaI+\yaII)}
			\pgfmathsetmacro{\midYII}{0.5*(\yaII+\yaIII)}
			\pgfmathsetmacro{\maxY}{\yaIII-3}				
			
			\node(13a1) at (\xaI,\yaI) {
				\begin{tikzpicture}
					\node (L) at (-0.7,1.4) {$\Lambda$};
					\node[red] (G) at (-1.4,-0.4) {$h^L$};
					\node[red] (Gt) at (-0.8,0.5) {$h^L$};
					\node (M) at (0,1.7) {$M$};
					
					\def\mhei{1.5}
					\def\hei{1.2}
					\def\off{0.1}
					
					\pgfmathsetmacro{\yU}{\hei+\off}
					\pgfmathsetmacro{\yL}{\hei-\off}
					
					\def\xI{-1}
					\def\xII{-2}
					\def\yI{-0.6}
					\def\yII{0.3}
					
					\def\xIII{-1.6}
					
					\pgfmathsetmacro{\xBar}{\xII+\off}
					\pgfmathsetmacro{\yIBar}{2*\off}
					\pgfmathsetmacro{\yIIBar}{-2*\off}

					\begin{knot}
						[
						clip width=15, draft mode=strands]
						\strand[black, thick] (0,\mhei) -- (0,-\hei);
						\strand[black, thick] (\xI,\hei) -- (0,\hei);
						\strand[black, thick] (\xII,0) -- (0,0);
						
						\strand[red, thick] (\xI, \yI) -- (\xI, -\off);
						\strand[red,thick] (\xI,-\off) -- (\xII, -\off);
						\strand[red,thick] (\xII, -\off) -- (\xII, \yI);
						\strand[red,thick, dashed] (\xII,\yI) to[out=south, in=south] (\xI,\yI);
						
						\strand[red, thick] (\xI, \yL) -- (-0.1, \yL);
						\strand[red, thick] (-0.1, \yL) -- (-0.1, \off);
						\strand[red,thick] (-0.1,\off) -- (\xII, \off);
						\strand[red,thick] (\xII, \off) -- (\xII, \yII);
						\strand[red,thick, dashed] (\xII,\yII) to[out=north, in=west] (\xI,\yL);
						
						\strand[black, very thick] (\xBar,\yIBar) -- (\xBar,\yIIBar);
					\end{knot}

				\end{tikzpicture}
			};

			\node(13a2) at (\xaII,\yaI) {
				\begin{tikzpicture}
					\node (L) at (-0.7,1.4) {$\Lambda$};
					\node[red] (G) at (-1.5,0) {$h^L$};
					\node[red] (Gt) at (-0.4,0.5) {$h^L$};
					\node (M) at (0,1.7) {$M$};
					
					\def\mhei{1.5}
					\def\hei{1.2}
					\def\off{0.1}
					
					\pgfmathsetmacro{\yU}{\hei+\off}
					\pgfmathsetmacro{\yL}{\hei-\off}
					
					\def\xI{-1}
					\def\xII{-2}
					\def\yI{-0.6}
					\def\yII{0.3}
					
					\def\xIII{-1.6}
					
					\pgfmathsetmacro{\xBar}{\xII+\off}
					\pgfmathsetmacro{\yIBar}{2*\off}
					\pgfmathsetmacro{\yIIBar}{-2*\off}

					\pgfmathsetmacro{\xIU}{\xI+\off}
					\pgfmathsetmacro{\xIL}{\xI-\off}
					
					\pgfmathsetmacro{\xIBar}{\xIL-\off}	
					\pgfmathsetmacro{\xIIBar}{\xIU+\off}
					\pgfmathsetmacro{\yIBar}{\yL-\off}

					\begin{knot}
						[
						clip width=15, draft mode=strands]
						\strand[black, thick] (0,\mhei) -- (0,-\hei);
						\strand[black, thick] (\xIL,\hei) -- (0,\hei);
						\strand[black, thick] (\xI,\yI) -- (\xI,\hei);

						\strand[red,thick, dashed] (\xII,\yII) to[out=north, in=west] (\xIL,\yL);
						\strand[red,thick] (\xII, \yI) -- (\xII, \yII);
						\strand[red,thick, dashed] (\xII,\yI) to[out=south, in=south] (\xIL,\yI);
						\strand[red, thick] (\xIL, \yI) -- (\xIL, \yL);
						
						\strand[red, thick] (\xIU, \yL) -- (-0.1, \yL);
						\strand[red, thick] (-0.1, \yL) -- (-0.1, \off);
						\strand[red,thick] (-0.1,\off) -- (\xIU,\off);
						\strand[red,thick] (\xIU,\off) -- (\xIU, \yL);
						
						\strand[black, very thick] (\xIBar,\yIBar) -- (\xIIBar,\yIBar);
					\end{knot}

				\end{tikzpicture}
			};

			\node(13b1) at (\xaIII,\yaI) {
				\begin{tikzpicture}
					\node (L) at (-1.9,1.2) {$\Lambda$};
					\node[red] (G) at (-0.6,-0.5) {$h^L$};
					\node[red] (Gt) at (-0.8,0.5) {$h^L$};
					\node (M) at (0,1.7) {$M$};
					
					\def\mhei{1.5}
					\def\hei{1}
					\def\off{0.1}
					
					\pgfmathsetmacro{\yU}{\hei+\off}
					\pgfmathsetmacro{\yL}{\hei-\off}
					
					\def\xI{-1.2}
					\def\xII{-1.7}
					\def\yI{-0.3}
					
					\def\xIII{-1.7}
					
					\def\xIV{-0.5}

					\begin{knot}
						[
						clip width=15, draft mode=strands]
						\strand[black, thick] (0,\mhei) -- (0,-\mhei);
						\strand[black, thick] (\xII,\hei) -- (0,\hei);
						\strand[black, thick] (-0.5,-\hei) -- (0,-\hei);
						\strand[black, thick] (\xII,0) -- (0,0);
						
						\strand[red, thick] (\xIV, -\yL) -- (-0.1, -\yL);
						\strand[red, thick] (-0.1, -\yL) -- (-0.1, -\off);
						\strand[red,thick] (-0.1,-\off) -- (\xI, -\off);
						\strand[red,thick] (\xI, -\off) -- (\xI, \yI);
						\strand[red,thick, dashed] (\xI,\yI) to[out=south, in=west] (\xIV,-\yL);
						
						\strand[red, thick] (\xII, \yL) -- (-0.1, \yL);
						\strand[red, thick] (-0.1, \yL) -- (-0.1, \off);
						\strand[red,thick] (-0.1,\off) -- (\xIII, \off);
						\strand[red,thick, dashed] (\xIII,\off) to[out=west, in=west] (\xII,\yL);
						
						\strand[black, very thick] (-0.2,-0.2) -- (-0.2,0.2);
					\end{knot}

				\end{tikzpicture}
			};

			\node(13b2) at (\xaIV,\yaI) {
				\begin{tikzpicture}
					\node (L) at (-1.9,1.2) {$\Lambda$};
					\node[red] (G) at (-1.55,0.5) {$h^L$};
					\node[red] (Gt) at (-0.5,0) {$h^L$};
					\node (M) at (0,1.7) {$M$};
					
					\def\mhei{1.5}
					\def\hei{1}
					\def\off{0.1}
					
					\pgfmathsetmacro{\yU}{\hei+\off}
					\pgfmathsetmacro{\yL}{\hei-\off}
					
					\def\xI{-1.2}
					\def\xII{-1.7}
					\def\yI{-0.3}
					
					\def\xIII{-1.7}
					
					\def\xIV{-0.5}
					
					\pgfmathsetmacro{\xIL}{\xI-\off}
					\pgfmathsetmacro{\xIU}{\xI+\off}	
					
					\pgfmathsetmacro{\xIBar}{\xIL-\off}	
					\pgfmathsetmacro{\xIIBar}{\xIU+\off}
					\pgfmathsetmacro{\yIBar}{\yL-\off}		
					
					\begin{knot}
						[
						clip width=15, draft mode=strands]
						\strand[black, thick] (0,\mhei) -- (0,-\mhei);
						\strand[black, thick] (\xII,\hei) -- (0,\hei);
						\strand[black, thick] (-0.5,-\hei) -- (0,-\hei);
						\strand[black, thick] (\xI,\yI) -- (\xI,\hei);

						\strand[red, thick] (\xII, \yL) -- (\xIL, \yL);
						\strand[red,thick] (\xIL,\off) -- (\xIII, \off);
						\strand[red,thick] (\xIL,\off) -- (\xIL,\yL);
						\strand[red,thick, dashed] (\xIII,\off) to[out=west, in=west] (\xII,\yL);
						
						\strand[red,thick] (\xIU, \yL) -- (\xIU, \yI);
						\strand[red, thick] (\xIU, \yL) -- (-0.1, \yL);
						\strand[red, thick] (-0.1, \yL) -- (-0.1, -\yL);
						\strand[red, thick] (\xIV, -\yL) -- (-0.1, -\yL);
						\strand[red,thick, dashed] (\xIU,\yI) to[out=south, in=west] (\xIV,-\yL);
						
						\strand[black, very thick] (\xIBar,\yIBar) -- (\xIIBar,\yIBar);
					\end{knot}

				\end{tikzpicture}
			};

			\node(13c1) at (\xaI,\yaII) {
				\begin{tikzpicture}
					\node (L) at (-1.9,1.4) {$\Lambda$};
					\node[red] (G) at (-0.6,-0.5) {$h^L$};
					\node[red] (Gt) at (-0.8,0.5) {$h^L$};
					\node (M) at (0,2) {$M$};
					
					\def\mhei{1.8}
					\def\hei{1.3}
					\def\off{0.1}
					
					\pgfmathsetmacro{\yU}{\hei+\off}
					\pgfmathsetmacro{\yL}{\hei-\off}
					
					\def\xI{-1.2}
					\def\xII{-1.7}
					\def\yI{-0.3}
					
					\def\xIII{-1.7}
					
					\def\xIV{-0.5}

					\begin{knot}
						[
						clip width=15, draft mode=strands]
						\strand[black, thick] (0,\mhei) -- (0,-\mhei);
						\strand[black, thick] (\xII,\hei) -- (0,\hei);
						\strand[black, thick] (-0.5,-\hei) -- (0,-\hei);
						\strand[black, thick] (\xII,0) -- (0,0);
						
						\strand[red, thick] (\xIV, -\yL) -- (-0.1, -\yL);
						\strand[red, thick] (-0.1, -\yL) -- (-0.1, -\off);
						\strand[red,thick] (-0.1,-\off) -- (\xI, -\off);
						\strand[red,thick] (\xI, -\off) -- (\xI, \yI);
						\strand[red,thick, dashed] (\xI,\yI) to[out=south, in=west] (\xIV,-\yL);
						
						\strand[red, thick] (\xII, \yL) -- (-0.1, \yL);
						\strand[red, thick] (-0.1, \yL) -- (-0.1, \off);
						\strand[red,thick] (-0.1,\off) -- (\xIII, \off);
						\strand[red,thick, dashed] (\xIII,\off) to[out=west, in=west] (\xII,\yL);
						
						\strand[black, very thick] (-0.2,-0.2) -- (-0.2,0.2);
					\end{knot}

				\end{tikzpicture}
			};

			\node(13c2) at (\xaII,\yaII) {
				\begin{tikzpicture}
					\node (L) at (-1.9,1.5) {$\Lambda$};
					\node[red] (G) at (-0.5,-0.2) {$h^L$};
					\node[red] (Gt) at (-1.4,0.7) {$h^L$};
					\node (M) at (0,2) {$M$};
					
					\def\mhei{1.8}
					\def\hei{1.3}
					\def\off{0.1}
					
					\pgfmathsetmacro{\yU}{\hei+\off}
					\pgfmathsetmacro{\yL}{\hei-\off}
					
					\def\xI{-1.2}
					\def\xII{-1.7}
					\def\yI{-0.3}
					\def\yII{0.8}
					
					\def\xIII{-1.7}
					
					\def\xIV{-0.5}
					
					\pgfmathsetmacro{\xIU}{\xI+\off}
					\pgfmathsetmacro{\xIL}{\xI-\off}	 	
					
					\pgfmathsetmacro{\yIIU}{\yII+\off}
					\pgfmathsetmacro{\yIIL}{\yII-\off}
					
					\pgfmathsetmacro{\xBar}{-2*\off}
					\pgfmathsetmacro{\yIBar}{\yIIU+\off}
					\pgfmathsetmacro{\yIIBar}{\yIIL-\off}
					
					\begin{knot}
						[
						clip width=15, draft mode=strands]
						\strand[black, thick] (0,\mhei) -- (0,-\mhei);
						\strand[black, thick] (\xII,\hei) -- (0,\hei);
						\strand[black, thick] (-0.5,-\hei) -- (0,-\hei);
						\strand[black, thick] (\xI,\off) to[out=north,in=west] (-\off,\yII);
						\strand[black,thick] (-\off,\yII) -- (0,\yII);
						\strand[black,thick] (\xI,\off) -- (\xI,\yI);

						\strand[red,thick] (\xII,\yL) -- (-\off,\yL);
						\strand[red,thick] (-\off,\yL) -- (-\off, \yIIU);
						\strand[red,thick] (-\off,\yIIU) to[out=west,in=north] (\xIL, \off);
						\strand[red,thick] (\xIL,\off) -- (\xII, \off);
						\strand[red,thick,dashed] (\xII,\off) to[out=west, in=west] (\xII,\yL);
						
						\strand[red, thick] (-\off, \yIIL) -- (-\off, -\yL);
						\strand[red, thick] (\xIV, -\yL) -- (-\off, -\yL);
						\strand[red,thick, dashed] (\xIU,\yI) to[out=south, in=west] (\xIV,-\yL);
						\strand[red,thick] (\xIU, \yI) -- (\xIU, \off);	
						\strand[red,thick] (\xIU,\off) to[out=north,in=west] (-\off, \yIIL);
						
						\strand[black, very thick] (\xBar,\yIBar) -- (\xBar,\yIIBar);
					\end{knot}

				\end{tikzpicture}
			};

			\node(13d1) at (\xaIII,\yaII) {
				\begin{tikzpicture}
					\node (L) at (-0.7,1.2) {$\Lambda$};
					\node[red] (G) at (-0.6,-0.5) {$h^L$};
					\node[red] (Gt) at (-0.6,0.5) {$h^L$};
					\node (M) at (0,1.7) {$M$};
					
					\def\mhei{1.5}
					\def\hei{1}
					\def\off{0.1}
					
					\pgfmathsetmacro{\yU}{\hei+\off}
					\pgfmathsetmacro{\yL}{\hei-\off}
					
					\def\xI{-0.5}
					\def\xII{-1.2}
					\def\yI{-0.3}
					
					\def\xIII{-1.6}
					
					\pgfmathsetmacro{\xBar}{\xII+\off}
					\pgfmathsetmacro{\yIBar}{2*\off}
					\pgfmathsetmacro{\yIIBar}{-2*\off}

					\begin{knot}
						[
						clip width=15, draft mode=strands]
						\strand[black, thick] (0,\mhei) -- (0,-\mhei);
						\strand[black, thick] (-0.5,\hei) -- (0,\hei);
						\strand[black, thick] (-0.5,-\hei) -- (0,-\hei);
						\strand[black, thick] (\xII,0) -- (0,0);
						
						\strand[red, thick] (\xI, -\yL) -- (-0.1, -\yL);
						\strand[red, thick] (-0.1, -\yL) -- (-0.1, -\off);
						\strand[red,thick] (-0.1,-\off) -- (\xII, -\off);
						\strand[red,thick] (\xII, -\off) -- (\xII, \yI);
						\strand[red,thick, dashed] (\xII,\yI) to[out=south, in=west] (\xI,-\yL);
						
						\strand[red, thick] (\xI, \yL) -- (-0.1, \yL);
						\strand[red, thick] (-0.1, \yL) -- (-0.1, \off);
						\strand[red,thick] (-0.1,\off) -- (\xII, \off);
						\strand[red,thick] (\xII, \off) -- (\xII, -\yI);
						\strand[red,thick, dashed] (\xII,-\yI) to[out=north, in=west] (\xI,\yL);
						
						\strand[black, very thick] (\xBar,\yIBar) -- (\xBar,\yIIBar);
					\end{knot}

				\end{tikzpicture}
			};

			\node(13d2) at (\xaIV,\yaII) {
				\begin{tikzpicture}
					\node (L) at (-0.7,1.2) {$\Lambda$};
					\node[red] (G) at (-0.6,0.3) {$\delta(h^L)$};
					\node (M) at (0,1.7) {$M$};
					
					\def\mhei{1.5}
					\def\hei{1}
					\def\off{0.1}
					
					\pgfmathsetmacro{\yU}{\hei+\off}
					\pgfmathsetmacro{\yL}{\hei-\off}
					
					\def\xI{-0.5}
					\def\xII{-1.2}
					\def\yI{-0.3}
					
					\def\xIII{-1.6}
					
					\pgfmathsetmacro{\xBar}{\xII+\off}
					\pgfmathsetmacro{\yIBar}{2*\off}
					\pgfmathsetmacro{\yIIBar}{-2*\off}	
					
					\pgfmathsetmacro{\hoff}{0.5*\off}
					
					\def\xIV{-0.3}

					\begin{knot}
						[
						clip width=15, draft mode=strands]
						\strand[black, thick] (0,\mhei) -- (0,-\mhei);
						\strand[black, thick] (-0.5,\hei) -- (0,\hei);
						\strand[black, thick] (-0.5,-\hei) -- (0,-\hei);

						\strand[red, thick] (\xI, -\yL) -- (-0.1, -\yL);
						\strand[red,thick, dashed] (\xII,\yI) to[out=south, in=west] (\xI,-\yL);
						
						\strand[red, thick] (\xI, \yL) -- (-0.1, \yL);
						\strand[red,thick, dashed] (\xII,-\yI) to[out=north, in=west] (\xI,\yL);
						
						\strand[red,thick] (\xII, \yI) -- (\xII, -\yI);
						\strand[red, thick] (-\off, -\yL) -- (-\off, -\hoff);
						\strand[red, thick] (-\off, \yL) -- (-\off, \hoff);
						
						\strand[red,thick] (-\off,-\hoff) -- (\xIV,0);
						\strand[red,thick] (\xIV,0) -- (-\off, \hoff);
						
					\end{knot}

				\end{tikzpicture}
			};

			\node(13e1) at (\xaI,\yaIII) {
				\begin{tikzpicture}
					\node (L) at (-0.7,1.2) {$\Lambda$};
					\node[red] (G) at (-0.6,-0.5) {$h^L$};
					\node[red] (Gt) at (-0.6,0.5) {$h^L$};
					\node (M) at (0,1.7) {$M$};
					
					\def\mhei{1.5}
					\def\hei{1}
					\def\off{0.1}
					
					\pgfmathsetmacro{\yU}{\hei+\off}
					\pgfmathsetmacro{\yL}{\hei-\off}
					
					\def\xI{-0.5}
					\def\xII{-1.2}
					\def\yI{-0.3}
					
					\def\xIII{-1.6}
					
					\pgfmathsetmacro{\xBar}{-2*\off}
					\pgfmathsetmacro{\yIBar}{2*\off}
					\pgfmathsetmacro{\yIIBar}{-2*\off}

					\begin{knot}
						[
						clip width=15, draft mode=strands]
						\strand[black, thick] (0,\mhei) -- (0,-\mhei);
						\strand[black, thick] (-0.5,\hei) -- (0,\hei);
						\strand[black, thick] (-0.5,-\hei) -- (0,-\hei);
						\strand[black, thick] (\xII,0) -- (0,0);
						
						\strand[red, thick] (\xI, -\yL) -- (-0.1, -\yL);
						\strand[red, thick] (-0.1, -\yL) -- (-0.1, -\off);
						\strand[red,thick] (-0.1,-\off) -- (\xII, -\off);
						\strand[red,thick] (\xII, -\off) -- (\xII, \yI);
						\strand[red,thick, dashed] (\xII,\yI) to[out=south, in=west] (\xI,-\yL);
						
						\strand[red, thick] (\xI, \yL) -- (-0.1, \yL);
						\strand[red, thick] (-0.1, \yL) -- (-0.1, \off);
						\strand[red,thick] (-0.1,\off) -- (\xII, \off);
						\strand[red,thick] (\xII, \off) -- (\xII, -\yI);
						\strand[red,thick, dashed] (\xII,-\yI) to[out=north, in=west] (\xI,\yL);
						
						\strand[black, very thick] (\xBar,\yIBar) -- (\xBar,\yIIBar);
					\end{knot}

				\end{tikzpicture}
			};

			\node(13e2) at (\xaII,\yaIII) {
				\begin{tikzpicture}
					\node (L) at (-0.7,1.2) {$\Lambda$};
					\node[red] (G) at (-0.6,0.3) {$\delta(h^L)$};
					\node (M) at (0,1.7) {$M$};
					
					\def\mhei{1.5}
					\def\hei{1}
					\def\off{0.1}
					
					\pgfmathsetmacro{\yU}{\hei+\off}
					\pgfmathsetmacro{\yL}{\hei-\off}
					
					\def\xI{-0.5}
					\def\xII{-1.2}
					\def\yI{-0.3}
					
					\def\xIII{-1.6}
					
					\pgfmathsetmacro{\xBar}{\xII+\off}
					\pgfmathsetmacro{\yIBar}{2*\off}
					\pgfmathsetmacro{\yIIBar}{-2*\off}	
					
					\pgfmathsetmacro{\hoff}{0.5*\off}
					
					\pgfmathsetmacro{\xIV}{\xII+0.3}

					\begin{knot}
						[
						clip width=15, draft mode=strands]
						\strand[black, thick] (0,\mhei) -- (0,-\mhei);
						\strand[black, thick] (-0.5,\hei) -- (0,\hei);
						\strand[black, thick] (-0.5,-\hei) -- (0,-\hei);

						\strand[red, thick] (\xI, -\yL) -- (-0.1, -\yL);
						\strand[red,thick, dashed] (\xII,\yI) to[out=south, in=west] (\xI,-\yL);
						
						\strand[red, thick] (\xI, \yL) -- (-0.1, \yL);
						\strand[red,thick, dashed] (\xII,-\yI) to[out=north, in=west] (\xI,\yL);
						
						\strand[red,thick] (\xII, \yI) -- (\xII, -\hoff);
						\strand[red,thick] (\xII, \hoff) -- (\xII, -\yI);
						\strand[red, thick] (-\off, -\yL) -- (-\off, \yL);
						
						\strand[red,thick] (\xII,-\hoff) -- (\xIV,0);
						\strand[red,thick] (\xIV,0) -- (\xII, \hoff);
						
					\end{knot}

				\end{tikzpicture}
			};

			\node(13f1) at (\xaIII,\yaIII) {
				\begin{tikzpicture}
					\node (L) at (-0.7,1.7) {$\Lambda$};
					\node[red] (G) at (-0.5,0) {$h^L$};
					\node[red] (Gt) at (-0.5,1) {$h^L$};
					\node (M) at (0,2.2) {$M$};
					
					\def\mhei{2}
					\def\heiI{-1.5}
					\def\heiII{-0.5}
					\def\heiIII{0.5}
					\def\heiIV{1.5}
					\def\off{0.1}
					
					\pgfmathsetmacro{\yIU}{\heiI+\off}
					\pgfmathsetmacro{\yIL}{\heiI-\off}
					
					\pgfmathsetmacro{\yIIU}{\heiII+\off}
					\pgfmathsetmacro{\yIIL}{\heiII-\off}
					
					\pgfmathsetmacro{\yIIIU}{\heiIII+\off}
					\pgfmathsetmacro{\yIIIL}{\heiIII-\off}
					
					\pgfmathsetmacro{\yIVU}{\heiIV+\off}
					\pgfmathsetmacro{\yIVL}{\heiIV-\off}
					
					\def\xI{-0.5}
					\def\xII{-1.1}
					\def\xIII{-1.8}
					
					\pgfmathsetmacro{\xIIU}{\xII+\off}
					\pgfmathsetmacro{\xIIL}{\xII-\off}
					
					\pgfmathsetmacro{\xBar}{-2*\off}
					\pgfmathsetmacro{\yBarL}{\yIIL-\off}
					\pgfmathsetmacro{\yBarU}{\yIIU+\off}
					
					\begin{knot}
						[
						clip width=15, draft mode=strands]
						\strand[black, thick] (0,\mhei) -- (0,-\mhei);
						\strand[black, thick] (\xI,\heiI) -- (0,\heiI);
						\strand[black, thick] (\xI,\heiII) -- (0,\heiII);
						\strand[black, thick] (\xI,\heiIII) -- (0,\heiIII);
						\strand[black, thick] (\xI,\heiII) to[out=west,in=south] (\xII,0);
						\strand[black,thick] (\xII,0) to[out=north, in=west] (\xI,\heiIII);
						\strand[black, thick] (\xI,\heiIV) -- (0,\heiIV);
						
						\strand[red, thick] (-\off, \yIIIL) -- (-\off, \yIIU);
						\strand[red, thick] (-\off, \yIIU) -- (\xI, \yIIU);
						\strand[red,thick] (\xI,\yIIU) to[out=west,in=south] (\xIIU,0);
						\strand[red,thick] (\xIIU,0) to[out=north,in=west] (\xI,\yIIIL);
						\strand[red,thick] (\xI,\yIIIL) -- (-\off, \yIIIL);

						\strand[red,thick] (-\off,\yIIL) -- (-\off,\yIU);
						\strand[red,thick] (-\off,\yIU) -- (\xI, \yIU);
						\strand[red,thick,dashed] (\xI,\yIU) to[out=west,in=south] (\xIII,0);
						\strand[red,thick,dashed] (\xIII,0) to[out=north,in=west] (\xI,\yIVL);
						\strand[red,thick] (\xI,\yIVL) -- (-\off, \yIVL);
						\strand[red,thick] (-\off,\yIVL) -- (-\off,\yIIIU);
						\strand[red,thick] (-\off,\yIIIU) -- (\xI,\yIIIU);
						\strand[red,thick] (\xI,\yIIIU) to[out=west,in=north] (\xIIL,0);
						\strand[red,thick] (\xIIL,0) to[out=south,in=west] (\xI,\yIIL);
						\strand[red,thick] (\xI,\yIIL) -- (-\off, \yIIL);
						
						\strand[black, very thick] (\xBar,\yBarU) -- (\xBar,\yBarL);
					\end{knot}

				\end{tikzpicture}
			};

			\node(13f2) at (\xaIV,\yaIII) {
				\begin{tikzpicture}
					\node (L) at (-0.7,1.7) {$\Lambda$};
					\node[red] (G) at (-0.8,0) {$\delta(h^L)$};
					\node (M) at (0,2.2) {$M$};
					
					\def\mhei{2}
					\def\heiI{-1.5}
					\def\heiII{-0.5}
					\def\heiIII{0.5}
					\def\heiIV{1.5}
					\def\off{0.1}
					
					\pgfmathsetmacro{\yIU}{\heiI+\off}
					\pgfmathsetmacro{\yIL}{\heiI-\off}
					
					\pgfmathsetmacro{\yIIU}{\heiII+\off}
					\pgfmathsetmacro{\yIIL}{\heiII-\off}
					
					\pgfmathsetmacro{\yIIIU}{\heiIII+\off}
					\pgfmathsetmacro{\yIIIL}{\heiIII-\off}
					
					\pgfmathsetmacro{\yIVU}{\heiIV+\off}
					\pgfmathsetmacro{\yIVL}{\heiIV-\off}
					
					\def\xI{-0.5}
					\def\xII{-1.1}
					\def\xIII{-1.8}
					
					\pgfmathsetmacro{\xIIU}{\xII+\off}
					\pgfmathsetmacro{\xIIL}{\xII-\off}
					
					\pgfmathsetmacro{\xBar}{-2*\off}
					\pgfmathsetmacro{\yBarL}{\yIIL-\off}
					\pgfmathsetmacro{\yBarU}{\yIIU+\off}
					
					\pgfmathsetmacro{\hoff}{0.5*\off}
					\pgfmathsetmacro{\yIIIhU}{\yIIIU-\hoff}
					\pgfmathsetmacro{\yIIIhL}{\yIIIL+\hoff}
					
					\begin{knot}
						[
						clip width=15, draft mode=strands]
						\strand[black, thick] (0,\mhei) -- (0,-\mhei);
						\strand[black, thick] (\xI,\heiI) -- (0,\heiI);
						\strand[black, thick] (\xI,\heiIV) -- (0,\heiIV);

						\strand[red,thick] (-\off,\yIIIhL) -- (-\off,\yIU);
						\strand[red,thick] (-\off,\yIU) -- (\xI, \yIU);
						\strand[red,thick,dashed] (\xI,\yIU) to[out=west,in=south] (\xIII,0);
						\strand[red,thick,dashed] (\xIII,0) to[out=north,in=west] (\xI,\yIVL);
						\strand[red,thick] (\xI,\yIVL) -- (-\off, \yIVL);
						\strand[red,thick] (-\off,\yIVL) -- (-\off,\yIIIhU);
						
						\strand[red,thick] (-\off, \yIIIhU) -- (-0.4, \heiIII);
						\strand[red,thick] (-0.4, \heiIII) -- (-\off, \yIIIhL);
						
					\end{knot}

				\end{tikzpicture}
			};
			
			\draw[<->] (13a1) to (13a2);
			\draw[<->] (13b1) to (13b2);
			\draw[<->] (13c1) to (13c2);
			\draw[<->] (13d1) to (13d2);
			\draw[<->] (13e1) to (13e2);
			\draw[<->] (13f1) to (13f2);
			
			\draw[-] (\minX,\minY) to (\maxX,\minY);
			\draw[-] (\minX,\midYI) to (\maxX,\midYI);
			\draw[-] (\minX,\midYII) to (\maxX,\midYII);
			\draw[-] (\minX,\maxY) to (\maxX,\maxY);
			\draw[-] (\minX,\minY) to (\minX,\maxY);
			\draw[-] (\midX,\minY) to (\midX,\maxY);
			\draw[-] (\maxX,\minY) to (\maxX,\maxY);
			
		\end{tikzpicture}
		\caption[Canceling boundary terms in $(h^L\to h^L) + \delta(h^L)$]{The pairs of boundary canceling terms in $(h^L\to h^L) + \delta(h^L)$.}\label{fig:bordercancelings}
	\end{figure}

	\begin{proof}[Proof of \Cref{thm:dL2=0}.]
		\begin{align*}
			(d^L)^2(x) &= \{h^L, \{h^L,x\} + \delta_{str}^L(x)\} + \delta_{str}^L(\{h^L,x\} + \delta_{str}^L(x)) \\
			&= \{h^L, \{h^L,x\}\} + \{h^L, \delta_{str}^L (x)\} + \delta_{str}^L\{h^L,x\} + (\delta_{str}^L)^2(x) \\
			&= \{ h^L\to h^L, x\} + \{\delta_{str}^L(h^L), x\} \\
			&= \{h^L \to h^L + \delta_{str}^L(h^L), x\} \\
			&= 0
		\end{align*} 
	\end{proof}
	
	\begin{Rem}
		$\A_{SFT}^R$ is defined similarly to $\A_{SFT}^L$. It is generated by the crossings and right cusps of $\Pi_{xz}(\Lambda^R)$ as well as by a set of $\alpha$ and $\beta$ generators representing left half-disks and strands, respectively. The differential once again has an SFT and string component, both defined in an analogous way to $d_{SFT}^L$ and $\delta_{str}^L$. 
	\end{Rem}
	
	\subsection{$L$, $R$ morphisms} \-\
	
	In this section, we will construct DGA-morphisms from $\A^L_{SFT}$ and $\A^R_{SFT}$ into $\A^{comm}_{SFT}$.

	\begin{Def}[left/right admissible disk]\-\
		
		Define a \emph{right admissible disk} to be a map $u: (D^2, \partial D^2) \to (\mathbb{R}^2, \Pi_{xz}(\Lambda) \cup M)$ satisfying the following properties.
		
		\begin{enumerate}
			\item $u$ is an immersion apart from a finite set of points $\{z_1, \dots, z_k\} \subset \partial D^2$ which map either to crossings, cusps or points in $\Pi_{xz}(\Lambda) \cap M$.
			
			\item If $z_i$ maps to a crossing, then a neighborhood of $z_i$ is mapped to a single quadrant of that crossing.
			
			\item Exactly one of the intervals $(z_m, z_{m+1}) \subset \partial D^2$ maps to $M$, and a neighborhood of this interval maps to $\mathbb{R}^2_R$. Call this interval the ``dividing line interval"
		\end{enumerate}
		
		Let $D^R(i,j)$ denote the set of right admissible disks $u$ such that the dividing line interval is $(i,j) \subset M$.

		Define a \emph{left admissible disk} in the same way, except requiring a neighborhood of the dividing line interval to map to $\mathbb{R}^2_L$ instead.
		
		Let $D^L(i,j)$ denote the set of left admissible disks whose dividing line interval maps to $(i,j) \subset M$.
		
		Given any $u \in D^R(i,j)$ or $u\in D^L(i,j)$, let $\partial^*u$ denote the boundary of $u$, excluding the dividing line interval, read off as a monomial in $\A^{comm}_{SFT}$.
		
	\end{Def}

	\begin{Rem}
		Any right admissible disk must in fact map entirely into $\mathbb{R}^2_R$. (see \Cref{lem:rightdisk}). However, the analogous fact does not hold for left admissible disks (\Cref{fig:leftrightdisk}).
	\end{Rem}

	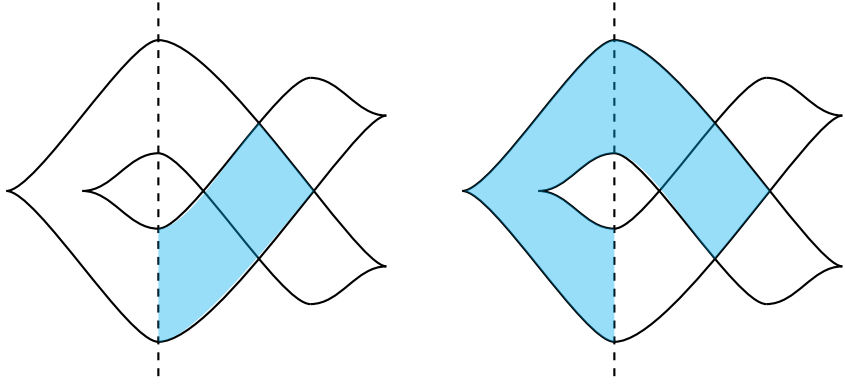
\begin{figure}
		\begin{tikzpicture}
			
			\def\xaI{0}
			\def\xaII{6}
			
			\def\yaI{0}
			
			\node(14a) at (\xaI,\yaI) {	
				\begin{tikzpicture}[scale=1, font=\normalsize]
					
					\def\a{0.4}
					\def\b{0.8}
					\def\c{1.2}
					\def\d{0.2}
					
					\def\xI{-2}
					\def\xII{-1}
					\def\xIII{0}
					\def\xIV{2}
					\def\xV{3}
					
					\def\yO{-2.5}
					\def\yI{-2}
					\def\yII{-1.5}
					\def\yIII{-1}
					\def\yIV{-0.5}
					\def\yV{0}
					\def\yVI{0.5}
					\def\yVII{1}
					\def\yVIII{1.5}
					\def\yIX{2}
					\def\yX{2.5}
					
					\def\xoffset{0.4}
					\def\yoffset{0.4}

					\begin{knot}
						[
						clip width=12, draft mode=strands]			
						\strand[black,thick] (\xI, \yV) .. controls +(0:\a) and +(180:\a) .. (\xIII,\yIX);
						\strand[black,thick] (\xIII,\yIX) .. controls +(0:\b) and +(180:\a) .. (\xV,\yIII);

						\strand[black,thick] (\xV,\yIII) .. controls +(180:\a) and +(0:\a) .. (\xIV, \yII);
						\strand[black,thick] (\xIV,\yII) .. controls +(180:\a) and +(0:\a) .. (\xIII,\yVI);
						\strand[black,thick] (\xIII,\yVI) .. controls +(180:\a) and +(0:\a) .. (\xII,\yV);
						\strand[black,thick] (\xII,\yV) .. controls +(0:\a) and +(180:\a) .. (\xIII,\yIV);
						\strand[black,thick] (\xIII,\yIV) .. controls +(0:\a) and +(180:\a) .. (\xIV, \yVIII);
						\strand[black,thick] (\xIV, \yVIII) .. controls +(0:\a) and +(180:\a) .. (\xV, \yVII);
						\strand[black,thick] (\xV,\yVII) .. controls +(180:\a) and +(0:\b) .. (\xIII,\yI);
						\strand[black,thick] (\xIII,\yI) .. controls +(180:\a) and +(0:\a) .. (\xI,\yV);
					\end{knot}

					\draw[dashed, black, thick] (\xIII, \yX) -- (\xIII, \yO);

					\pgfmathsetmacro{\threex}{\xIV+0.05}
					\pgfmathsetmacro{\threey}{\yV}
					
					\pgfmathsetmacro{\fourx}{1.32}
					\pgfmathsetmacro{\foury}{\yVII-0.1}
					
					\pgfmathsetmacro{\fivex}{1.32}
					\pgfmathsetmacro{\fivey}{\yIII+0.1}
					
					\pgfmathsetmacro{\sixx}{0.6}
					\pgfmathsetmacro{\sixy}{\yV}

					\def\off{0.1}
					\pgfmathsetmacro{\cIx}{\threex-2*\off}
					\pgfmathsetmacro{\cIy}{\threey}
					
					\pgfmathsetmacro{\cIIx}{\fourx}
					\pgfmathsetmacro{\cIIy}{\foury-2*\off}
					
					\pgfmathsetmacro{\cIIIx}{\xIII+\off}
					\pgfmathsetmacro{\cIIIy}{\yIV-\off}
					
					\pgfmathsetmacro{\cIVx}{\xIII+\off}
					\pgfmathsetmacro{\cIVy}{\yI+2*\off}

					\fill[cyan,opacity=0.4] (\threex, \threey) to (\fourx,\foury)
					.. controls +(225:\a) and +(0:\a) .. (\xIII,\yIV)
					to (\xIII, \yI)
					.. controls +(0:0.5) and +(225:\a) .. cycle;

				\end{tikzpicture}
			};

			\node(14b) at (\xaII,\yaI) {
				\begin{tikzpicture}[scale=1, font=\normalsize]
					
					\def\a{0.4}
					\def\b{0.8}
					\def\c{1.2}
					\def\d{0.2}
					
					\def\xI{-2}
					\def\xII{-1}
					\def\xIII{0}
					\def\xIV{2}
					\def\xV{3}
					
					\def\yO{-2.5}
					\def\yI{-2}
					\def\yII{-1.5}
					\def\yIII{-1}
					\def\yIV{-0.5}
					\def\yV{0}
					\def\yVI{0.5}
					\def\yVII{1}
					\def\yVIII{1.5}
					\def\yIX{2}
					\def\yX{2.5}
					
					\def\xoffset{0.4}
					\def\yoffset{0.4}

					\begin{knot}
						[
						clip width=12, draft mode=strands]			
						\strand[black,thick] (\xI, \yV) .. controls +(0:\a) and +(180:\a) .. (\xIII,\yIX);
						\strand[black,thick] (\xIII,\yIX) .. controls +(0:\b) and +(180:\a) .. (\xV,\yIII);

						\strand[black,thick] (\xV,\yIII) .. controls +(180:\a) and +(0:\a) .. (\xIV, \yII);
						\strand[black,thick] (\xIV,\yII) .. controls +(180:\a) and +(0:\a) .. (\xIII,\yVI);
						\strand[black,thick] (\xIII,\yVI) .. controls +(180:\a) and +(0:\a) .. (\xII,\yV);
						\strand[black,thick] (\xII,\yV) .. controls +(0:\a) and +(180:\a) .. (\xIII,\yIV);
						\strand[black,thick] (\xIII,\yIV) .. controls +(0:\a) and +(180:\a) .. (\xIV, \yVIII);
						\strand[black,thick] (\xIV, \yVIII) .. controls +(0:\a) and +(180:\a) .. (\xV, \yVII);
						\strand[black,thick] (\xV,\yVII) .. controls +(180:\a) and +(0:\b) .. (\xIII,\yI);
						\strand[black,thick] (\xIII,\yI) .. controls +(180:\a) and +(0:\a) .. (\xI,\yV);
					\end{knot}

					\draw[dashed, black, thick] (\xIII, \yX) -- (\xIII, \yO);

					\pgfmathsetmacro{\threex}{\xIV+0.05}
					\pgfmathsetmacro{\threey}{\yV}
					
					\pgfmathsetmacro{\fourx}{1.32}
					\pgfmathsetmacro{\foury}{\yVII-0.1}
					
					\pgfmathsetmacro{\fivex}{1.32}
					\pgfmathsetmacro{\fivey}{\yIII+0.1}
					
					\pgfmathsetmacro{\sixx}{0.6}
					\pgfmathsetmacro{\sixy}{\yV}

					\def\off{0.1}
					\pgfmathsetmacro{\cIx}{\threex-2*\off}
					\pgfmathsetmacro{\cIy}{\threey}
					
					\pgfmathsetmacro{\cIIx}{\fourx}
					\pgfmathsetmacro{\cIIy}{\foury-2*\off}
					
					\pgfmathsetmacro{\cIIIx}{\xIII+\off}
					\pgfmathsetmacro{\cIIIy}{\yIV-\off}
					
					\pgfmathsetmacro{\cIVx}{\xIII+\off}
					\pgfmathsetmacro{\cIVy}{\yI+2*\off}

					\fill[cyan,opacity=0.4] (\xIII,\yIV) to (\xIII, \yI)
					.. controls +(180:\a) and +(0:\a) .. (\xI,\yV)
					.. controls +(0:\a) and +(180:\a) .. (\xIII,\yIX)
					.. controls +(0:0.6) and +(135:\a) .. (\threex,\threey)
					to (\fivex,\fivey)
					.. controls +(135:\a) and +(0:\a) .. (\xIII,\yVI)
					.. controls +(180:\a) and +(0:\a) .. (\xII,\yV)
					.. controls +(0:\a) and +(180:\a) .. cycle;

				\end{tikzpicture}
			};
			
		\end{tikzpicture}
		
		\caption[Left/right admissible disks]{Left: a right admissible disk contributing to $L(\alpha_{34})$. Right: a left admissible disk contributing to $R(\alpha_{34})$. Note that the left admissible disk may `loop back' across the dividing line, but the same is not possible for right admissible disks, since we have restricted consideration to simple fronts (\Cref{lem:rightdisk}).}\label{fig:leftrightdisk}
	\end{figure}

	Now, we define a map $L: \A^L_{SFT} \to \A^{comm}_{SFT}$ as follows.
	
	\begin{Def}
		$L:\A^L_{SFT} \to \A^{comm}_{SFT}$ is defined on generators by setting:
		\begin{itemize}
			\item $L(p_i) = p_i$ 
			\item $L(q_i) = q_i$ 
			\item $L(\alpha_{ij}) = \sum_{u \in D^R(i,j)} \partial^*u$. 
			\item $L(\beta_{ij}) = \sum_{ij \text{ right strand}} p_kq_k$, where the sum is taken over the interior Reeb chords along the $ij$ strand of the right diagram $\Lambda^R$.
			\item $L(t^{\pm 1}) = t^{\pm 1}$.
		\end{itemize}
		
		Extend it to an algebra map by setting $L(x+y) = L(x)+L(y)$ and $L(xy) = L(x)L(y)$. 
	\end{Def}
	
	\begin{Def}
		$R:\A^R_{SFT} \to \A^{comm}_{SFT}$ defined similarly. On generators:
		\begin{itemize}
			\item $R(p_i) = p_i$
			\item $R(q_i) = q_i$
			\item $R(\alpha_{ij}) = \sum_{u \in D^L(i,j)} \partial^*u$. %
			\item $R(\beta_{ij}) = \sum_{ij \text{ left strand}} p_kq_k$.
		\end{itemize}
		
		Extend it to an algebra map by setting $R(x+y) = R(x)+R(y)$ and $R(xy) = R(x)R(y)$. 
	\end{Def}
	
	\subsubsection{$L$ and $R$ are morphisms}\-\
	
	In this section, we prove the following theorem.
	
	\begin{Th}\label{thm:LRmorphism}
		$L$ and $R$ are morphisms. That is, 
		\[ d \circ L = L \circ d^L, \]
		and
		\[ d\circ R = R \circ d^R.\]
	\end{Th}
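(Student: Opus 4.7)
The plan is to verify $d\circ L = L\circ d^L$ on each generator of $\A^L_{SFT}$ and then conclude by the fact that both sides are derivations (since $L$ is an algebra map, $d^L$ is a derivation, and $d$ is a derivation on the commutative quotient). The proof for $R$ is parallel, with the added wrinkle that left admissible disks can loop back across $M$ (cf.\ \Cref{fig:leftrightdisk}); the bookkeeping is the same but with the roles of left and right swapped. Throughout, the strategy mirrors the proof of $(d^L)^2=0$ given in \Cref{thm:dL2=0}: one exhibits a sign-free pairing between summands of $d(L(x))$ and $L(d^L(x))$ coming from cutting/gluing disks at non-convex corners and at the dividing line.

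For $x = p_i$ or $x=q_i$ (a crossing or right cusp of $\Lambda^L$), we have $L(x)=x$, so we must show $L(d^L(x)) = d(x)$. Every admissible disk in $\Lambda$ contributing to $d(x)$ is either entirely contained in $\mathbb{R}^2_L$ (contributing to $d^L(x)$ through an ordinary disk count) or meets $M$ in one or more segments. In the latter case, slicing the disk along each $M$-segment decomposes it into a left admissible disk of $\Lambda^L$ with negative corners at generators $\alpha_{i_kj_k}$ and a family of right admissible disks bounded by the corresponding $M$-segments; this is exactly what the Leibniz expansion of $L$ applied to an $\alpha\alpha\cdots\alpha$ factor in $d^L(x)$ produces. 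The string contributions $\delta_{str}^L(x)$ match the string contributions to $d(x) = \{h,x\}+\delta_{str}(x)$ in the same way, since broken closed strings in $\Lambda^L$ either never interact with $M$ or do so via $\beta$-insertions that correspond under $L$ to $pq$-insertions at interior Reeb chords of the right strand.

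The main case is $x=\alpha_{ij}$. Here $L(\alpha_{ij})$ is a sum of monomials $\partial^* u$ over $u\in D^R(i,j)$. Each summand of $d(L(\alpha_{ij}))$ arises by either (a) gluing such a $u$ with another disk of $h$ along a shared Reeb chord or $M$-segment, or (b) making a string insertion in the broken closed string of $\partial^* u$. On the other side, $L(d^L(\alpha_{ij})) = L(\{h^L,\alpha_{ij}\}) + L(\delta_{str}^L(\alpha_{ij}))$: the first term, expanded by the Leibniz rule, produces products of right admissible disks arising from either gluing a disk of $h^L$ onto the $\alpha_{ij}$-loop or splitting $\alpha_{ij}$ into $\alpha_{ik}\alpha_{kj}$; the second term contributes $pq$-insertions, $\alpha$-splittings, and $\beta$-insertions applied to the $\alpha_{ij}$-loop. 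The bijection between the two sides is the familiar non-convex corner argument applied to the glued disk $u\cup u'$: cutting the other way at the non-convex corner produces either two right admissible disks bounded by $M$-segments $ik$ and $kj$ (matching a term in $L$ of an $\alpha\alpha$ summand of $\{h^L,\alpha_{ij}\}$), a smaller right admissible disk with the original $\alpha_{ij}$-segment (matching an insertion of a disk of $h^L$), or a configuration corresponding to a string insertion. The case $x=\beta_{ij}$ reduces to the identity $(\sum_k p_kq_k)^2 = \sum_k p_k q_k \cdot \sum_\ell p_\ell q_\ell$ on the nose and the direct verification that $d(\sum p_k q_k) = 0$ in $\A^{comm}_{SFT}$ modulo the string/Leibniz terms that cancel in pairs, and $x=t^{\pm 1}$ is immediate.

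The main obstacle will be the bordered-exceptional terms near the dividing line in the $\alpha_{ij}$ case: when a boundary bubbling produces a disk that shares a Reeb chord or $M$-segment with the $\alpha_{ij}$-loop, one must verify that the corresponding summand on the opposite side comes from an $\alpha$-insertion, $\beta$-insertion, or Leibniz term coming from $L$ applied to a $\beta_{ij}$ factor—exactly the analogue of the boundary cancellations catalogued in \Cref{fig:boundaryexcepts} and \Cref{fig:bordercancelings}. As in the proof of \Cref{thm:dL2=0}, this will require a case analysis over the possible geometric positions of the insertion/gluing relative to $M$, matched against the algebraic terms generated by $\{\cdot,\cdot\}$ and $\delta_{str}^L$ on the $\alpha$ and $\beta$ generators.
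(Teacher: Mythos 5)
Your proposal follows essentially the same route as the paper's proof: a generator-by-generator check, with the $p_i,q_i$ case handled by slicing an admissible disk of $\Lambda$ along its $M$-segments into a connected left piece (connectedness being exactly \Cref{lem:rightdisk}) plus right admissible disks realizing $L(\alpha_{i_kj_k})$, and the $\alpha_{ij}$ case handled by pairwise cancellation inside the four-term sum $\{h,L(x)\}+L\{h^L,x\}+\delta(L(x))+L(\delta_{str}^L(x))$ via cutting and regluing at non-convex corners and at the dividing line, as catalogued in \Cref{fig:commsquarecancel}. The one place your sketch misstates the target is the SFT part of the $\beta_{ij}$ case: what must be shown is not that $d\left(\sum_k p_kq_k\right)=0$ but that $\{h,\sum_k p_kq_k\}=L(\{h^L,\beta_{ij}\})$, which the paper gets by observing that $\{h,\sum_k p_kq_k\}$ counts disks with an odd number of corners along the $ij$ right strand, each of which is counted twice (hence vanishes over $\mathbb{Z}_2$) unless it crosses $M$, and the surviving disks match the terms of $L(\{h^L,\beta_{ij}\})$ (\Cref{fig:lbeta1}, \Cref{fig:lbeta2}); the string part is the genuine on-the-nose identity $\sum_k(p_kq_k)^2=\left(\sum_k p_kq_k\right)^2$ in characteristic $2$.
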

	
	\begin{Lemma}\label{lem:rightdisk}
		Any right admissible disk maps entirely into $\mathbb{R}^2_R$.
	\end{Lemma}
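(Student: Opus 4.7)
The plan is to reduce the statement to a claim about the $\Lambda$-boundary of a right admissible disk $u$---namely, that this boundary, connecting the two endpoints of the $M$-interval, remains in $\overline{\mathbb{R}^2_R}$---and then conclude by a topological argument. First I would extract what the simple front assumption gives in $\mathbb{R}^2_R$: since all right cusps lie at the common value $x = x_{\max}$ and (under the standing convention for the placement of $M$) no left cusps lie in $\mathbb{R}^2_R$, every connected component of $\Pi_{xz}(\Lambda) \cap \overline{\mathbb{R}^2_R}$ is an arc whose two endpoints are points of $\Pi_{xz}(\Lambda) \cap M$, containing exactly one right cusp, and with $x$ strictly monotone on each half. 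Consequently, a strand in $\mathbb{R}^2_R$ returns to $M$ only by crossing it transversally at a point of $\Pi_{xz}(\Lambda)\cap M$.

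Starting at an endpoint $p_2$ of the $M$-interval, I would then trace the $\Lambda$-boundary of $u$ into $\mathbb{R}^2_R$, allowing corners at crossings and at the right cusp on each arc. If this boundary ever reaches a point $q \in \Pi_{xz}(\Lambda) \cap M$ other than $p_1$, then near $q$ the boundary must either (i) continue smoothly along the strand across $M$ into $\mathbb{R}^2_L$, or (ii) have a corner at $q$ transitioning to another admissible direction. The only non-$\Lambda$ direction at $q$ is along $M$ itself, and taking it would produce a second interval of $\partial D^2$ mapping to $M$, violating property~(3) in the definition of a right admissible disk. Thus only case~(i) could produce a boundary exiting $\overline{\mathbb{R}^2_R}$.

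Case~(i) will be ruled out by combining the orientation of $u$ with simple connectedness. The $M$-interval is traversed with interior on the $\mathbb{R}^2_R$ side; propagating this along the boundary prescribes on which side of each traversed strand the disk's interior lies. I would argue that if the boundary smoothly passed through some $q$ into $\mathbb{R}^2_L$, then $u^{-1}(\mathbb{R}^2_L)$ would be a nonempty open subset of $D^2$ whose closure in $D^2$ meets $\partial D^2$ only along $\Lambda$-arcs (not along the $M$-interval), forcing the line $M$ to appear as an interior arc separating two regions of $u(D^2)$. Since admissible disks in a simple front are embeddings on their interior, this would produce a configuration in which the interior orientations on the two sides of such an $M$-arc are inconsistent with the orientation at the $M$-interval, giving a contradiction. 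The main obstacle is this orientation bookkeeping: carefully verifying that the ``side of the strand'' determined at $p_2$ propagates without inconsistency to $q$, so that a smooth crossing is genuinely forbidden.

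Once $\partial u(D^2) \subset \overline{\mathbb{R}^2_R}$ is established, the conclusion follows: $u(D^2)$ is the image of an interior-embedded disk whose boundary lies in the closed half-plane $\overline{\mathbb{R}^2_R}$ and whose interior is on the $\mathbb{R}^2_R$ side of the $M$-interval, so the entire image is contained in $\overline{\mathbb{R}^2_R}$, as required.
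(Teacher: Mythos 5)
Your reduction steps are sound: the boundary of $u$ can only leave $\overline{\mathbb{R}^2_R}$ by running smoothly along a strand of $\Lambda$ across a point of $\Pi_{xz}(\Lambda)\cap M$ (a corner there would create a second dividing line interval), and once $u(\partial D^2)\subset \overline{\mathbb{R}^2_R}$ is known, openness of $x\circ u$ on the interior of $D^2$ gives $u(D^2)\subset\overline{\mathbb{R}^2_R}$. The gap is in the step carrying all the content: ruling out the smooth crossing. The orientation/separation argument you sketch does not yield a contradiction. An embedded, orientation-preserving disk whose boundary consists of one segment of $M$ (with interior to its right) together with arcs crossing $M$ transversally elsewhere is perfectly consistent topologically --- picture a crescent issuing rightward from the $M$-segment, arching over, and dipping into $\mathbb{R}^2_L$; the interior arcs of $u^{-1}(M)$ are simply crossed by the disk, and no inconsistency of ``sides'' arises. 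A decisive sanity check: your step ruling out case (i) never uses simplicity of the front and is left--right symmetric, so if it worked it would equally show that \emph{left} admissible disks stay in $\overline{\mathbb{R}^2_L}$ --- which is false (see \Cref{fig:leftrightdisk}).

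What is actually needed, and what the paper does, is a front-geometric argument that breaks this symmetry. If the disk escapes, the two boundary arcs issuing from the two endpoints of the $M$-interval are distinct maximal arcs mapping into $\overline{\mathbb{R}^2_R}$ with all four endpoints on $M$; each attains its maximal $x$-coordinate either at a right cusp or at a corner at a crossing. These two maxima cannot coincide without violating the immersion condition, and since \emph{all} right cusps sit at the common maximal $x$-coordinate of the simple front, the arc with the smaller maximum must turn around at a crossing --- where the disk is then forced to cover three quadrants, contradicting admissibility (\Cref{fig:inadmissibledisk}). This comparison of $x$-maxima is the missing idea. (A smaller issue: your description of the components of $\Pi_{xz}(\Lambda)\cap\overline{\mathbb{R}^2_R}$ as arcs containing exactly one right cusp with $x$ monotone on each half presumes that no left cusps lie in $\mathbb{R}^2_R$, which is not among the paper's hypotheses and is not needed for the argument.)
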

	
	\begin{proof}
				
		Suppose not. Then there must be at least two segments $(a,b), (c,d) \subset \partial D^2$ mapping into $\Lambda^R$ such that the points $a,b,c,d$ all map onto the dividing line $M$. Consider the points along $(a,b)$ and $(c,d)$ at which the $x$-coordinate is maximized. This must happen either at a crossing or right cusp of $\Lambda^R$. These maximal $x$-coordinates cannot be equal, or else $D^2$ would not be immersed. It follows that the smaller maximal $x$-coordinate must occur at a crossing (recall that we assumed the front projection was simple, so all right cusps have an equal $x$-coordinate). But now one can see that near this crossing, $D^2$ must map to three quadrants and is thus inadmissible (\Cref{fig:inadmissibledisk}).
	\end{proof}

	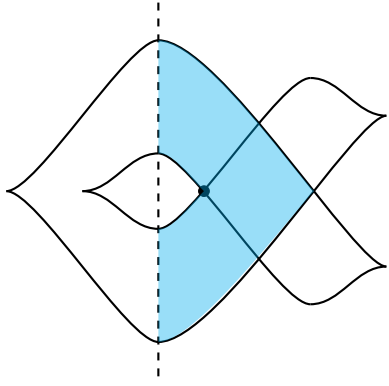
\begin{figure}
		\begin{tikzpicture}[scale=1, font=\normalsize]
			
			\def\a{0.4}
			\def\b{0.8}
			\def\c{1.2}
			\def\d{0.2}
			
			\def\xI{-2}
			\def\xII{-1}
			\def\xIII{0}
			\def\xIV{2}
			\def\xV{3}
			
			\def\yO{-2.5}
			\def\yI{-2}
			\def\yII{-1.5}
			\def\yIII{-1}
			\def\yIV{-0.5}
			\def\yV{0}
			\def\yVI{0.5}
			\def\yVII{1}
			\def\yVIII{1.5}
			\def\yIX{2}
			\def\yX{2.5}
			
			\def\xoffset{0.4}
			\def\yoffset{0.4}

			\begin{knot}
				[
				clip width=12, draft mode=strands]			
				\strand[black,thick] (\xI, \yV) .. controls +(0:\a) and +(180:\a) .. (\xIII,\yIX);
				\strand[black,thick] (\xIII,\yIX) .. controls +(0:\b) and +(180:\a) .. (\xV,\yIII);

				\strand[black,thick] (\xV,\yIII) .. controls +(180:\a) and +(0:\a) .. (\xIV, \yII);
				\strand[black,thick] (\xIV,\yII) .. controls +(180:\a) and +(0:\a) .. (\xIII,\yVI);
				\strand[black,thick] (\xIII,\yVI) .. controls +(180:\a) and +(0:\a) .. (\xII,\yV);
				\strand[black,thick] (\xII,\yV) .. controls +(0:\a) and +(180:\a) .. (\xIII,\yIV);
				\strand[black,thick] (\xIII,\yIV) .. controls +(0:\a) and +(180:\a) .. (\xIV, \yVIII);
				\strand[black,thick] (\xIV, \yVIII) .. controls +(0:\a) and +(180:\a) .. (\xV, \yVII);
				\strand[black,thick] (\xV,\yVII) .. controls +(180:\a) and +(0:\b) .. (\xIII,\yI);
				\strand[black,thick] (\xIII,\yI) .. controls +(180:\a) and +(0:\a) .. (\xI,\yV);
			\end{knot}

			\draw[dashed, black, thick] (\xIII, \yX) -- (\xIII, \yO);

			\pgfmathsetmacro{\threex}{\xIV+0.05}
			\pgfmathsetmacro{\threey}{\yV}
			
			\pgfmathsetmacro{\fourx}{1.32}
			\pgfmathsetmacro{\foury}{\yVII-0.1}
			
			\pgfmathsetmacro{\fivex}{1.32}
			\pgfmathsetmacro{\fivey}{\yIII+0.1}
			
			\pgfmathsetmacro{\sixx}{0.6}
			\pgfmathsetmacro{\sixy}{\yV}

			\filldraw[black] (\sixx, \sixy) circle (2pt) node[anchor=west]{};
			
			\fill[cyan,opacity=0.4] (\xIII,\yIX)
			.. controls +(0:0.6) and +(135:\a) .. (\threex,\threey)
			.. controls +(225:\a) and +(0:0.5) .. (\xIII,\yI)
			to (\xIII,\yIV)
			.. controls +(0:\d) and +(225:\a) .. (\sixx, \sixy)
			.. controls +(135:\a) and +(0:\d) .. (\xIII, \yVI)
			to cycle;

		\end{tikzpicture}

		\caption[Right disk with multiply dividing line segments]{A right disk with two boundary components on the dividing line must have a non-convex corner (marked with a dot).}\label{fig:inadmissibledisk}
		
	\end{figure}

	\begin{proof} [Proof of \Cref{thm:LRmorphism}]
		Let $d$ denote the differential on $\A^{comm}_{SFT}$ and let $d^L$ denote the differential on $\A^L$. We need to show $d \circ L = L \circ d^L$. 
		
		\begin{itemize}
			\item $x=p_i:$ 
			\[ d\circ L (p_i) = d(p_i) = \{h, p_i\} + \delta_{str}(p_i) \]
			
			\begin{align*}
				L\circ d^L(p_i) &=  L(\{h^L, p_i\} + \delta_{str}^L(p_i)) \\
				&= L(\{h^L, p_i\}) + L(\delta_{str}^L(p_i)))
			\end{align*}

			It is clear that $\delta(p_i) = L(\delta_{str}^L(p_i))$.
			
			So it suffices to show $\{h,p_i\} = L(\{h^L, p_i\})$.

			Let $w=\{w', p_i\}$ be a summand of $\{h, p_i\}$. Write $w'$ as $\partial D$ for an admissible disk $D$. Let $D^L$ be the portion of $D$ in the left-half diagram, and $D^R$ the right half.
			
			By \Cref{lem:rightdisk}, $D^L$ must be connected, though $D^R$ may have multiple components. Label the components of $D^R$ as $D_k^R$, and let the intersection of $D_k^R$ with the dividing line be $(i_k,j_k)$.
			
			Write $\partial D = w_1^Lw_1^R\dots w_k^R w_{k+1, 1}^L q_i w_{k+1,2}^L w_{k+1}^R \dots w_n^R$, where $w_i^L$ is a word consisting of only generators of the left half (similarly for $w_i^R$). 
			
			Each $w_k^R = \partial^*(D_k^R)$ is by definition a term in $L(\alpha_{i_kj_k})$. 
			
			$\partial D^L = w_1^L \alpha_{i_1j_1}w_2^L\dots \alpha_{i_kj_k} w_{k+1,1}^L q_i w_{k+1,2}^L \alpha_{i_{k+1}j_{k+1}}\dots w_n^R$, on the other hand, is a term in $h^L$. 
			
			Therefore, $\{h^L, p_i\}$ will contain a term of the form 
			\[w_1^L \alpha_{i_1j_1}w_2^L\dots \alpha_{i_kj_k} w_{k+1,1}^Lw_{k+1,2}^L\alpha_{i_{k+1}j_{k+1}}\dots w_n^R. \]
			
			Therefore, $L(\{h^L, p_i\})$ contains a term of the form 
			
			\[ w_1^Lw_1^R \dots w_{k+1,1}^Lw_{k+1,2}^L \dots w_{n}^Lw_n^R = w. \]

			Thus, any summand of $\{h, p_i\}$ cancels with a summand of $L(\{h^L, p_i\})$. A similar argument shows that every summand of $L(\{h^L, p_i\})$ cancels with a summand of $\{h, p_i\}$, and thus these are equal.

			\item $x=q_i$. Same argument.
			
			\item $x=\alpha_{ij}$. In this case, $\{h, L(x)\} \neq L\{h^L, x\}$ and $\delta(L(x)) \neq L(\delta_{str}^L(x))$ in general. However, the terms of 
			
			\[ \{h, L(x)\} + L\{h^L, x\} + \delta(L(x)) + L(\delta_{str}^L(x)) \]
			
			come in pairs which cancel, as depicted in \Cref{fig:commsquarecancel}.
			
			\item $x = \beta_{ij}$. 
			
			Note that $\delta(p_kq_k) = (p_kq_k)^2$ for any pair $p_k, q_k$.
			
			\begin{align*}
				\delta(L(\beta_{ij})) &= \delta\left(\sum_{ij \text{ right strand}} pq\right) \\
				&= \sum_{ij \text{ right strand}} \delta(pq) \\
				&= \sum_{ij \text{ right strand}} (pq)^2 \\
				&= \left( \sum_{ij \text{ right strand}} pq \right)^2 \\
				&= \left( L(\beta_{ij}) \right)^2 \\
				&= L\left( \beta_{ij}^2 \right) \\
				&= L(\delta_{str}^L(\beta_{ij}))
			\end{align*}
			
			For the SFT portion, note that $\{h, p_kq_k\} = p_k\{h,q_k\} + \{h, p_k\} q_k$ is just equal to the sum of admissible disks that have an odd number of corners at the vertex labeled $k$. 
			
			Thus, $\{h, L(\beta_{ij})\} = \left\{ h, \sum_{ij \text{ right strand}} pq \right\}$ is equal to the sum of admissible disks with an odd number of corners at some crossing along the $ij$ strand. Each such disk will be double counted in $\{h, L(\beta_{ij})\}$ (as in \Cref{fig:lbeta1}), unless the disk crosses $M$ as depicted in \Cref{fig:lbeta2}. But these exceptional disks precisely cancel with a corresponding term in $L(\{h^L, \beta_{ij}\})$. 
			
		\end{itemize}

		A similar argument shows that $R: \A^R_{SFT} \to A^{comm}_{SFT}$ is also a morphism.
		
	\end{proof}

	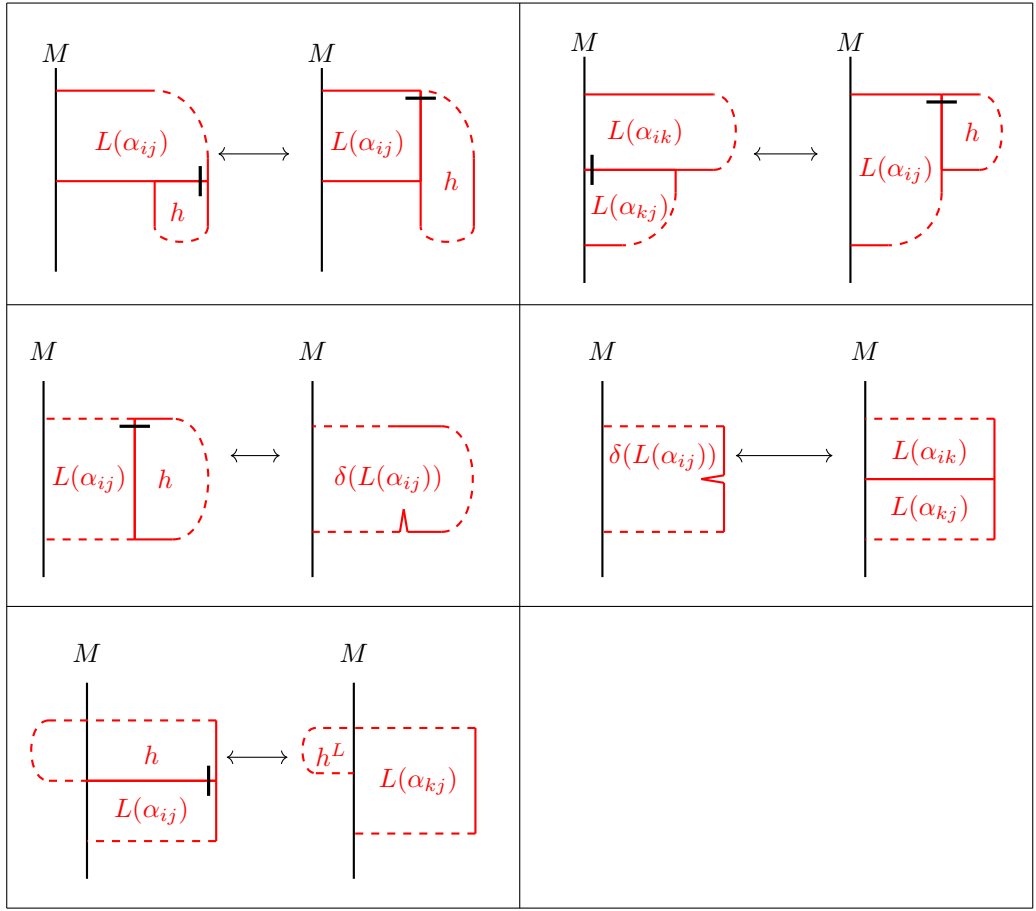
\begin{figure}
		\begin{tikzpicture}
			
			\def\xaI{0}
			\def\xaII{3.5}
			\def\xaIII{7}
			\def\xaIV{10.5}
			
			\def\yaI{0}
			\def\yaII{-4}
			\def\yaIII{-8}
			
			\pgfmathsetmacro{\minX}{\xaI-1.5}
			\pgfmathsetmacro{\midXI}{0.5*(\xaI+\xaII)}
			\pgfmathsetmacro{\midXII}{0.5*(\xaII+\xaIII)}
			\pgfmathsetmacro{\midXIII}{0.5*(\xaIII+\xaIV)}
			\pgfmathsetmacro{\maxX}{\xaIV+1.5}
			
			\pgfmathsetmacro{\minY}{\yaI+2}
			\pgfmathsetmacro{\midYI}{0.5*(\yaI+\yaII)}
			\pgfmathsetmacro{\midYII}{0.5*(\yaII+\yaIII)}
			\pgfmathsetmacro{\maxY}{\yaIII-2}

			\node(16a1) at (\xaI, \yaI) {
				\begin{tikzpicture}
					\node[red] (G) at (1.6,-0.4) {$h$};
					\node[red] (Gt) at (1,0.5) {$L(\alpha_{ij})$};
					\node (M) at (0,1.7) {$M$};
					
					\def\mhei{1.5}
					\def\hei{1.2}
					\def\off{0}
					
					\pgfmathsetmacro{\yU}{\hei+\off}
					\pgfmathsetmacro{\yL}{\hei-\off}
					
					\def\xI{1.3}
					\def\xII{2}
					\def\yI{-0.6}
					\def\yII{0.3}
					
					\def\xIII{1.6}
					
					\pgfmathsetmacro{\xBar}{\xII-0.1}
					\pgfmathsetmacro{\yIBar}{2*0.1}
					\pgfmathsetmacro{\yIIBar}{-2*0.1}

					\begin{knot}
						[
						clip width=15, draft mode=strands]
						\strand[black, thick] (0,\mhei) -- (0,-\hei);
						
						\strand[red, thick] (\xI, \yI) -- (\xI, -\off);
						\strand[red,thick] (\xI,-\off) -- (\xII, -\off);
						\strand[red,thick] (\xII, -\off) -- (\xII, \yI);
						\strand[red,thick, dashed] (\xII,\yI) to[out=south, in=south] (\xI,\yI);
						
						\strand[red, thick] (\xI, \yL) -- (0, \yL);
						\strand[red,thick] (0,\off) -- (\xII, \off);
						\strand[red,thick] (\xII, \off) -- (\xII, \yII);
						\strand[red,thick, dashed] (\xII,\yII) to[out=north, in=east] (\xI,\yL);
						
						\strand[black, very thick] (\xBar,\yIBar) -- (\xBar,\yIIBar);
					\end{knot}

				\end{tikzpicture}
			};

			\node(16a2) at (\xaII,\yaI) {
				\begin{tikzpicture}
					\node[red] (G) at (1.7,0) {$h$};
					\node[red] (Gt) at (0.6,0.5) {$L(\alpha_{ij})$};
					\node (M) at (0,1.7) {$M$};
					
					\def\mhei{1.5}
					\def\hei{1.2}
					\def\off{0}
					
					\pgfmathsetmacro{\yU}{\hei+\off}
					\pgfmathsetmacro{\yL}{\hei-\off}
					
					\def\xI{1.3}
					\def\xII{2}
					\def\yI{-0.6}
					\def\yII{0.3}
					
					\def\xIII{1.6}
					
					\pgfmathsetmacro{\xBar}{\xII-\off}
					\pgfmathsetmacro{\yIBar}{2*\off}
					\pgfmathsetmacro{\yIIBar}{-2*\off}

					\pgfmathsetmacro{\xIU}{\xI-\off}
					\pgfmathsetmacro{\xIL}{\xI+\off}
					
					\pgfmathsetmacro{\xIBar}{\xIL+0.2}	
					\pgfmathsetmacro{\xIIBar}{\xIU-0.2}
					\pgfmathsetmacro{\yIBar}{\yL-0.1}

					\begin{knot}
						[
						clip width=15, draft mode=strands]
						\strand[black, thick] (0,\mhei) -- (0,-\hei);

						\strand[red,thick, dashed] (\xII,\yII) to[out=north, in=east] (\xIL,\yL);
						\strand[red,thick] (\xII, \yI) -- (\xII, \yII);
						\strand[red,thick, dashed] (\xII,\yI) to[out=south, in=south] (\xIL,\yI);
						\strand[red, thick] (\xIL, \yI) -- (\xIL, \yL);
						
						\strand[red, thick] (\xIU, \yL) -- (0, \yL);
						\strand[red,thick] (0,\off) -- (\xIU,\off);
						\strand[red,thick] (\xIU,\off) -- (\xIU, \yL);
						
						\strand[black, very thick] (\xIBar,\yIBar) -- (\xIIBar,\yIBar);
					\end{knot}

				\end{tikzpicture}
			};

			\node(16b1) at (\xaIII,\yaI) {
				\begin{tikzpicture}
					\node[red] (G) at (0.6,-0.5) {$L(\alpha_{kj})$};
					\node[red] (Gt) at (0.8,0.5) {$L(\alpha_{ik})$};
					\node (M) at (0,1.7) {$M$};
					
					\def\mhei{1.5}
					\def\hei{1}
					\def\off{0}
					
					\pgfmathsetmacro{\yU}{\hei+\off}
					\pgfmathsetmacro{\yL}{\hei-\off}
					
					\def\xI{1.2}
					\def\xII{1.7}
					\def\yI{-0.3}
					
					\def\xIII{1.7}
					
					\def\xIV{0.5}

					\begin{knot}
						[
						clip width=15, draft mode=strands]
						\strand[black, thick] (0,\mhei) -- (0,-\mhei);
						
						\strand[red, thick] (\xIV, -\yL) -- (\off, -\yL);
						\strand[red,thick] (\off,-\off) -- (\xI, -\off);
						\strand[red,thick] (\xI, -\off) -- (\xI, \yI);
						\strand[red,thick, dashed] (\xI,\yI) to[out=south, in=east] (\xIV,-\yL);
						
						\strand[red, thick] (\xII, \yL) -- (\off, \yL);
						\strand[red,thick] (\off,\off) -- (\xIII, \off);
						\strand[red,thick, dashed] (\xIII,\off) to[out=east, in=east] (\xII,\yL);
						
						\strand[black, very thick] (0.1,-0.2) -- (0.1,0.2);
					\end{knot}

				\end{tikzpicture}
			};

			\node(16b2) at (\xaIV,\yaI) {
				\begin{tikzpicture}
					\node[red] (G) at (1.6,0.5) {$h$};
					\node[red] (Gt) at (0.6,0) {$L(\alpha_{ij})$};
					\node (M) at (0,1.7) {$M$};
					
					\def\mhei{1.5}
					\def\hei{1}
					\def\off{0}
					
					\pgfmathsetmacro{\yU}{\hei+\off}
					\pgfmathsetmacro{\yL}{\hei-\off}
					
					\def\xI{1.2}
					\def\xII{1.7}
					\def\yI{-0.3}
					
					\def\xIII{1.7}
					
					\def\xIV{0.5}
					
					\pgfmathsetmacro{\xIL}{\xI+\off}
					\pgfmathsetmacro{\xIU}{\xI-\off}	
					
					\pgfmathsetmacro{\xIBar}{\xIL+0.2}	
					\pgfmathsetmacro{\xIIBar}{\xIU-0.2}
					\pgfmathsetmacro{\yIBar}{\yL-0.1}		
					
					\begin{knot}
						[
						clip width=15, draft mode=strands]
						\strand[black, thick] (0,\mhei) -- (0,-\mhei);

						\strand[red, thick] (\xII, \yL) -- (\xIL, \yL);
						\strand[red,thick] (\xIL,\off) -- (\xIII, \off);
						\strand[red,thick] (\xIL,\off) -- (\xIL,\yL);
						\strand[red,thick, dashed] (\xIII,\off) to[out=east, in=east] (\xII,\yL);
						
						\strand[red,thick] (\xIU, \yL) -- (\xIU, \yI);
						\strand[red, thick] (\xIU, \yL) -- (\off, \yL);
						\strand[red, thick] (\xIV, -\yL) -- (\off, -\yL);
						\strand[red,thick, dashed] (\xIU,\yI) to[out=south, in=east] (\xIV,-\yL);
						
						\strand[black, very thick] (\xIBar,\yIBar) -- (\xIIBar,\yIBar);
					\end{knot}

				\end{tikzpicture}
			};

			\node(16c1) at (\xaI,\yaII) {
				\begin{tikzpicture}
					\node[red] (G) at (1.6,0) {$h$};
					\node[red] (Gt) at (0.6,0) {$L(\alpha_{ij})$};
					\node (M) at (0,1.7) {$M$};
					
					\def\mhei{1.3}
					\def\hei{0.8}
					\def\off{0}
					
					\pgfmathsetmacro{\yU}{\hei+\off}
					\pgfmathsetmacro{\yL}{\hei-\off}
					
					\def\xI{1.2}
					\def\xII{1.7}
					\def\yI{-0.3}
					
					\def\xIII{1.7}
					
					\def\xIV{0.5}
					
					\pgfmathsetmacro{\xIL}{\xI+\off}
					\pgfmathsetmacro{\xIU}{\xI-\off}	
					
					\pgfmathsetmacro{\xIBar}{\xIL+0.2}	
					\pgfmathsetmacro{\xIIBar}{\xIU-0.2}
					\pgfmathsetmacro{\yIBar}{\yL-0.1}		
					
					\begin{knot}
						[
						clip width=15, draft mode=strands]
						\strand[black, thick] (0,\mhei) -- (0,-\mhei);

						\strand[red, thick] (\xII, \yL) -- (\xIL, \yL);
						\strand[red,thick] (\xIL,-\yL) -- (\xIII, -\yL);
						\strand[red,thick] (\xIL,-\yL) -- (\xIL,\yL);
						\strand[red,thick, dashed] (\xIII,-\yL) to[out=east, in=east] (\xII,\yL);
						
						\strand[red,thick] (\xIU, \yL) -- (\xIU, -\yL);
						\strand[red, thick,dashed] (\xIU, \yL) -- (\off, \yL);
						\strand[red, thick,dashed] (\xIU, -\yL) -- (\off, -\yL);
						
						\strand[black, very thick] (\xIBar,\yIBar) -- (\xIIBar,\yIBar);
					\end{knot}

				\end{tikzpicture}
			};

			\node(16c2) at (\xaII,\yaII) {
				\begin{tikzpicture}
					\node[red] (Gt) at (1,0) {$\delta(L(\alpha_{ij}))$};
					\node (M) at (0,1.7) {$M$};
					
					\def\mhei{1.3}
					\def\hei{0.8}
					\def\off{0.1}
					
					\pgfmathsetmacro{\yU}{\hei+\off}
					\pgfmathsetmacro{\yL}{\hei-\off}
					
					\def\xI{1.2}
					\def\xII{1.7}
					\def\yI{-0.3}
					
					\def\xIII{1.7}
					
					\def\xIV{0.5}
					
					\pgfmathsetmacro{\xIL}{\xI+\off}
					\pgfmathsetmacro{\xIU}{\xI-\off}	
					
					\pgfmathsetmacro{\xIBar}{\xIL+0.2}	
					\pgfmathsetmacro{\xIIBar}{\xIU-0.2}
					\pgfmathsetmacro{\yIBar}{\yL-0.1}		
					
					\pgfmathsetmacro{\xIhL}{\xI+0.5*\off}
					\pgfmathsetmacro{\xIhU}{\xI-0.5*\off}
					
					\pgfmathsetmacro{\yh}{-\yL+0.3}
					
					\begin{knot}
						[
						clip width=15, draft mode=strands]
						\strand[black, thick] (0,\mhei) -- (0,-\mhei);

						\strand[red, thick] (\xII, \yL) -- (\xIU, \yL);
						
						\strand[red,thick, dashed] (\xIII,-\yL) to[out=east, in=east] (\xII,\yL);
						
						\strand[red, thick,dashed] (\xIU, \yL) -- (0, \yL);

						\strand[red, thick,dashed] (\xIhU, -\yL) -- (0, -\yL);
						\strand[red,thick] (\xIhL,-\yL) -- (\xIII, -\yL);
						
						\strand[red,thick] (\xIhL,-\yL) -- (\xI, \yh);
						\strand[red,thick] (\xIhU,-\yL) -- (\xI, \yh);	
					\end{knot}

				\end{tikzpicture}
			};

			\node(16d1) at (\xaIII,\yaII) {
				\begin{tikzpicture}
					\node[red] (Gt) at (0.8,0.3) {$\delta(L(\alpha_{ij}))$};
					\node (M) at (0,1.7) {$M$};
					
					\def\mhei{1.3}
					\def\hei{0.8}
					\def\off{0.1}
					
					\pgfmathsetmacro{\yU}{\hei+\off}
					\pgfmathsetmacro{\yL}{\hei-\off}
					
					\def\xI{1.7}
					\def\xII{1.7}
					\def\yI{-0.3}
					
					\def\xIII{1.7}
					
					\def\xIV{0.5}
					
					\pgfmathsetmacro{\xIL}{\xI+\off}
					\pgfmathsetmacro{\xIU}{\xI-\off}	
					
					\pgfmathsetmacro{\xIBar}{\xIL+\off}	
					\pgfmathsetmacro{\xIIBar}{\xIU-\off}
					\pgfmathsetmacro{\yIBar}{\yL-\off}

					\pgfmathsetmacro{\xh}{\xIU-0.3}
					
					\begin{knot}
						[
						clip width=15, draft mode=strands]
						\strand[black, thick] (0,\mhei) -- (0,-\mhei);

						\strand[red, thick,dashed] (\xIU, \yL) -- (0, \yL);
						\strand[red, thick,dashed] (\xIU, -\yL) -- (0, -\yL);

						\strand[red,thick] (\xIU, \yL) -- (\xIU, 0.5*\off);
						\strand[red,thick] (\xIU,-0.5*\off) -- (\xIU, -\yL);
						
						\strand[red,thick] (\xIU,0.5*\off) -- (\xh,0);
						\strand[red,thick] (\xh,0) -- (\xIU,-0.5*\off);
						
					\end{knot}

				\end{tikzpicture}
			};

			\node(16d2) at (\xaIV,\yaII) {
				\begin{tikzpicture}
					\node[red] (G) at (0.85,-0.4) {$L(\alpha_{kj})$};
					\node[red] (Gt) at (0.85,0.35) {$L(\alpha_{ik})$};
					\node (M) at (0,1.7) {$M$};
					
					\def\mhei{1.3}
					\def\hei{0.8}
					\def\off{0}
					
					\pgfmathsetmacro{\yU}{\hei+\off}
					\pgfmathsetmacro{\yL}{\hei-\off}
					
					\def\xI{1.7}
					\def\xII{1.7}
					\def\yI{-0.3}
					
					\def\xIII{1.7}
					
					\def\xIV{0.5}
					
					\pgfmathsetmacro{\xIL}{\xI+\off}
					\pgfmathsetmacro{\xIU}{\xI-\off}	
					
					\pgfmathsetmacro{\xIBar}{\xIL+\off}	
					\pgfmathsetmacro{\xIIBar}{\xIU-\off}
					\pgfmathsetmacro{\yIBar}{\yL-\off}

					\pgfmathsetmacro{\xh}{\xIU-0.3}
					
					\begin{knot}
						[
						clip width=15, draft mode=strands]
						\strand[black, thick] (0,\mhei) -- (0,-\mhei);

						\strand[red, thick,dashed] (\xIU, \yL) -- (\off, \yL);
						\strand[red, thick,dashed] (\xIU, -\yL) -- (\off, -\yL);

						\strand[red,thick] (\xIU, \yL) -- (\xIU, \off);
						\strand[red,thick] (\xIU,-\off) -- (\xIU, -\yL);

						\strand[red,thick] (\off,\off) -- (\xIU,\off);
						\strand[red,thick] (\off,-\off) -- (\xIU, -\off);

					\end{knot}

				\end{tikzpicture}
			};

			\node(16e1) at (\xaI,\yaIII) {
				\begin{tikzpicture}
					\node[red] (G) at (0.85,-0.4) {$L(\alpha_{ij})$};
					\node[red] (Gt) at (0.85,0.35) {$h$};
					\node (M) at (0,1.7) {$M$};
					
					\def\mhei{1.3}
					\def\hei{0.8}
					\def\off{0}
					
					\pgfmathsetmacro{\yU}{\hei+\off}
					\pgfmathsetmacro{\yL}{\hei-\off}
					
					\def\xI{1.7}
					\def\xII{1.7}
					\def\yI{-0.3}
					
					\def\xIII{1.7}
					
					\def\xIV{0.5}
					
					\pgfmathsetmacro{\xIL}{\xI+\off}
					\pgfmathsetmacro{\xIU}{\xI-\off}	
					
					\pgfmathsetmacro{\xIBar}{\xI-0.1}	
					\pgfmathsetmacro{\yIIBar}{2*0.1}
					\pgfmathsetmacro{\yIBar}{-2*0.1}

					\pgfmathsetmacro{\xh}{\xIU-0.3}
					
					\def\xV{-0.5}

					\begin{knot}
						[
						clip width=15, draft mode=strands]
						\strand[black, thick] (0,\mhei) -- (0,-\mhei);

						\strand[red, thick,dashed] (\xIU, \yL) -- (\xV, \yL);
						\strand[red,thick,dashed] (\xV,\yL) to[out=west,in=west] (\xV,\off);
						\strand[red,thick,dashed] (\xV,\off) -- (0,\off);
						
						\strand[red, thick,dashed] (\xIU, -\yL) -- (\off, -\yL);

						\strand[red,thick] (\xIU, \yL) -- (\xIU, \off);
						\strand[red,thick] (\xIU,-\off) -- (\xIU, -\yL);

						\strand[red,thick] (0,\off) -- (\xIU,\off);
						\strand[red,thick] (\off,-\off) -- (\xIU, -\off);

						\strand[black, very thick] (\xIBar,\yIIBar) -- (\xIBar,\yIBar);
					\end{knot}

				\end{tikzpicture}
			};

			\node(16e2) at (\xaII,\yaIII) {
				\begin{tikzpicture}
					\node[red] (G) at (0.85,0) {$L(\alpha_{kj})$};
					\node[red] (Gt) at (-0.3,0.35) {$h^L$};
					\node (M) at (0,1.7) {$M$};
					
					\def\mhei{1.3}
					\def\hei{0.8}
					\def\off{0.1}
					
					\pgfmathsetmacro{\yU}{\hei+\off}
					\pgfmathsetmacro{\yL}{\hei-\off}
					
					\def\xI{1.7}
					\def\xII{1.7}
					\def\yI{-0.3}
					
					\def\xIII{1.7}
					
					\def\xIV{0.5}
					
					\pgfmathsetmacro{\xIL}{\xI+\off}
					\pgfmathsetmacro{\xIU}{\xI-\off}	
					
					\pgfmathsetmacro{\xIBar}{\xI-2*\off}	
					\pgfmathsetmacro{\yIIBar}{2*\off}
					\pgfmathsetmacro{\yIBar}{-2*\off}

					\pgfmathsetmacro{\xh}{\xIU-0.3}
					
					\def\xV{-0.5}

					\begin{knot}
						[
						clip width=15, draft mode=strands]
						\strand[black, thick] (0,\mhei) -- (0,-\mhei);

						\strand[red, thick,dashed] (\xIU, \yL) -- (\xV, \yL);
						\strand[red,thick,dashed] (\xV,\yL) to[out=west,in=west] (\xV,\off);
						\strand[red,thick,dashed] (\xV,\off) -- (0,\off);
						
						\strand[red, thick,dashed] (\xIU, -\yL) -- (0, -\yL);

						\strand[red,thick] (\xIU, \yL) -- (\xIU, -\yL);

					\end{knot}

				\end{tikzpicture}
			};
			
			\draw[<->] (16a1) to (16a2);
			\draw[<->] (16b1) to (16b2);
			\draw[<->] (16c1) to (16c2);
			\draw[<->] (16d1) to (16d2);
			\draw[<->] (16e1) to (16e2);

			\draw[-] (\minX,\minY) to (\maxX,\minY);
			\draw[-] (\minX,\midYI) to (\maxX,\midYI);
			\draw[-] (\minX,\midYII) to (\maxX,\midYII);			
			\draw[-] (\minX,\maxY) to (\maxX,\maxY);
			
			\draw[-] (\minX,\minY) to (\minX,\maxY);
			\draw[-] (\midXII,\minY) to (\midXII,\maxY);
			\draw[-] (\maxX,\minY) to (\maxX,\maxY);

		\end{tikzpicture}
		\caption[Canceling pairs in $\{h, L(\alpha_{ij})\} + L\{h^L, \alpha_{ij}\} + \delta(L(\alpha_{ij})) + L(\delta_{str}^L(\alpha_{ij}))$]{Canceling pairs in $\{h, L(\alpha_{ij})\} + L\{h^L, \alpha_{ij}\} + \delta(L(\alpha_{ij})) + L(\delta_{str}^L(\alpha_{ij}))$.}\label{fig:commsquarecancel}
	\end{figure}

	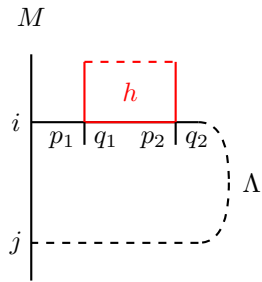
\begin{figure}
		\begin{tikzpicture}
			
			\def\mhei{1.5}
			\def\yIII{0.6}
			\def\yII{-0.2}
			\def\yI{-1}
			
			\def\yIV{0.3}
			
			\def\xI{0}
			\def\xII{2.2}
			\def\xIII{2.6}
			
			\node(M) at (0,2) {$M$};
			\node(L) at (2.9, \yII) {$\Lambda$};
			
			\node(i) at (-0.2,\yIII) {$i$};
			\node(j) at (-0.2,\yI) {$j$};
			
			\draw[black, thick] (\xI,\mhei) -- (\xI,-\mhei);
			\draw[black, thick] (\xI,\yIII) -- (\xII,\yIII);
			\draw[black,dashed, thick] (\xII,\yIII) to[out=east,in=north] (\xIII,\yII);
			\draw[black,dashed, thick] (\xIII,\yII) to[out=south,in=east] (\xII,\yI);
			\draw[black,dashed, thick] (\xII,\yI) to (\xI,\yI);
			
			\def\hxI{0.7}
			\def\hxII{1.9}
			
			\def\hyI{1.4}
			
			\pgfmathsetmacro{\hx}{(0.5)*(\hxI+\hxII)}
			\pgfmathsetmacro{\hy}{(0.5)*(\hyI+\yIII)}
			
			\draw[red, thick] (\hxI,\yIII) to (\hxI,\hyI);
			\draw[red,dashed, thick] (\hxI,\hyI) to (\hxII,\hyI);		
			\draw[red, thick] (\hxII,\hyI) to (\hxII,\yIII);	
			\draw[red, thick] (\hxII,\yIII) to (\hxI,\yIII);	
			
			\draw[black, thick] (\hxI,\yIII) to (\hxI, \yIV);
			\draw[black, thick] (\hxII,\yIII) to (\hxII, \yIV);
			
			\node(p) at (\hxI,\yIII) [anchor=north east] {$p_1$};
			\node(q) at (\hxI,\yIII) [anchor=north west] {$q_1$};				
			\node(pp) at (\hxII,\yIII) [anchor=north east] {$p_2$};
			\node(qq) at (\hxII,\yIII) [anchor=north west] {$q_2$};
			
			\node[red](h) at (\hx,\hy) {$h$};

		\end{tikzpicture}
		\caption{Here, $L(\beta_{ij}) = p_1q_1+p_2q_2+\dots$. The disk labeled $h$ contributes to both $\{h, p_1q_1\}$ and $\{h,p_2q_2\}$. }\label{fig:lbeta1}
	\end{figure}

	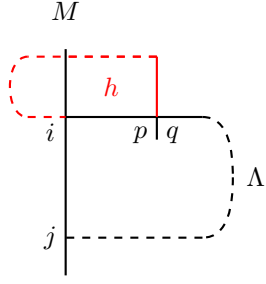
\begin{figure}
		\begin{tikzpicture}
			
			\def\mhei{1.5}
			\def\yIII{0.6}
			\def\yII{-0.2}
			\def\yI{-1}
			
			\def\yIV{0.3}
			
			\def\xI{0}
			\def\xII{1.8}
			\def\xIII{2.2}
			
			\node(M) at (0,2) {$M$};
			\node(L) at (2.5, \yII) {$\Lambda$};
			
			\node(i) at (-0.2,0.4) {$i$};
			\node(j) at (-0.2,\yI) {$j$};
			
			\draw[black, thick] (\xI,\mhei) -- (\xI,-\mhei);
			\draw[black, thick] (\xI,\yIII) -- (\xII,\yIII);
			\draw[black,dashed, thick] (\xII,\yIII) to[out=east,in=north] (\xIII,\yII);
			\draw[black,dashed, thick] (\xIII,\yII) to[out=south,in=east] (\xII,\yI);
			\draw[black,dashed, thick] (\xII,\yI) to (\xI,\yI);
			
			\def\hxI{1.2}
			\def\hxII{0}
			\def\hxIII{-0.5}
			
			\def\hyI{1.4}
			
			\pgfmathsetmacro{\hx}{(0.5)*(\hxI+\hxII)}
			\pgfmathsetmacro{\hy}{(0.5)*(\hyI+\yIII)}
			
			\draw[red, thick] (\hxI,\yIII) to (\hxI,\hyI);
			\draw[red,dashed, thick] (\hxI,\hyI) to (\hxII,\hyI);		
			\draw[red,dashed, thick] (\hxI,\hyI) to (\hxIII,\hyI);
			\draw[red,dashed, thick] (\hxIII,\hyI) to[out=west,in=west] (\hxIII,\yIII);	
			\draw[red,dashed, thick] (\hxIII,\yIII) to (0,\yIII);
			
			\draw[black, thick] (\hxI,\yIII) to (\hxI, \yIV);
			
			\node(p) at (\hxI,\yIII) [anchor=north east] {$p$};
			\node(q) at (\hxI,\yIII) [anchor=north west] {$q$};				
			
			\node[red](h) at (\hx,\hy) {$h$};

		\end{tikzpicture}
		
		\caption{Here, $L(\beta_{ij}) = pq+\dots$. The disk labeled $h$ contributes to both $\{h, pq\}$ and $L(\{h^L, \beta_{ij}\})$. }\label{fig:lbeta2}

	\end{figure}

	\subsection{$\ell$, $r$ morphisms and commutativity of diagram} \-\
	
	In this section, we construct DGA-morphisms from $\A^M_{SFT}$ into $\A^L_{SFT}$ and $\A^R_{SFT}$.
	
	\subsubsection{Definitions}
	
	\begin{Def}[left/right admissible half-disk]
		Define a \emph{left admissible half-disk} to be a map $u:(D^2, \partial D^2) \to (\mathbb{R}^2_L, \Pi(\Lambda^L) \cup M)$ satisfying the following properties.

		\begin{enumerate}
			\item $u$ is an immersion apart from a finite set of points $\{z_1, \dots, z_k\} \subset \partial D^2$ which map either to crossings, cusps or points in $\Pi(\Lambda^L) \cap M$.
			
			\item If $z_i$ maps to a crossing, then a neighborhood of $z_i$ is mapped to a single quadrant of that crossing.
			
			\item There is at least one interval $(z_m, z_{m+1}) \subset \partial D^2$ which maps to $M$, and one of these intervals is distinguished. Call the distinguished interval the ``dividing line interval". 
		\end{enumerate}
		
		Let $H_{ij}^L$ denote the set of left admissible half-disks $H$ such that the distinguished dividing line interval maps to the interval $(i,j) \subset M$.
		
		Given any $u \in H_{ij}^L$, let $\partial^*(u)$ denote the boundary of $u$, excluding the distinguished dividing line interval, read off as a monomial in $\A^L_{SFT}$.

		Define a right admissible disk and $H_{ij}^R$ in the analogous way.
		
	\end{Def}
	
	\begin{Rem} \label{rem:righthalfdisk}
		A right admissible half-disk is the same as a right admissible disk, by \Cref{lem:rightdisk}. However, a left admissible half-disk is not the same as a left admissible disk, in virtue of the fact that a left admissible disk need not map entirely into $\mathbb{R}^2_L$. 
	\end{Rem}

	Now, we define maps $\ell: \A^M_{SFT} \to \A^L_{SFT}$, $r:\A^M_{SFT} \to \A^R_{SFT}$ as follows:
	\begin{Def}
		$\ell: \A^M_{SFT} \to \A^L_{SFT}$ is defined on generators by setting
		\begin{itemize}
			\item $\ell(\alpha_{ij}^L) = \sum_{u\in H_{ij}^L} \partial^* u$.
			
			\item $\ell(\alpha_{ij}^R) = \alpha_{ij}$.
			
			\item $\ell(\beta_{ij}^L) = \sum_{ij \text{ left strand}} p_kq_k$ where the sum is taken over the interior Reeb chords along the ij strand of the left diagram $\Lambda^L$.
			
			\item $\ell(\beta_{ij}^R) = \beta_{ij}$.
		\end{itemize}
		Extend $\ell$ to an algebra map by declaring $\ell(x+y) = \ell(x) + \ell(y)$ and $\ell(xy) = \ell(x)\ell(y)$.
	\end{Def}
	
	Similarly,
	
	\begin{Def}
		$r: \A^M_{SFT} \to \A^R_{SFT}$ is defined on generators by setting
		\begin{itemize}
			\item $r(\alpha_{ij}^L) = \alpha_{ij}$.
			
			\item $r(\alpha_{ij}^R) = \sum_{u \in H_{ij}^R} \partial^*u$.
			
			\item $r(\beta_{ij}^L) = \beta_{ij}$.
			
			\item $r(\beta_{ij}^R) = \sum_{ij \text{ right strand}} p_kq_k$.
		\end{itemize}
		Extend $r$ to an algebra map by declaring $r(x+y) = r(x) + r(y)$ and $r(xy) = r(x)r(y)$.
	\end{Def}
	
	The proof that $\ell$ and $r$ are morphisms is similar to that of \Cref{thm:LRmorphism}.
	
	\subsubsection{Commutativity of diagram}\-\
	
	In this section we check that the diagram
	
	\[\begin{tikzcd}
		\A^M_{SFT} \arrow[r, "\ell"] \arrow[d, "r"] & \A^L_{SFT} \arrow[d, "L"] \\
		\A^R_{SFT} \arrow[r, "R"]           & \A^{comm}_{SFT}
	\end{tikzcd} \]
	
	commutes. In other words, we must check that $L \circ \ell = R \circ r$.

	\begin{itemize}
		\item $x = \beta_{ij}^R$. 
		
		\begin{align*}
			L(\ell(\beta_{ij}^R)) = L(\beta_{ij}) = \sum_{ij \text{ right strand}} p_kq_k
		\end{align*}
		
		\begin{align*}
			R(r(\beta_{ij}^R)) = R\left(\sum_{ij \text{ right strand}} p_kq_k\right) = \sum_{ij \text{ right strand}} p_kq_k
		\end{align*}
		
		\item $x = \beta_{ij}^L$. 
		
		Analogous to above case.
		
		\item $x = \alpha_{ij}^R$. 
		
		\begin{align*}
			L(\ell(\alpha_{ij}^R)) = L(\alpha_{ij}) = \sum_{D\in D_{ij}^R} \partial^*D
		\end{align*}
		
		\begin{align*}
			R(r(\alpha_{ij}^R)) = R\left(\sum_{H\in H_{ij}^R} \partial^* H\right) = \sum_{D\in D_{ij}^R} \partial^* D
		\end{align*}
		(recall from \Cref{rem:righthalfdisk} that right half disks are the same as right disks)

		\end{itemize}

		\subsection{Pushout}\-\
		
		We now have a commutative diagram 
		
		\[\begin{tikzcd}
			\A^M_{SFT} \arrow[r, "\ell"] \arrow[d, "r"] & \A^L_{SFT} \arrow[d, "L"] \\
			\A^R_{SFT} \arrow[r, "R"]           & \A^{comm}_{SFT}
		\end{tikzcd} \]
		
		In this section, we would like to prove \Cref{thm:main}, which states that this is a pushout square.

		\begin{proof}[Proof of \Cref{thm:main}]

			\-\
			
			Suppose we have another DGA $Q$ together with a commutative diagram
			
			\[\begin{tikzcd}
				\A^M_{SFT} \arrow[r, "\ell"] \arrow[d, "r"] & \A^L_{SFT} \arrow[d, "f"] \\
				\A^R_{SFT} \arrow[r, "g"]           & Q
			\end{tikzcd} \]
			
			We need to construct a morphism $h: \A^{comm}_{SFT} \to Q$ which makes the diagram 
			
			\[\begin{tikzcd}
				\A^M_{SFT} \arrow[r, "\ell"] \arrow[d, "r"]          & \A^L_{SFT} \arrow[d, "L"] \arrow[rdd, bend left, "f"] &   \\
				\A^R_{SFT} \arrow[r, "R"] \arrow[rrd, bend right, "g"] & \A^{comm}_{SFT} \arrow[rd, dotted, "h"]               &   \\
				&                                    & Q
			\end{tikzcd} \]
			
			commute.

			Every generator $x$ of $\A^{comm}_{SFT}$ can be written either as $L(s)$ for a generator $s \in \A^L_{SFT}$ or as $R(s)$ for a generator $s \in \A^R_{SFT}$. 
			
			We therefore define
			
			\[ h(x) = \begin{cases}
				f(s), & x = L(s) \\
				g(s), & x = R(s) 
			\end{cases}\]
			
			and extend $h$ to an algebra map $\A^{comm}_{SFT} \to Q$ by declaring $h(x+y) = h(x)+h(y)$ and $h(xy) = h(x)h(y)$. 
			
			First we must check that the diagram commutes, i.e. we must check $g = h\circ R$ and $f = h\circ L$.

			We start with checking $f=h\circ L$.
			\begin{itemize}
				\item $x=p_i \in \A^L_{SFT}$, or $x=q_i \in \A^L_{SFT}$, or $x=t^{\pm 1} \in\A^L_{SFT}$:
				
				Then $h(L(x)) = f(x)$ by definition.
				
				\item $x=\alpha_{ij} \in \A^L_{SFT}$.
				
				\begin{align*}
					h(L(x)) &= h(L(\ell(\alpha_{ij}^R))) = h(R(r(\alpha_{ij}^R)))
				\end{align*}
				
				Since $r(\alpha_{ij}^R) \in \A^R_{SFT}$ consists of terms only including $p$ and $q$ generators, $h(R(r(\alpha_{ij}^R))) = g(r(\alpha_{ij}^R))$. Thus,
				
				\begin{align*}
					h(L(x)) &= g(r(\alpha_{ij}^R)) = f(\ell(\alpha_{ij}^R)) = f(\alpha_{ij})
				\end{align*}
				
				\item $x=\beta_{ij} \in \A^L$.
				
				\begin{align*}
					h(L(x)) &= h(L(\ell(\beta_{ij}^R)) = h(R(r(\beta_{ij}^R)))
				\end{align*}
				
				Since $r(\beta_{ij}^R)$ consists of terms only including $p$ and $q$ generators, $h(R(r(\beta_{ij}^R))) = g(r(\beta_{ij}^R))$. Thus,
				
				\begin{align*}
					h(L(x)) &= g(r(\beta_{ij}^R)) = f(\ell(\beta_{ij}^R)) = f(\beta_{ij})
				\end{align*}

			\end{itemize}
			
			Verifying $g=h\circ R$ is similar.

			Now we must check that $h$ is a morphism.
			
			\begin{itemize}
				\item $x = L(s)$ for $s\in \A^L_{SFT}$.
				\begin{align*}
					d^Q(h(x)) &= d^Q(f(s)) = f(d^L(s)) = h(L(d^L(s))) = h(d(L(s))) = h(d(x)) 
				\end{align*}
				
				\item $x=R(s)$ for $s\in \A^R_{SFT}$.
				\begin{align*}
					d^Q(h(x)) &= d^Q(g(s)) = g(d^R(s)) = h(R(d^R(s))) = h(d(R(s))) = h(d(x)) 
				\end{align*}
				
			\end{itemize}

		\end{proof}
		
		\subsection{LR algebra}
		
		\-\

		Consider now a ``two-sided diagram", i.e., one with a dividing line on both the left and right, such as in \Cref{fig:twosideddiagram}. The marked points may lie outside the diagram or they may lie in the diagram. If the marked points lie in the diagram, we assume that they are adjacent to each other and to one of the dividing lines, i.e. the path from the marked points to one of the dividing lines does not pass through any crossing.
		
		Let $m$ be the number of intersections of $\Lambda$ with the left line, and $n$ the number of intersections of $\Lambda$ with the right line. 
		
		\begin{Rem}
			Note that left diagrams and right diagrams may both be viewed as special cases of two-sided diagrams, where either the right or left dividing line does not intersect $\Lambda$. Additionally, a middle diagram may be identified with a two-sided diagram corresponding to a trivial braid.
		\end{Rem}

		\begin{figure}
			\begin{tikzpicture}[scale=0.8]
				\def\a{1.2}
				\def\b{0.8}
				\def\c{1.6}
				\def\d{0.4}
				
				\def\xI{-4}
				\def\xII{-1}
				\def\xIII{0}
				\def\xIV{1}
				\def\xV{3}
				
				\def\yO{0}
				\def\yI{0.6}
				\def\yII{1.3}
				\def\yIII{2.5}
				\def\yIV{3.5}
				\def\yV{4.7}
				\def\yVI{5.4}
				\def\yVII{6}
				
				\def\ypI{0.9}
				\def\ypII{5.1}
				\begin{knot}
					[
					clip width=12]
					\strand[black, thick] (\xII, \yV) to[out=10,in=170] (\xV, \yV);
					\strand[black, thick] (\xII, \yII) to[out=-10,in=-170] (\xV, \yII);
					\strand[black, thick] (\xV,\yIV) .. controls +(-\b,0) and +(\b,0) .. (\xIV,\yIII) .. controls +(-\b,0) and +(\b,0) .. (\xII,\yIV); %
					\strand[black, thick] (\xV,\yIII) .. controls +(-\b,0) and +(\b,0) .. (\xIV,\yIV) .. controls +(-\b,0) and +(\b,0) .. (\xII,\yIII); %
					\flipcrossings{1};
				\end{knot}
				
				\draw[black,thick,dashed] (\xII,\yVII) -- (\xII,\yO);
				\draw[black,thick,dashed] (\xV,\yVII) -- (\xV,\yO);

			\end{tikzpicture}
			\caption[A two-sided diagram]{A two-sided diagram.}\label{fig:twosideddiagram}
		\end{figure}
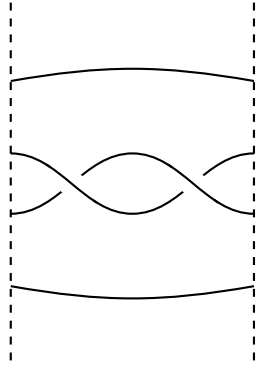

		We also require the additional data of a pairing $\beta^L$ of $\{1,\dots, m\}$, and a pairing $\beta^R$ of $\{1,\dots, n\}$ which specify how the strands close up on the left and right (see \Cref{subsec:AM}). As in \Cref{subsec:AM}, we assume that closing the diagram up according to these pairings would yield a single knot. If the marked points lie outside the diagram, we distinguish one intersection point of $\Lambda^{LR}$ with one of the dividing lines (specifying the location of the marked points). To such a diagram, we associate a differential graded algebra in the following manner.
		
		\begin{Def}
			
			$\A^{LR}_{SFT}$ is the commutative algebra over $\mathbb{Z}_2$ freely generated by the following elements:
			\begin{itemize}
				\item Two generators $p_i, q_i$ corresponding to each crossing.
				\item $\alpha_{ij}^R$ for all $1\leq i<j \leq n$.
				\item $\alpha_{ij}^L$ for all $1\leq i<j \leq m$.
				\item $\beta_{ij}^R$ for $1\leq i<j \leq n$ such that $\{i,j\} \in \beta^R$.
				\item $\beta_{ij}^L$ for $1\leq i<j \leq m$ such that $\{i,j\} \in \beta^L$. 
			\end{itemize}
		\end{Def}

		The string differential on $\A^{LR}$ is defined similarly to the previously discussed cases (see \Cref{def:amstrdiff} and \Cref{def:alstrdiff}), except that we now consider broken closed strings which are allowed to jump from one strand to another at both dividing lines, according to the pairings $\beta^L$ and $\beta^R$. If the marked points lie outside the diagram, we define paths (see \Cref{def:ampath}) starting at the distinguished endpoint of $\Lambda^{LR}$.
		
		The SFT bracket is also similar to previous cases. The Hamiltonian $h^{LR}$ includes disks possibly having boundary components on one or both of the dividing lines.
		
		Similar to prior cases, the differential on $\A^{LR}$ is then defined as the sum of string and SFT components:
		
		\[ d^{LR} = \delta_{str}^{LR} + \{h^{LR}, \cdot \}.\]
		
		The proofs that $(d^M)^2 = 0$ and $(d^L)^2 = 0$ (\Cref{thm:dM2=0} and \Cref{thm:dL2=0}) may be easily adapted to show that $(d^{LR})^2 = 0$.
		
		\-\
		
		Now, let $\Lambda_1, \Lambda_2, \Lambda_3$ be three adjacent two-sided diagrams, such as in \Cref{fig:threelrdiagrams}. Let 
		\[ \Lambda_{12} = \Lambda_1 \cup \Lambda_2, \quad \Lambda_{23} = \Lambda_2\cup \Lambda_3, \quad \Lambda_{123} = \Lambda_1 \cup \Lambda_2 \cup \Lambda_3\]
		These are also two-sided diagrams.

		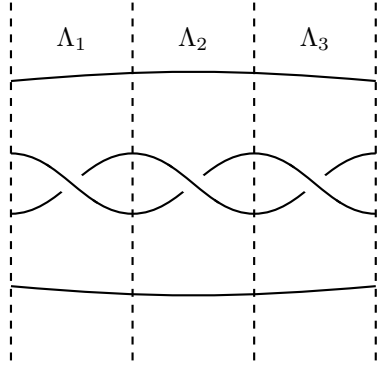
\begin{figure}
			\begin{tikzpicture}[scale=0.8]
				\def\a{1.2}
				\def\b{0.8}
				\def\c{1.6}
				\def\d{0.4}
				
				\def\xI{-4}
				\def\xII{-1}
				\def\xIII{0}
				\def\xIV{1}
				\def\xV{3}
				\def\xVI{5}
				
				\def\yO{0}
				\def\yI{0.6}
				\def\yII{1.3}
				\def\yIII{2.5}
				\def\yIV{3.5}
				\def\yV{4.7}
				\def\yVI{5.4}
				\def\yVII{6}
				
				\def\ypI{0.9}
				\def\ypII{5.1}
				\begin{knot}
					[
					clip width=12]
					\strand[black, thick] (\xII, \yV) to[out=5,in=175] (\xVI, \yV);
					\strand[black, thick] (\xII, \yII) to[out=-5,in=-175] (\xVI, \yII);
					\strand[black, thick] (\xVI,\yIII) .. controls +(-\b,0) and +(\b,0) .. (\xV,\yIV) .. controls +(-\b,0) and +(\b,0) .. (\xIV,\yIII) .. controls +(-\b,0) and +(\b,0) .. (\xII,\yIV); %
					\strand[black, thick] (\xVI,\yIV) .. controls +(-\b,0) and +(\b,0) .. (\xV,\yIII) .. controls +(-\b,0) and +(\b,0) .. (\xIV,\yIV) .. controls +(-\b,0) and +(\b,0) .. (\xII,\yIII); %
					\flipcrossings{2};
				\end{knot}
				
				\draw[black,thick,dashed] (\xII,\yVII) -- (\xII,\yO);
				\draw[black,thick,dashed] (\xV,\yVII) -- (\xV,\yO);
				\draw[black,thick,dashed] (\xIV,\yVII) -- (\xIV,\yO);
				\draw[black,thick,dashed] (\xVI,\yVII) -- (\xVI,\yO);
				
				\node(L1) at (0, \yVI) {$\Lambda_1$};
				\node(L2) at (2, \yVI) {$\Lambda_2$};
				\node(L3) at (4, \yVI) {$\Lambda_3$};
				
			\end{tikzpicture}
			\caption[Adjacent two-sided diagrams]{Adjacent two-sided diagrams.}\label{fig:threelrdiagrams}
		\end{figure}
		
		\Cref{thm:main} may now be generalized to this setting:

		\begin{Th}
			There exists a pushout square
			
			\[\begin{tikzcd}
				\A^{LR}_{SFT}(\Lambda_2) \arrow[r] \arrow[d] & \A^{LR}_{SFT}(\Lambda_{12}) \arrow[d] \\
				\A^{LR}_{SFT}(\Lambda_{23}) \arrow[r]           & \A^{LR}_{SFT}(\Lambda_{123})
			\end{tikzcd} \]
		\end{Th}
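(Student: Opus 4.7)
The plan is to adapt the proof of \Cref{thm:main} essentially verbatim, with $\Lambda_2$ playing the role of the middle algebra $\A^M_{SFT}$, and $\Lambda_{12}, \Lambda_{23}, \Lambda_{123}$ playing the roles of $\Lambda^L$, $\Lambda^R$, and $\Lambda$, respectively. The internal line between $\Lambda_1$ and $\Lambda_2$ (which is the left dividing line of $\Lambda_2$ and also the internal line of $\Lambda_{12}$) plays the role of the middle dividing line in the original Seifert--van Kampen setup on the left; symmetrically for the internal line between $\Lambda_2$ and $\Lambda_3$ on the right. Note that the outer left and right dividing lines of $\Lambda_{123}$ coincide with those of $\Lambda_{12}$ and $\Lambda_{23}$, respectively, so the $\alpha^L$, $\alpha^R$, $\beta^L$, $\beta^R$ generators at the outer boundaries match up canonically.

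First I would define the four morphisms in the square. The vertical map $\A^{LR}_{SFT}(\Lambda_2) \to \A^{LR}_{SFT}(\Lambda_{23})$ sends crossings of $\Lambda_2$ to themselves, acts as the identity on $\alpha^R_{ij}, \beta^R_{ij}$ (since the right dividing line is shared), and expands $\alpha^L_{ij}$ as a sum over admissible left half-disks in $\Lambda_2 \cup \Lambda_3$ terminating on the interior line (thus becoming a left half-disk for $\Lambda_{23}$), while $\beta^L_{ij}$ expands as the sum $\sum p_k q_k$ over interior Reeb chords along the $ij$ strand in $\Lambda_{23}$; the other three maps are defined analogously with left/right reversed. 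Each of these maps is a DGA morphism by the argument proving \Cref{thm:LRmorphism}: disks contributing to the Hamiltonian of the target which cross the newly-interior dividing line correspond precisely to compositions of SFT brackets of Hamiltonian terms with source $\alpha$ generators, and boundary bubbling terms pair up against string insertions crossing the interior line, exactly as in Figures \ref{fig:commsquarecancel}, \ref{fig:lbeta1}, and \ref{fig:lbeta2}.

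Next I would verify commutativity of the square: for crossings of $\Lambda_2$ this is tautological, for $\alpha^L, \alpha^R, \beta^L, \beta^R$ of the outer boundaries of $\Lambda_2$ both compositions produce sums over admissible half-disks in $\Lambda_{123}$ with the prescribed dividing-line interval (expanded via $\Lambda_1$ on one side and $\Lambda_3$ on the other); this is the direct analogue of the $L\circ \ell = R\circ r$ verification in the previous subsection. For the universal property, given any commutative square over a target $Q$, every generator of $\A^{LR}_{SFT}(\Lambda_{123})$ arises either as the image of a generator from $\A^{LR}_{SFT}(\Lambda_{12})$ (crossings in $\Lambda_1, \Lambda_2$, and left boundary $\alpha/\beta$'s) or from $\A^{LR}_{SFT}(\Lambda_{23})$ (crossings in $\Lambda_2, \Lambda_3$, and right boundary $\alpha/\beta$'s), and the generators in the overlap (crossings in $\Lambda_2$) have their images prescribed consistently via $\Lambda_2$ by the commutativity hypothesis. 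Define $h$ on generators by either $f$ or $g$ accordingly and extend multiplicatively; checking well-definedness and the chain map condition is formally identical to the argument in the proof of \Cref{thm:main}.

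The main obstacle I expect is bookkeeping rather than conceptual: one must verify that admissible disks in $\Lambda_{123}$ whose interior crosses an internal dividing line (now there can be two such internal lines) decompose uniquely into combinations of disks and half-disks in $\Lambda_1, \Lambda_2, \Lambda_3$ that correspond to the claimed generators. The analogue of \Cref{lem:rightdisk} must be re-examined: a disk could in principle touch both internal dividing lines, but simplicity of the front and the argument that a minimal-$x$-coordinate touching must be at a crossing (forcing a non-convex corner) still rules out pathological multi-crossings. Once this decomposition uniqueness is established, the combinatorial cancellation arguments from \Cref{thm:dL2=0} and \Cref{thm:LRmorphism} go through unchanged, the square commutes, and the universal property verification is the same formal manipulation.
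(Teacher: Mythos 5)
Your overall plan is exactly what the paper intends: the paper offers no separate argument for this theorem beyond asserting that the proof of \Cref{thm:main} generalizes, with $\A^{LR}_{SFT}(\Lambda_2)$ in the role of the middle algebra and $\A^{LR}_{SFT}(\Lambda_{12})$, $\A^{LR}_{SFT}(\Lambda_{23})$ in the roles of the left and right algebras, which is precisely your proposal. However, you have the roles of left and right swapped in your description of the map $\A^{LR}_{SFT}(\Lambda_2) \to \A^{LR}_{SFT}(\Lambda_{23})$. Since $\Lambda_{23} = \Lambda_2 \cup \Lambda_3$ attaches $\Lambda_3$ on the \emph{right}, it is the left dividing line of $\Lambda_2$ that is shared with $\Lambda_{23}$, so $\alpha^L_{ij}$ and $\beta^L_{ij}$ are the generators that map identically; the right dividing line of $\Lambda_2$ becomes the interior line of $\Lambda_{23}$, so it is $\alpha^R_{ij}$ that must be expanded as a sum over admissible half-disks lying in the $\Lambda_3$ portion with distinguished boundary interval on that interior line (reading off crossings of $\Lambda_3$ and, for any further boundary intervals on the outer right line, $\alpha^R$ generators of $\A^{LR}_{SFT}(\Lambda_{23})$), and $\beta^R_{ij}$ that expands as $\sum p_kq_k$ over the crossings of $\Lambda_3$ along the corresponding strand. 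As literally written, ``identity on $\alpha^R_{ij}$'' does not typecheck, because the $\alpha^R$ generators of source and target are indexed by different dividing lines; the corresponding swap must also be undone for the map to $\A^{LR}_{SFT}(\Lambda_{12})$, which is the one that fixes $\alpha^R,\beta^R$ and expands $\alpha^L,\beta^L$ through $\Lambda_1$. With the labels corrected, the remainder of your outline --- adapting \Cref{thm:LRmorphism}, checking commutativity on the overlap, re-examining \Cref{lem:rightdisk} for disks meeting two interior lines, and the formal universal-property argument --- is the intended generalization.
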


		\section{Examples} \label{sec:examples}
		
		In this section, we go over some example calculations of Bordered LSFT.
		
		\subsection{Example 1}\-\
		
		Let $\Lambda$ be the Legendrian trefoil depicted in \Cref{fig:lambda5}, with the dividing line chosen as shown.
		
		\begin{figure}
			\begin{tikzpicture}[scale=1, font=\normalsize]
				
				\def\a{1.2}
				\def\b{0.8}
				\def\c{2}
				\def\d{0.4}
				
				\def\xI{-3}
				\def\xII{-1}
				\def\xIII{0}
				\def\xIV{1}
				\def\xV{4}
				\def\xVI{4.4}
				\def\xVII{5}
				
				\def\yI{0.6}
				\def\yII{2}
				\def\yIII{2.5}
				\def\yIV{3.5}
				\def\yV{4}
				\def\yVI{5.4}
				\def\yVZ{4.5}
				\def\yIZ{1.5}
				\def\yVII{6}
				\def\yO{0}
				
				\def\xoffset{0.4}
				\def\yoffset{0.4}
				\def\nxoffset{0.15}	
				\def\nyoffset{0.15}
				
				\begin{knot}
					[
					clip width=12]
					\strand[black, thick] (\xI,\yV) .. controls +(0,\a) and +(-\a,0) .. (\xIII,\yVI) .. controls +(\c,0) and +(-\a,0) .. (\xVI,\yIV);
					\strand[black, thick] (\xVI, \yIV) .. controls +(\d,0) and +(0,-\d) .. (\xVII,\yV) .. controls +(0,\d) and +(\d,0) .. (\xVI, \yVZ);
					\strand[black, thick] (\xVI,\yVZ) .. controls +(-\a,0) and +(\a,0) .. (\xIV,\yIII) .. controls +(-\b,0) and +(\b,0) .. (\xII,\yIV) .. controls +(-\b,0) and +(0,\b) .. (\xI,\yII);
					\strand[black, thick] (\xI,\yII) .. controls +(0,-\a) and +(-\a,0) .. (\xIII,\yI) .. controls +(\c,0) and +(-\a,0) .. (\xVI,\yIII);
					\strand[black, thick] (\xVI, \yIII) .. controls +(\d,0) and +(0,\d) .. (\xVII,\yII) .. controls +(0,-\d) and +(\d,0) .. (\xVI, \yIZ);			
					\strand[black, thick] (\xVI,\yIZ) .. controls +(-\a,0) and +(\a,0) .. (\xIV,\yIV) .. controls +(-\b,0) and +(\b,0) .. (\xII,\yIII) .. controls +(-\b,0) and +(0,-\b) .. (\xI,\yV);
					\flipcrossings{2,4,5};
				\end{knot}
				
				\filldraw[black] (-1.35,5.3) circle (3pt) node[anchor=south east]{};
				\node (asterisk) at (-1.75,5.2) {\huge $*$};

				\pgfmathsetmacro{\onex}{\xV-0.75}
				\pgfmathsetmacro{\oney}{\yV-0.04}

				\pgfmathsetmacro{\ponex}{\onex+\xoffset}
				\pgfmathsetmacro{\poney}{\oney}		
				\node (p1) at (\ponex, \poney) {$p_3$};
				
				\pgfmathsetmacro{\ponepx}{\onex-\xoffset}
				\pgfmathsetmacro{\ponepy}{\oney}		
				\node (p1p) at (\ponepx, \ponepy) {$p_3$};
				
				\pgfmathsetmacro{\qonex}{\onex}
				\pgfmathsetmacro{\qoney}{\oney+\yoffset}		
				\node (q1) at (\qonex, \qoney) {$q_3$};
				
				\pgfmathsetmacro{\qonepx}{\onex}
				\pgfmathsetmacro{\qonepy}{\oney-\yoffset}		
				\node (q1p) at (\qonepx, \qonepy) {$q_3$};

				\pgfmathsetmacro{\twox}{\xV-0.75}
				\pgfmathsetmacro{\twoy}{\yII+0.04}

				\pgfmathsetmacro{\ptwox}{\twox+\xoffset}
				\pgfmathsetmacro{\ptwoy}{\twoy}		
				\node (p2) at (\ptwox, \ptwoy) {$p_4$};
				
				\pgfmathsetmacro{\ptwopx}{\twox-\xoffset}
				\pgfmathsetmacro{\ptwopy}{\twoy}		
				\node (p2p) at (\ptwopx, \ptwopy) {$p_4$};
				
				\pgfmathsetmacro{\qtwox}{\twox}
				\pgfmathsetmacro{\qtwoy}{\twoy+\yoffset}		
				\node (q2) at (\qtwox, \qtwoy) {$q_4$};
				
				\pgfmathsetmacro{\qtwopx}{\twox}
				\pgfmathsetmacro{\qtwopy}{\twoy-\yoffset}		
				\node (q2p) at (\qtwopx, \qtwopy) {$q_4$};

				\pgfmathsetmacro{\threex}{2.15}
				\pgfmathsetmacro{\threey}{3}

				\pgfmathsetmacro{\pthreex}{\threex+\xoffset}
				\pgfmathsetmacro{\pthreey}{\threey}		
				\node (p3) at (\pthreex, \pthreey) {$p_2$};
				
				\pgfmathsetmacro{\pthreepx}{\threex-\xoffset}
				\pgfmathsetmacro{\pthreepy}{\threey}		
				\node (p3p) at (\pthreepx, \pthreepy) {$p_2$};
				
				\pgfmathsetmacro{\qthreex}{\threex}
				\pgfmathsetmacro{\qthreey}{\threey+\yoffset}		
				\node (q3) at (\qthreex, \qthreey) {$q_2$};
				
				\pgfmathsetmacro{\qthreepx}{\threex}
				\pgfmathsetmacro{\qthreepy}{\threey-\yoffset}		
				\node (q3p) at (\qthreepx, \qthreepy) {$q_2$};

				\pgfmathsetmacro{\fourx}{0.05}
				\pgfmathsetmacro{\foury}{3}

				\pgfmathsetmacro{\pfourx}{\fourx+\xoffset}
				\pgfmathsetmacro{\pfoury}{\foury}		
				\node (p4) at (\pfourx, \pfoury) {$p_1$};
				
				\pgfmathsetmacro{\pfourpx}{\fourx-\xoffset}
				\pgfmathsetmacro{\pfourpy}{\foury}		
				\node (p4p) at (\pfourpx, \pfourpy) {$p_1$};
				
				\pgfmathsetmacro{\qfourx}{\fourx}
				\pgfmathsetmacro{\qfoury}{\foury+\yoffset}		
				\node (q4) at (\qfourx, \qfoury) {$q_1$};
				
				\pgfmathsetmacro{\qfourpx}{\fourx}
				\pgfmathsetmacro{\qfourpy}{\foury-\yoffset}		
				\node (q4p) at (\qfourpx, \qfourpy) {$q_1$};

				\pgfmathsetmacro{\fivex}{-2.3}
				\pgfmathsetmacro{\fivey}{3}

				\pgfmathsetmacro{\pfivex}{\fivex+\xoffset}
				\pgfmathsetmacro{\pfivey}{\fivey}		
				\node (p5) at (\pfivex, \pfivey) {$p_0$};
				
				\pgfmathsetmacro{\pfivepx}{\fivex-\xoffset}
				\pgfmathsetmacro{\pfivepy}{\fivey}		
				\node (p5p) at (\pfivepx, \pfivepy) {$p_0$};
				
				\pgfmathsetmacro{\qfivex}{\fivex}
				\pgfmathsetmacro{\qfivey}{\fivey+\yoffset}		
				\node (q5) at (\qfivex, \qfivey) {$q_0$};
				
				\pgfmathsetmacro{\qfivepx}{\fivex}
				\pgfmathsetmacro{\qfivepy}{\fivey-\yoffset}		
				\node (q5p) at (\qfivepx, \qfivepy) {$q_0$};

				\draw[dashed, black, thick] (\xII, \yVII) -- (\xII, \yO);

				\pgfmathsetmacro{\nx}{\xII-\nxoffset}
				
				\pgfmathsetmacro{\noney}{\yVI+\nyoffset-0.07}
				
				\pgfmathsetmacro{\ntwoy}{\yIV+\nyoffset}
				
				\pgfmathsetmacro{\nthreey}{\yIII+\nyoffset}
				
				\pgfmathsetmacro{\nfoury}{\yI+\nyoffset+0.07}

			\end{tikzpicture}
			\caption{Legendrian trefoil.}\label{fig:lambda5}
		\end{figure}
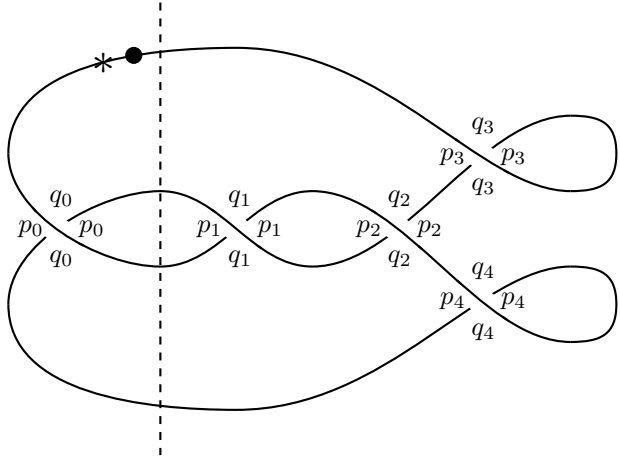
		
		First, we compute $\A_{SFT}^{comm}(\Lambda)$.

		$\A_{SFT}^{comm}$ is generated by the elements $\{p_0,q_0,p_1,q_1,p_2,q_2,p_3,q_3,p_4,q_4,t,t^{-1}\}$.

		The string and SFT differentials are computed below:

		\begin{align*}
			\delta_{str}(q_0) &= q_0(p_1q_1+p_2q_2) & d_{SFT}(q_0) &= p_1 \\
			\delta_{str}(p_0) &= p_0(q_0p_0+q_1p_1+q_2p_2) & d_{SFT}(p_0) &= tq_1q_2p_3+q_1q_2p_4+tp_3+p_4 \\		
			\delta_{str}(q_1) &= q_1(p_0q_0+p_2q_2) & d_{SFT}(q_1) &= p_0+p_2 \\
			\delta_{str}(p_1) &= p_1(q_0p_0+q_1p_1+q_2p_2) & d_{SFT}(p_1) &= tq_0q_2p_3+q_0q_2p_4 \\
			\delta_{str}(q_2) &= q_2(p_0q_0+p_1q_1) & d_{SFT}(q_2) &= p_1 \\
			\delta_{str}(p_2) &= p_2(q_0p_0+q_1p_1+q_2p_2) & d_{SFT}(p_2) &= tq_0q_1p_3+q_0q_1p_4+tp_3+p_4 \\
			\delta_{str}(q_3) &= q_3p_3q_3 & d_{SFT}(q_3) &= tq_0q_1q_2+tq_0+tq_2+1 \\
			\delta_{str}(p_3) &= 0 & d_{SFT}(p_3) &= 0 \\
			\delta_{str}(q_4) &= q_4p_4q_4 & d_{SFT}(q_4) &= q_0q_1q_2+q_0+q_2+1 \\
			\delta_{str}(p_4) &= 0 & d_{SFT}(p_4) &= 0 \\
			\delta_{str}(t) &= 0 & d_{SFT}(t) &= 0 \\
			\delta_{str}(t^{-1}) &= 0 & d_{SFT}(t^{-1}) &= 0
		\end{align*}
		
		The Hamiltonian is 
		
		\[ h = tq_0q_1q_2p_3+q_0q_1q_2p_4+tq_0p_3+tq_2p_3+p_0p_1+p_1p_2+q_0p_4+q_2p_4+p_3+p_4\]

		Now, we compute the middle algebra $\A_{SFT}^M$. 
		
		$\A_{SFT}^M$ is generated by the elements 
		\[ \{\alpha_{12}^L, \alpha_{13}^L, \alpha_{14}^L, \alpha_{23}^L, \alpha_{24}^L, \alpha_{34}^L, \alpha_{12}^R, \alpha_{13}^R, \alpha_{14}^R, \alpha_{23}^R, \alpha_{24}^R, \alpha_{34}^R, \beta_{13}^L, \beta_{24}^L, \beta_{12}^R, \beta_{34}^R\}\] 
		The string and SFT differentials are computed below:
		
		\begin{align*}
			\delta_{str}^M(\alpha_{12}^L) &= \beta_{12}^R\alpha_{12}^L & d_{SFT}^M(\alpha_{12}^L) &= \alpha_{13}^L\alpha_{23}^R + \alpha_{14}^L\alpha_{24}^R \\
			\delta_{str}^M(\alpha_{13}^L) &= (\beta_{12}^R+\beta_{24}^L+\beta_{34}^R)\alpha_{13}^L + \alpha_{12}^L\alpha_{23}^L & d_{SFT}^M(\alpha_{13}^L) &= \alpha_{14}^L\alpha_{34}^R \\
			\delta_{str}^M(\alpha_{14}^L) &= (\beta_{12}^R+\beta_{24}^L)\alpha_{14}^L + \alpha_{12}^L\alpha_{24}^L + \alpha_{13}^L\alpha_{34}^L & d_{SFT}^M(\alpha_{14}^L) &= 0 \\
			\delta_{str}^M(\alpha_{23}^L) &= (\beta_{24}^L + \beta_{34}^R)\alpha_{23}^L & d_{SFT}^M(\alpha_{23}^L) &= \alpha_{13}^L\alpha_{12}^R + \alpha_{24}^L\alpha_{34}^R \\
			\delta_{str}^M(\alpha_{24}^L) &= \beta_{24}^L\alpha_{24}^L + \alpha_{23}^L\alpha_{34}^L & d_{SFT}^M(\alpha_{24}^L) &= \alpha_{14}^L\alpha_{12}^R \\
			\delta_{str}^M(\alpha_{34}^L) &= \beta_{34}^R\alpha_{34}^L & d_{SFT}^M(\alpha_{34}^L) &= \alpha_{14}^L\alpha_{13}^R+\alpha_{24}^L\alpha_{23}^R \\
			\delta_{str}^M(\alpha_{12}^R) &= \beta_{12}^R\alpha_{12}^R & d_{SFT}^M(\alpha_{12}^R) &= \alpha_{13}^R\alpha_{23}^L + \alpha_{14}^R\alpha_{24}^L \\
			\delta_{str}^M(\alpha_{13}^R) &= (\beta_{12}^R+\beta_{24}^L+\beta_{34}^R)\alpha_{13}^R + \alpha_{12}^R\alpha_{23}^R & d_{SFT}^M(\alpha_{13}^R) &= \alpha_{14}^R\alpha_{34}^L \\
			\delta_{str}^M(\alpha_{14}^R) &= (\beta_{12}^R+\beta_{24}^L)\alpha_{14}^R + \alpha_{12}^R\alpha_{24}^R + \alpha_{13}^R\alpha_{34}^R & d_{SFT}^M(\alpha_{14}^R) &= 0 \\
			\delta_{str}^M(\alpha_{23}^R) &= (\beta_{24}^L + \beta_{34}^R)\alpha_{23}^R & d_{SFT}^M(\alpha_{23}^R) &= \alpha_{13}^R\alpha_{12}^L + \alpha_{24}^R\alpha_{34}^L \\
			\delta_{str}^M(\alpha_{24}^R) &= \beta_{24}^L\alpha_{24}^R + \alpha_{23}^R\alpha_{34}^R & d_{SFT}^M(\alpha_{24}^R) &= \alpha_{14}^R\alpha_{12}^L \\
			\delta_{str}^M(\alpha_{34}^R) &= \beta_{34}^R\alpha_{34}^R & d_{SFT}^M(\alpha_{34}^R) &= \alpha_{14}^R\alpha_{13}^L+\alpha_{24}^R\alpha_{23}^L \\
			\delta_{str}^M(\beta_{13}^L) &= \left(\beta_{13}^L\right)^2 & d_{SFT}^M(\beta_{13}^L) &= \alpha_{12}^L\alpha_{12}^R + \alpha_{14}^L\alpha_{14}^R + \alpha_{23}^L\alpha_{23}^R + \alpha_{34}^L\alpha_{34}^R \\
			\delta_{str}^M(\beta_{24}^L) &= \left(\beta_{24}^L\right)^2 & d_{SFT}^M(\beta_{24}^L) &= \alpha_{12}^L\alpha_{12}^R + \alpha_{14}^L\alpha_{14}^R + \alpha_{23}^L\alpha_{23}^R + \alpha_{34}^L\alpha_{34}^R \\
			\delta_{str}^M(\beta_{12}^R) &= \left(\beta_{12}^R\right)^2 & d_{SFT}^M(\beta_{12}^R) &= \alpha_{13}^L\alpha_{13}^R + \alpha_{14}^L\alpha_{14}^R + \alpha_{23}^L\alpha_{23}^R + \alpha_{24}^L\alpha_{24}^R \\
			\delta_{str}^M(\beta_{34}^R) &= \left(\beta_{34}^R\right)^2 & d_{SFT}^M(\beta_{34}^R) &= \alpha_{13}^L\alpha_{13}^R + \alpha_{14}^L\alpha_{14}^R + \alpha_{23}^L\alpha_{23}^R + \alpha_{24}^L\alpha_{24}^R \\		
		\end{align*}

		The Hamiltonian is 
		\[ h^M = \alpha_{12}^L\alpha_{12}^R + \alpha_{13}^L\alpha_{13}^R + \alpha_{14}^L\alpha_{14}^R + \alpha_{23}^L\alpha_{23}^R + \alpha_{24}^L\alpha_{24}^R + \alpha_{34}^L\alpha_{34}^R\]
		Next, we compute the left algebra $\A_{SFT}^L(\Lambda^L)$. 
		
		$\A_{SFT}^L$ is generated by the elements $\{\alpha_{12}, \alpha_{13}, \alpha_{14}, \alpha_{23}, \alpha_{24}, \alpha_{34}, \beta_{12}, \beta_{34}, p_0, q_0, t, t^{-1}\}$. The string and SFT differentials are computed below:
		\begin{align*}
			\delta_{str}^L(\alpha_{12}) &= \beta_{12}\alpha_{12} & d_{SFT}^L(\alpha_{12}) &= \alpha_{13}p_0 + \alpha_{14} \\
			\delta_{str}^L(\alpha_{13}) &= (\beta_{12}+p_0q_0+\beta_{34})\alpha_{13} + \alpha_{12}\alpha_{23} & d_{SFT}^L(\alpha_{13}) &= \alpha_{14}q_0 \\
			\delta_{str}^L(\alpha_{14}) &= (\beta_{12}+p_0q_0)\alpha_{14} + \alpha_{12}\alpha_{24} + \alpha_{13}\alpha_{34} & d_{SFT}^L(\alpha_{14}) &= 0 \\
			\delta_{str}^L(\alpha_{23}) &= (p_0q_0 + \beta_{34})\alpha_{23} & d_{SFT}^L(\alpha_{23}) &= t\alpha_{13}q_0 + \alpha_{24}q_0 \\
			\delta_{str}^L(\alpha_{24}) &= p_0q_0\alpha_{24} + \alpha_{23}\alpha_{34} & d_{SFT}^L(\alpha_{24}) &= t\alpha_{14}q_0 \\
			\delta_{str}^L(\alpha_{34}) &= \beta_{34}\alpha_{34} & d_{SFT}^L(\alpha_{34}) &= t\alpha_{14}+\alpha_{24}p_0 \\
			\delta_{str}^L(\beta_{12}) &= \left(\beta_{12}\right)^2 & d_{SFT}^L(\beta_{12}) &= t\alpha_{13} + \alpha_{23}p_0 + \alpha_{24} \\
			\delta_{str}^L(\beta_{34}) &= \left(\beta_{34}\right)^2 & d_{SFT}^L(\beta_{34}) &= t\alpha_{13} + \alpha_{23}p_0 + \alpha_{24} \\
			\delta_{str}^L(p_0) &= p_0(q_0p_0 + \beta_{34}) & d_{SFT}^L(p_0) &= t\alpha_{12} + \alpha_{34} \\
			\delta_{str}^L(q_0) &= \beta_{34}q_0 & d_{SFT}^L(q_0) &= \alpha_{23} \\
			\delta_{str}^L(t) &= 0 & d_{SFT}^L(t) &= 0 \\
			\delta_{str}^L(t^{-1}) &= 0 & d_{SFT}^L(t^{-1}) &= 0 \\
		\end{align*}
		The Hamiltonian is 
		\[h^L = tq_0\alpha_{12} + p_0\alpha_{23} + q_0\alpha_{34} + t\alpha_{13} + \alpha_{24}.\]
		
		Next we compute the right algebra $\A_{SFT}^R(\Lambda^R)$.
		
		$\A_{SFT}^R$ is generated by the elements $\{\alpha_{12}, \alpha_{13}, \alpha_{14}, \alpha_{23}, \alpha_{24}, \alpha_{34}, \beta_{13}, \beta_{24}, p_1, q_1, p_2, q_2, p_3, q_3, p_4, q_4\}$. The string and SFT differentials are computed below:
		
		\begin{align*}
			\delta_{str}^R(\alpha_{12}) &= (p_2q_2+p_1q_1)\alpha_{12} & d_{SFT}^R(\alpha_{12}) &= \alpha_{13}p_1 + \alpha_{14}p_4q_2 \\
			\delta_{str}^R(\alpha_{13}) &= \beta_{24}\alpha_{13} + \alpha_{12}\alpha_{23} & d_{SFT}^R(\alpha_{13}) &= \alpha_{14}(p_4q_2q_1 + p_4) \\
			\delta_{str}^R(\alpha_{14}) &= (p_2q_2+p_1q_1+\beta_{24})\alpha_{14} + \alpha_{12}\alpha_{24} + \alpha_{13}\alpha_{34} & d_{SFT}^R(\alpha_{14}) &= 0 \\
			\delta_{str}^R(\alpha_{23}) &= (\beta_{24} + p_2q_2+p_1q_1)\alpha_{23} & d_{SFT}^R(\alpha_{23}) &= \alpha_{13}(p_3q_2q_1+p_3) + \alpha_{24}(p_4q_2q_1+p_4) \\
			\delta_{str}^R(\alpha_{24}) &= \beta_{24}\alpha_{24} + \alpha_{23}\alpha_{34} & d_{SFT}^R(\alpha_{24}) &= \alpha_{14}(p_3q_2q_1+p_3) \\
			\delta_{str}^R(\alpha_{34}) &= (p_2q_2+p_1q_1)\alpha_{34} & d_{SFT}^R(\alpha_{34}) &= \alpha_{14}p_3q_2+\alpha_{24}p_1 \\
			\delta_{str}^R(\beta_{13}) &= \left(\beta_{13}\right)^2 & d_{SFT}^R(\beta_{13}) &= (p_3q_2q_1+p_3)\alpha_{12} + p_1\alpha_{23} \\
			& & &+ (p_4q_2q_1+p_4)\alpha_{34} \\
			\delta_{str}^R(\beta_{24}) &= \left(\beta_{24}\right)^2 & d_{SFT}^R(\beta_{24}) &= (p_3q_2q_1+p_3)\alpha_{12} + p_1\alpha_{23} \\
			& & &+ (p_4q_2q_1+p_4)\alpha_{34} \\
			\delta_{str}^R(p_1) &= \beta_{24}p_1+p_2q_2p_1+p_1q_1p_1 & d_{SFT}^R(p_1) &= p_3q_2\alpha_{12}+p_4q_2\alpha_{34} \\
			\delta_{str}^R(q_1) &= \beta_{24}q_1+q_2p_2q_1 & d_{SFT}^R(q_1) &= p_2+\alpha_{23} \\
			\delta_{str}^R(p_2) &= \beta_{24}p_2+p_1q_1p_2+p_2q_2p_2 & d_{SFT}^R(p_2) &= p_3q_1\alpha_{12}+p_3\alpha_{13}+p_4q_1\alpha_{34}+p_4\alpha_{24} \\
			\delta_{str}^R(q_2) &= \beta_{24}q_2+q_1p_1q_2 & d_{SFT}^R(q_2) &= p_1 \\
			\delta_{str}^R(p_3) &= 0 & d_{SFT}^R(p_3) &= 0 \\
			\delta_{str}^R(q_3) &= q_3p_3q_3 & d_{SFT}^R(q_3) &= 1+q_2q_1\alpha_{12}+\alpha_{12}+q_2\alpha_{13} \\
			\delta_{str}^R(p_4) &= 0 & d_{SFT}^R(p_4) &= 0 \\
			\delta_{str}^R(q_4) &= q_4p_4q_4 & d_{SFT}^R(q_4) &= 1+q_2q_1\alpha_{34}+\alpha_{34}+q_2\alpha_{24}
		\end{align*}
		The Hamiltonian is 
		\[ h^R = p_3+p_4+p_3\alpha_{12}+p_3q_2q_1\alpha_{12}+p_3q_2\alpha_{13}+p_4\alpha_{34}+p_4q_2q_1\alpha_{34}+p_4q_2\alpha_{24}+p_2p_1+p_1\alpha_{23}\]
		Finally, we describe the maps forming the pushout square.

		$\ell: \A^M_{SFT} \to \A^L_{SFT}$.
		
		\begin{align*}
			\ell(\alpha_{12}^L) &= tq_0 & \ell(\alpha_{12}^R) &= \alpha_{12} \\
			\ell(\alpha_{13}^L) &= t & \ell(\alpha_{13}^R) &= \alpha_{13} \\
			\ell(\alpha_{14}^L) &= 0 & \ell(\alpha_{14}^R) &= \alpha_{14} \\
			\ell(\alpha_{23}^L) &= p_0 & \ell(\alpha_{23}^R) &= \alpha_{23} \\
			\ell(\alpha_{24}^L) &= 1 & \ell(\alpha_{24}^R) &= \alpha_{24} \\
			\ell(\alpha_{34}^L) &= q_0 & \ell(\alpha_{34}^R) &= \alpha_{34} \\
			\ell(\beta_{13}^L) &= p_0q_0 & \ell(\beta_{12}^R) &= \beta_{12} \\
			\ell(\beta_{24}^L) &= p_0q_0 & \ell(\beta_{34}^R) &= \beta_{34} \\	
		\end{align*}

		$r: \A^M_{SFT} \to \A^R_{SFT}$.
		
		\begin{align*}
			r(\alpha_{12}^L) &= \alpha_{12} & r(\alpha_{12}^R) &= p_3+q_1q_2p_3 \\
			r(\alpha_{13}^L) &= \alpha_{13} & r(\alpha_{13}^R) &= p_3q_2 \\
			r(\alpha_{14}^L) &= \alpha_{14} & r(\alpha_{14}^R) &= 0 \\
			r(\alpha_{23}^L) &= \alpha_{23} & r(\alpha_{23}^R) &= p_1 \\
			r(\alpha_{24}^L) &= \alpha_{24} & r(\alpha_{24}^R) &= p_4q_2 \\
			r(\alpha_{34}^L) &= \alpha_{34} & r(\alpha_{34}^R) &= p_4+q_1q_2p_4 \\
			r(\beta_{13}^L) &= \beta_{13} & r(\beta_{12}^R) &= p_2q_2+p_1q_1 \\
			r(\beta_{24}^L) &= \beta_{24} & r(\beta_{34}^R) &= p_2q_2+p_1q_1 \\	
		\end{align*}
		
		$L: \A^L_{SFT} \to \A_{SFT}^{comm}$.
		
		\begin{align*}
			L(\alpha_{12}) &= p_3+q_1q_2p_3 & L(\beta_{12}) &= p_2q_2+p_1q_1\\
			L(\alpha_{13}) &= p_3q_2 & L(\beta_{34}) &= p_2q_2+p_1q_1\\
			L(\alpha_{14}) &= 0 & L(p_0)&=p_0  \\
			L(\alpha_{23}) &= p_1 & L(q_0)&=q_0\\
			L(\alpha_{24}) &= p_4q_2 & L(t) &= t\\
			L(\alpha_{34}) &= p_4+q_1q_2p_4 & L(t^{-1}) &= t^{-1}\\
		\end{align*}

		$R: \A^R_{SFT} \to \A_{SFT}^{comm}$. 
		
		\begin{align*}
			R(\alpha_{12}) &= q_0t & R(p_1)&=p_1\\
			R(\alpha_{13}) &= t & R(q_1)&=q_1\\
			R(\alpha_{14}) &= 0 & R(p_2)&=p_2\\
			R(\alpha_{23}) &= p_0 & R(q_2)&=q_2 \\
			R(\alpha_{24}) &= 1 & R(p_3)&=p_3\\
			R(\alpha_{34}) &= q_0 & R(q_3)&=q_3 \\
			R(\beta_{13}) &= p_0q_0 & R(p_4)&=p_4 \\
			R(\beta_{24}) &= p_0q_0 & R(q_4)&=q_4\\	
		\end{align*}

		Code for computing LSFT and bordered LSFT may be found at the author's Github page here: \url{https://github.com/maciejwlodek/lsft}.

		\bibliography{blsft_bib}{}
		\bibliographystyle{plain}
		
	\end{document}